\crefname{equation}{}{}
\let\originalleft\left
\let\originalright\right
\renewcommand{\left}{\mathopen{}\mathclose\bgroup\originalleft}
\renewcommand{\right}{\aftergroup\egroup\originalright}
\crefname{algocf}{Algorithm}{Algorithms}
\crefname{equation}{}{} %remove ``Equation''
\colorlet{refkey}{orange!20}
\colorlet{labelkey}{blue!30}
\crefname{algocf}{Algorithm}{Algorithms}
\numberwithin{equation}{section}
\newtheorem{theorem}{Theorem}[section]
\newtheorem{proposition}[theorem]{Proposition}
\newtheorem{lemma}[theorem]{Lemma}
\newtheorem{claim}[theorem]{Claim}
\newtheorem*{claim*}{Claim}
\crefname{claim}{Claim}{Claims}
\newtheorem{corollary}[theorem]{Corollary}
\newtheorem{conjecture}[theorem]{Conjecture}
\newtheorem*{question*}{Question}
\newtheorem{fact}[theorem]{Fact}
\crefname{conjecture}{Conjecture}{Conjectures}
\crefname{fact}{Fact}{Facts}
\theoremstyle{definition}
\newtheorem{definition}[theorem]{Definition}
\newtheorem{question}[theorem]{Question}
\newtheorem*{definition*}{Definition}
\theoremstyle{remark}
\newtheorem{remark}[theorem]{Remark}
\newtheorem*{remark*}{Remark}
\DeclareMathOperator{\rank}{rank}
\newcommand{\mb}{\mathbb}
\newcommand{\mbm}{\mathbbm}
\newcommand{\mc}{\mathcal}
\newcommand{\mr}{\mathrm}
\newcommand{\on}{\operatorname}
\newcommand{\eps}{\varepsilon}
\renewcommand{\Pr}{\mb P}
\newcommand{\Ex}{\mb E}
\newcommand{\pvec}[1]{\vec{#1}\mkern2mu\vphantom{#1}'}
\newcommand*{\claimproofname}{Proof of claim}
\newenvironment{claimproof}[1][\claimproofname]{\begin{proof}[#1]}{\end{proof}}
\setlist{itemsep=0.3em} 
\title{Algebraic aspects of the polynomial Littlewood--Offord problem}
\author[Jin]{Zhihan Jin}
\address{Department of Mathematics, ETH, Z\"urich, Switzerland}
\email{zhihan.jin@math.ethz.ch}
\author[Kwan]{Matthew Kwan}
\address{Institute of Science and Technology Austria (ISTA). Am Campus 1, 3400 Klosterneuburg, Austria.}
\email{matthew.kwan@ist.ac.at}
\author[Sauermann]{Lisa Sauermann}
\address{Institute for Applied Mathematics, University of Bonn. Endenicher Allee 60, 53115 Bonn, Germany.}
\email{sauermann@iam.uni-bonn.de}
\author[Wang]{Yiting Wang}
\address{Institute of Science and Technology Austria (ISTA). Am Campus 1, 3400 Klosterneuburg, Austria.}
\email{yiting.wang@ist.ac.at}
\thanks{
The first author was supported by SNSF grant 200021-19696.
The second and fourth authors were supported by ERC Starting Grant ``RANDSTRUCT'' No.~101076777. The third author was supported by the DFG Heisenberg Program.
}
\let\@wraptoccontribs\wraptoccontribs
\begin{document}

\begin{abstract}
Consider a degree-$d$ polynomial $f(\xi_{1},\dots,\xi_{n})$ of independent
Rademacher random variables $\xi_{1},\dots,\xi_{n}$. To what extent
can $f(\xi_{1},\dots,\xi_{n})$ concentrate on a single point? This
is the so-called \emph{polynomial Littlewood--Offord problem}. A
nearly optimal bound was proved by
Meka, Nguyen and Vu: the point probabilities are always at most about
$1/\sqrt{n}$, unless $f$ is ``close to the zero polynomial'' (having
only $o(n^{d})$ nonzero coefficients).

In this paper we prove several results supporting the general philosophy
that the Meka--Nguyen--Vu bound can be significantly improved unless
$f$ is ``close to a polynomial with special algebraic structure'',
drawing some comparisons to phenomena in analytic number theory. In
particular, one of our results is a corrected version of a conjecture
of Costello on multilinear forms (in an appendix with Ashwin
Sah and Mehtaab Sawhney, we disprove Costello's original conjecture).
\end{abstract}

\maketitle
\section{Introduction}

\emph{Anticoncentration} inequalities are an important class of probabilistic inequalities, giving upper bounds on the probability that a
random variable falls in a small interval or is equal to a particular
value (i.e., they provide limits on the extent to which a random variable
``concentrates''). Such inequalities are ubiquitous, playing an
essential behind-the-scenes role in a wide range of different types
of probabilistic arguments.

In particular, an important direction is the
(polynomial) \emph{Littlewood--Offord problem}, which concerns anticoncentration
of low-degree polynomials of independent random variables (making
minimal assumptions about the structure of the polynomials in question).
A representative theorem, due to Meka, Nguyen and Vu~\cite{MNV16}, is as follows. (The \emph{Rademacher} distribution
is the uniform distribution on $\{-1,1\}$).
\begin{theorem} \label{thm:polynomial-LO}
    Fix $d\ge 1$ and $\varepsilon>0$,
    and let $\mathbb{F}\in\{\mathbb{R},\mathbb{C}\}$ be the field of
    real or complex numbers. Let $n$ be sufficiently large (in terms
    of $\varepsilon,d$). Let $f\in\mathbb{F}[x_{1},\dots,x_{n}]$ be
    an $n$-variable polynomial of degree $d$. Then, at least
    one of the following holds:
    \begin{enumerate}[{\bfseries{A\arabic{enumi}}}]
    \item\label{A1} $f$ has at most $\varepsilon n^{d}$ nonzero coefficients,
    or
    \item\label{A2} letting $\xi_{1},\dots,\xi_{n}\in\{-1,1\}$ be i.i.d.\ Rademacher
    random variables, we have
    \[
    \sup_{z\in\mathbb{F}}\Pr[f(\xi_{1},\dots,\xi_{n})=z]\le n^{-1/2+\varepsilon}.
    \]
    \end{enumerate}
\end{theorem}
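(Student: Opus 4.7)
I would proceed by induction on the degree $d$. For the base case $d=1$, writing $f(x)=a_{0}+\sum_{i=1}^{n}a_{i}x_{i}$, the failure of the first alternative forces at least $\varepsilon n$ of the coefficients $a_{i}$ to be nonzero, and the classical Erd\H{o}s--Littlewood--Offord theorem (which works equally well over $\mathbb{R}$ and $\mathbb{C}$, for instance via Hal\'asz's Fourier-analytic argument) then yields $\sup_{z}\Pr[f(\xi)=z]=O(1/\sqrt{\varepsilon n})\le n^{-1/2+\varepsilon}$ for $n$ sufficiently large in terms of $\varepsilon$.

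For the inductive step, assume the statement is known in degree $d-1$, and let $f$ be of degree $d$ with at least $\varepsilon n^{d}$ nonzero coefficients. Since $\xi_{i}^{2}=1$ we may replace $f$ by its multilinearization without changing its distribution on $\{-1,1\}^{n}$, so we may assume $f$ is multilinear. For each index $i\in[n]$ write $f=x_{i}g_{i}+h_{i}$ with $g_{i}=\partial_{i}f$ and $h_{i}$ free of $x_{i}$, so every $g_{i}$ is a multilinear polynomial of degree $d-1$ in the remaining variables. A double-counting argument---each degree-$d$ monomial of $f$ appears in exactly $d$ of the derivatives $g_{i}$---shows that a positive fraction of the $g_{i}$ each carry $\Omega_{d}(\varepsilon n^{d-1})$ nonzero coefficients, so the induction hypothesis applies to every such ``rich'' derivative and gives $\sup_{z}\Pr[g_{i}(\xi_{-i})=z]\le n^{-1/2+\varepsilon/d}$ for these indices.

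The heart of the argument is a decoupling step that transfers anticoncentration from $f$ to the $g_{i}$. A crude $L^{2}$-decoupling via Cauchy--Schwarz yields $\sup_{z}\Pr[f(\xi)=z]^{2}\le \Pr[f(\xi)=f(\xi')]$ for an independent copy $\xi'$, and after the change of variables $\eta=(\xi+\xi')/2$, $\mu=(\xi-\xi')/2$ the right-hand side becomes an anticoncentration probability for a polynomial of degree $d-1$ in $\mu$ (with coefficients depending on $\eta$). Combined with a combinatorial lemma showing that if $f$ has many coefficients then so does this ``$\mu$-polynomial'' for typical $\eta$, this reduces the degree by one at the cost of squaring the probability.

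The main obstacle is precisely this squaring: iterating a naive decoupling $d$ times degrades the exponent geometrically to $n^{-1/2^{d}+\varepsilon}$, which is far worse than the target $n^{-1/2+\varepsilon}$. Preserving the correct $1/\sqrt n$ scale across all $d$ inductive levels requires a more refined averaging that processes all partial derivatives simultaneously rather than one at a time (the Meka--Nguyen--Vu strategy), or alternatively an $L^{p}$-variant with $p$ growing with $d$ in the spirit of Razborov--Viola. A secondary obstacle is the combinatorial claim that ``many coefficients of $f$'' propagates to ``many coefficients of a typical partial derivative, and of a typical $\mu$-polynomial'': this requires a careful hypergraph-style analysis of the monomial support of $f$, ensuring that the loss between ``many coefficients'' thresholds at successive degrees is merely a constant factor rather than something that compounds with $d$.
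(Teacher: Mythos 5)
This theorem is not proved in the paper at all: it is imported as a black-box citation to Meka--Nguyen--Vu~\cite{MNV16}, who in turn rely on a result of Kane~\cite{Kan14}. So there is no ``paper's own proof'' to compare your sketch against; I will instead assess the sketch on its own terms.

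Your plan correctly reduces to the multilinear case, sets up the derivative decomposition $f=x_i g_i + h_i$, and --- most importantly --- correctly diagnoses the central difficulty: the naive Cauchy--Schwarz decoupling squares the probability at every inductive level, so $d$ iterations only give $n^{-1/2^d+\varepsilon}$, not $n^{-1/2+\varepsilon}$. That diagnosis is exactly right and is the reason the polynomial Littlewood--Offord problem was open for so long at degree $d\ge 2$. However, your proposal then gestures at the fix (``a more refined averaging \dots\ the Meka--Nguyen--Vu strategy, or an $L^p$-variant in the spirit of Razborov--Viola'') without supplying it, and that gestured-at fix \emph{is} the content of the theorem. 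In fact the Razborov--Viola route does not rescue the exponent: their argument gives a much weaker, doubly-exponential-in-$d$ degradation, which is why the paper describes it as ``a weak form'' of \cref{thm:polynomial-LO}. The actual MNV proof proceeds quite differently --- it transfers to the Gaussian setting via a regularity/invariance argument and then invokes Kane's anticoncentration bound for Gaussian polynomials --- and none of that machinery appears in your outline.

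There is also a technical imprecision in the decoupling step as written. After the substitution $\eta=(\xi+\xi')/2$, $\mu=(\xi-\xi')/2$, the difference $f(\eta+\mu)-f(\eta-\mu)$ is odd in $\mu$, so for odd $d$ its degree in $\mu$ is still $d$ (the leading $\mu^d$ term survives), not $d-1$. The degree-reduction form of decoupling used in the literature (and in \cref{lem:multiple-decoupling-LO} of this paper) instead partitions the variables into two blocks $X\cup Y$ and applies Cauchy--Schwarz over one block only, which genuinely kills the top-degree terms supported in $Y$. In short: the outline identifies the right obstacle but does not overcome it, and it has a small error in the decoupling bookkeeping; the proof of \cref{thm:polynomial-LO} must still be taken from \cite{MNV16,Kan14}.
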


In other words, polynomials of $n$ independent random variables have their point
probabilities bounded by about $1/\sqrt{n}$, unless they are ``close
to the zero polynomial'' (note that we may take $\varepsilon>0$
to be arbitrarily small\footnote{There are more precise quantitative questions in this direction, which we will not pursue in this paper.}, strengthening both \cref{A1} and \cref{A2}, though this forces us to take large $n$). To see
that one cannot hope for a bound stronger than about $1/\sqrt{n}$,
consider the polynomial $f(x_{1},\dots,x_{n})=(x_{1}+\dots+x_{n})^{d}$. Indeed, then $\sup_{z\in\mathbb{F}}\Pr[f(\xi_{1},\dots,\xi_{n})=z]\ge \sup_{z\in\mathbb{F}}\Pr[x_{1}+\dots+x_{n}=z]=\binom{n}{\lfloor n/2\rfloor}\cdot 2^{-n}$ is on the order of $1/\sqrt n$ (up to constant factors).
\begin{remark}
Littlewood--Offord theorems are usually stated with $\xi_{1},\dots,\xi_{n}$
being i.i.d.\ Rademacher (as in \cref{thm:polynomial-LO} above),
but it is often not hard to deduce analogous results where $\xi_{1},\dots,\xi_{n}$
are allowed to have arbitrary distributions, as long as they are independent.
Indeed, in some sense the Rademacher case is the ``hardest case'':
other cases can be reduced to this case with simple coupling/conditioning
arguments (see for example \cite[Section~6]{MNV16}).
\end{remark}

To give some history for \cref{thm:polynomial-LO} (more can be found in the surveys \cite{NV13,Vu17}): the
$d=1$ (linear) case of \cref{thm:polynomial-LO} first appeared as
a lemma in a 1943 paper of Littlewood and Offord~\cite{LO43}\footnote{A more general result was claimed in a 1939 paper by Doeblin~\cite{Doe39},
though this is also the subject of a 1958 paper of Kolmogorov~\cite{Kol58},
which claims that Doeblin's paper did not provide a full proof.} on random algebraic equations. Their bound was improved by Erd\H os~\cite{Erd45} in 1945, obtaining \emph{exact} bounds for the $d=1$ case (the result of this latter paper is now usually called the \emph{Erd\H os--Littlewood--Offord theorem}, and has
been very influential in combinatorics and random matrix
theory). Both these papers were for linear forms with real coefficients;
complex coefficients were first explicitly considered by Kleitman~\cite{Kle70}
(though, in the way we have stated \cref{thm:polynomial-LO}, the real
and complex cases are equivalent).

The higher-degree ($d\ge2$) Littlewood--Offord problem was first
considered in a 1996 paper of Rosi\'nski and Samorodnitsky~\cite{RS96}
(they proved a weaker form of \cref{thm:polynomial-LO} for applications
to L\'evy chaos), and this direction of study rose to the
forefront after a 2006 paper of Costello, Tao and Vu~\cite{CTV06}
(who proved a weak form of the $d=2$ case of \cref{thm:polynomial-LO},
as a key ingredient in their work on random symmetric matrices), and
a 2013 paper of Razborov and Viola \cite{RV13} (who proved a weak form
of \cref{thm:polynomial-LO} as a tool to study correlation bounds
between Boolean functions and polynomials). The $d=2$ case of \cref{thm:polynomial-LO}
was proved by Costello~\cite{Cos13}, and \cref{thm:polynomial-LO} for general $d$ was proved by Meka, Nguyen
and Vu~\cite{MNV16}, using a result of Kane~\cite{Kan14}.
Very recently, an optimal $O(1/\sqrt n)$ bound in \cref{A2} was obtained in the case $d=2$ by the second and third authors~\cite{KS}.

It remains a very interesting open problem to obtain sharp $O(1/\sqrt{n})$
bounds in \cref{A2} for $d\ge3$, but another important question is to study conditions
under which one can prove \emph{much stronger} bounds. A natural conjecture
along these lines was first made by Costello~\cite[Conjecture~3]{Cos13},
as follows (his paper was about polynomials with complex coefficients,
but we have taken the liberty to state his conjecture in both the
real and complex cases\footnote{Unlike \cref{thm:polynomial-LO}, here there is a genuine difference
between the real and complex cases, as there are real polynomials
that factorise over $\mb C$ but not $\mb R$.}).
\begin{conjecture} \label{conj:costello}
    Fix $d\ge 2$ and $\varepsilon>0$,
    and let $\mathbb{F}\in\{\mathbb{R},\mathbb{C}\}$. Let $n$ be sufficiently
    large (in terms of $\varepsilon,d$), and let $f\in\mathbb{F}[x_{1},\dots,x_{n}]$
    be an $n$-variable polynomial of degree $d$. Then, at
    least one of the following holds:
    \begin{enumerate}[{\bfseries{B\arabic{enumi}}}]
        \item \label{B1}there is a reducible\footnote{A polynomial is reducible if it can be factored as the product of two non-constant polynomials. In this paper, we use the convention that the zero polynomial is reducible.} polynomial $g\in\mathbb{F}[x_{1},\dots,x_{n}]$ with degree at most $d$, such that $f-g$ has at most
        $\varepsilon n^{d}$ nonzero coefficients, or
        \item \label{B2}letting $\xi_{1},\dots,\xi_{n}\in\{-1,1\}$ be i.i.d.\ Rademacher
        random variables, we have
        \[
        \sup_{z\in\mathbb{F}}\Pr[f(\xi_{1},\dots,\xi_{n})=z]\le n^{-d/2+\varepsilon}.
        \]
    \end{enumerate}
\end{conjecture}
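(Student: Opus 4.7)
My plan is to prove the conjecture by induction on the degree $d$, combining a decoupling argument to pass from $f$ to its partial derivatives with an algebraic ``integration'' step to lift reducibility from the derivatives back to $f$. First, since $\xi_i^2=1$, I reduce to the case that $f$ is multilinear: $f=\sum_{|S|\le d}c_S\prod_{i\in S}x_i$. Both the ``nonzero-coefficient'' count and reducibility (at least of the top-degree part) are preserved up to routine adjustments.

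For the base case $d=2$, the sharp quadratic bound of~\cite{Cos13, KS} should suffice: writing $f(\xi)=\xi^\top A\xi+b^\top\xi+c$, the hypothesis $\sup_z\Pr[f(\xi)=z]>n^{-1+\varepsilon}$ ought to force (via Fourier/quadratic Gauss-sum analysis) that $A$ is within $\varepsilon n^2$ off-diagonal entries of a symmetrised rank-one matrix $uv^\top+vu^\top$, so that $f$ agrees with $(u^\top\xi)(v^\top\xi)$ up to an $\varepsilon n^2$-sparse perturbation, and is therefore close to reducible.

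For the inductive step with $d\ge 3$, assume $\sup_z\Pr[f(\xi)=z]\ge n^{-d/2+\varepsilon}$. I would use a higher-order Costello--Tao--Vu decoupling (single-coordinate swapping alone only gives $\Pr[f=z]^2\le \tfrac12+\tfrac12\sup_w\Pr[\partial_if(\xi)=w]$, which is too weak once $\Pr[f=z]\ll 1$); the right version should iterate and average so as to produce, for a positive proportion of $i\in[n]$, a bound $\sup_w\Pr[\partial_i f(\xi)=w]\ge n^{-(d-1)/2+\varepsilon'}$. Since $\partial_i f$ has degree at most $d-1$, the inductive hypothesis yields reducible approximants $g_ih_i$ (with $\deg g_i,\deg h_i\ge 1$) such that $\partial_if-g_ih_i$ has at most $\varepsilon''n^{d-1}$ nonzero coefficients.

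The main obstacle is the final algebraic step: deducing from ``many partial derivatives are approximately reducible'' that $f$ itself is approximately reducible. Interpreting the top-degree part of $f$ as a symmetric $d$-tensor $T$, reducibility of $f$ translates into a ``slice-factorisation'' of $T$, while the inductive data provides such factorisations for many $(d{-}1)$-dimensional slices of $T$. One then needs a tensor-theoretic coherence lemma, showing that these slicewise factorisations can be aligned into a single global factorisation of $T$, up to an $\varepsilon n^d$ sparse error. This is where I expect the real difficulty: a priori the pairs $(g_i,h_i)$ from different $i$ need not share any common factor, and ruling this out likely requires either a pigeonhole-style argument on projective equivalence classes of the $g_i$'s (to extract a common linear factor), or a further application of Littlewood--Offord to the residuals $\partial_i f-g_ih_i$ to enforce structural rigidity. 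Making this coherent uniformly across a positive-density set of indices $i$ appears to be the crux of the whole argument.
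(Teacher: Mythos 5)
This conjecture is in fact \emph{false} for $d\ge 3$, which the paper establishes via \cref{prop:costello-false} (proved in \cref{sec:counterexample}): taking $f = L_1\cdots L_d - L_{d+1}\cdots L_{2d}$ for linear forms $L_j$ on disjoint index sets of size about $n/(2d)$, one has $\Pr[f(\xi)=0]\ge \Pr[L_1(\xi)=0 \text{ and } L_{2d}(\xi)=0]=\Omega(1/n)$, which exceeds $n^{-d/2+\eps}$ as soon as $d\ge 3$; yet $f$ cannot be approximated (in the sense of \cref{B1}) by any reducible polynomial, even over $\mb C$. Thus no proof strategy can succeed for $d\ge 3$, and the issue is not merely a gap in your inductive coherence lemma but a genuine obstruction at the level of the statement.

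The specific place your plan goes wrong is precisely the step you flag as ``the crux'': lifting slicewise reducibility of the $(d-1)$-tensor slices to global reducibility of $T$. For the counterexample above, each partial derivative $\partial_i f$ (for $i$ in, say, the support of $L_1$) equals $L_2\cdots L_d$ up to lower-order terms, which \emph{is} reducible; but these reducible slices do not align into a global factorisation of $T$, because the two summands $L_1\cdots L_d$ and $-L_{d+1}\cdots L_{2d}$ produce slices that factorise through disjoint collections of linear forms. No pigeonhole on projective classes or rigidity argument can force a common factor, because none exists. What can be salvaged is a power-saving bound weaker than $n^{-d/2}$ (the paper's \cref{conj:stability} with some $c>1/2$), and for the special case of $d$-multilinear forms the paper proves the optimal $c=1$ bound (\cref{thm:multilinear}) using a tensor-testing lemma (\cref{thm:tensor-property-testing}) and a geometric Littlewood--Offord theorem (\cref{thm:FKS}) --- a strategy that is structurally related to yours but crucially targets the bound $n^{-1}$ rather than $n^{-d/2}$. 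For $d=2$ your base case should be fine (the paper also believes the $d=2$ case of the conjecture, and proves the complex quadratic case of \cref{conj:stability} with $c=13/24$ in \cref{thm:stability}), but the induction cannot extend past $d=2$ as stated.
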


To paraphrase, Costello conjectured that either the point probabilities
are bounded by about $n^{-d/2}$, or $f$ is close to a product of
two lower-degree polynomials.
To motivate the bound ``$n^{-d/2}$'', note that if the coefficients of $f$ are bounded integers (and at least a constant fraction of them are nonzero), then the variance of $f(\xi_{1},\dots,\xi_{n})$ has order of magnitude $n^d$, so its standard deviation has order of magnitude $n^{d/2}$. It is reasonable to imagine that, ``generically'', the probability mass is roughly evenly spread out on the integers within standard-deviation range of the mean.
The most obvious way for this ``generic'' bound to fail is if $f$ factorises into lower-degree polynomials, since when one of the factors is zero, the whole polynomial is zero. Costello conjectured that that a bound of roughly $n^{-d/2}$ is achieved as long as the polynomial is far from factorising (i.e., if it is ``robustly irreducible'' in the sense of \cref{B1}).

We remark that \cref{conj:costello} falls in the general direction
of the \emph{inverse} Littlewood--Offord problem, which is the study
of the structural aspects of $f$ that control the concentration behaviour
of $f(\xi_{1},\dots,\xi_{n})$ (this problem has a long history, which
we discuss in more detail in \cref{subsubsec:inverse}). Comparisons can also be made
to the \emph{Elekes--R\'onyai--Szab\'o problem} in combinatorial
geometry (see \cite{dZe18} for a survey), and the problem of counting
integral points on varieties (which is intensively studied in analytic
number theory; see e.g.~\cite{Bro09}). Indeed, one can consider the general problem
of counting the number of zeroes of an $n$-variable polynomial $f$
which lie in a ``combinatorial box'' $\prod_{i=1}^{n}A_{i}$. The polynomial Littlewood--Offord problem is concerned with the case where each $A_{i}$ has size 2 and the dimension $n$ is large, whereas the Elekes--R\'onyai--Szab\'o problem is concerned with the case where $n$ is small and each $A_{i}$ is large, and in the study of integral points on varieties one is concerned with the specific case where each $A_{i}$ is a long interval of integers of the form $\{-B,\dots,B\}$. In the latter two settings, the fundamental question is how the algebraic properties of the polynomial $f$ control the asymptotics of the number of zeroes in a large box.

The $d=1$ case of \cref{conj:costello} trivially coincides with the
$d=1$ case of \cref{thm:polynomial-LO}, and we are also inclined
to believe the $d=2$ case (we will justify this in \cref{subsec:further-directions}, by comparison
to related work in analytic number theory), but unfortunately \cref{conj:costello}
is too optimistic for $d\ge 3$, as shown by the following proposition.

\begin{proposition}\label{prop:costello-false}
Fix an integer constant $d\ge 1$. Consider $2d$ disjoint subsets $I_{1},\dots,I_{2d}\subseteq\{1,\dots,n\}$,
each of size exactly $2\lfloor n/(4d)\rfloor$. For $j\in\{1,\dots,2d\}$ let
$L_{j}(x_{1},\dots,x_{n})=\sum_{i\in I_{j}}x_{i}$, and define the
degree-$d$ polynomial
\[
f=L_{1}\dots L_{d}-L_{d+1}\dots L_{2d}\in\mb R[x_{1},\dots,x_{n}].
\]
Then, there is $\varepsilon>0$ (depending only on $d$) such that:
\begin{itemize}
    \item \cref{B1} does not hold, even if we take $\mathbb{F}=\mathbb{C}$ (i.e., the real polynomial $f$ is ``robustly irreducible'', even if we allow factorisations over the complex numbers), and
    \item for i.i.d.\ Rademacher
        random variables $\xi_{1},\dots,\xi_{n}\in\{-1,1\}$, we have $\Pr[f(\xi_{1},\dots,\xi_{n})=0]\ge \varepsilon/n$ (so if $d\ge 3$, then \cref{B2} does not hold either).
\end{itemize}
\end{proposition}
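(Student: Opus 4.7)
The plan is to handle the two parts separately: Part~2 follows from a short independence argument, while Part~1 (robust irreducibility) requires a structural analysis of factorisations of $f$.

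For Part~2, observe that since $I_1,\ldots,I_{2d}$ are pairwise disjoint, the random variables $L_j(\xi)$ are mutually independent; each is a sum of $m:=2\lfloor n/(4d)\rfloor$ i.i.d.\ Rademachers, and by Stirling $\Pr[L_j(\xi)=0]=\binom{m}{m/2}2^{-m}=\Theta(\sqrt{d/n})$. Whenever $L_1(\xi)=L_{d+1}(\xi)=0$ we have $f(\xi)=0$, so $\Pr[f(\xi)=0]\ge\Pr[L_1(\xi)=0]\Pr[L_{d+1}(\xi)=0]=\Theta(d/n)$, which gives the required $\Omega(1/n)$ bound.

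For Part~1, suppose towards contradiction that $g=g_1g_2\in\mathbb{C}[x_1,\ldots,x_n]$ is reducible of degree at most $d$ with $f-g$ having at most $\varepsilon n^d$ nonzero coefficients. Setting $A=\bigcup_{j\le d}I_j$ and $B=\bigcup_{j>d}I_j$, the two summands of $f=L_1\cdots L_d-L_{d+1}\cdots L_{2d}$ involve disjoint variable sets and thus contribute $m^d$ distinct monomials each, so $f$ is homogeneous of degree $d$ with exactly $2m^d$ nonzero coefficients. Splitting $g=g^{\mathrm{top}}+g^{\mathrm{low}}$ by degree and using the homogeneity of $f$ rules out $\deg g<d$ for $\varepsilon$ small, so $\deg g=d$ and $g^{\mathrm{top}}=h_1h_2$ with $h_1,h_2$ homogeneous of positive degrees $d_1+d_2=d$. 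Now decompose each homogeneous degree-$d$ polynomial $p$ as $p=p^A+p^B+p^M$ (pure-$A$, pure-$B$, mixed monomials). Then $f^A=L_1\cdots L_d$, $f^B=-L_{d+1}\cdots L_{2d}$, $f^M=0$, and $(h_1h_2)^A=h_1^Ah_2^A$, $(h_1h_2)^B=h_1^Bh_2^B$, while $(h_1h_2)^M$ collects the cross-terms. Applying the coefficient bound separately on the three parts, none of $h_1^A,h_2^A,h_1^B,h_2^B$ can vanish (else $L_1\cdots L_d$ or $L_{d+1}\cdots L_{2d}$ would have few nonzero coefficients), so all four are nonzero homogeneous of degrees $d_1$ or $d_2$.

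The crucial technical input is a \emph{robust unique factorisation lemma}: if a product $PQ$ differs from $L_1\cdots L_d$ in few coefficients with $\deg P=d_1\ge 1$, then there exist $S\subseteq[d]$ of size $d_1$ and $\alpha\ne 0$ such that $P$ is close to $\alpha\prod_{j\in S}L_j$ (and $Q$ close to $\alpha^{-1}\prod_{j\notin S}L_j$). Applied to $h_1^Ah_2^A$ and $h_1^Bh_2^B$, this yields subsets $S_A,S_B\subseteq[d]$ of size $d_1$ pinning down the approximate form of each of the four pieces. The bidegree-$(d_1,d_2)$ component of $(h_1h_2)^M$ then contains the main term $h_1^Ah_2^B\approx\pm\prod_{j\in S_A}L_j\prod_{k\notin S_B}L_{k+d}$, a product of $d$ linear forms in $d$ disjoint variable sets with $\approx m^d$ nonzero coefficients, supported on transversals of $\{I_j:j\in S_A\}\cup\{I_{k+d}:k\notin S_B\}$. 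The other non-mixed-times-mixed contribution in this bidegree, $h_1^Bh_2^{[d_1,d_2-d_1]}$, is supported on monomials containing a variable from $\bigcup_{k\in S_B}I_{k+d}$, which is disjoint from the support of the main term. Playing the $(d_1,d_2)$ and $(d_2,d_1)$ bidegree components against each other (with $h_1^Bh_2^A$ as the main term in the latter) and tracking cancellations with cross-terms involving the mixed parts $h_1^M,h_2^M$, one shows that these main contributions cannot be cancelled, forcing $(h_1h_2)^M$ to have $\gtrsim m^d\gg\varepsilon n^d$ nonzero coefficients---the desired contradiction. The principal obstacle is the robust unique factorisation lemma itself; a natural strategy is induction on $d$ via specialisation (setting the variables in some $I_j$ to zero reduces to the analogous problem for a product of $d-1$ linear forms), with the base case $d=2$ handled by direct rank/spectral analysis of the rank-$\le 2$ symmetric bilinear form $PQ$ close to the specific ``bipartite'' matrix representing $L_1L_2$.
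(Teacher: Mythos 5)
Your Part~2 is correct and essentially the same as the paper's: fix two of the $L_j$ drawn from the two products (one index in $\{1,\dots,d\}$ and one in $\{d+1,\dots,2d\}$), use disjointness of the $I_j$ for independence, and apply the central binomial estimate. The paper uses $L_1$ and $L_{2d}$; you use $L_1$ and $L_{d+1}$; this is immaterial.

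Part~1 takes a genuinely different route from the paper, and it has real gaps. The paper proves the non-existence of an approximate factorisation by a random-sampling step to kill the multilinear degree-$d$ terms of the error polynomial, followed by a Ramsey-type argument on ``types'' and ``directions'' of coefficients, and then a final specialisation to one variable per part, reducing everything to the impossibility of factoring the explicit $2d$-variable polynomial $x_{i_1}\cdots x_{i_d}+x_{i_{d+1}}\cdots x_{i_{2d}}$. Your approach decomposes by variable set (pure-$A$, pure-$B$, mixed) and reduces the whole question to a \emph{robust unique factorisation lemma} for the single polynomial $L_1\cdots L_d$. The two are not the same argument: the paper never appeals to robust unique factorisation, and you never appeal to Ramsey's theorem.

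The gap is that the robust unique factorisation lemma is not proved, only asserted, and your sketched strategy for it does not obviously work. Setting all variables in some $I_j$ to zero kills the entire product $L_1\cdots L_d$ (you get the zero polynomial, not a $(d-1)$-fold product), so this specialisation gives no inductive foothold; setting them to nonzero constants changes the coefficient-count metric in a way you would need to control carefully (the approximation hypothesis ``$PQ$ differs from $L_1\cdots L_d$ in few coefficients'' is not preserved in any obvious quantitative sense under such a substitution, and in particular is sensitive to non-multilinear terms of $P$ and $Q$, which you have not ruled out). Even granting the lemma, the final step (``playing the $(d_1,d_2)$ and $(d_2,d_1)$ bidegree components against each other and tracking cancellations with cross-terms involving the mixed parts $h_1^M,h_2^M$'') is entirely hand-waved. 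Nothing in your setup bounds $h_1^M$ or $h_2^M$, so you have no a priori control on the bidegree-$(d_1,d_2)$ contributions of the form $h_1^M h_2^M$, and you cannot conclude that the ``main term'' $h_1^Ah_2^B$ survives. Precisely the purpose of the paper's Ramsey/direction machinery (\cref{lem:ramsey}, \cref{lem:colexicographic}, \cref{fact:no-cancellation-direction}, and \cref{claim:disjoint-terms}) is to force the approximate factorisation into a rigid structure where these cross-terms provably cannot cancel the main term; your proposal lacks a replacement for this.
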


\cref{prop:costello-false} was observed in a conversation between
Ashwin Sah, Mehtaab Sawhney and the second author of this paper.
The proof proceeds via a random sampling argument restricting to a small subset of the variables where the polynomial essentially factorises, with some careful Ramsey-theoretic arguments to handle non-multilinear terms. The details are provided in \cref{sec:counterexample}.
\begin{remark}
\label{rem:multilinear}Costello~\cite{Cos13} also highlighted a
special case of \cref{conj:costello} (appearing as \cite[Conjecture~2]{Cos13})
for $d$-multilinear forms (i.e., degree-$d$ polynomials for which
the variables are partitioned into $d$ different ``types'', and
every monomial contains exactly one variable of each type). Costello
was able to prove this specialised conjecture for bilinear forms (2-multilinear forms),
but \cref{prop:costello-false} shows that when $d=3$ even this
specialised conjecture is false.
\end{remark}
\begin{remark}
\label{rem:maybe}It seems plausible that with similar methods as in the proof of \cref{prop:costello-false} (but more technical details) one can actually prove that for \emph{any} irreducible polynomial $F\in \mb F[X_1,\dots,X_k]$, polynomials of the form $F(L_1,\dots,L_k)$ are robustly irreducible in the sense of \cref{B1} (where we fix disjoint sets $I_1,\dots,I_k\subseteq\{1,\dots,n\}$ of size about $n/k$, and take $L_j(x_1,\dots,x_n)=\sum_{i\in I_j}x_i$ for $j=1,\dots,k$). If the polynomial $F$ has integer coefficients and has many zeroes in  $([-\sqrt{n},\sqrt{n}]\cap \mathbb{Z})^k$, then the point probability $\Pr[F(L_1(\xi_{1},\dots,\xi_{n}),\dots,L_k(\xi_{1},\dots,\xi_{n}))=0]$ is large (see also the discussion in \cref{subsubsec:analytic}). So, this would give a large family of counterexamples to \cref{conj:costello}.
\end{remark}

The purpose of this paper is to open a systematic investigation
into the algebraic aspects of the Littlewood--Offord problem, in
accordance with the philosophy of \cref{conj:costello}. We see there
being two main questions to investigate. First, we believe that if
\cref{B1} does not hold (i.e., if $f$ is ``robustly irreducible''), it should still be possible to at least prove
some ``power-saving'' beyond the general bound in \cref{thm:polynomial-LO}.
\begin{conjecture}
\label{conj:stability}In the setting of \cref{conj:costello}, either
\cref{B1} holds, or else
\begin{enumerate}[{\bfseries{B\arabic{enumi}'}}]
  \setcounter{enumi}{1}
    \item\label{B2'} letting $\xi_{1},\dots,\xi_{n}\in\{-1,1\}$ be i.i.d.\ Rademacher
    random variables, we have
    \[
    \sup_{z\in\mathbb{F}}\Pr[f(\xi_{1},\dots,\xi_{n})=z]\le n^{-c+\varepsilon}.
    \]
    for some $c>1/2$ (which does not depend on $\varepsilon$, $n$
    or $f$).
\end{enumerate}
\end{conjecture}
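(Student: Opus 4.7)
The natural strategy is to combine iterated decoupling with a quantitative inverse Littlewood--Offord analysis, proceeding by induction on $d$. Throughout I may assume \cref{A1} fails, so $f$ has $\gtrsim n^d$ nonzero coefficients (otherwise \cref{B1} holds trivially with $g=0$).

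For the base case $d=2$, write $Q(x) = x^\top A x + b^\top x + c$. The standard Costello--Tao--Vu decoupling yields
\[
\sup_z \Pr[Q(\xi)=z]^2 \;\le\; \Ex_{\mu}\Pr_{\eta}\!\left[\eta^\top (A\mu) = 0\right],
\]
where $\mu,\eta \in \{-1,0,1\}^n$ arise from symmetrising and antisymmetrising two independent Rademacher samples. Conditional on $\mu$, the right-hand side is a \emph{linear} Littlewood--Offord problem for $\eta^\top v$ with $v=A\mu$. The usual Erd\H os--Littlewood--Offord bound $O(1/\sqrt n)$ gives only $\sup_z\Pr[Q=z]\le O(n^{-1/4})$, which is weaker than our target. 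To improve, I would show that if $v$ has many \emph{pairwise distinct} nonzero entries with good probability in $\mu$, the S\'ark\"ozy--Szemer\'edi-type bound $O(n^{-3/2})$ applies and yields $\sup_z\Pr[Q=z]\le O(n^{-3/4})$, comfortably beating $n^{-1/2}$. The robust irreducibility hypothesis (failure of \cref{B1}) is used precisely to force this distinctness: if many rows of $A$ coincided, or more generally $A$ had low ``rank structure'', $Q$ would be close to a product of two linear forms. Making this rigorous requires a quantitative structure theorem identifying quadratic forms that are close to reducible with those exhibiting such coefficient degeneracies.

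For general $d\ge 3$, I would iterate the decoupling to produce a discrete derivative $\Delta f$ of degree $d-1$ in fresh variables, apply the induction hypothesis, and stitch the bounds back through the telescoping of decouplings. The delicate point is that $\Delta f$ may fail robust irreducibility even when $f$ satisfies it: if $f = x_1\cdots x_d + \text{(robustly non-factoring perturbation)}$, the derivative $\partial_{x_1}f$ is approximately $x_2\cdots x_d$, which is \emph{very} reducible. The induction must therefore be coupled with an \emph{algebraic transfer lemma}: if most discrete derivatives of $f$ are close to reducible in $\mathbb{F}[x_1,\dots,x_n]$, then $f$ itself is close to reducible. With such a lemma in hand, either \cref{B1} holds for $f$, or induction delivers a power-saving bound for enough derivatives to drive the decoupling.

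The principal obstacle is this algebraic transfer step. Classical commutative algebra gives only qualitative factorisation preservation (e.g.\ via resultants, Bezout, or generic specialisation), whereas here one needs a genuine \emph{stability} statement quantifying the notion ``approximate factorisation'' and showing that approximate factorisations of many derivatives glue into an approximate factorisation of $f$ itself. Developing such a quantitative algebraic regularity lemma -- plausibly by adapting Kane's regularity lemma to an algebraic setting, or by importing tools from the Elekes--R\'onyai--Szab\'o framework as hinted at in \cref{rem:maybe} -- appears to be the crux. An alternative Fourier-analytic route via Hal\'asz moment estimates would reduce matters to counting zeros of $f$ on the Boolean cube, but controlling this count under robust irreducibility raises structurally analogous difficulties.
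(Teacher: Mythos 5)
First, an important clarification: the statement you were asked to prove is a \emph{conjecture} in the paper (\cref{conj:stability}), not a theorem, and the paper does not prove it. The paper establishes only two special cases: \cref{thm:multilinear} ($d$-multilinear forms, with $c=1$) and \cref{thm:stability} ($d=2$ and $\mathbb{F}=\mathbb{C}$, with $c=13/24$). The general case — in particular $d\ge 3$ for non-multilinear polynomials, and $d=2$ over $\mathbb{R}$ — remains open. So any complete blind proof would be a new result, and your proposal, as you acknowledge, is a sketch with a genuine gap at its centre.

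On the $d=2$ base case, your route (Costello--Tao--Vu decoupling to a linear anticoncentration problem for $\eta^\top(A\mu)$, then S\'ark\"ozy--Szemer\'edi if $A\mu$ has many pairwise distinct nonzero entries) is quite different from what the paper does, and if it worked it would even give a better exponent ($c=3/4$ versus the paper's $13/24$). But there is a real gap: you assert that ``robust irreducibility'' forces the entries of $A\mu$ to be distinct with good probability, justified by the claim that if $A$ has low ``rank structure'' then $Q$ is close to a product of two linear forms. That claim is false over $\mathbb{R}$: the quadratic form $L_1^2+L_2^2$ has a rank-$2$ coefficient matrix but is irreducible over $\mathbb{R}$, and one cannot extract a product-of-linear-forms approximation from low rank alone when the signature is definite. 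This is precisely why \cref{thm:stability} is stated only over $\mathbb{C}$, where reducibility of a quadratic form is equivalent to rank $\le 2$. Even over $\mathbb{C}$, the deduction from ``far from rank $\le 2$'' (which the paper extracts from failure of \cref{B1} via the symmetric local-to-global \cref{lemma: close to symmetric low rank}) to ``$A\mu$ has $\Omega(n)$ distinct entries with high probability'' is not automatic and is not where the paper goes; the paper instead uses the 3-way decoupling \cref{lem:custom-decoupling-LO} together with the Nguyen--Vu optimal inverse theorem (\cref{theorem: optimal inverse theorem}) and delicate GAP-volume estimates. Your linkage from failure of \cref{B1} to a distinctness property strong enough for S\'ark\"ozy--Szemer\'edi would have to rule out, for instance, matrices whose rows differ only in a few coordinates, where $(\vec a_i-\vec a_j)\cdot\mu=0$ has constant probability; this needs a quantitative argument that is not present.

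For $d\ge 3$ you correctly identify the crux — some form of ``algebraic transfer lemma'' asserting that if most discrete derivatives of $f$ are close to reducible then $f$ itself is close to reducible — and you correctly note (via $\partial_{x_1}(x_1\cdots x_d)\approx x_2\cdots x_d$) that the na\"ive inductive step fails. But you do not prove such a lemma, and this is exactly where the conjecture is open. For the multilinear special case, the paper sidesteps the transfer lemma entirely: instead of trying to propagate ``approximate factorisation'' through derivatives, it collapses the $d$-dimensional tensor along one direction and asks when the Rademacher sum $\sum\xi_i T_i$ falls into the \emph{variety of reducible tensors}; the geometric Littlewood--Offord theorem (\cref{thm:FKS}) together with the classification of maximal linear subspaces of that variety (\cref{thm:irreducible-variety}) then either gives the anticoncentration bound or forces the whole tensor into a reducible structure. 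This ``collapse-onto-variety'' mechanism is a genuinely different resolution of the difficulty you flag, and it crucially exploits the multilinear structure; it does not extend to general degree-$d$ polynomials, which is why the full conjecture is still open. Your proposal therefore does not resolve the conjecture, and as written the $d=2$ base case already has an unrepaired gap (the distinctness claim), particularly over $\mathbb{R}$.
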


It seems plausible that in fact \cref{conj:stability} holds with $c=1$
(this is the best one could hope for, given \cref{prop:costello-false}, and in the $d=2$ case it would be equivalent to \cref{conj:costello}).
However, this is probably a very difficult problem: as we will discuss in \cref{subsubsec:analytic}, the general $c=1$ case of \cref{conj:stability} seems to be at least as hard as the so-called \emph{affine dimension growth conjecture} in analytic number theory.

Second, it would be of interest to identify conditions under which
\cref{B2} can be salvaged.
\begin{question}
\label{qu:dream}In the setting of \cref{conj:costello}, what natural
assumptions on $f$ are sufficient to guarantee that \cref{B2} holds?
\end{question}

As our first main result, we prove an optimal version of \cref{conj:stability}
for $d$-multilinear forms, which can be viewed as a ``repaired''
version of \cite[Conjecture~2]{Cos13} (recalling the discussion in
\cref{rem:multilinear}).
\begin{theorem}
\label{thm:multilinear}
    In the setting of \cref{conj:costello,conj:stability}, if $f$ is a $d$-multilinear form, then either \cref{B1} or \cref{B2'} holds with $c=1$.
\end{theorem}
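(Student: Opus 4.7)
The plan is to reduce the $d$-multilinear case directly to Costello's bilinear theorem (the $d=2$ multilinear case of \cref{conj:costello}, mentioned in \cref{rem:multilinear}) via a conditioning argument. Let $V_1,\dots,V_d$ denote the partition of the $n$ variables making $f$ multilinear, and write $y = (x^{(3)},\dots,x^{(d)})$. Conditioning on $y$, the polynomial $f$ becomes a bilinear form $f_y(x^{(1)}, x^{(2)})$ whose coefficient matrix is $M(y)_{ij} = \sum_{\alpha} T_{ij,\alpha}\, y^{\alpha}$, where $T$ is the coefficient tensor of $f$ (indexed by $V_1 \times \dots \times V_d$) and $y^{\alpha}$ ranges over multilinear monomials in the $y$-variables. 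By the law of total probability,
\[
\sup_z \Pr[f(\xi) = z] \le \Ex_y\bigl[\sup_z \Pr[f_y(\xi^{(1)}, \xi^{(2)}) = z \mid y]\bigr].
\]
If for most Rademacher $y$ the bilinear form $f_y$ is robustly irreducible (i.e., $M(y)$ is $\Omega(n^2)$-far from any rank-$1$ matrix in $\ell_0$-distance), then Costello's bilinear theorem yields $\Pr[f_y = z \mid y] \le n^{-1+\varepsilon/2}$ on those slices, and the general Meka--Nguyen--Vu bound \cref{thm:polynomial-LO} controls the remaining slices by $n^{-1/2+\varepsilon/2}$.

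The main obstacle is a quantitative structural lemma: assuming $f$ is $\varepsilon$-robustly irreducible as in \cref{B1}, at most a fraction $n^{-1/2-\Omega(\varepsilon)}$ of Rademacher $y$'s produce an $f_y$ that is $\ell_0$-close to rank $1$. Equivalently, in contrapositive form: if many slices $M(y)$ are close to rank $1$, then $T$ is close to a tensor with a rank-$1$ flattening, which translates to $f$ being close to a reducible polynomial. The algebraic core is the classical theorem that a linear subspace of matrices consisting entirely of rank-${\le}\,1$ matrices must share a common row or a common column vector. Applied to the subspace spanned by the matrices $N_\alpha = (T_{ij,\alpha})_{ij}$, this forces $T$ to be rank $1$ in its $V_1$-flattening or $V_2$-flattening, meaning that $f$ factors as $L(x^{(1)}) \cdot g(x^{(2)},\dots,x^{(d)})$ or $L(x^{(2)}) \cdot g(x^{(1)}, x^{(3)},\dots,x^{(d)})$, both reducible polynomials. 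The hard part is proving a robust/stability version of this classical theorem together with an anticoncentration argument showing that the $\ell_0$-distance of $M(y)$ from rank $1$ does not drop to $o(n^2)$ for too many $y$; this requires combining polynomial Littlewood--Offord estimates on the minors of $M(y)$ (which are themselves polynomials in $y$) with a "cleaning" procedure that extracts a common row or column from an approximate rank constraint that holds for a large set of $y$'s.

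With the structural lemma in hand, the proof concludes by splitting the expectation over $y$ into good and bad slices:
\[
\sup_z \Pr[f(\xi)=z] \le n^{-1+\varepsilon/2} + n^{-1/2-\Omega(\varepsilon)} \cdot n^{-1/2+\varepsilon/2} \le n^{-1+\varepsilon}
\]
for $n$ large in terms of $\varepsilon$ and $d$. An alternative approach that may avoid the full strength of the subspace-of-rank-$1$-matrices theorem is induction on $d$: condition only on the last type $x^{(d)}$, show that for most fixings the $(d-1)$-multilinear form $f_{x^{(d)}}$ remains robustly irreducible (an analogous but one-dimensional tensor-stability statement), and propagate the bound $n^{-1+\varepsilon}$ through the induction, using Costello's bilinear result as the base case.
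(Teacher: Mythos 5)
Your main proposal has a genuine gap, and your fallback suggestion (induction on $d$) is essentially the paper's actual route, but you don't develop it or explain why the main route fails.

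The flaw in the main plan: after freezing $y=(x^{(3)},\dots,x^{(d)})$, the coefficient matrix $M(y)=\sum_\alpha y^\alpha N_\alpha$ is a \emph{multilinear form of degree $d-2$} in $y$, not a linear function, unless $d=3$. Consequently the image $\{M(y):y\}$ is a cone cut out by a nonlinear map, not a linear subspace of matrices, and the classical fact you invoke (a linear subspace consisting of rank-$\le 1$ matrices has a common row or common column) simply does not apply to it for $d\ge 4$. Even for $d=3$ there is a subtlety: the hypothesis only gives you rank $\le 1$ on the discrete cube $\{-1,1\}^m$, and the $2\times 2$-minor polynomials of $M(y)$ are genuinely quadratic, so vanishing on the cube does not force identical vanishing (squares $y_k^2$ collapse to $1$), and you cannot directly conclude that the span of the $N_\alpha$ lies inside the rank-$\le 1$ variety. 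Finally, the quantitative/robust version of the structural lemma that your plan actually needs — showing that only an $n^{-1/2-\Omega(\varepsilon)}$ fraction of $y$'s can make $M(y)$ $\ell_0$-close to rank $1$ under the sole hypothesis that the full tensor is $\varepsilon$-far from reducible with respect to \emph{any} partition — is asserted but not proved, and this is exactly where the hard work lies.

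The paper circumvents the nonlinearity obstruction in precisely the way your "alternative approach" hints at: peel off one group of variables at a time, by induction on $d$. Then the frozen variables always enter \emph{linearly} into the resulting $(d-1)$-dimensional coefficient tensor $T\vec\xi[I_d]$, so one can apply a geometric Littlewood--Offord theorem (\cref{thm:FKS}) to bound the probability that the collapsed tensor falls in the variety of reducible tensors, using the classification of maximal linear subspaces of that variety (\cref{thm:irreducible-variety}). This is then promoted from small subtensors to the whole tensor via a local-to-global property testing lemma (\cref{thm:tensor-property-testing}), and the induction step carefully splits on whether the collapsed tensor is likely to be close to zero (handled by Hal\'asz' inequality) or close to reducible but not close to zero (handled by \cref{lem:collapse}). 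None of these ingredients appears in your main plan, and they are not minor: they are exactly what makes the "for most $y$, slices behave" statement provable.
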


As our next main result, we prove \cref{conj:stability} in the complex
quadratic case ($d=2$ and $\mathbb{F}=\mathbb{C}$).

\begin{theorem}
\label{thm:stability}When $d=2$ and $\mathbb{F}=\mb C$, \cref{conj:stability}
holds with $c=13/24$.
\end{theorem}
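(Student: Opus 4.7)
The plan is to improve on the $n^{-1/2+\varepsilon}$ bound of \cref{thm:polynomial-LO} by combining a quadratic-to-linear decoupling with a quantitative inverse Littlewood--Offord theorem, using the assumption that $f$ is far from reducible to rule out the algebraic structure that would otherwise be compatible with a large point probability. Write $f(x) = x^\top A x + b^\top x + c$ with $A$ symmetric. A standard Cauchy--Schwarz/swap step gives $\sup_z \Pr[f(\xi)=z]^2 \le \Pr[f(\xi)-f(\xi')=0]$ for independent Rademacher copies $\xi,\xi'$, and $f(\xi)-f(\xi')$ simplifies to $2\, u^\top A\, v$, where $u := \xi+\xi'$ and $v := \xi-\xi'$ have disjoint supports $I, J \subseteq \{1,\dots,n\}$ determined by the agreement/disagreement pattern of the two copies. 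Conditioning on $v$, the task reduces to bounding the point probability at zero of the linear form $\sum_{i \in I} c_i(v)\, u_i$, where $c_i(v) := \sum_{j \in J} A_{ij} v_j$, with $u$ having independent $\pm 2$ signs on $I$.

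The core of the argument is an inverse Littlewood--Offord step. Suppose, for contradiction, that $\sup_z \Pr[f(\xi)=z] \ge n^{-13/24 + \varepsilon}$. Then, after averaging over $v$, there must exist a positive-density collection $\mathcal{V}$ of sign patterns for which the conditional linear form in $u$ has point probability much larger than $|I|^{-1/2}$. The complex inverse Littlewood--Offord theorem of Nguyen--Vu (and its quadratic-setting refinements) forces, for each such $v$, the coefficient vector $c(v) = (Av)|_I$ to lie in a generalised arithmetic progression of bounded rank and controlled size $n^{O(\varepsilon)}$. The target exponent $13/24$ is expected to emerge from carefully balancing the loss in the decoupling inequality against the quantitative savings available from the inverse theorem applied to the conditional linear forms; in particular, the split of Rademacher randomness into the $u$ and $v$ pieces (of sizes $|I|,|J|\sim n/2$) and the threshold for what counts as ``large'' point probability have to be optimised simultaneously.

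The final step is to convert the structural conclusion ``$Av \in \mathrm{GAP}$ for all $v \in \mathcal{V}$'' into an approximate factorisation of $f$. Heuristically, an $A$ whose action on the sparse vectors of a dense set $\mathcal{V}\subseteq\{-2,0,2\}^n$ lands in a low-dimensional arithmetic structure must itself, after $o(n^2)$ entry modifications, have complex rank at most $2$; incorporating the lower-order data $b,c$, the extended $(n+1)\times(n+1)$ coefficient matrix of $f$ is then $\ell_0$-close to rank at most $2$. This is exactly the condition for the resulting polynomial to split as a product of two affine linear forms over $\mathbb{C}$, so \cref{B1} holds. The main obstacle, I expect, is this last quantitative passage: turning a probabilistic-combinatorial statement about the image of $A$ on many sparse vectors into a genuine $\ell_0$-approximate rank-$2$ decomposition over $\mathbb{C}$, while preserving the numerology required for the $13/24$ exponent. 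The two-dimensional projection arguments used by Costello~\cite{Cos13} for bilinear forms provide the natural template, but they must be iterated and quantified with care in the power-saving regime, and one should expect the exponent $13/24$ (rather than $1$) to reflect precisely where the quantitative bookkeeping of complex inverse Littlewood--Offord and the GAP-to-rank conversion bottleneck.
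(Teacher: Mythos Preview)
Your proposal has a genuine gap at the decoupling step, and it is not a matter of bookkeeping: the single-copy Cauchy--Schwarz decoupling you describe cannot produce any power saving over $n^{-1/2}$.

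Concretely, your scheme gives $\bigl(\sup_z\Pr[f(\xi)=z]\bigr)^2\le \Pr[u^\top A v + b^\top v=0]$ for a single bilinear form in $(u,v)$. Even granting the full strength of the Nguyen--Vu inverse theorem and the $\varepsilon$-robust rank-$3$ hypothesis on $A$, the best one can prove for a \emph{single} bilinear equation of this type is $\Pr[u^\top A v=\psi(v)]\le n^{-1+o(1)}$: indeed, reveal $v$, apply \cref{lemma: prob via volume} with $q=3$ to the resulting linear form, and take expectations over $v$. Using the estimates in \cref{lem:technical-estimates} (and the analogous $\mathbb E[1/h_1^\delta]\le n^{-1/2+o(1)}$ bound), the dominant term is $n^{-1/2}\cdot \mathbb E[1/h_1^\delta(v^\top A)]\le n^{-1+o(1)}$, and this is essentially tight. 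Taking the square root returns exactly $n^{-1/2+o(1)}$, i.e.\ no improvement over \cref{thm:polynomial-LO}. Your sentence ``the target exponent $13/24$ is expected to emerge from carefully balancing the loss in the decoupling inequality against the quantitative savings available from the inverse theorem'' is therefore not justified: with one bilinear constraint there is nothing to balance against.

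What the paper does differently is to decouple into \emph{three} pieces $X\cup Y\cup Z$ via \cref{lem:custom-decoupling-LO}, obtaining
\[
\sup_z\Pr[f(\xi)=z]^4\le \Pr\bigl[\alpha^\top A[X,Y]\beta=0,\ \alpha^\top A[X,Z]\gamma=\varphi(\alpha,\beta),\ \beta^\top A[Y,Z]\gamma=\psi(\alpha,\beta)\bigr],
\]
i.e.\ a system of three bilinear constraints with a cyclic dependence on $(\alpha,\beta,\gamma)$. The whole point is that the \emph{joint} probability of three constraints can be pushed below $n^{-2}$ (down to $n^{-13/6+\varepsilon}$, via \cref{theorem: three bilinears}), which after the fourth root gives $n^{-13/24+\varepsilon}$. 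The proof of \cref{theorem: three bilinears} is a careful case analysis: one splits according to whether $\alpha^\top A_1$ or $\alpha^\top A_2$ is almost zero, and whether $\alpha^\top A_2$ and $\beta^\top A_3$ are almost collinear, and then in each case reveals $\alpha,\beta,\gamma$ in a tailored order, combining Hal\'asz-type bounds (\cref{lemma: many nonzero entries}, \cref{lemma: dependent-event}) with the expected-inverse-volume estimates of \cref{lem:technical-estimates}. None of this structure is available from a single bilinear constraint.

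A second, separate gap is your last step. You write that ``$Av\in\text{GAP}$ for many $v$'' should force $A$ to be $\ell_0$-close to complex rank $\le 2$, with Costello's bilinear arguments as ``the natural template''. This implication is not established anywhere, and it is not how the paper proceeds. The paper instead connects robust irreducibility directly to the submatrix picture: over $\mathbb C$, a quadratic form is reducible iff its matrix has rank $\le 2$, so if $f$ is far from reducible then \cref{lemma: close to symmetric low rank} (a symmetric local-to-global lemma) guarantees that a positive fraction of the $3\times 3$ submatrices of $A$ are nonsingular, and \cref{lemma: tri-partition} propagates this robustness to each block $A[X,Y],A[X,Z],A[Y,Z]$. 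It is this submatrix robustness (not a GAP-to-rank conversion) that feeds into \cref{theorem: three bilinears}.
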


As our final main result, we show that in the quadratic case ($d=2$),
one can approach the ``generic bound'' $1/n=n^{-d/2}$ with an assumption
that the quadratic part of $f$ has high rank (this provides a partial
answer to \cref{qu:dream}).
\begin{theorem} \label{thm:rank}
    Fix any $\varepsilon>0$ and $k\ge 1$, and let $\mathbb F\in \{\mb R,\mb C\}$. Let $n$ be sufficiently
    large (in terms of $\varepsilon,k$), and let $f\in\mathbb{F}[x_{1},\dots,x_{n}]$
    be a quadratic $n$-variable polynomial. Then, at
    least one of the following holds:
    \begin{enumerate}[{\bfseries{C\arabic{enumi}}}]
        \item \label{C1} there is a quadratic form $g\in\mathbb{F}[x_{1},\dots,x_{n}]$ with rank\footnote{Recall that a \emph{quadratic form} is a homogeneous quadratic polynomial $h\in \mb F[x_1,\dots,x_n]$.  Any quadratic form can be expressed in matrix form as $h(x)=\vec x^T Q \vec x$ for some symmetric matrix $Q\in \mb F^{n\times n}$. The \emph{rank} of $h$ is defined to be the rank of this matrix $Q$. Equivalently, the rank of $h$ is the minimum $r$ such that there is a representation $h=\lambda_1 h_1^2+\dots+\lambda_r h_r^2$ as a linear combination of squares of homogeneous linear polynomials $h_1,\dots,h_r\in \mb F[x_1,\dots,x_n]$ with coefficients $\lambda_1,\dots,\lambda_r\in \mb F$.} less than $2k^2$, such that $f-g$ has at most $\varepsilon n^{2}$ nonzero coefficients, or 
        \item \label{C2} letting $\xi_{1},\dots,\xi_{n}\in\{-1,1\}$ be i.i.d.\ Rademacher
        random variables, we have
        \[
            \sup_{z\in\mathbb{F}}\Pr[f(\xi_{1},\dots,\xi_{n})=z]\le n^{-1+2/k}.
        \]
    \end{enumerate}
    
\end{theorem}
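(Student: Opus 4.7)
Assume \cref{C1} fails, so that no quadratic form $g$ of rank less than $2k^{2}$ makes $f-g$ have at most $\varepsilon n^{2}$ nonzero coefficients. Writing $f(x)=x^{\top}Ax+b\cdot x+c$ with $A\in\mathbb{F}^{n\times n}$ symmetric, and observing that the linear and constant terms contribute only $n+1\ll\varepsilon n^{2}$ coefficients, this hypothesis amounts to a \emph{robust high-rank} condition on $A$: no symmetric matrix $A'$ with $\rank A'<2k^{2}$ agrees with $A$ outside $O(\varepsilon n^{2})$ entries. The strategy combines a decoupling step with a high-moment Hal\'asz-type inequality, the latter exploiting the robust rank to control the additive energy of the resulting linear forms.

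\emph{Stage 1 (decoupling).} Partition $\{1,\dots,n\}$ into two halves, set $x=(y,z)$, and split $A=\left(\begin{smallmatrix}A_{1}&B\\B^{\top}&A_{2}\end{smallmatrix}\right)$. A standard Cauchy--Schwarz step yields
\[
    \Pr[f(\xi)=t]^{2}\;\le\;\Ex_{\xi_{y},\xi_{y}'}\,\sup_{s\in\mathbb{F}}\Pr_{\xi_{z}}\!\bigl[2(\xi_{y}-\xi_{y}')^{\top}B\,\xi_{z}=s\bigr],
\]
so it suffices to bound, for typical $(\xi_{y},\xi_{y}')$, the point probability of a linear form in $\xi_{z}$ with (random) coefficient vector $v=2B^{\top}(\xi_{y}-\xi_{y}')$. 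By choosing the partition at random (and pigeonholing), the robust high-rank condition on the \emph{symmetric} matrix $A$ can be transferred, up to constant factors in the threshold, to a comparable condition on the \emph{non-symmetric} off-diagonal block $B$.

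\emph{Stage 2 (Hal\'asz-type estimate).} Apply a high-moment Hal\'asz inequality at parameter $s=k$: such inequalities bound the point probability of a linear form in Rademacher variables by $n^{-1/2}$ times an appropriate power of an additive-energy quantity counting $k$-fold coincidences among the coefficients. To obtain the desired exponent $-1+2/k$, it suffices to show that for typical $(\xi_{y},\xi_{y}')$ the coefficient vector $v=2B^{\top}(\xi_{y}-\xi_{y}')$ has few $k$-fold additive coincidences. Morally, many such coincidences would force the relevant rows of $B$ to lie in a low-dimensional additive structure, which together with a Freiman/linear-algebra argument should contradict the robust rank $\ge 2k^{2}$ hypothesis. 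For $\mathbb{F}=\mathbb{C}$ one views $\mathbb{C}$ as $\mathbb{R}^{2}$ to apply the same machinery.

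\emph{Main obstacle.} The hardest part is the additive-energy bound in Stage 2: quantifying ``robust rank $\ge 2k^{2}$ implies few $k$-fold additive coincidences in $B^{\top}u$ for typical $u\in\{-2,0,2\}^{n/2}$''. I anticipate handling this via an iterative peeling/pigeonhole argument that uses the rank budget of $2k^{2}$ to absorb roughly $2k$ dimensions per round over $k$ rounds of coincidence analysis, reflecting the ``$k$ rounds $\times\ 2k$ dimensions $=2k^{2}$'' accounting that appears to underlie the exponent $-1+2/k$ in \cref{C2}.
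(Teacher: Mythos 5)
Your Stage~1 decoupling (a single Cauchy--Schwarz step, i.e.\ decoupling with one extra copy) caps the achievable exponent in a way that makes the target bound unreachable for large $k$, no matter how strong the estimate you obtain in Stage~2. Concretely: take $A$ to be a random symmetric matrix with entries in $\{-1,0,1\}$ and a constant density of nonzeros, so that $A$ is robustly far from any symmetric matrix of rank less than $2k^2$ (so \cref{C1} fails). The off-diagonal block $B=A[X,Y]$ then also has entries in $\{-1,0,1\}$, $u:=\xi_y-\xi_y'\in\{-2,0,2\}^{|X|}$, and so $\vec v = 2B^{\top}u$ is an integer vector; with probability $1-o(1)$ over $(\xi_y,\xi_y')$ its entries are integers of magnitude $O(\sqrt{n\log n})$ by standard concentration. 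Then $\vec v\cdot\xi_z$ is an integer of magnitude $O(n^{3/2+o(1)})$, so by the pigeonhole principle $\sup_s\Pr_{\xi_z}[\vec v\cdot\xi_z=s]\ge n^{-3/2-o(1)}$, and hence the right-hand side of your displayed Cauchy--Schwarz inequality is $\ge n^{-3/2-o(1)}$. Taking the square root, Stage~1 can never yield a bound better than $n^{-3/4+o(1)}$, which for $k\ge 9$ is weaker than the required $n^{-1+2/k}$. No Hal\'asz-type or additive-energy estimate in Stage~2 can fix this, because the bottleneck is the fact that you are bounding the point probability of a \emph{scalar} linear form and then only raising it to the power $1/2$.

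The fix the paper uses, and which your setup would need to absorb, is to decouple with $k$ \emph{independent} copies of $\xi[X]$ (\cref{lem:multiple-decoupling-LO}). This turns the inner quantity into the joint probability that $k$ simultaneous linear equations $(\vec\alpha^{(i)})^{T}A[X,Y]\,\xi[Y]=\psi(\vec\alpha^{(i)})$ all hold, which is a vector-valued linear anticoncentration problem and can legitimately decay like $n^{-k+1}$; raising that to the power $1/(k+1)$ gives $n^{-(k-1)/(k+1)}\le n^{-1+2/k}$. The engine for controlling that joint probability is not a high-moment Hal\'asz estimate but the Nguyen--Vu optimal inverse theorem (\cref{theorem: optimal inverse theorem}) combined with a counting argument for symmetric GAPs of bounded rank and volume (\cref{lem:count-1,lem:count-2}), packaged in \cref{thm: multi-x-single-y costello}; this is where the robust rank $2k^2$ of $A$ actually gets used. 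Your Stage~1 observation that the robust rank of the symmetric $A$ transfers to the off-diagonal block $B$ is correct in spirit and is exactly what \cref{lemma: tri-partition} makes precise, but that alone does not rescue the one-copy decoupling.
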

In other words, for any $\alpha>0$, we can prove a bound of the form $n^{-1+\alpha}$ (for large $n$), as long as $f$ is not close to a quadratic form with rank less than about $\alpha^{-2}$.
\begin{remark}
It is worth remarking that \cref{thm:rank} is reminiscent of a theorem of Gowers and Karam~\cite{GK22}, which (rephrased in the language of this paper) implies that for a fixed prime $p$, any degree-$d$ polynomial $f\in \mb Z[x_1,\dots,x_n]$, and i.i.d. Rademacher $\xi_1,\dots,\xi_n$, if $f(\xi_1,\dots,\xi_n)$ has ``a significant bias mod $p$'', then in a certain sense it is possible to approximate $f$ by a degree-$d$ polynomial of ``low rank''. This can be interpreted as a ``restricted-domain'' version of a similar theorem of Green and Tao~\cite{prank5} (qualitatively improved by Janzer~\cite{prank7} and Mili\'cevi\'c~\cite{prank9}), which roughly speaking considers the case where $\xi_1,\dots,\xi_n$ are uniform mod $p$. However, we do not see a formal connection between \cref{thm:rank} and this direction of study.
\end{remark}

\begin{remark}
    The second and third author~\cite{KS20} previously proved versions of \cref{thm:stability,thm:rank} in the setting where $f$ has bounded coefficients, and instead of measuring the ``smallness'' of a polynomial by counting nonzero coefficients, one considers the sum of the absolute values of the coefficients. This setting makes the problem very different in character (it makes the problem much more analytic than algebraic), and the proofs in \cite{KS20} have almost nothing in common with the proofs of \cref{thm:stability,thm:rank}.
\end{remark}

We outline the ideas in the proofs of \cref{thm:multilinear,thm:rank,thm:stability} in \cref{sec:outline}. The proofs contain a number of ideas
of general interest, including some lemmas related to \emph{property
testing} of low-rank tensors and symmetric matrices.

\subsection{Further directions, and speculations\label{subsec:further-directions}}It seems that for further progress on \cref{conj:stability,qu:dream} (and the $d=2$ case of \cref{conj:costello}), it might be necessary to integrate the ``combinatorial'' ideas in this paper with ideas from analytic number theory. Here, we discuss the connection to analytic number theory and also suggest some other directions for further research.

\subsubsection{Connections to analytic number theory}\label{subsubsec:analytic}
\cref{prop:costello-false} (our counterexample to Costello's conjecture) is really a special case of a general observation
that if $P\in\mathbb{F}[y_{1},\dots,y_{k}]$ is any polynomial with $N_P(B)$ zeroes in $\{-B,\dots,B\}^k$, then one can construct a polynomial $f_P\in\mathbb{F}[x_{1},\dots,x_{n}]$
of the form $f_P=P(L_{1},\dots,L_{k})$ for linear forms $L_{1},\dots,L_{k}\in \mathbb{F}[x_{1},\dots,x_{n}]$ such that $\Pr[f_P(\xi_1,\dots,\xi_n)=0]$ scales roughly like $N_P(\sqrt n) n^{-k/2}$. So, in order to fully understand the algebraic aspects of the
polynomial Littlewood--Offord problem, it seems to be necessary to understand how the algebraic features of a polynomial $P$ affect the rate of growth of $N_P(B)$ (which can be interpreted as a certain ``density of integral points'' on the variety defined by $P$). To this end, one is forced to confront
deep questions in analytic number theory.

Specifically, if we restrict our attention to polynomials of the form $f_P$, then the bound of shape approximately $1/\sqrt n$ in \cref{thm:polynomial-LO} corresponds to the ``trivial bound'' $N_P(B)=O(B^{k-1})$, which holds for any nonzero polynomial $P\in\mathbb{F}[y_{1},\dots,y_{k}]$ of fixed degree (see for example \cite[Equation~(2.3)]{BH05}). It seems that in order to prove \cref{conj:stability} for polynomials of the form $f_P$, one would need a power-saving improvement over this ``trivial bound'' when $P$ is irreducible. 
Such estimates are indeed available: for an irreducible $d$-form $P$, Cohen~\cite{Coh81} proved that $N_P(B)\le B^{k-3/2+o(1)}$, and this was later improved by Pila~\cite{Pil95} to $N_P(B)\le B^{k-2+1/d+o(1)}$ (see also \cite{Wal15}). It is conjectured that $N_P(B)\le B^{k-2+o(1)}$ for all $d\ge 2$ (which would be best-possible, if true; this is an affine version of the so-called \emph{dimension growth conjecture} of Heath-Brown~\cite{HB02} for projective varieties). The affine dimension growth conjecture was very recently proved when $d\ne 3$ by Vermeulen~\cite{Ver24}, building on earlier work of Browning, Heath-Brown and Salberger~\cite{BHS06} and Salberger~\cite{Sal23}.
The methods in all these works are quite different, and it would be very interesting to investigate if any of them can be extended to a full proof of \cref{conj:stability} (not just for polynomials of the form $f_P$).

Regarding \cref{qu:dream}, if $f_P$ has ``generic'' anticoncentration $n^{-d/2+o(1)}$, this corresponds to a bound of the form $N_P(B)\le B^{k-d+o(1)}$. This is sometimes called the ``probabilistic heuristic'' in analytic number theory. It is a very deep open problem to characterise the polynomials $P$ for which such a bound holds, but one interesting sufficient condition is that $P$ is a $d$-form with sufficiently large \emph{Schmidt rank} (this parameter is also sometimes called \emph{h-invariant} or \emph{strength}), meaning that $P$ cannot be written as a sum of few products of forms of lower degree~\cite{Sch85}. We suspect that a version of \cref{conj:costello} along these lines should also hold, as follows.
\begin{conjecture}\label{conj:rank}
For any $d$, there is $k\in \mb N$ such that the following holds. In the setting of \cref{conj:costello}, either \cref{B2} holds or 
\begin{itemize}
\item [\textbf{B1'}]For some $r\le k$ there are homogeneous polynomials $g_1,h_1,g_2,h_2,\dots,g_r,h_r\in \mb F[x_1,\dots,x_n]$, each with positive degree, such that $g_1h_1+g_2h_2+\dots+g_rh_r$ has degree at most $d$, and $f-(g_1h_1+g_2h_2+\dots+g_rh_r)$ has at most $\varepsilon n^d$ nonzero coefficients.
\end{itemize}
\end{conjecture}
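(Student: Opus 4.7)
Because \cref{conj:rank} is an open conjecture, I can only sketch a strategy. The plan is to argue by induction on $d$, separating the top-degree homogeneous part of $f$, reducing to multilinear forms by polarisation, and then applying an inverse-theorem argument in the style of Green--Tao and Kaufman--Lovett, adapted from $\mb F_p$ to the Rademacher setting.

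\textbf{Base case and rank concentration.} For $d=1$ the Erd\H os--Littlewood--Offord theorem gives point probability $O(n^{-1/2})$, and the contrapositive of the conjecture is vacuous. For $d=2$, \cref{thm:rank} already provides a structural dichotomy, but with a rank bound of order $\varepsilon^{-2}$ rather than a constant. A ``rank concentration'' step is needed: one should show that among the many rank-one summands produced by \cref{thm:rank}, only a bounded number (depending just on $d=2$) can contribute substantially to the anticoncentration of $f$, and the remainder can be absorbed into the $\varepsilon n^{2}$ approximation error to yield a constant-rank representation. This step seems plausible because any additional rank-one summand $\lambda\ell^2$ with $\ell$ a ``generic'' linear form of large support should itself equidistribute on the scale $\sqrt{n}$ and thus dilute, rather than amplify, the anticoncentration.

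\textbf{Inductive step.} Write $f=f_0+\dots+f_d$ with $f_j$ homogeneous of degree $j$. A decoupling argument (in the spirit of Costello--Tao--Vu) shows that high anticoncentration of $f$ forces high anticoncentration of the $d$-th iterated difference of $f_d$, which is essentially a polarisation $F$ of $f_d$ viewed as a $d$-multilinear form on $d$ independent copies of the Rademacher cube. Iterated Cauchy--Schwarz then yields a lower bound on a Gowers-type analytic rank of $F$. By a Rademacher analogue of the equivalence between analytic rank and partition/Schmidt rank (proved over $\mb F_p$ by Janzer, Mili\'cevi\'c, and Cohen--Moshkovitz), $F$ would then be forced to be a sum of $O(1)$ products of lower-degree multilinear forms. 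Unpolarising and lifting the decomposition back to $f_d$ produces a decomposition $f_d=\sum_{i=1}^{r} g_i h_i$ with $r=O(1)$ and each $g_i,h_i$ homogeneous of positive degree. Subtracting this from $f$ yields a polynomial of degree strictly less than $d$, which either already satisfies \cref{B2} or, by induction, admits a decomposition of the desired form; combining the two Schmidt-rank decompositions gives the structure claimed in \textbf{B1'}.

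\textbf{Main obstacles.} The chief difficulty is the inverse theorem step: the analytic/partition rank equivalence is well-understood over $\mb F_p$, but over $\{-1,+1\}$ multiplicative characters must be replaced by oscillatory integrals, and rational approximation arguments of Bogolyubov--Freiman--Ruzsa type incur losses depending on $\varepsilon$. A second obstacle is the rank-concentration step already for $d=2$, which appears to require quantitative control over the singular-value distribution of the symmetric matrix representing the quadratic part of $f$, and which is not provided by the bounds underlying \cref{thm:stability} or \cref{thm:rank}. A third obstacle is that the natural decoupling reduction loses a factor of $2^d$ in the exponent, so merely obtaining the correct target exponent $-d/2+\varepsilon$ on the multilinear side requires a sharp, rather than lossy, inverse theorem. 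Consequently, and as the authors' discussion in \cref{subsubsec:analytic} already suggests, a complete proof would likely require integrating the combinatorial methods of this paper with genuinely new tools at the interface of higher-order Fourier analysis and analytic number theory.
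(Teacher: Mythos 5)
This statement is \cref{conj:rank}, an open conjecture; the paper offers no proof of it, so there is no ``paper's approach'' to compare against. Your sketch is an honest speculation and correctly flags most of the obstacles, but two of the steps you present as merely technical gaps are in fact precisely the open problems that make this a conjecture rather than a theorem.

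The first and most concrete is the ``rank concentration'' step for $d=2$. As the paper explicitly notes immediately after stating \cref{conj:rank}, \cref{thm:rank} does not resolve the $d=2$ case because the rank threshold $2k^2$ must grow as the target exponent approaches $-1$; \cref{conj:rank} demands a single threshold $k$ that works for every $\varepsilon>0$. Your heuristic---that a high-rank remainder $h$ ``equidistributes and dilutes'' the anticoncentration of $f=g+h$---is not an argument: point probabilities of a sum are not controlled by the point probabilities of the summands (one cannot condition on $g$ and then use the randomness of $h$, since $g$ and $h$ share all their variables), and the whole difficulty is that the interaction of $g$ with $h$ could in principle realign mass. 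Making this rigorous for quadratic forms is, via the $f_P$-construction of \cref{subsubsec:analytic}, at least as hard as proving the ``probabilistic heuristic'' $N_P(B)\le B^{k-d+o(1)}$ for quadratic forms $P$ of bounded Schmidt rank, which is a genuine open problem in analytic number theory rather than a missing lemma. The second gap is your inverse-theorem step: there is no known analogue over $\{-1,1\}$ (or over $\mb R$/$\mb C$ with Rademacher inputs) of the analytic-rank-vs-partition-rank equivalence that Janzer, Mili\'cevi\'c, and Cohen--Moshkovitz prove over $\mb F_p$, and the proofs of those results lean on exact character-sum identities that have no direct Rademacher substitute; the paper itself stops at the much more modest \cref{thm:tensor-property-testing}, which only handles partition rank $1$ and explicitly warns (in \cref{subsubsec:analytic}) that even the multilinear case of \cref{conj:rank} is open. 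So your plan is a reasonable map of the difficulties, but the two central steps are not ``obstacles to be filled in''---they are the conjecture.
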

Note that \cref{thm:rank} almost, but not quite, resolves the $d=2$ case of \cref{conj:rank}. The difference is that in \cref{thm:rank}, as one wishes to get closer and closer to the generic bound $n^{-d/2+o(1)}$, the rank requirement continues to increase, whereas in \cref{conj:rank} there is a particular rank above which one immediately obtains the generic bound $n^{-d/2+o(1)}$.

In the case where $f$ is a $d$-multilinear form, the Schmidt rank of $f$ coincides with the so-called \emph{partition rank} of the tensor of coefficients of $f$. This notion has recently been intensively studied in theoretical computer science and additive combinatorics (see for example \cite{prank10,prank7,prank16,prank2,prank9,prank12,prank15,prank1}), and it would likely be much more tractable (and still very interesting) to prove the special case of \cref{conj:rank} where $f$ is a $d$-multilinear form.
\subsubsection{Further questions}
There are a huge number of other questions related to the results and conjectures in this paper. For example:
\begin{enumerate}[{\bfseries{Q\arabic{enumi}}}]
    \item Could an even stronger version of \cref{conj:stability} hold, where in \cref{B1} one demands that $g$ is divisible by a polynomial of degree 1 (i.e., this degree-1 polynomial is ``directly responsible'' for the poor anticoncentration behaviour)?
    \item\label{item:smallness} Can one consider weaker notions of ``smallness'' of a polynomial than having few nonzero coefficients? In \cite{KS}, the second and third authors show that in the $d=2$ case of \cref{thm:polynomial-LO} one can replace property \cref{A1} (that $f$ has at most $\varepsilon n^2$ nonzero coefficients) with the property that one can fix at most $\varepsilon n$ of the variables to obtain a constant polynomial. It would be very interesting if this notion of ``smallness'' could also be used for theorems along the lines of \cref{thm:multilinear,thm:rank,thm:stability}.
    \item Can one quantify the ``$n^{\varepsilon}$'' error term in \cref{thm:multilinear}? We note that it is not possible to remove $n^{o(1)}$ terms entirely: for example, if we consider the polynomial $f=L_1L_2-L_3 L_4$ in the $d=2$ case of \cref{prop:costello-false}, known results on the ``multiplication table problem'' (counting the integers respresentable in the form $a\cdot b$, where $a,b\in\{1,\dots,N\}$; see \cite{Ford08}) show that
    \[
\sup_{z\in\mathbb{F}}\Pr[f(\xi_{1},\dots,\xi_{n})=z]\ge  \frac{(\log n)^{\alpha+o(1)}}n.
\]%
for some absolute constant $\alpha>0$.
    \item What can we prove in the ``higher-dimensional'' case, where we are interested in the probability that a length-$k$ vector $(f_1(\xi_1,\dots,\xi_n),\dots,f_k(\xi_1,\dots,\xi_n))$ of polynomials of independent random variables is equal to a particular vector $z\in \mb F^k$? In the linear ($d=1$) case, essentially optimal results were famously obtained by Hal\'asz~\cite{Hal77} (see also the results in \cite{HO,FJZ22}, which consider the dependence on $k$), but almost nothing is known for higher degrees.
\end{enumerate}

\subsubsection{Inverse theorems}\label{subsubsec:inverse} From a broader point of view, all the theorems and conjectures in this paper can be interpreted as falling under the umbrella of the \emph{inverse Littlewood--Offord problem}, which is concerned with understanding the structural properties of $f$ that control the anticoncentration behaviour of $f(\xi_1,\dots,\xi_n)$. In this paper we have considered only the \emph{algebraic} aspects of this problem; aspirationally it would be of interest to consider the interplay between algebraic and \emph{arithmetic} aspects.

To give some context: in the linear ($d=1$) case, there is no relevant algebraic structure; the anticoncentration behaviour of $f(\xi_1,\dots,\xi_n)$ only depends on the multiset $\{a_1,\dots,a_n\}$ of (degree-1) coefficients of $f$. It is now understood that, in fact, the anticoncentration behaviour of $f(\xi_1,\dots,\xi_n)= a_1\xi_1+\dots+a_n\xi_n $ is essentially determined by ``how much $\{a_1,\dots,a_n\}$ looks like a generalised arithmetic progression''. A particular theorem along these lines, due to Nguyen and Vu~\cite{NV11}, appears as \cref{theorem: optimal inverse theorem} later in the paper; earlier theorems in this spirit were proved by Hal\'asz~\cite{Hal77}, Tao and Vu~\cite{TV09,TV10} and Rudelson and Vershinyn~\cite{RV08}.

For higher degrees, the only result in the literature considering both arithmetic and algebraic aspects of the Littlewood--Offord problem is a theorem of Nguyen~\cite{Ngu12} in the quadratic case ($d=2$). This theorem had important consequences in random matrix theory, but unfortunately it is very crude, and we are still a long way from ``optimal'' inverse theorems for higher degrees. In particular, Nguyen's theorem is only capable of distinguishing ``polynomial'' anticoncentration from ``sub-polynomial'' anticoncentration (i.e., distinguishing whether there is a point probability at least $n^{-C}$, for some constant $C$, or not). 
For bilinear forms (a subfamily of quadratic polynomials), Costello made a more refined conjecture~\cite[Conjecture~1]{Cos13} but we do not know of a general conjecture for quadratic polynomials.

\subsubsection{Property testing for (tensor) rank} As we will discuss further in the proof outlines in \cref{sec:outline}, the proofs of \cref{thm:multilinear,thm:rank,thm:stability} depend on two ``local-to-global'' lemmas which allow one to study the rank of a matrix or tensor via the ranks of small submatrices or small subtensors. First, \cref{thm:tensor-property-testing} says that if a $d$-dimensional $n\times \dots\times n$ tensor $T$ has the property that a $1-o(1)$ fraction of its $2^{d-1}\times \dots\times 2^{d-1}$ subtensors have partition rank 1, then we can change a $o(1)$-fraction of the entries of $T$ to obtain a tensor with partition rank 1. Second, \cref{lemma: close to symmetric low rank} says that, if an $n\times n$ symmetric matrix $A$ has the property that a $1-o(1)$ fraction of its $r\times r$ submatrices are singular, then we can change a $o(1)$-fraction of the entries of $A$ to obtain a symmetric matrix with rank less than $r$. 

\cref{thm:tensor-property-testing,lemma: close to symmetric low rank} are of independent interest, as they permit efficient \emph{property testing} for matrices and tensors. For example, suppose we have a $d$-dimensional tensor $T$ of enormous size $n\times \dots\times n$, and we want to distinguish between the possibilities that $T$ has partition rank 1 and that $T$ is ``$\varepsilon$-far'' from having partition rank 1. With high probability, we can accomplish this in \emph{constant} time (i.e., with a number of queries that only depends on $\varepsilon$, $d$, and the desired success probability), by randomly sampling many subtensors with side length $2^{d-1}$ and checking\footnote{While computing the partition rank of a tensor is difficult in general, it is easy to check whether a tensor has partition rank $1$ (this amounts to checking whether some matrix of a certain form has rank $1$, see \cref{fact:tensor-to-matrix}).} whether they have partition rank 1. There is an enormous literature on property testing of graphs, Boolean functions, probability distributions, etc.\ (see for example the monographs \cite{Gol17,BY22,Ron07} and the references therein), and in particular there has been some specific attention on property testing for the rank of (not necessarily symmetric) matrices; see for example \cite{BLWZ19,KS03,LWW14}. However, there does not seem to have been any prior work on property testing for any kind of tensor rank, or for matrices constrained to be symmetric. We believe there is quite some scope to investigate these directions further.

For example, while \cref{lemma: close to symmetric low rank} already has near-optimal quantitative aspects, it would be interesting to investigate quantitative aspects for \cref{thm:tensor-property-testing}. It would also be interesting to investigate the possibility of more efficient property testing algorithms that do not simply randomly sample small submatrices/subtensors (for not-necessarily-symmetric matrices, there are rank-testing algorithms that do strictly better than sampling submatrices; see \cite{LWW14}).

It would also be very interesting to consider more general property testing for the partition rank (or other notions of tensor rank), beyond the setting of \cref{sec:tensor-property-testing} (where we are only interested in whether the partition rank is 1 or not). Tensor ranks are surprisingly poorly behaved with respect to subtensors (see e.g.\ the surprising construction of Gowers in \cite[Proposition~3.1]{Kar22}), and a straightforward generalisation of \cref{thm:tensor-property-testing} to higher ranks does not actually seem to be possible.

\subsection{Organisation of the paper} In \cref{sec:outline} we provide outlines of the proofs of \cref{thm:multilinear,thm:rank,thm:stability}, including statements of various key lemmas. Then, in \cref{sec:prelim}, we record some basic preliminary lemmas that will be used throughout the proofs.

The details of the proof of \cref{thm:multilinear} appear in \cref{sec:reducible-variety,sec:tensor-property-testing,sec:multilinear}. First, in \cref{sec:tensor-property-testing} we prove a key lemma allowing us to restrict our attention to small subtensors, and in \cref{sec:reducible-variety} we characterise the variety of reducible multilinear forms (i.e., reducible tensors). In \cref{sec:multilinear} we combine all the relevant ingredients to prove \cref{thm:multilinear}.

Most of the rest of the paper is concerned with the proofs of \cref{thm:rank,thm:stability}. In \cref{sec:matrix-property-testing} we prove a key lemma allowing us to restrict our attention to small submatrices, and in \cref{sec:decoupling} we prove some special-purpose decoupling inequalities that will allow us to efficiently use tools from linear Littlewood--Offord theory to study quadratic anticoncentration. Then, the proof of \cref{thm:rank} is completed in \cref{sec:rank} and the proof of \cref{thm:stability} is completed in \cref{sec:power-saving}.

Finally, we have two appendices: in \cref{sec:FKS} we show how to prove a certain variant of a ``geometric Littlewood--Offord theorem'' of Fox, Spink and the second author~\cite{FKS23}, which is an ingredient in the proof of \cref{thm:multilinear}. Then, \cref{sec:counterexample} (authored by Matthew Kwan, Ashwin Sah and Mehtaab Sawhney) contains the proof of \cref{prop:costello-false} (i.e., the disproof of Costello's conjecture).

\subsection{Notation}\label{subsec:notation} We use standard asymptotic notation throughout, as follows. For functions $f=f(n)$ and $g=g(n)$, we write $f=O(g)$ to mean that there is a constant $C$ such that $|f(n)|\le C|g(n)|$, and we write $f=\Omega(g)$ to mean that there is a constant $c>0$ such that $f(n)\ge c|g(n)|$ for sufficiently large $n$.
We also write $f=o(g)$ to mean that $f(n)/g(n)\to0$ as $n\to\infty$. Subscripts on asymptotic notation indicate quantities that should be treated as constants.

Slightly less standardly, for a matrix $A\in \mb F^{X\times Y}$ and subsets $I\subseteq X,J\subseteq Y$, we write $A[I,J]\in \mb F^{I\times J}$ for the $|I|\times |J|$ submatrix of $A$ consisting of the rows with indices in $I$ and the columns with indices in $J$. In the case that $I=\{i\}$ for some $i\in X$, we slightly abuse notation and write $A[i,J]$ instead of $A[\{i\},J]$. Similarly, for a vector $\vec v\in \mb F^X$ and a subset $I\subseteq X$ we write $\vec v[I]\in \mb F^{I}$ for the vector obtained from $\vec v\in \mb F^I$ by only taking the coordinates with indices in $I$.
In addition, for a matrix $A \in \mb F^{X\times Y}$, we write $\|A\|_0$ for its number of nonzero entries.

We say that a partition $X=I\cup J$ of a set $X$ into two subsets $I$ and $J$ is ``non-trivial'', if both of the subsets $I$ and $J$ are non-empty. For a real number $x$, the floor and ceiling functions are denoted $\lfloor x\rfloor=\max\{i\in \mb Z:i\le x\}$ and $\lceil x\rceil =\min\{i\in\mb Z:i\ge x\}$. We will however sometimes omit floor and ceiling symbols and assume large numbers are integers, wherever divisibility considerations are not important. Finally, some conventions: all logarithms in this paper are in base $e$, unless specified otherwise, and the natural numbers $\mb N$ include zero.

\subsection*{Acknowledgments} We would like to thank Tim Browning and Sarah Peluse for informing us about relevant references in the analytic number theory literature.

\section{Proof outlines\label{sec:outline}}

In this section, we outline the proofs of \cref{thm:multilinear,thm:rank,thm:stability}. The proofs of \cref{thm:rank,thm:stability}
are quite closely related, but the proof strategy for \cref{thm:multilinear}
is rather different.

\subsection{$d$-multilinear forms}\label{subsec:multilinear}
First, we outline the proof of \cref{thm:multilinear}.
Recall that a polynomial $f\in \mb F[x_1,\dots,x_n]$ is a \emph{$d$-multilinear form} if there is a partition $\{1,\dots,n\}=I_1\cup \dots\cup I_d$ such that every term of $f$ is of the form $c_{i_1,\dots,i_d} x_{i_1}\dots x_{i_d}$, where $c_{i_1,\dots,i_d}\in \mb F$ and $i_j\in I_j$ for each $j\in \{1,\dots,d\}$. Such a $d$-multilinear form can be equivalently viewed as a function $\mb F^{I_1}\times \dots \times \mb F^{I_d}\to \mb F$ which is linear in each of its $d$ arguments. It can be encoded by the $|I_1|\times \dots\times |I_d|$ array of coefficients $c_{i_1,\dots,i_d}$, and it can also be viewed as an element of the vector space $\mb F^{I_1}\otimes \dots\otimes \mb F^{I_d}$. This gives rise to three different ways to define the notion of an $|I_1|\times \dots\times |I_d|$ \emph{tensor}:
\begin{itemize}
    \item as a $d$-multilinear form, which can be interpreted as a polynomial in $\mb F[x_1,\dots,x_n]$ or a function $\mb F^{I_1}\times \dots \times \mb F^{I_d}\to \mb F$,
    \item as a function $I_1\times \dots\times I_d\to \mb F$ (i.e., a $I_1\times \dots\times I_d$ array with entries in $\mb F$), or
    \item as an element of the vector space $\mb F^{I_1}\otimes \dots\otimes \mb F^{I_d}$.
\end{itemize}
We will switch between the these three points of view as convenient. Indeed, viewing a tensor as a function $I_1\times \dots\times I_d\to \mb F$ makes it easier to talk about subtensors (i.e., tensors induced by subsets $I_1'\subseteq I_1,\dots,I_d\subseteq I_d'$), while viewing a tensor as an element of $\mb F^{I_1}\otimes \dots\otimes \mb F^{I_d}$ makes it easier to talk about factorisation.
\begin{definition} \label{def: reducible tensors}
    A tensor $T\in \mb F^{I_1}\otimes \dots\otimes \mb F^{I_d}$ is \emph{reducible} or has \emph{partition rank at most 1} if there is a non-trivial partition $\{1,\dots,d\}=J_1\cup J_2$, and tensors $T_1\in \bigotimes_{j\in J_1} \mb F^{I_j}$ and $T_2\in \bigotimes_{j\in J_2}\mb F^{I_j}$, such that $T$ can be factored as a tensor product $T=T_1\otimes T_2$.
    Equivalently, this says that the corresponding $d$-multilinear form can be factorised into lower-degree factors.
\end{definition}

The first ingredient we need is the following theorem, which states that for any tensor $T$, if most of the small subtensors of $T$ are reducible, then $T$ itself is close to being reducible. In our proof, we will use this as a sort of local-to-global principle: we can study small subtensors of $T$ individually, and make conclusions about the whole tensor $T$.

\begin{lemma}\label{thm:tensor-property-testing}
    Fix $d>1$ and $\varepsilon>0$, and any field $\mb F$. 
    Let $\delta=(\varepsilon/2)^{2^{d-1}}$, and let $n_0$ be sufficiently large in terms of $d$ and $\varepsilon$. 
    Consider a $d$-dimensional tensor $T:I_1\times \dots\times I_d\to \mb F$, where $|I_1|,\dots,|I_d|\ge n_0$, and suppose that all but at most a $\delta$-fraction of the $2^{d-1}\times \dots\times 2^{d-1}$ subtensors of $T$ are reducible. Then one can make $T$ reducible by changing up to an $\varepsilon$-fraction of its entries.
\end{lemma}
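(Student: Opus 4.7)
\textbf{The plan} is to prove \cref{thm:tensor-property-testing} by induction on $d\ge 2$, with the base case a quantitative rank-$1$ property testing result for matrices. When $d=2$, reducibility coincides with matrix rank $\le 1$, so the claim reduces to: if at most a $\delta=(\varepsilon/2)^{2}$ fraction of $2\times 2$ submatrices of $A\in\mb F^{I_{1}\times I_{2}}$ are nonsingular, then $A$ agrees with a rank-$1$ matrix on all but at most an $\varepsilon$-fraction of entries. For this I would argue as follows: for each pair of rows $i,i'$, view the columns of $A[\{i,i'\},\,\cdot\,]$ as vectors in $\mb F^{2}$, so a $2\times 2$ minor vanishes iff the two relevant column vectors lie in a common $1$-dimensional subspace. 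Double counting shows that all but an $O(\sqrt\delta)$-fraction of row-pairs are \emph{good}, meaning a $(1-O(\sqrt\delta))$-fraction of column-pairs are coplanar; an extremal inequality (maximising $\sum_{k}\binom{c_{k}}{2}$ subject to $\sum_{k}c_{k}=|I_{2}|$) then forces, for each good pair, a single dominant $1$-dimensional subspace capturing all but an $O(\sqrt\delta)$-fraction of the columns. Picking a good template row $i_{0}$ and rescaling it appropriately in each other row yields a rank-$1$ matrix within Hamming distance $\varepsilon|I_{1}||I_{2}|$ of $A$.

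\textbf{For the inductive step}, assume the result in dimension $d-1$ and consider $T$ satisfying the hypothesis in dimension $d$. Every reducible $d$-tensor factors along some non-trivial partition of $\{1,\dots,d\}$, and there are only $2^{d-1}-1$ of these, so pigeonhole furnishes a distinguished partition $\pi^{\ast}=J_{1}\cup J_{2}$ along which at least an $\alpha\ge (1-\delta)/(2^{d-1}-1)$ fraction of the $2^{d-1}$-subtensors of $T$ factor. I would then flatten $T$ along $\pi^{\ast}$ to obtain a matrix $M\colon \prod_{j\in J_{1}}I_{j}\times \prod_{j\in J_{2}}I_{j}\to \mb F$; each subtensor factoring along $\pi^{\ast}$ corresponds to a rank-$1$ \emph{product submatrix} of $M$ of shape $(2^{d-1})^{|J_{1}|}\times (2^{d-1})^{|J_{2}|}$, whose row and column index sets are products of small subsets of the $I_{j}$'s. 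The central step is to amplify this local information into global closeness of $M$ to rank $1$: for a generic pair of row-multi-indices $\vec r,\vec r'\in \prod_{j\in J_{1}}I_{j}$, averaging over the rank-$1$ product submatrices containing both rows should force $M[\{\vec r,\vec r'\},\,\cdot\,]$ to have nearly all of its $2\times 2$ minors vanishing. The base-case template-row argument applied to $M$ then produces a rank-$1$ approximation $M'$ within an $\varepsilon$-fraction of $M$, whose corresponding tensor factors along $\pi^{\ast}$ and differs from $T$ on at most an $\varepsilon$-fraction of entries. The repeated square-rooting in this bootstrap is what produces the dependence $\delta=(\varepsilon/2)^{2^{d-1}}$.

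\textbf{The main obstacle} is the amplification step sketched above. Pigeonhole over the $2^{d-1}-1$ partitions unavoidably loses a constant factor, so we begin with only an $\Omega(1/2^{d-1})$-fraction of product submatrices known to be rank $1$ — much weaker than the near-$1$ fraction demanded by the base case. What should rescue the argument is the rigidity conferred by the \emph{product structure} of the row and column index sets: two rank-$1$ product submatrices that overlap in at least two rows and two columns are forced, by the essential uniqueness of rank-$1$ decompositions, to have compatible factorisations, so local scalings can be stitched into a globally consistent approximation. Carefully tracking the quantitative losses in this stitching is where the bulk of the technical work lies, and the bookkeeping is presumably what forces both the subtensor side-length $2^{d-1}$ and the strength $\delta=(\varepsilon/2)^{2^{d-1}}$ appearing in the statement.
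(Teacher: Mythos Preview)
Your proposal has a genuine gap at the amplification step, and the paper's proof takes a quite different route that sidesteps it entirely.

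The difficulty you correctly identify is fatal for the approach as written: a plain pigeonhole over the $2^{d-1}-1$ partitions only guarantees that some partition $\pi^{*}$ has an $\Omega(1/2^{d-1})$-fraction of product submatrices of the flattening $M$ with rank~$1$. Going from ``a constant fraction of product submatrices are rank~$1$'' to ``$M$ is $\varepsilon$-close to rank~$1$'' is not a small technical matter; without the near-$1$ density you have no way to rule out, say, $M$ splitting into a bounded number of rank-$1$ blocks. Your ``rigidity and stitching'' sketch does not give a mechanism for this, and I do not see how to make it work. (Also note that your inductive hypothesis for dimension $d-1$ is never actually invoked; the argument is really a direct reduction to the matrix case.)

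The paper avoids pigeonhole altogether. It first handles the case where $T$ has few nonzero entries trivially, and otherwise fixes a \emph{nonzero anchor entry} $T(i_{1}^{\star},\dots,i_{d}^{\star})$ such that all but a $\delta^{\ell/(\ell+1)}$-fraction of the $2^{d-1}\times\dots\times 2^{d-1}$ subtensors \emph{through that entry} are reducible (a Markov argument). The crucial step is then a coupling: enumerate the $\ell=2^{d-1}-1$ partitions as $\mathcal P_{1},\dots,\mathcal P_{\ell}$, sample for each $j$ independent indices $i_{j}^{1},\dots,i_{j}^{\ell}\in I_{j}\setminus\{i_{j}^{\star}\}$, and combine them into a single random $2^{d-1}\times\dots\times 2^{d-1}$ subtensor through the anchor. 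If that big subtensor is irreducible, it is irreducible with respect to \emph{every} $\mathcal P_{t}$, hence so is the $2\times\dots\times 2$ subtensor $T[\{i_{1}^{t},i_{1}^{\star}\},\dots,\{i_{d}^{t},i_{d}^{\star}\}]$; by independence over $t$ one gets
\[
\delta^{\ell/(\ell+1)}\ \ge\ \prod_{t=1}^{\ell}\Pr\bigl[T[\{i_{1}^{t},i_{1}^{\star}\},\dots]\text{ is irreducible w.r.t.\ }\mathcal P_{t}\bigr],
\]
so some single partition has all but a $\delta^{1/(\ell+1)}$-fraction of $2\times\dots\times 2$ subtensors through the anchor reducible along it. This gives a near-$1$ density directly, with no amplification needed. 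Finally, with a fixed partition and a fixed nonzero anchor, the rank-$1$ approximation is written down by an explicit formula (a single $2\times 2$ determinant identity), not by stitching. The exponent $2^{d-1}=\ell+1$ in $\delta=(\varepsilon/2)^{2^{d-1}}$ comes from the product-of-$\ell$-probabilities step combined with the averaging to find the anchor, not from repeated square-rooting.
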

\begin{remark}
The proof of \cref{thm:tensor-property-testing} is also suitable for other notions of ``tensor rank at most 1'', most notably the so-called \emph{slice rank} (see \cite{srank1,srank2}).
\end{remark}

The (short) proof of \cref{thm:tensor-property-testing} appears in \cref{sec:tensor-property-testing}. We will prove \cref{thm:multilinear} by induction on $d$, using Costello's result for $d=2$ as the base case. Given \cref{thm:tensor-property-testing}, the main ingredient for the induction step is the following lemma.

\begin{lemma}\label{lem:collapse}
    Fix $d\ge 3$ and $r \ge 1$ and $\varepsilon>0$ and $\mb F\in \{\mb R,\mb C\}$.
    Let $n$ be sufficiently large in terms of $d$, $r$ and $\varepsilon$.
    Consider a tensor $T:I_1\times \dots \times I_{d}\to \mb F$, where $|I_1|=\dots=|I_{d-1}|=r$ and $I_d=\{1,\dots,n\}$. 
    For a vector $\vec x\in \mb F^n$, define the $(d-1)$-dimensional tensor $T\vec x:I_1\times\dots\times I_{d-1}\to \mb F$ by
    \[T\vec x(i_1,\dots,i_{d-1})=\sum_{i=1}^n T(i_1,\dots,i_{d-1},i)x_i.\]
    Then at least one of the following holds.
    \begin{enumerate}[{\bfseries{D\arabic{enumi}}}]
        \item\label{D1} All but an $\varepsilon$-fraction of the $r\times \dots\times r$ subtensors of $T$ are reducible, or
        \item\label{D2} letting $\vec \xi=(\xi_{1},\dots,\xi_{n})\in\{-1,1\}^n$ be a vector of i.i.d.\ Rademacher
        random variables, we have
        \[
            \Pr[T\vec \xi\text{ is reducible}]\le n^{-1/2+\varepsilon}.
        \]
    \end{enumerate}
\end{lemma}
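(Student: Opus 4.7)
The plan is to proceed by contraposition: assuming that \cref{D2} fails, so that $\Pr[T\vec\xi \in V_{\mathrm{red}}] > n^{-1/2+\varepsilon}$, where $V_{\mathrm{red}} \subseteq \mathbb{F}^{r^{d-1}}$ denotes the variety of reducible $(d-1)$-tensors of shape $r \times \dots \times r$, I will deduce \cref{D1}. Regarding $T$ as a list of slices $T_1, \dots, T_n \in \mathbb{F}^{r^{d-1}}$ across the last coordinate, so that $T\vec\xi = \sum_{i=1}^n \xi_i T_i$, the failure of \cref{D2} becomes a \emph{geometric} Littlewood--Offord statement: a Rademacher-weighted sum of $n$ given vectors falls inside the prescribed algebraic variety $V_{\mathrm{red}}$ with probability exceeding $n^{-1/2 + \varepsilon}$.

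The main tool will be the geometric Littlewood--Offord theorem for algebraic varieties proved in \cref{sec:FKS} (a variant of a theorem of Fox, Spink, and the second author). Roughly, this asserts that for a random Rademacher-weighted sum $\sum_i \xi_i v_i$ to land inside a prescribed algebraic variety $V$ with probability exceeding $n^{-1/2+o(1)}$, the vectors $v_i$ must concentrate on a common translate of a proper ``structural'' subvariety of $V$ (coming from an affine-linear subspace lying inside $V$). Noting that $V_{\mathrm{red}}$ is a finite union of Segre-type irreducible components $W_{J_1,J_2}$ indexed by non-trivial partitions $J_1 \cup J_2 = \{1, \dots, d-1\}$, a pigeonhole over these components yields a single partition $(J_1,J_2)$ and an affine-linear subspace $W^{\circ} \subseteq W_{J_1,J_2}$ together with a shift $w \in \mathbb{F}^{r^{d-1}}$, such that all but $o(n)$ of the slices $T_i$ lie in the translate $w + W^{\circ}$.

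Finally, I would transfer this slice-level information into the subtensor-level statement \cref{D1}. The heuristic is that an affine-linear family of pure tensors $U \otimes V$ is extremely rigid: one of the two tensor factors must be essentially constant across the family. So for a ``good'' $r$-subset $S$ whose slices all lie in $w + W^{\circ}$ (which, since the exceptional set has size $o(n)$, is almost every $r$-subset), the slices $\{T_i : i \in S\}$ admit a joint factorization with a shared factor on one side of the partition. This forces the $d$-tensor $T|_S$ to factor along $J_1 \cup (J_2 \cup \{d\})$ (or its symmetric counterpart), so that $T|_S$ is reducible as a $d$-tensor, yielding \cref{D1}. The main obstacle in this final step will be simultaneously handling the shift $w$ and the varying tensor factor: I expect to need an affine correction (e.g.\ subtracting a reference slice $T_{i_0}$) to reduce to the homogeneous case $w = 0$, and then an appeal to \cref{thm:tensor-property-testing} may be needed to upgrade ``near-factorizations'' into exact reducibility of almost all $r$-subtensors.
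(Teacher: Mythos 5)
Your high-level blueprint matches the paper's: argue by contraposition, view $T\vec\xi$ as a Rademacher sum $\sum_i \xi_i T_i$ of slices, apply the geometric Littlewood--Offord theorem of \cref{sec:FKS} to the variety $\mathcal Z$ of reducible $(d-1)$-tensors, extract a small structured subspace containing almost all slices, and then push this structural information down to $r$-subtensors.

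However, your proposal has a genuine gap in the final step, and the paper resolves it with two specific observations that you are missing. First, you correctly anticipate having to deal with the shift $\vec w$ coming out of \cref{thm:FKS} and suggest ``subtracting a reference slice'' to reduce to the homogeneous case. This does not quite work: replacing the slices $T_i$ by $T_i - T_{i_0}$ changes the $d$-tensor $T$ into a different tensor, whose $r\times\dots\times r$ subtensors are not simply related to those of $T$ (you would gain reducibility of subtensors of the modified object, not of $T$). The paper's much cleaner observation is that $\mathcal Z$ is a cone (stable under scalar multiplication, since multiplying a tensor by a nonzero scalar does not affect reducibility), which forces every maximal \emph{affine}-linear subspace of $\mathcal Z$ to already be a \emph{linear} subspace; thus $\vec w$ may simply be taken to be zero. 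Second, you hedge about possibly needing \cref{thm:tensor-property-testing} to pass from ``near-factorisations'' to exact reducibility of almost all $r$-subtensors. No such appeal is needed: the classification of maximal linear subspaces of $\mathcal Z$ (\cref{thm:irreducible-variety}) says the subspace $\mathcal W$ consists precisely of tensors of the form $T^\star\otimes T'$ for a \emph{fixed} factor $T^\star$ supported on a fixed block of indices. Consequently, for any $r$-subset $I_d'$ all of whose slices $T_i$ lie in $\mathcal W$, the subtensor $T[I_1,\dots,I_{d-1},I_d']$ is \emph{exactly} $T^\star\otimes T'$ for a suitable $T'$ (no approximation involved), and a simple union bound (if at most $(\varepsilon/r)n$ slices fail to lie in $\mathcal W$, at most an $\varepsilon$-fraction of $r$-subsets contain a bad slice) gives \cref{D1} directly. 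Your rigidity intuition about ``one tensor factor being essentially constant'' is exactly what \cref{thm:irreducible-variety} formalises, so you should invoke that lemma explicitly rather than try to rederive it mid-proof.
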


Roughly speaking, we will use \cref{lem:collapse} as follows. Given a $d$-multilinear form $f$ (with variables partitioned into $d$ parts), we wish to understand the behaviour of $f$ when the variables take random values. In the induction step, we reveal all the randomness in only the last of the $d$ parts, which allows us to reinterpret $f$ as a $(d-1)$-multilinear form in the remaining unrevealed variables.
One way to visualise this, from the point of view that the tensor of coefficients is a $d$-dimensional array, is to view our $d$-dimensional tensor as a ``stack'' of $(d-1)$-dimensional tensors. We assign a random sign to each of the $(d-1)$-dimensional tensors in the stack, and add them together to obtain the ``collapsed'' $(d-1)$-dimensional tensor. For the induction to work, we need to know that if the original $d$-multilinear form was far from being reducible, then the ``collapsed'' $(d-1)$-multilinear form is likely to also be far from being reducible. By \cref{thm:tensor-property-testing}, it suffices to study this ``collapse'' on tensors whose side-lengths are $2^{d-1}$ (except in the $d$-th dimension), and this is precisely the role of \cref{lem:collapse}.

In the statement of \cref{lem:collapse}, note that $T\vec \xi$ can be viewed as a linear combination of Rademacher random variables, with coefficients in the space of $|I_{d-1}|\times \dots\times |I_{d-1}|$ tensors $\mb F^{I_1}\otimes\dots\otimes \mb F^{I_{d-1}}$.
In \cref{D2} we are interested in the event that this random linear combination falls in the variety of reducible tensors. So, to prove \cref{lem:collapse} we will use the following ``geometric Littlewood--Offord'' theorem (closely related to results of Fox, Spink and the second author~\cite[Theorem~1.5]{FKS23}), providing bounds on the probability that a Rademacher sum falls in a given variety.
\begin{theorem}\label{thm:FKS}
    Fix $d\ge 1$ and $\varepsilon>0$ and $\mb F\in \{\mb R,\mb C\}$ and let $\mathcal Z\subsetneq  \mb F^d$ be a (possibly reducible) affine variety. Let $n$ be sufficiently large (in terms of $\mc Z,\varepsilon$), and consider vectors $\vec a_1,\dots,\vec a_n\in \mb F^d$.
    Then, at least one of the following holds.
    \begin{enumerate}[{\bfseries{E\arabic{enumi}}}]
        \item\label{E1} There is a linear subspace $\mc W\subseteq \mb F^d$ containing all but at most $\varepsilon n$ of the vectors $\vec a_i$, such that $\vec w+\mc W\subseteq \mc Z$ for some vector $\vec w\in \mb F^d$.
        \item\label{E2} letting $\xi_{1},\dots,\xi_{n}\in\{-1,1\}$ be i.i.d.\ Rademacher
random variables, we have
\[
\Pr[\xi_1\vec a_1+\dots+\xi_n\vec a_n\in \mc Z]\le n^{-1/2+\varepsilon}.
\]
\end{enumerate}
\end{theorem}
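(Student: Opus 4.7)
My approach is to prove \cref{thm:FKS} by induction on the dimension $d$, after reducing to the case that $\mc Z$ is irreducible: decomposing $\mc Z$ into its (boundedly many) irreducible components, \cref{E1} for any component gives \cref{E1} for $\mc Z$, and \cref{E2} for all components combines via a union bound (the constant loss absorbed into $n^\varepsilon$). The base case $d=1$ is immediate: a proper irreducible subvariety of $\mb F$ is a single point $\{c\}$, and either most $a_i$ vanish (so \cref{E1} holds with $\mc W = \{0\}$ and $\vec w = c$) or the classical Erd\H{o}s--Littlewood--Offord inequality yields \cref{E2}.

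For the inductive step I would distinguish two cases depending on whether $\mc Z$ lies in an affine hyperplane. In \emph{Case A}, $\mc Z \subseteq \{\vec y : \ell(\vec y) = c\}$ for some nonzero linear functional $\ell$ and scalar $c$, so the event $\vec S := \sum_i \xi_i \vec a_i \in \mc Z$ forces $\sum_i \xi_i \ell(\vec a_i) = c$. If at least $\varepsilon n$ of the scalars $\ell(\vec a_i)$ are nonzero, Erd\H{o}s--Littlewood--Offord applied to this one-dimensional sum yields \cref{E2} directly; otherwise all but at most $\varepsilon n$ of the $\vec a_i$ lie in $\ker \ell$, and conditioning on the signs of those outside $\ker \ell$ reduces the problem to applying the inductive hypothesis to a subvariety of $\ker \ell \cong \mb F^{d-1}$ (with the degenerate sub-case that the restricted variety equals all of $\ker \ell$ forcing $\mc Z$ to be the whole hyperplane, which yields \cref{E1} immediately with $\mc W = \ker \ell$). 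In \emph{Case B}, $\mc Z$ affinely spans $\mb F^d$; I would pick a nonzero polynomial $P \in \mb F[y_1, \dots, y_d]$ of some degree $k$ vanishing on $\mc Z$ and apply \cref{thm:polynomial-LO} to the $n$-variable polynomial $Q(\vec x) := P(\sum_i x_i \vec a_i)$ of degree at most $k$. Either $\Pr[Q(\vec \xi) = 0] \le n^{-1/2+\varepsilon}$, which forces \cref{E2} since $\{\vec S \in \mc Z\} \subseteq \{Q(\vec \xi) = 0\}$, or $Q$ has at most $\varepsilon' n^k$ nonzero coefficients, from which structural information on the $\vec a_i$ must be extracted.

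The principal obstacle will be the last step of Case B: converting the sparsity of $Q$'s coefficients into the geometric conclusion \cref{E1}. I would use the polarization identity, which expresses the coefficient of a squarefree monomial $x_{i_1} \cdots x_{i_k}$ in $Q$ as (a constant multiple of) the symmetric $k$-linear form polarizing the top-degree homogeneous part of $P$, evaluated at $(\vec a_{i_1}, \dots, \vec a_{i_k})$. Sparsity of $Q$ then says this form vanishes on a $(1-o(1))$-fraction of $k$-subsets of $\{\vec a_1, \dots, \vec a_n\}$; a hypergraph-removal or Ramsey-type argument should extract a large subset on which the form vanishes identically, placing the corresponding $\vec a_i$'s inside a common ``isotropic'' linear subspace $\mc W$ on which the top-degree part of $P$ vanishes. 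Iterating the same argument across additional polynomials in the ideal of $\mc Z$, together with careful bookkeeping for the lower-degree parts and for the constant, should then refine $\mc W$ and produce a translate $\vec w + \mc W \subseteq \mc Z$, completing \cref{E1}. I expect the isotropic-subspace extraction and the interaction with the full defining ideal of $\mc Z$ to be the most delicate aspects of the argument.
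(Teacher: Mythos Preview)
Your inductive framework and the split into Cases~A and~B are reasonable, and Case~A together with the base case can be made to work. The genuine gap is in Case~B: your plan for the sparse branch of \cref{thm:polynomial-LO} is to prove \cref{E1}, but this is not always the right conclusion. Take $d=2$, $\mc Z=\{(y_1,y_2):y_1^2+y_2=0\}$ (irreducible and affinely spanning, so Case~B applies), $P=y_1^2+y_2$ with $k=2$, and $\vec a_i=(0,1)$ for all $i$. Then $Q(\vec x)=\sum_i x_i$ has $n\le \varepsilon' n^2$ nonzero coefficients, so you land in the sparse branch; and your polarization argument correctly places all $\vec a_i$ in the isotropic subspace $\mc W=\{y_1=0\}$ for $P_2=y_1^2$. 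But no translate of $\mc W$ lies in the parabola $\mc Z$, and the only smaller subspace $\{0\}$ does not contain the $\vec a_i$. So \cref{E1} is simply false here (while \cref{E2} does hold, since $\Pr[\sum_i\xi_i=0]=O(n^{-1/2})$). Your stated goal of ``refining $\mc W$ and producing a translate $\vec w+\mc W\subseteq\mc Z$, completing \cref{E1}'' therefore cannot succeed.

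The paper avoids this trap by reversing the order of operations: instead of applying \cref{thm:polynomial-LO} first and then trying to extract a subspace from sparsity, it first locates the minimal-dimensional linear subspace $\mc W$ containing almost all the $\vec a_i$, which by minimality forces the $\vec a_i$ to contain $\Omega_\varepsilon(n)$ disjoint bases of $\mc W$. Conditioning on the remaining signs and restricting to $\mc W\cong\mb F^b$, the failure of \cref{E1} guarantees a nonzero polynomial $P$ of some degree $q$ vanishing on the (translated and restricted) variety; the abundance of disjoint bases then shows directly that $P(\sum\xi_i\pvec a_i)$ has $\Omega(m^q)$ nonzero degree-$q$ multilinear coefficients, so the sparse branch of \cref{thm:polynomial-LO} never occurs. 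In effect, the paper's pre-emptive dimension reduction ensures the vectors are ``generic enough'' that sparsity is impossible, whereas your approach confronts sparsity head-on and must then recover \cref{E2} (not \cref{E1}) via a further reduction you have not described.
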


This result follows from the same proof as in \cite{FKS23} (which considers a more general setting). For completeness, we include a proof of \cref{thm:FKS} in \cref{sec:FKS}, where we deduce \cref{thm:FKS} from the Meka--Nguyen--Vu bound for the polynomial Littlewood--Offord problem (see \cref{thm:polynomial-LO}).

In order to actually apply \cref{thm:FKS}, our final ingredient is a description of the maximal linear subspaces of the variety of reducible tensors.
\begin{lemma}\label{thm:irreducible-variety}
Fix disjoint sets $I_1,\dots,I_d$, fix $\mb F\in \{\mb R,\mb C\}$ and let $r=\prod_{j=1}^d|I_j|$. Let $\mc Z\subseteq \mb F^{I_1}\otimes\dots\otimes \mb F^{I_d}$ be the set of reducible  $I_1\times \dots\times I_d$ tensors. Note that $\mb F^{I_1}\otimes\dots\otimes \mb F^{I_d}\cong \mb F^{r}$, so we can view $\mc Z$ as a subset of $\mb F^r$. Then:
\begin{enumerate}
    \item $\mc Z\subseteq \mb F^{r}$ is a (possibly reducible) affine variety.
    \item The maximal linear subspaces of $\mc Z$ are all sets of tensors of the following form: fix a proper subset $\emptyset\subsetneq J\subsetneq \{1,\dots,d\}$, fix a $|J|$-dimensional tensor $T^\star\in \bigotimes_{j\in J} \mathbb F^{I_j}$, and consider all tensors of the form $T^\star\otimes T'$, where $T'$ ranges over all tensors $T'\in \bigotimes_{j\notin J} \mathbb F^{I_j}$.
\end{enumerate}
\end{lemma}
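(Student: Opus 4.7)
For part (1), the plan is to write $\mc Z$ as a finite union of Segre-type subvarieties. Unwinding \cref{def: reducible tensors} gives $\mc Z=\bigcup_{\{J,J^c\}} S_J$, where the union ranges over the $2^{d-1}-1$ non-trivial unordered bipartitions $\{1,\dots,d\}=J\cup J^c$, and $S_J$ denotes the set of tensors of the form $T_1\otimes T_2$ with $T_1\in\bigotimes_{j\in J}\mb F^{I_j}$ and $T_2\in\bigotimes_{j\notin J}\mb F^{I_j}$. Under the natural ``unfolding'' isomorphism $\bigotimes_{j=1}^d\mb F^{I_j}\cong \mb F^{M\times N}$ (where $M=\prod_{j\in J}|I_j|$ and $N=\prod_{j\notin J}|I_j|$), the set $S_J$ corresponds to the classical variety of $M\times N$ matrices of rank at most $1$, which is cut out by the vanishing of all $2\times 2$ minors. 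So each $S_J$ is an affine variety, and hence so is their finite union $\mc Z$.

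For part (2), let $V\subseteq\mc Z$ be any maximal linear subspace. The key observation is that $V$, being linearly isomorphic to $\mb F^{\dim V}$, is Zariski-irreducible over $\mb F$ (its coordinate ring is an integral domain). Intersecting the decomposition from part (1) with $V$ writes $V=\bigcup_{\{J,J^c\}}(V\cap S_J)$ as a finite union of Zariski-closed subsets, and irreducibility forces $V\subseteq S_J$ for some single $J$. By maximality of $V$ inside $\mc Z\supseteq S_J$, the subspace $V$ must in fact be a maximal linear subspace of $S_J$.

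It then remains, via the unfolding isomorphism, to classify the maximal $\mb F$-linear subspaces of the rank-$\le 1$ matrix variety in $\mb F^{M\times N}$. I would do this by a short classical argument: if an $\mb F$-linear subspace $W$ of rank-$\le 1$ matrices contains two nonzero elements $u_1v_1^T$ and $u_2v_2^T$, then every combination $\alpha u_1v_1^T+\beta u_2v_2^T$ has rank at most $1$, and a quick column-space computation forces $\{u_1,u_2\}$ or $\{v_1,v_2\}$ to be linearly dependent. Iterating this implies that $W$ is contained either in $\{u^\star v^T : v\in\mb F^N\}$ for some fixed nonzero $u^\star\in\mb F^M$, or in $\{u(v^\star)^T : u\in\mb F^M\}$ for some fixed nonzero $v^\star\in\mb F^N$. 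Both of these are themselves linear subspaces of the rank-$\le 1$ variety, so they are precisely the maximal ones. Translating back through the unfolding, the first family yields exactly $\{T^\star\otimes T' : T'\in\bigotimes_{j\notin J}\mb F^{I_j}\}$ with $T^\star\in\bigotimes_{j\in J}\mb F^{I_j}$, while the second yields the analogous shape with $J$ replaced by $J^c$; in either case $V$ has the form claimed in the lemma.

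The main (modest) difficulty I expect is bookkeeping across the three equivalent views of tensors (as multilinear forms, as arrays, and as elements of a tensor product space) and making sure the unfolding isomorphism respects the $\mb F$-linear structure uniformly over $\mb F\in\{\mb R,\mb C\}$. The conceptually decisive step is the irreducibility argument that collapses any potential ``mixing'' between different bipartitions to a single Segre piece $S_J$; together with the elementary rank-$1$ classification, it does all the real work.
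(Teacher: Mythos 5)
Your proof is correct and follows essentially the same route as the paper's: decompose $\mc Z$ into the Segre-type pieces $\mc Z_{J_1,J_2}$, use irreducibility of linear subspaces as affine varieties to restrict to a single piece, unfold via the tensor-to-matrix correspondence, and classify the maximal linear subspaces of the rank-$\le 1$ matrix variety. The only cosmetic difference is in that final classification: the paper fixes one reference rank-$1$ element $\vec u^{\hspace{0.05em}\star}\otimes\vec v^{\hspace{0.05em}\star}$, shows every element lies in $\mc V_1\cup\mc V_2$, and invokes irreducibility a second time, whereas you compare all pairs of rank-$1$ elements and close with a short case analysis on which factor is forced to be parallel.
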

We prove \cref{thm:irreducible-variety} in \cref{sec:reducible-variety}.

In \cref{sec:multilinear}, we give the details of how to prove \cref{lem:collapse} using \cref{thm:irreducible-variety,thm:FKS}, and deduce \cref{thm:multilinear} from \cref{lem:collapse,thm:tensor-property-testing}.

\subsection{Quadratic polynomials}
In this subsection we outline the proofs of \cref{thm:rank,thm:stability}. We start with some elements common to both proofs, and then split into separate subsections for each of \cref{thm:rank,thm:stability}.

First, recalling the role that \cref{thm:tensor-property-testing} played for the proof of \cref{thm:multilinear}, we need a ``local-to-global'' lemma for the rank of a symmetric matrix (while $d$-multilinear forms are naturally encoded by $d$-dimensional tensors, quadratic forms are naturally encoded by symmetric matrices). Recall from \cref{subsec:notation} that $\|A\|_0$ denotes the number of nonzero entries in a matrix $A$.

\begin{lemma}\label{lemma: close to symmetric low rank}
    Let $n \ge r \ge 1$, let $\alpha \in [0,1]$ and let $\mb{F}$ be a field.
    Consider a symmetric matrix $A \in \mb{F}^{n \times n}$.
    If all but at most an $\alpha$-fraction of the $r\times r$ submatrices of $A$ are singular, then there exists a symmetric matrix $A^\prime \in \mb{F}^{n \times n}$ of rank less than $r$ such that $\|A-A^\prime\|_0 \le O(r^4 \alpha^{1/r} n^2)$.
\end{lemma}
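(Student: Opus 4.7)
The plan is to construct the desired symmetric rank-$<r$ matrix $A'$ explicitly via the Schur-complement formula applied to a small principal submatrix. Precisely, for a set $R\subseteq [n]$ with $|R|\le r-1$ and $A[R,R]$ non-singular, set $A':=A[:,R]\,A[R,R]^{-1}\,A[R,:]$. Symmetry of $A$ (and hence of $A[R,R]^{-1}$) ensures $A'$ is symmetric, and clearly $\mathrm{rank}(A')\le|R|\le r-1$. The standard Schur-complement identity shows $A'_{ij}=A_{ij}$ whenever $i\in R$ or $j\in R$, and for $i,j\in [n]\setminus R$ the equality $A'_{ij}=A_{ij}$ is equivalent to the extended $r\times r$ submatrix $A[R\cup\{i\},R\cup\{j\}]$ being singular. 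Hence $\|A-A'\|_0$ equals the number of ``bad'' pairs $(i,j)\in([n]\setminus R)^2$ for which $A[R\cup\{i\},R\cup\{j\}]$ is non-singular, and the task reduces to choosing $R$ minimising this count.

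To guide the construction I would first establish the non-symmetric analogue by a classical randomised-greedy argument: picking $i_1,\dots,i_{r-1}$ and $j_1,\dots,j_{r-1}$ one at a time uniformly at random from $[n]$, the probability that the $k$-th extension of the running submatrix is non-singular, conditional on the previous one being non-singular, equals the fraction of entries on which $A$ disagrees with the current Schur approximation $\hat A^{(k-1)}$. Taking a product over $k=1,\dots,r$ shows that if $A$ is $\varepsilon$-far from every rank-$<r$ matrix then a random $r\times r$ submatrix is non-singular with probability at least $(\varepsilon/2)^r$; the hypothesis $\alpha\ge(\varepsilon/2)^r$ then forces $\varepsilon\le 2\alpha^{1/r}$, yielding a (non-symmetric) rank-$<r$ matrix within $O(\alpha^{1/r}n^2)$ Hamming distance.

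For the symmetric case one adapts this by building a single set $R$ used simultaneously for rows and columns. The new difficulty is that a random single-index extension $R_k\to R_k\cup\{i\}$ preserves non-singularity only when $D^{(k)}_{ii}\ne 0$, where $D^{(k)}:=A-\hat A^{(k)}$ is the symmetric residual; this can fail even when $\|D^{(k)}\|_0$ is large (namely when $D^{(k)}$ is supported off the diagonal). The fix is to allow two types of extension: (i) a one-index extension whenever some diagonal entry of $D^{(k)}$ outside $R_k$ is nonzero; (ii) a two-index extension otherwise, picking $i\ne j$ outside $R_k$ with $D^{(k)}_{ij}\ne 0$---the $2\times 2$ minor $D^{(k)}[\{i,j\},\{i,j\}]$ then has determinant $-(D^{(k)}_{ij})^2\ne 0$, and a block-Schur identity ensures $A[R_k\cup\{i,j\},R_k\cup\{i,j\}]$ is non-singular. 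At each step, if $\|D^{(k)}\|_0\ge\varepsilon n^2$ then either case (i) succeeds with probability $\Omega(\varepsilon/r)$ (when $D^{(k)}$ has $\Omega(\varepsilon n/r)$ nonzero diagonal entries) or case (ii) succeeds with probability $\Omega(\varepsilon)$ (when $\Omega(\varepsilon n^2)$ of $\|D^{(k)}\|_0$ is off-diagonal). Iterating over at most $r-1$ steps and relating the resulting success probability to the hypothesis yields $\varepsilon\le O(r^c\alpha^{1/r})$, and careful accounting of the polynomial losses (coarser success in the two-index case, avoidance of already-used indices, and handling the last-step parity when a two-index extension would push $|R|$ past $r-1$) gives $c=4$.

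The main obstacle is precisely this symmetric-degeneracy issue---a symmetric residual $D^{(k)}$ may have many nonzero entries but all off-diagonal---and its clean resolution via two-index extensions. One must correctly design the two-index step, verify the $2\times 2$ determinant identity that underpins its Schur-complement analysis, handle the parity-related edge cases in the termination of the greedy process, and tally the polynomial-in-$r$ losses tightly enough to obtain the $O(r^4\alpha^{1/r}n^2)$ bound rather than a weaker one.
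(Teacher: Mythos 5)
Your construction $A'=A[:,R]\,A[R,R]^{-1}\,A[R,:]$ from a nonsingular principal block is indeed the right device (the paper uses essentially the same Schur-complement formula in its Lemma~\ref{lemma: symmetry for robust matrices}), and the two-index extension---adding a pair $\{i,j\}$ with $D^{(k)}_{ij}\ne 0$ when the residual's diagonal vanishes, and noting $\det D^{(k)}[\{i,j\},\{i,j\}]=-(D^{(k)}_{ij})^2\ne0$---is a correct way to avoid the symmetric-degeneracy obstruction at the level of a single step. However, there is a real gap in the sentence ``relating the resulting success probability to the hypothesis.'' Your greedy uses each newly chosen index simultaneously as a row and as a column, so after $\Theta(r)$ steps it has exposed only $\Theta(r)$ random indices; even after one final non-principal extension $(i,j)$, the $r\times r$ submatrices $A[I,J]$ it can possibly certify all satisfy $|I\cap J|\ge r-1$, a family of size $\Theta(n^{r+1})$ rather than $\Theta(n^{2r})$. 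Consequently, the greedy's success probability over its own (roughly $n^{r+1}$) choices gives a lower bound of at most order $n^{r+1}$ on the number of nonsingular $r\times r$ submatrices, which is never in tension with the hypothesis $\alpha\binom{n}{r}^2\approx \alpha n^{2r}/(r!)^2$ except for absurdly small $\alpha$. Put differently, the hypothesis is a statement about general (overwhelmingly non-principal) $r\times r$ minors, while your greedy only ever produces near-principal ones, so its failure does not yield the contradiction you need.

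The paper avoids this by decoupling the two tasks. It first runs the unrestricted row/column greedy to produce a possibly \emph{asymmetric} matrix $B$ of rank $<r$ with $\|A-B\|_0\le\alpha^{1/r}n^2$ (Lemma~\ref{lem:close-non-symmetric}); this step genuinely uses all $r\times r$ minors and is where the hypothesis is consumed. Symmetry of $A$ then gives $\|B-B^T\|_0\le2\alpha^{1/r}n^2$, and a separate \emph{symmetrisation} lemma (Lemma~\ref{lem:fix-symmetry}), which makes no further reference to $r\times r$ submatrices, replaces $B$ by a nearby symmetric matrix of the same rank. That lemma finds a nonsingular, symmetric principal block $B[V,V]$ not via two-index Schur steps but by first restricting to a sub-square on which the rank is ``robust'' (Lemma~\ref{lemma: robust rank in n by n matrices}), so that nonsingularity of $B[V,V]$ comes for free once $V$ is chosen to span the row space, and then choosing $V$ to avoid the few rows where $B$ fails to be symmetric. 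If you want to keep your direct approach, you would need a mechanism that extracts a good principal block from statistics about \emph{general} $r\times r$ minors, and the proposed greedy does not supply one; the parity issue you flag (a two-index step overshooting $r-1$) is a secondary problem by comparison.
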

We remark that \cref{lemma: close to symmetric low rank} is very easy to prove if we drop the requirement that the approximating matrix $A'$ is symmetric. The challenge is that we want to modify $A$ to have rank less than $r$ \emph{while maintaining symmetry}. The details of the proof of \cref{lemma: close to symmetric low rank} appear in \cref{sec:matrix-property-testing}, but to give a brief idea: first, we show that we can adjust a small number of entries of $A$ to obtain a matrix $B$ which has low rank (but is not necessarily symmetric). Then, we identify a principal submatrix of $B$ which is symmetric and ``robustly has full rank'', and use this as a ``guide'' to construct $A'$.
\begin{remark}
Throughout the paper we do not pay much attention to quantitative aspects, but it seems worth noting that in \cref{lemma: close to symmetric low rank} the dependence on $\alpha$ is best-possible. Indeed, for any fixed integer $r\ge 1$, let $\ell=\lfloor \alpha^{1/2r} n/2 \rfloor$ and consider a binary matrix $A=(a_{ij})\in \mb R^{n\times n}$ defined by
\[a_{ij}=\begin{cases}1&\text{if }j\ge n-\ell+i\text{ or }i\ge n-\ell+j\\
0&\text{otherwise}\end{cases}\]
(this can be informally described as an all-zero matrix, in which an upper-triangular chunk of the top-right corner, and a lower-triangular chunk of the bottom-left corner, have been changed to ``1''). 
It is not hard to see that all but at most an $\alpha$-fraction of the $r\times r$ submatrices are singular, but to make the rank less than $r$ it is necessary to change an $\Omega_r(\alpha^{1/r})$-fraction of the entries.
\end{remark}

Next, one very important aspect of the proofs of \cref{thm:rank,thm:stability}, which has no counterpart in the proof of \cref{thm:multilinear}, is \emph{decoupling}. Namely, in the last subsection (on the proof of \cref{thm:multilinear}), we crucially used that $d$-multilinear forms can be inductively broken down into \emph{linear} forms in disjoint sets of variables. For example, a bilinear form in the variables $x_1,\dots,x_m$ and $y_1,\dots,y_n$ can be viewed as a linear form in $x_1,\dots,x_m$ whose coefficients are themselves linear forms in $y_1,\dots,y_n$. So, one can study anticoncentration of bilinear forms via tools for \emph{linear} anticoncentration.

In \cref{thm:rank,thm:stability}, we are interested in general (not necessarily bilinear) quadratic polynomials, where this type of recursive linear structure is not available. However, one of the most important techniques for the polynomial Littlewood--Offord problem, called \emph{decoupling}, nonetheless allows one to deduce information about general quadratic polynomials via tools for linear anticoncentration\footnote{The term ``decoupling'' refers more generally to a class of techniques used to reduce from dependent situations to independent situations, see for example the book-length treatment in \cite{dlPG12}. In the context of the polynomial Littlewood--Offord problem, this technique was introduced by Rosi\'nski and Samorodnitsky~\cite{RS96} and Costello, Tao and Vu~\cite{CTV06}.}.

We will need some non-standard decoupling inequalities, which allow us to relate quadratic polynomials to \emph{high-dimensional} linear anticoncentration problems.
We will state and discuss our decoupling inequalities momentarily (when we start to break down the proofs of \cref{thm:rank,thm:stability}), but first we introduce a very powerful tool which gives us control over the high-dimensional linear anticoncentration problems that arise from our decoupling inequalities: namely, the so-called ``optimal inverse theorem'' of Nguyen and Vu~\cite{NV11} (see the discussion in \cref{subsubsec:inverse} for context). This inverse theorem is stated in terms of \emph{generalised arithmetic progressions}, defined as follows.
\begin{definition}\label{def:GAP}
    Let $G$ be an additive group (i.e., an abelian group, additively written). A \emph{symmetric generalised arithmetic progression}, or a \emph{symmetric GAP} for short, is a map of the form
    \[\varphi: \{-N_1,-N_1+1,\dots,N_1\}\times \dots\times \{-N_r,-N_r+1,\dots,N_r\}\to G\]
    for some $r,N_1,\dots,N_r\in \mb N$, such that there exist $v_1,\dots,v_r \in G$ with $\varphi(a_1, \dots ,a_r)=a_1 v_1 + \dots + a_r v_r$ for all $(a_1,\dots,a_r)\in \{-N_1,-N_1+1,\dots,N_1\}\times \dots\times \{-N_r,-N_r+1,\dots,N_r\}$ (in other words, such that $\varphi$ is the restriction of a group homomorphism $\mathbb{Z}^r\to G$). 
    
    We say that $r$ is the \emph{rank} and $\prod_{i=1}^r (2N_i+1)$ is the \emph{volume} of the symmetric GAP. Furthermore, we say that a subset $G'\subseteq G$ is contained in (or ``lies in'') a symmetric GAP $\varphi$, if $G'$ is contained in the image\footnote{It is more common to define GAPs as sets, not maps (i.e., many authors would say that the image of $\varphi$ is itself a GAP). However, this creates some ambiguity around rank and volume (since the image of $\varphi$ may not uniquely determine $\varphi$), and later in this paper it will be rather important to be precise about these notions.} of $\varphi$.
\end{definition}

Note that any symmetric GAP has volume at least $1$. Now, the optimal inverse theorem of Nguyen and Vu~\cite[Theorem 2.5]{NV11}  is as follows.
\begin{theorem}\label{theorem: optimal inverse theorem}
    Fix $C > 0$.
    Let $v_1,\dots,v_n$ be elements of a torsion-free\footnote{Recall that an abelian group is \emph{torsion-free} if there is no nonzero element of finite order.} additive group $G$ and $\xi_1,\dots,\xi_n$ be i.i.d.\ Rademacher random variables.
    If
    \[\rho:=\sup_{z\in G} \Pr[\xi_1v_1+\dots+\xi_nv_n=z] > n^{-C},\]
    then for any $\sqrt{n} \le n' \le n$, there exists a symmetric GAP with rank $r=O_C(1)$ and volume $O_C(\rho^{-1}(n')^{-r/2})$, that contains all but at most $n'$ of $v_1,\dots,v_n$.
\end{theorem}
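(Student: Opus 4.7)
The approach is Fourier-analytic, combined with Freiman-type inverse theorems from additive combinatorics. First, I would reduce to the case where the $v_i$ lie in $\mb Z$: since $G$ is torsion-free and $v_1,\dots,v_n$ generate a finitely generated subgroup (which is therefore free abelian), one can embed $\langle v_1,\dots,v_n\rangle\hookrightarrow \mb Z^m$ for some $m$, and then apply a generic $\mb Z$-linear map $\mb Z^m\to \mb Z$ that is injective on the finite set of signed sums $\sum_i s_i v_i$ with $s_i\in\{-1,0,1\}$. This preserves the concentration probability and pulls back GAP containment, so we may assume $v_i\in \mb Z$.

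Next, by Esseen's concentration inequality, the hypothesis $\rho > n^{-C}$ translates into a lower bound on the spectral integral
\[
\int_{-1/2}^{1/2}\prod_{j=1}^n |\cos(2\pi t v_j)|\,dt \ge c\rho.
\]
I would then study the level set $T=\{t\in \mb R/\mb Z:\prod_j |\cos(2\pi t v_j)|\ge \rho/(2C)\}$. On $T$ the fractional parts $\{tv_j\}$ are close to $0$ for a large number of indices $j$, so $T$ is morally a Bohr-type set generated by $v_1,\dots,v_n$, and the hypothesis on $\rho$ forces $|T|$ to be at least of order $\rho$.

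The key step, and also the main obstacle, is to convert the metric/spectral structure of $T$ into genuine additive structure of the $v_i$ themselves. This is done via a Pl\"unnecke--Ruzsa inequality together with Freiman's theorem: $T$ has small doubling modulo~$1$, and hence lies in a continuous neighborhood of a GAP $P^\ast$ of rank $r=O_C(1)$; a dual/pairing argument then forces most of the $v_i$ to lie in a related GAP in $\mb Z$. The rank bound $r = O_C(1)$ is immediate from Freiman's theorem, but the sharp quantitative tradeoff between the GAP volume $O_C(\rho^{-1}(n')^{-r/2})$ and the number of allowed outliers $n'$ is delicate: it requires an iterative scheme that successively discards ``bad'' indices whose contribution to the spectral mass is small, together with a sufficiently sharp form of the Pl\"unnecke--Ruzsa--Freiman machinery to avoid losing polynomial factors. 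Symmetry of the final GAP is inherited for free from the symmetry of the Rademacher distribution under sign flip.
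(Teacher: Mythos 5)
The paper does not prove this theorem; it is Theorem~2.5 of Nguyen and Vu \cite{NV11}, cited as a black box, so there is no ``paper's own proof'' to compare against. What you are attempting is to reprove a substantial result from the literature.

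Your reduction to $\mb Z$ (embed the f.g.\ torsion-free group in $\mb Z^m$, then apply a linear map to $\mb Z$ that is injective on the signed partial sums) is correct and standard, and the Esseen-inequality starting point together with the study of the level set $T$ of the characteristic-function product is indeed the right framework --- it is also the framework of \cite{NV11}. However, your sketch has a genuine gap precisely where the real work lies. Invoking a Pl\"unnecke--Ruzsa plus Freiman argument to pass from ``small doubling of $T$'' to a GAP of rank $O_C(1)$ containing most of the $v_i$ cannot deliver the sharp volume bound $O_C(\rho^{-1}(n')^{-r/2})$: every known quantitative form of Freiman's theorem (Chang, Green--Ruzsa, Sanders, Schoen, \ldots) incurs losses that are at least polynomial in the doubling constant, and here the doubling constant is not $O(1)$ --- it grows polynomially in $n$ through the dependence of $|T|$ on $\rho \approx n^{-C}$ --- so these losses swamp the tight $(n')^{-r/2}$ dependence the theorem demands. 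You acknowledge this yourself by asking for ``a sufficiently sharp form of the Pl\"unnecke--Ruzsa--Freiman machinery to avoid losing polynomial factors,'' but that phrase is not a proof step; it is an admission that the key estimate is missing. The main technical content of \cite{NV11} is exactly the development of a bespoke iterative/refinement argument (their ``long-range inverse theorem'') that sidesteps the generic Freiman bound and tracks rank and volume exactly through each stage, trading off discarded indices against volume savings. Without that argument, or a substitute of comparable quantitative sharpness, the proposal does not establish the stated theorem.

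A secondary point: the closing sentence that symmetry of the GAP is ``inherited for free from the symmetry of the Rademacher distribution under sign flip'' is a non sequitur. The symmetry of the output GAP comes either from the structure of the spectral analysis (the quantity $\prod_j|\cos(2\pi t v_j)|$ is even in each $v_j$, and the dual GAP is built from the level set $T$, which is symmetric about $0$), or more prosaically by symmetrising an arbitrary GAP about the origin at the cost of a factor $2^r$ in volume. It is not a direct consequence of $\xi_j$ being sign-symmetric.
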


Note that the symmetric GAP in the conclusion must automatically have rank $r\le 4C$ if $n$ is sufficiently large in terms of $C$ (since otherwise the volume $O_C(\rho^{-1}(n')^{-r/2})\le O_C(n^C\sqrt{n}^{-r/2})$ would be smaller than 1). In fact, if $n'$ is chosen to be linear in $n$ one obtains an even better bound for the rank (namely, $r\le 2C$ if $n'\ge \delta n$ for some fixed $\delta>0$ and $n$ is sufficiently large with respect to $C$ and $\delta$).

If we take $G$ to be a high-dimensional space of the form $\mb F^d$ (for $\mb F\in\{\mb R,\mb C\}$), then \cref{theorem: optimal inverse theorem} gives us anticoncentration bounds for random variables of the form $A\vec \xi$  (where $A\in \mb F^{d\times n}$ is a matrix, and $\vec \xi\in \mb \{-1,1\}^n$ is a vector of i.i.d.\ Rademacher random variables). The quality of the bound depends on how well the columns of $A$ can be ``covered'' by a GAP of low rank and small volume. In particular, one can already obtain useful bounds just using information about the ``robust rank'' of $A$ (without taking into account the volume of the covering GAP); such a bound was previously obtained by Hal\'asz~\cite{Hal77}, and will be used occasionally in this paper (it appears explicitly as \cref{thm:Halasz}).

Now we move into the details of the proofs of \cref{thm:rank,thm:stability}. Both proofs follow a similar proof strategy, but the proof of \cref{thm:rank} is simpler, so we start with that.

\subsubsection{Bounds in terms of the rank}\label{subsubsec:rank} We now outline the proof of \cref{thm:rank}, approaching the ``generic bound'' $1/n$ for quadratic forms of high rank.

First, our decoupling inequality is as follows. (Recall from \cref{subsec:notation} our notation $\vec \xi[X]$, $A[X,Y]$ for subvectors and submatrices). 
\begin{definition}\label{def:shifted-rademacher}
    We say that a random variable $\xi$ is \emph{shifted Rademacher} if $\xi-\mb E \xi$ has the Rademacher distribution.
\end{definition}
\begin{lemma}\label{lem:multiple-decoupling-LO}
    Fix $\mb F\in \{\mb R,\mb C\}$ and a positive integer $k$.
    \begin{itemize}
        \item Let $f\in \mb F[x_1,\dots,x_n]$ be a quadratic polynomial, so there is a unique way to write the quadratic part of $f(\vec x)$ as $\vec x^{\,T} A\vec x$ for some symmetric matrix $A\in \mb F^{n\times n}$.
        \item Let $\vec{\xi}=(\xi_{1},\dots,\xi_{n})$ be a sequence of independent Rademacher random variables.
        \item Consider a partition $\{1,\dots,n\}=X\cup Y$.
    \end{itemize}
    Then,
    \begin{equation}
        \sup_{z \in \mb{F}} \Pr[f(\vec \xi)=z] 
        \le \Pr\Bigl[(\vec{\alpha}^{\,(i)})^T A[X,Y]\,\vec{\xi}[Y]=\psi(\vec{\alpha}^{\,(i)})\text{ for all }i\in\{1,\dots,k\}\Bigr]^{1/(k+1)},\label{eq:decoupling-3}
    \end{equation}
    for some function $\psi:\mb F^X\to \mb F$, and some i.i.d.\ random vectors $\vec \alpha^{(1)},\dots,\vec \alpha^{(k)}\in \mb F^X$ with independent shifted Rademacher entries (all independent of $\vec \xi[Y]$).
\end{lemma}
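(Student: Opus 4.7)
The plan is a $(k+1)$-fold decoupling via Jensen's inequality, followed by a pairwise-difference step that extracts a linear form in $\vec{\xi}[Y]$, and a concluding pigeonhole step to convert a random shift into a deterministic one. Let $p:=\sup_{z\in\mathbb{F}}\Pr[f(\vec{\xi})=z]$, achieved at some $z_0$. Writing $r(\vec{y}):=\Pr_{\vec{\xi}[X]}[f(\vec{\xi}[X],\vec{y})=z_0]$, we have $p=\mathbb{E}[r(\vec{\xi}[Y])]$, so Jensen's inequality applied to the convex function $t\mapsto t^{k+1}$ gives $p^{k+1}\le\mathbb{E}[r(\vec{\xi}[Y])^{k+1}]$. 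Expanding the right-hand side as a joint probability over $k+1$ i.i.d.\ copies $\vec{\xi}^{(0)}[X],\dots,\vec{\xi}^{(k)}[X]$ of $\vec{\xi}[X]$ (all independent of $\vec{\xi}[Y]$) yields
\[
p^{k+1}\ \le\ \Pr\Bigl[f(\vec{\xi}^{(j)}[X],\vec{\xi}[Y])=z_0\text{ for all }j\in\{0,1,\dots,k\}\Bigr].
\]

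Next, weaken this to the (implied) event that the $k$ pairwise differences $f(\vec{\xi}^{(j)}[X],\vec{\xi}[Y])-f(\vec{\xi}^{(0)}[X],\vec{\xi}[Y])$ vanish for $j=1,\dots,k$. Writing the quadratic part of $f$ as $\vec{x}^{\,T}A\vec{x}$ and its linear coefficients as $\vec{b}$, the purely-$\vec{\xi}[Y]$ quadratic and linear pieces cancel (using symmetry of $A$). Setting $\vec{\alpha}^{\,(j)}:=\vec{\xi}^{(j)}[X]-\vec{\xi}^{(0)}[X]$, each difference reduces to
\[
(\vec{\alpha}^{\,(j)})^{T}A[X,Y]\,\vec{\xi}[Y]\ =\ -\vec{\xi}^{(0)}[X]^{T}A[X,X]\,\vec{\alpha}^{\,(j)}-\tfrac{1}{2}(\vec{\alpha}^{\,(j)})^{T}A[X,X]\,\vec{\alpha}^{\,(j)}-\tfrac{1}{2}\vec{b}[X]^{T}\vec{\alpha}^{\,(j)},
\]
and we denote the right-hand side by $\psi_{\vec{\xi}^{(0)}[X]}(\vec{\alpha}^{\,(j)})$; note it depends only on $\vec{\xi}^{(0)}[X]$ and $\vec{\alpha}^{\,(j)}$, and the formula is the same for every $j$.

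Finally, condition on $\vec{\xi}^{(0)}[X]=\vec{s}$: then each coordinate $\alpha^{(j)}_i=\xi^{(j)}_i-s_i$ has mean $-s_i$ with $\alpha^{(j)}_i-(-s_i)=\xi^{(j)}_i$ Rademacher, so $\vec{\alpha}^{\,(1)},\dots,\vec{\alpha}^{\,(k)}$ become i.i.d.\ random vectors with independent shifted Rademacher entries, independent of $\vec{\xi}[Y]$, while $\psi_{\vec{s}}$ becomes a fixed function of $\vec{\alpha}^{\,(j)}$ alone. Averaging the resulting bound $p^{k+1}\le\mathbb{E}_{\vec{s}}[\Pr[\text{all }k\text{ equations}\mid\vec{\xi}^{(0)}[X]=\vec{s}]]$ and noting that the maximum of the conditional probability is at least its average, there exists a specific outcome $\vec{s}^{*}$ for which the conditional probability is $\ge p^{k+1}$. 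Setting $\psi:=\psi_{\vec{s}^{*}}$ and taking the $\vec{\alpha}^{\,(i)}$'s of the lemma to have the conditional distribution (i.i.d.\ with independent shifted Rademacher entries, with the deterministic shifts $-s^{*}_1,\dots,-s^{*}_{|X|}$, independent of $\vec{\xi}[Y]$) gives the stated inequality after taking the $(k+1)$-th root. The only slightly delicate point is this final pigeonhole step, which is what allows the \emph{random} shift to be replaced by a \emph{deterministic} one, as required by the statement.
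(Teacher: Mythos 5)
Your proof is correct and follows essentially the same route as the paper: Jensen's inequality to decouple with $k+1$ independent copies of $\vec{\xi}[X]$, subtracting the $j=0$ equation from each of the other $k$ equations so the pure-$\vec{\xi}[Y]$ terms cancel, and then a pigeonhole step fixing $\vec{\xi}^{(0)}[X]$ at its best outcome to turn the random shift into a deterministic one. The only cosmetic inaccuracy is the parenthetical ``using symmetry of $A$'' attached to the cancellation of the purely-$\vec{\xi}[Y]$ pieces---those cancel simply because they are identical in both copies; symmetry of $A$ is instead what lets you combine the two cross terms into $2\,\vec{x}_1^{\,T}A[X,Y]\vec{x}_2$.
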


We give the short proof of \cref{lem:multiple-decoupling-LO} in \cref{sec:decoupling}.

Note that, after conditioning on an outcome of $\vec \alpha^{(i)}$, the expression $(\vec \alpha^{(i)})^T \allowbreak A[X,Y]\,\vec{\xi}[Y]-\psi(\vec{\alpha}^{(i)})$ becomes a linear function of $\vec{\xi}[Y]$. So, the $k=1$ case of \cref{lem:multiple-decoupling-LO} provides a direct way to deduce quadratic anticoncentration bounds from linear anticoncentration bounds. This particular case of \cref{lem:multiple-decoupling-LO} is well-known (in this form, it seems to have been first observed by Costello and Vu~\cite{CV08}, though similar inequalities appeared earlier in \cite{RS96,CTV06,Sid93}).

Although it is near-trivial to generalise the proof of the $k=1$ case of \cref{lem:multiple-decoupling-LO} to general $k$, the possibility of ``decoupling with multiple copies'' (and the advantage of doing so) has been recognised only recently. In particular, a related (though much more delicate) decoupling scheme was recently used by the second and third authors~\cite{KS} to obtain optimal bounds for the quadratic case of \cref{thm:polynomial-LO}. We believe \cref{lem:multiple-decoupling-LO} makes it quite transparent why it is helpful to decouple with more than one copy. Indeed, \cref{eq:decoupling-3} involves $k$ simultaneous equations; if we imagine that each equation is satisfied independently with probability at most about $1/n$, then \cref{lem:multiple-decoupling-LO} provides a bound of about $n^{-k/(k+1)}$, which gets closer and closer to $1/n$ as we increase $k$.

Of course, the $k$ simultaneous equations in \cref{eq:decoupling-3} are not independent. The primary challenge in our proof of \cref{thm:rank} is to demonstrate that, for the purposes of anticoncentration, these equations are ``approximately independent'', if \cref{C1} does not hold (i.e., if our quadratic form robustly has high rank). Specifically, given \cref{lem:multiple-decoupling-LO,lemma: close to symmetric low rank}, the following lemma is the main part of the proof of \cref{thm:rank}. (Recall the definition of a shifted Rademacher random variable from \cref{def:shifted-rademacher}).
\begin{lemma} \label{thm: multi-x-single-y costello}
    Fix any $\varepsilon,\delta \in(0,1]$ and $k \ge 1$, and let $\mb{F} \in \{\mb{R},\mb{C}\}$.
    Let $n$ be sufficiently large (in terms of $\varepsilon,\delta,k$), and let $A \in \mb{F}^{n \times n}$ be a matrix.
    Then, at least one of the following holds.
    \begin{enumerate}[{\bfseries{G\arabic{enumi}}}]
        \item\label{G1} At most a $\delta$-fraction of the $\lceil2k^2/\varepsilon\rceil \times \lceil2k^2/\varepsilon\rceil$ submatrices of $A$ are nonsingular, or
        \item\label{G2} letting $\Xi \in \mb{F}^{k \times n}$ be a random matrix with independent shifted Rademacher entries, and (independently) letting $\vec{\eta} \in \{-1,1\}^n$ be a random column vector with independent Rademacher entries, for any (non-random) function $\vec \varphi: \mb{F}^{k\times n}\to \mb{F}^k$
        we have
        \[
            \Pr\big[\Xi A \vec{\eta} = \vec \varphi(\Xi)  \big]
            \le n^{-k+\varepsilon}.
        \]
    \end{enumerate}
\end{lemma}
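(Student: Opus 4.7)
The plan is to reduce the $k$-dimensional anticoncentration for $\Xi A \vec\eta$ to a one-dimensional Littlewood--Offord question for $A \vec\eta$ via a decoupling on the rows of $\Xi$, and then combine \cref{theorem: optimal inverse theorem} with Hal\'asz-type anticoncentration bounds for small row-submatrices of $A$.

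First, conditioning on $\vec\eta$, the $k$ rows of $\Xi$ are independent shifted-Rademacher row-vectors. Iteratively conditioning on all but one row (and absorbing the shifts into the sup over the target) shows that
\[\Pr_\Xi\bigl[\Xi(A\vec\eta) = \vec\varphi(\Xi)\bigm|\vec\eta\bigr] \le \rho_1(A\vec\eta)^k,\]
where $\rho_1(\vec u) := \sup_{z\in\mb F}\Pr_{\vec\xi}[\vec\xi^{\,T}\vec u = z]$ is the maximal point probability of the Rademacher sum with coefficient vector $\vec u$. Averaging over $\vec\eta$ and splitting the expectation at the threshold $t = n^{-1+\varepsilon/(2k)}$ (so that $t^k \le n^{-k+\varepsilon/2}$), the target estimate reduces to the tail bound
\[\Pr_{\vec\eta}\bigl[\rho_1(A\vec\eta) > t\bigr] \le n^{-k+\varepsilon/2}.\]

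To establish this tail, I would apply \cref{theorem: optimal inverse theorem} with $C = 1 - \varepsilon/(2k)$ and $n' = n/2$: the event $\rho_1(A\vec\eta) > t$ forces at least $n/2$ entries of $A\vec\eta$ to lie in a symmetric GAP $\mc G \subseteq \mb F$ of rank $r^* = O(1)$ (in fact $r^* \le 2$ for large $n$) and volume $V = O(n^{1/2 - \varepsilon/(2k)})$. Set $r = \lceil 2k^2/\varepsilon \rceil$ and sample a uniformly random $r$-subset $I \subseteq \{1,\dots,n\}$, independent of $\vec\eta$. A random-subset argument gives $\Pr_I[(A\vec\eta)[I] \in \mc G^I] \ge 2^{-r-1}$ on the tail event. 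On the other hand, the failure of \cref{G1} (via Markov averaging on $I$) yields a positive fraction of ``good'' $I$'s for which $A[I,J]$ is nonsingular for many $r$-subsets $J$; for such $I$, no nontrivial linear combination of the rows $\{A_i\}_{i\in I}$ can be ``mostly zero'' (else too many $J$'s would give singular $A[I,J]$), which is precisely the column-spread condition needed for a Hal\'asz-type bound $\sup_z \Pr_{\vec\eta}[A[I,:]\vec\eta = z] \le O(n^{-r/2})$. For any fixed GAP $\mc G$ of volume $V$, this gives
\[\Pr_{\vec\eta}\bigl[(A\vec\eta)[I] \in \mc G^I\bigr] \le V^r \cdot O(n^{-r/2}) = O(n^{-r\varepsilon/(2k)}) = O(n^{-k}),\]
where the calibration $r = \lceil 2k^2/\varepsilon \rceil$ is precisely what makes the exponents match.

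The main obstacle is the final union bound over the GAP $\mc G$ itself. Because GAPs are parametrized by real (or complex) generators, na\"ive union-bounding is infeasible; the standard workaround is a discretization argument that restricts attention to a polynomial-size ``canonical'' family of GAPs (for instance, those whose generator is of the form $(A\vec\eta)_i / a$ for some $i\in\{1,\dots,n\}$ and some integer $a$ with $|a|\le V$), so that the union-bound loss is at most a polynomial factor that can be absorbed into the $n^{\varepsilon}$ slack. A secondary technical point is amplifying the ``positive fraction of good $I$'s'' provided by \cref{G1} into a more usable estimate, which will likely require either sampling a slightly larger index set to ensure a good core with overwhelming probability, or employing a weighted random-sampling variant of the argument above.
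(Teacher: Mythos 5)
Your first reduction step is incorrect: after conditioning on $\vec\eta$, you claim
\[\Pr_\Xi\bigl[\Xi(A\vec\eta) = \vec\varphi(\Xi)\bigm|\vec\eta\bigr]\le\rho_1(A\vec\eta)^k,\]
but this fails precisely because $\vec\varphi$ is allowed to depend on $\Xi$ (and in the application to \cref{thm:rank} it does, with $\varphi_i(\Xi)=\psi(\Xi_i)$ for a fixed \emph{quadratic} function $\psi$ of the corresponding row, so this dependence is essential). The event $\Xi_i(A\vec\eta)=\varphi_i(\Xi)$ has a target that moves with $\Xi_i$, so no sup-over-fixed-$z$ bound such as $\rho_1(A\vec\eta)$ applies. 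Concretely, if $A$ and an outcome of $\vec\eta$ satisfy $A\vec\eta=\vec e_1$, and one takes $\varphi_i(\Xi)=\Xi_{i1}$, then the conditional probability is $1$ while $\rho_1(A\vec\eta)^k=2^{-k}$. This $\Xi$-dependence of the target is the crux of the lemma, and it forces the opposite order of revelation: one must condition on $\Xi$ (so that $\vec\varphi(\Xi)$ becomes a constant target) and then exploit the randomness of $\vec\eta$. That is what the paper does, applying \cref{theorem: optimal inverse theorem} to the $\mb F^k$-valued linear form $\vec\eta\mapsto\Xi A\vec\eta$ whose coefficient sequence is the columns of $\Xi A$.

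Separately, even taking your first step on faith, the union bound over GAPs $\mc G$ (which you flag as the main obstacle) is genuinely unresolved, and the sketched discretization by generators of the form $(A\vec\eta)_i/a$ is far from obviously sufficient. The paper avoids any such union bound by re-using the randomness of $\Xi$: conditionally on $\Xi$ lying in a high-probability event $\mc E$, the inverse theorem forces most columns of $\Xi A$ into a single low-rank, small-volume GAP, and then \cref{lem:count-1,lem:count-2} bound (over the randomness of $\Xi$) the probability that a fixed $m$-tuple of columns of $\Xi A$ lies in any such GAP. Combined with \cref{lemma: tuple-counting,fact: disjoint and non-disjoint nonsingular submatrices,thm:Halasz}, this directly yields a density bound on nonsingular $m\times m$ submatrices of $A$, i.e.\ the negation of \cref{G1}, with no need to enumerate GAPs.
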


To prove \cref{thm: multi-x-single-y costello}, we first use a counting argument, and some anticoncentration estimates using the rank of $A$, to show that it is unlikely that most of the columns of $\Xi A$ are contained in a generalised arithmetic progression with small rank and volume. Then, we can apply \cref{theorem: optimal inverse theorem} (the Nguyen--Vu optimal linear inverse theorem) with the randomness of $\vec \eta$, to prove an upper bound on the probability that $\Xi A \vec{\eta} = \vec \varphi(\Xi)$.

\begin{remark}\label{rem:optimal?}
We suspect that the rank bound in \cref{G1} is far from best possible; indeed, we suspect that the same statement should hold with ``$\lceil2k^2/\varepsilon\rceil$'' replaced by ``$2k$'' (if true, this would be best-possible, as one can see by taking $A$ to be an appropriate block-diagonal matrix). Actually, this would constitute a generalisation of the $d=2$ case of \cref{thm:multilinear}, via \cref{lemma: close to symmetric low rank}. We did not fight too hard to optimise the particular rank bound in \cref{G1}, but our methods do seem to be somewhat too crude to prove an optimal bound.
\end{remark}

In \cref{sec:rank}, we give the details of the proof of \cref{thm: multi-x-single-y costello}, and deduce \cref{thm:rank} from \cref{thm: multi-x-single-y costello,lem:multiple-decoupling-LO,lemma: close to symmetric low rank}.

\subsubsection{A power-saving improvement in the ``robustly irreducible'' case} Now, we outline the proof of \cref{thm:stability}, giving a power-saving improvement for quadratic polynomials which are ``robustly irreducible'' over $\mb C$.

First, the significance of $\mb C$ is that a quadratic polynomial is reducible over $\mb C$ if and only if it has rank at most 2. (On the other hand, reducibility over $\mb R$ cannot be expressed in terms of rank: $x^2+y^2$ and $x^2-y^2$ both have rank two, but the former is irreducible and the latter is reducible).

Given this connection between irreducibility and rank, our proof of \cref{thm:stability} follows the same general scheme as the proof of \cref{thm:rank}, though the details are much more delicate. Indeed, for \cref{thm:rank} we could wastefully assume our coefficient matrix $A$ had high rank, but here we can only assume that the rank is at least 3.

Just as for \cref{thm:rank}, the first ingredient is a decoupling lemma. Instead of \cref{lem:multiple-decoupling-LO}, we use the following more technical lemma.
\begin{definition}\label{def:lazy-rademacher}
 we say that a random variable $\alpha$ is \emph{lazy Rademacher} if it can be expressed as the difference of two independent Rademacher random variables (explicitly, this means $\Pr[\alpha=0]=1/2$ and $\Pr[\alpha=-1]=\Pr[\alpha=1]=1/4$).
\end{definition}
\newcommand{\vx}{\vec{\alpha}}
\newcommand{\vy}{\vec{\beta}}
\newcommand{\vz}{\vec{\gamma}}
\begin{lemma}\label{lem:custom-decoupling-LO}
    Fix $\mb F\in \{\mb R,\mb C\}$.
    \begin{itemize}
        \item Let $f \in \mb{F}[x_1,\dots,x_n]$ be a quadratic polynomial, so there is a unique way to write the quadratic part of $f(\vec{x})$ as $\vec{x}^T A \vec{x}$ for some symmetric matrix $A \in \mb{F}^{n\times n}$.
        \item Let $\vec{\xi}=(\xi_{1},\dots,\xi_{n})$ be a sequence of independent Rademacher random variables.
        \item Consider a partition $\{1,\dots,n\}=X\cup Y\cup Z$.
    \end{itemize}
    Then,
    \[
        \sup_{z\in \mb{F}} \Pr[f(\vec \xi)=z] 
        \le \Pr\Bigl[\vx^{\,T} A[X,Y] \vy=0, \quad\vx^{\,T} A[X,Z] \vz=\varphi(\vec{\alpha}, \vec{\beta}),\quad\vy^{\,T} A[Y,Z] \vz=\psi(\vec{\alpha},\vec{\beta})\Bigr]^{1/4},
    \]
    for some functions $\varphi,\psi:\mb F^X\times \mb F^Y \to \mb F$, and independent random vectors $\vx,\vy,\vz$, where the entries of $\vx,\vy$ are i.i.d.\ lazy Rademacher, and the entries of $\vz$ are i.i.d.\ Rademacher.
\end{lemma}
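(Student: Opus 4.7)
The plan is to apply Cauchy--Schwarz twice, first to introduce an independent copy of $\vec{\xi}[X]$ and then of $\vec{\xi}[Y]$, obtaining four evaluations of $f$. Concretely, let $\vec{u}_0,\vec{u}_1\in\{-1,1\}^X$, $\vec{v}_0,\vec{v}_1\in\{-1,1\}^Y$, and $\vec{w}\in\{-1,1\}^Z$ be mutually independent Rademacher vectors, and write $f_{ab}=f(\vec{u}_a,\vec{v}_b,\vec{w})$ for $a,b\in\{0,1\}$. Conditioning first on $(\vec{v}_0,\vec{w})$ and squaring via Jensen's inequality, and then conditioning on $(\vec{u}_0,\vec{u}_1,\vec{w})$ and squaring again, one obtains the standard four-copy bound
\[
\sup_{z\in\mb F}\Pr[f(\vec{\xi})=z]^4\le \Pr[\mathcal{E}],\qquad \mathcal{E}:=\{f_{00}=f_{01}=f_{10}=f_{11}=z\}.
\]
Setting $\vec{\alpha}=(\vec{u}_0-\vec{u}_1)/2$, $\vec{\beta}=(\vec{v}_0-\vec{v}_1)/2$, and $\vec{\gamma}=\vec{w}$, the entries of $\vec{\alpha},\vec{\beta}$ are i.i.d.\ lazy Rademacher, the entries of $\vec{\gamma}$ are i.i.d.\ Rademacher, and the three vectors are mutually independent, as required.

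Next, I would extract three \emph{derived} equations from $\mathcal{E}$ by taking linear combinations of the four equalities $f_{ab}=z$. Writing $f(\vec{x})=\vec{x}^T A\vec{x}+\vec{c}^T\vec{x}+d$, the key observation is that whenever a term in the expansion of $f_{ab}$ depends on at most one of $a,b$ (for instance, the diagonal quadratic terms $\vec{u}_a^T A[X,X]\vec{u}_a$, or the ``$XZ$-cross term'' $\vec{u}_a^T A[X,Z]\vec{w}$), it cancels in any linear combination that alternates in the other index. In particular, the second difference $(f_{00}-f_{10})-(f_{01}-f_{11})$ collapses to $8\vec{\alpha}^T A[X,Y]\vec{\beta}$, yielding the first target equation $\vec{\alpha}^T A[X,Y]\vec{\beta}=0$. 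The sum $(f_{00}-f_{10})+(f_{01}-f_{11})$ equals $8\vec{\alpha}^T A[X,Z]\vec{\gamma}$ plus an ``error term'' that is independent of $\vec{\gamma}$ and depends only on $(\vec{u}_0,\vec{u}_1,\vec{v}_0+\vec{v}_1)$; this gives an equation of the form $\vec{\alpha}^T A[X,Z]\vec{\gamma}=c_1$ for some $c_1=c_1(\vec{u}_0,\vec{u}_1,\vec{v}_0,\vec{v}_1)$. Symmetrically, $(f_{00}-f_{01})+(f_{10}-f_{11})$ yields $\vec{\beta}^T A[Y,Z]\vec{\gamma}=c_2$ for some $c_2=c_2(\vec{u}_0,\vec{u}_1,\vec{v}_0,\vec{v}_1)$.

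The main obstacle is that $c_1$ and $c_2$ are \emph{not} functions of $(\vec{\alpha},\vec{\beta})$ alone: they depend also on the ``extra'' variables $\vec{u}_0+\vec{u}_1$ and $\vec{v}_0+\vec{v}_1$ (together with the diagonal quadratic contributions). To absorb this extra randomness, I would condition on $(\vec{u}_0,\vec{u}_1,\vec{v}_0,\vec{v}_1)$, so that $\vec{\alpha},\vec{\beta},c_1,c_2$ are all determined and only $\vec{\gamma}$ is still random, and define $(\varphi(\vec{\alpha},\vec{\beta}),\psi(\vec{\alpha},\vec{\beta}))$ to be any pair $(a,b)\in\mb F^2$ maximising
\[
\Pr_{\vec{\gamma}}\bigl[\vec{\alpha}^T A[X,Z]\vec{\gamma}=a,\ \vec{\beta}^T A[Y,Z]\vec{\gamma}=b\bigr]
\]
(the maximum is attained because $\vec{\gamma}$ has finite support). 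For any values of $c_1,c_2$ this choice trivially bounds $\Pr_{\vec{\gamma}}[\vec{\alpha}^T A[X,Z]\vec{\gamma}=c_1,\,\vec{\beta}^T A[Y,Z]\vec{\gamma}=c_2]$ by $\Pr_{\vec{\gamma}}[\vec{\alpha}^T A[X,Z]\vec{\gamma}=\varphi(\vec{\alpha},\vec{\beta}),\,\vec{\beta}^T A[Y,Z]\vec{\gamma}=\psi(\vec{\alpha},\vec{\beta})]$. Integrating out $(\vec{u}_0,\vec{u}_1,\vec{v}_0,\vec{v}_1)$ and noting that the resulting bound depends on these variables only through $(\vec{\alpha},\vec{\beta})$ then yields the desired inequality. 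The conceptual difficulty is precisely this final manoeuvre: the natural derived equations involve randomness beyond $(\vec{\alpha},\vec{\beta},\vec{\gamma})$, and the worst-case choice of $(\varphi,\psi)$ is what allows the bound to be stated cleanly in terms of deterministic functions of $(\vec{\alpha},\vec{\beta})$.
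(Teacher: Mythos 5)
Your proposal is correct and follows essentially the same route as the paper's proof: two applications of Jensen's inequality to produce the four-copy event, linear combinations of the four equations to isolate the three bilinear expressions, and a worst-case choice of $(\varphi,\psi)$ to eliminate the residual dependence on $\vec{u}_0+\vec{u}_1$ and $\vec{v}_0+\vec{v}_1$. The only cosmetic difference is that you maximise over the target pair $(a,b)\in\mathbb{F}^2$ directly, whereas the paper maximises over an outcome of the second copies and then evaluates $\varphi,\psi$ at that outcome.
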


We prove \cref{lem:custom-decoupling-LO} in \cref{sec:decoupling}. Then, given \cref{lem:custom-decoupling-LO,lemma: close to symmetric low rank}, the proof of \cref{thm:stability} essentially comes down to the following lemma.
\begin{lemma}\label{theorem: three bilinears}
Fix any $\varepsilon\in(0,1]$, and let $\mb{F} \in \{\mb{R},\mb{C}\}$.
    Let $n$ be sufficiently large (in terms of $\varepsilon$), and consider matrices $A_1,A_2,A_3 \in \mb{F}^{n \times n}$.
    Then, at least one of the following holds.
    \begin{enumerate}[{\bfseries{H\arabic{enumi}}}]
        \item\label{H1} 
         For one of the matrices $A_1,A_2,A_3$,
        at most an $\varepsilon$-fraction of its $3\times 3$ submatrices are nonsingular, or
        \item\label{H2}
        letting $\vx,\vy\in \{-1,0,1\}^n$ be vectors of lazy Rademacher random variables, and $\vz\in \{-1,1\}^n$ be a vector of Rademacher random variables (all independent), 
        for any functions $\varphi,\psi: \mb{F}^n\times \mb{F}^n\to \mb{F}$
        we have
        \[
\Pr\left[ \vx^{\,T} A_1 \vy=0, \quad\vx^{\,T} A_2 \vz=\varphi(\vx,\vy), \quad\vy^{\,T} A_3 \vz=\psi(\vx,\vy) \right] \le n^{-2-1/6+\varepsilon}.
        \]
    \end{enumerate}
\end{lemma}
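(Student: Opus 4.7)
The plan is to first condition on $\vx$ and $\vy$ so that the second and third constraints become linear equations in $\vz$.  Since the functions $\varphi,\psi$ depend only on $(\vx,\vy)$, once those are fixed they are merely constants; setting
\[
\rho(\vx,\vy) \;:=\; \sup_{t_2,t_3\in\mb F}\Pr_{\vz}\!\left[(A_2^{T}\vx)^{T}\vz=t_2,\ (A_3^{T}\vy)^{T}\vz=t_3\right],
\]
we obtain $\Pr[E_1\cap E_2\cap E_3]\;\le\;\Ex_{\vx,\vy}\big[\one[\vx^{T}A_1\vy=0]\cdot\rho(\vx,\vy)\big]$.
The failure of \cref{H1}, combined with the (easy non-symmetric analogue of) \cref{lemma: close to symmetric low rank}, lets me assume that each $A_i$ is ``robustly of rank $\ge 3$'': even after modifying a small fraction of its entries, the rank is still $\ge 3$.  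In particular $A_1$ is robustly of rank $\ge 2$.

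Next I would split the expectation at the threshold $\rho_0:=n^{-2+\varepsilon/2}$:
\[
\Ex\!\big[\one[E_1]\,\rho\big]\;\le\;\rho_0\,\Pr[E_1]\;+\;\Pr\!\big[E_1,\ \rho(\vx,\vy)>\rho_0\big].
\]
Because $A_1$ is robustly of rank $\ge 2$, the bilinear form $\vx^{T}A_1\vy$ is a robustly irreducible $2$-multilinear form (after expressing each lazy Rademacher entry of $\vx,\vy$ as the difference of two independent Rademachers), so the $d=2$ case of \cref{thm:multilinear} yields $\Pr[E_1]\le n^{-1+\varepsilon/4}$.  Hence the first term is at most $n^{-3+o(1)}$, comfortably below the target $n^{-13/6+\varepsilon}$.

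The crux of the proof is therefore the ``bad'' term $\Pr[E_1,\ \rho>\rho_0]$.  For each $(\vx,\vy)$ with $\rho(\vx,\vy)>\rho_0$, applying the Nguyen--Vu optimal inverse theorem (\cref{theorem: optimal inverse theorem}) to the $n\times 2$ matrix $[A_2^{T}\vx\mid A_3^{T}\vy]$ forces all but at most (say) $n/4$ of its rows into a symmetric GAP in $\mb F^{2}$ of bounded rank $r=O_\varepsilon(1)$ and volume $O_\varepsilon(\rho_0^{-1}n^{-r/2})$.  I would then split into sub-cases according to the one-dimensional sup probabilities $\rho_u(\vx)=\sup_t\Pr_{\vz}[(A_2^{T}\vx)^{T}\vz=t]$ and $\rho_v(\vy)=\sup_t\Pr_{\vz}[(A_3^{T}\vy)^{T}\vz=t]$.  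If either of these exceeds $n^{-1+\varepsilon/3}$, one-dimensional inverse Littlewood--Offord combined with the robust rank bound on $A_2$ (respectively $A_3$) forces $\vx$ (respectively $\vy$) into a small structured set, and the contribution is estimated by intersecting that set with $E_1$.  In the remaining ``genuinely 2D'' regime (both $\rho_u,\rho_v$ smaller than $n^{-1+\varepsilon/3}$), one uses the joint 2D GAP structure to count pairs $(\vx,\vy)$ that additionally satisfy $E_1$.

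The main obstacle will be this last sub-case: extracting the precise exponent $-2-\tfrac{1}{6}$ almost certainly requires a delicate optimisation, balancing the volume and rank of the constraining GAP (which controls $\rho$) against the size of the set of $(\vx,\vy)$ forced into that structural constraint, intersected with the bilinear hypersurface $E_1$.  The peculiar factor $\tfrac{1}{6}$ suggests that the trade-off juggles multiple parameters simultaneously (GAP rank and volume against the bilinear bound $\Pr[E_1]\le n^{-1+o(1)}$ and the threshold $\rho_0=n^{-2+o(1)}$), and the argument may well require an additional decoupling or Cauchy--Schwarz step -- perhaps another application of \cref{lem:custom-decoupling-LO}, or a variant thereof -- in order to reduce $E_1$ to purely linear constraints that mesh cleanly with the GAP count.
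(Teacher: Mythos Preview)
Your overall plan---reveal $\vz$ last, write the remaining probability as $\Ex_{\vx,\vy}[\one[E_1]\rho(\vx,\vy)]$, then split into a ``good'' term handled by Costello's bilinear bound and a ``bad'' structured term handled by the Nguyen--Vu inverse theorem---is reasonable in spirit, and the first term works out as you say. But the second term is exactly where the content of the lemma lies, and your sketch there is not yet a proof: you acknowledge this, so let me say what the paper actually does differently and which missing ingredient is essential.

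The paper does \emph{not} condition on $(\vx,\vy)$ jointly and then attempt to count pairs compatible with a two-dimensional GAP structure on $[A_2^T\vx\mid A_3^T\vy]$. Instead it reveals the randomness in the order $\vx$, then $\vy$, then $\vz$, and the case split is on events defined in terms of $\vx$ alone (whether $\vx^T A_1$ or $\vx^T A_2$ is almost zero) and then in terms of $(\vx,\vy)$ (whether $\vx^T A_2$ and $\vy^T A_3$ are almost collinear). The Nguyen--Vu theorem is applied not once at a fixed threshold, but reformulated as an upper bound on $\sup_z\Pr[\cdot=z]$ in terms of the \emph{inverse minimal GAP volumes} $1/h_r^\delta(\vec v)$ (Theorem~\ref{lemma: prob via volume}). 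The whole problem then reduces to bounding expectations over $\vx$ of the form
\[
\Ex_{\vx}\!\left[\frac{1}{h_2^\delta(\vx^T A)}\right]\le n^{-1/6+o(1)}
\qquad\text{and}\qquad
\Ex_{\vx}\!\left[\frac{1}{h_1^\delta(\vx^T A_1)\,h_2^\delta(\vx^T A_2)}\right]\le n^{-2/3+o(1)}.
\]
The exponent $-\tfrac16$ is literally the first of these two bounds; the ``delicate optimisation'' you anticipate is just these two estimates. Their proofs (Lemma~\ref{lem:technical-estimates}) integrate $\Pr[h_r^\delta<V]$ against $V^{-2}\,dV$, and the $h_1$ tail bound requires a number-theoretic input due to Costello (\cref{lemma: pair volume by Costello0}) controlling the probability that two linear forms in lazy Rademachers become rationally dependent with small coefficients---this is the ingredient your proposal is missing. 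A crude Hal\'asz/counting bound of the kind in \cref{lem:count-2} is not sharp enough for the $h_1$ estimate; without Costello's lemma the exponent degrades and you do not recover $-2-\tfrac16$. Your proposed joint-in-$(\vx,\vy)$ counting approach would have to rediscover this estimate in disguise, and there is no indication in the sketch of how it would.
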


At a high level, the proof of \cref{theorem: three bilinears} can be compared to the proof of \cref{thm: multi-x-single-y costello}. To study anticoncentration of a bilinear form of random variables, we reveal its two sets of variables in two stages; in the first stage we prepare for the application of \cref{theorem: optimal inverse theorem} (the Nguyen--Vu optimal linear inverse theorem), and in the second stage we actually apply \cref{theorem: optimal inverse theorem}. However, the details of \cref{theorem: three bilinears} are much more complicated than the details of \cref{thm: multi-x-single-y costello}, for two main reasons.

First, since \cref{theorem: three bilinears} concerns three different bilinear forms which depend on each other in a ``cyclic'' manner, we must be very careful about the way that we reveal the random variables $\vx,\vy,\vz$. We actually need to consider three different cases, in which we reveal the random variables $\vx,\vy,\vz$ in different ways. (The three cases are defined in terms of the random vectors $\vx^{\,T} A_1$ and $\vx^{\,T} A_2$; specifically, we need to distinguish whether one of these vectors is almost zero, and whether these vectors are almost collinear).

Second, we have much less room to make crude estimates, and we need to be much more careful about the tradeoffs between the stages of random exposure.
In particular, in addition to the simple counting-based estimates featuring in proof of \cref{thm: multi-x-single-y costello}, we also need to use some more technical estimates proved by Costello~\cite{Cos13} via number-theoretic means.

In \cref{sec:power-saving}, we give the details of the proof of \cref{theorem: three bilinears}, and deduce \cref{thm:stability} from \cref{theorem: three bilinears,lem:custom-decoupling-LO,lemma: close to symmetric low rank}.

\section{Preliminaries}\label{sec:prelim}
In this section we collect a few basic tools that will be used throughout the paper. First, the following lemma is related to the ``local-to-global'' point of view in \cref{thm:tensor-property-testing,lemma: close to symmetric low rank}. (Very similar estimates appear for example in \cite[Section~3.4]{Cos13}, but we were not able to find an easily citable reference).

\begin{lemma}\label{lemma: tuple-counting}
    Let $p,\delta\in (0,1)$ and $n \ge r \ge 1$.
    Suppose $I$ is a random subset of $\{1,\dots,n\}$ such that $\Pr[|I|\ge (1-\delta) n] \ge p$.
    Then, for all but a $2r\delta$-fraction of the $r$-element subsets $S\subset \{1,\dots,n\}$, we have $\Pr[S \subset I] \ge p/2.$
\end{lemma}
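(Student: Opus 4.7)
The plan is to condition on the ``good'' event $E=\{|I|\ge (1-\delta)n\}$, compute the average of $\Pr[S\subset I\mid E]$ over all $r$-subsets $S$, and then use a reverse Markov inequality to bound the number of exceptional subsets.

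First, I would observe that on the event $E$ the number of $r$-subsets \emph{not} contained in $I$ is small. Indeed, every such $S$ must meet the complement $[n]\setminus I$, which has size at most $\delta n$ on $E$. By a union bound over which element of $[n]\setminus I$ is hit, the number of such $r$-subsets is at most $\delta n \cdot \binom{n-1}{r-1} = r\delta \binom{n}{r}$. Consequently, pointwise on $E$,
\[
\binom{|I|}{r}\ge (1-r\delta)\binom{n}{r}.
\]
Taking expectations conditional on $E$ and swapping the sum gives
\[
\sum_{S} \Pr[S\subset I \mid E] \;=\; \Ex\!\left[\binom{|I|}{r}\,\Big|\,E\right] \;\ge\; (1-r\delta)\binom{n}{r}.
\]

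Next, let $B=\{S:\Pr[S\subset I\mid E]<1/2\}$. Splitting the sum over $B$ and its complement and using the trivial bound $\Pr[S\subset I\mid E]\le 1$ on $S\notin B$,
\[
(1-r\delta)\binom{n}{r} \;\le\; \tfrac{1}{2}|B| + \left(\binom{n}{r}-|B|\right) \;=\; \binom{n}{r}-\tfrac{1}{2}|B|,
\]
which rearranges to $|B|\le 2r\delta\binom{n}{r}$. Finally, for any $S\notin B$ I would conclude
\[
\Pr[S\subset I] \;\ge\; \Pr[S\subset I,\,E] \;=\; \Pr[E]\cdot \Pr[S\subset I\mid E] \;\ge\; p\cdot \tfrac{1}{2} \;=\; \tfrac{p}{2},
\]
finishing the proof.

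There is no real obstacle: the argument is a standard first-moment / reverse-Markov computation, and the only step that deserves care is the inequality $\binom{n}{r}-\binom{|I|}{r}\le (n-|I|)\binom{n-1}{r-1}$, which follows from the union bound above and handles the case $|I|<r$ automatically (both sides being compared correctly when $\binom{|I|}{r}=0$). In particular, when $2r\delta\ge 1$ the claim is vacuous, so one does not need to separately worry about the regime where $(1-\delta)n<r$.
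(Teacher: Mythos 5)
Your proof is correct and takes essentially the same approach as the paper's: both double-count $\binom{|I|}{r}$ restricted to the event $E=\{|I|\ge(1-\delta)n\}$, use the identity $\sum_S\Pr[S\subset I,\,E]=\Ex[\mbm 1_E\binom{|I|}{r}]$, and separate good from bad $r$-subsets via a reverse-Markov argument. The only cosmetic differences are that you condition on $E$ where the paper carries the indicator $\mbm 1_{\mc E}$ throughout (equivalent after dividing by $\Pr[\mc E]$), and you substitute the explicit bound $\binom{|I|}{r}\ge(1-r\delta)\binom nr$ early rather than keeping $\binom{\lceil(1-\delta)n\rceil}{r}$ and simplifying at the end.
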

\begin{proof}
    For $i\in \{1,\dots,n\}$, let $\mathcal{E}_{i}$ be the event that $i\in I$
and let $\mathcal{E}$ be the event that $|I|\ge(1-\delta)n$, so we
are assuming $\Pr[\mathcal{E}]\ge p$. Let $\mathcal{Q}$ be the collection
of sets $\{i_{1},\dots,i_{r}\}\subseteq \{1,\dots,n\}$ of $r$ distinct indices such that
$\Pr[\mathcal{E}_{i_{1}}\cap\dots\cap \mathcal{E}_{i_{r}}]<p/2\le\Pr[\mathcal{E}]/2$.
So, our goal is to prove that $|\mathcal{Q}|\le2r\delta\binom{n}{r}$.

On one hand, we have
\begin{equation}
\mb E\left[\mbm 1_{\mathcal{E}}\binom{|I|}{r}\right]\ge\binom{\lceil (1-\delta)n\rceil}{r}\Pr[\mathcal{E}],\label{eq:double-counting-events-1}
\end{equation}
but on the other hand we have
\begin{align}
\mb E\left[\mbm 1_{\mathcal{E}}\binom{|I|}{r}\right] =\sum_{i_{1}<\dots<i_{r}}\Pr[\mathcal{E}\cap\mathcal{E}_{i_{1}}\cap\dots\cap\mathcal{E}_{i_{r}}] 
 & \le\sum_{\{i_{1},\dots,i_{r}\}\in\mathcal{Q}}\Pr[\mathcal{E}_{i_{1}}\cap\dots\cap\mathcal{E}_{i_{r}}]+\sum_{\{i_{1},\dots,i_{r}\}\notin\mathcal{Q}}\Pr[\mathcal{E}]\nonumber\\
 & <|\mathcal{Q}|\cdot(\Pr[\mathcal{E}]/2)+\left(\binom{n}{r}-|\mathcal{Q}|\right)\cdot\Pr[\mathcal{E}].\label{eq:double-counting-events-2}
\end{align}
Combining \cref{eq:double-counting-events-1,eq:double-counting-events-2}
yields the desired inequality
\[
|\mathcal{Q}|<2\left(\binom{n}{r}-\binom{\lceil (1-\delta)n\rceil}{r}\right)\le2\left(\delta n\cdot\binom{n-1}{r-1}\right)=2r\delta\binom{n}{r}.
\]
(The second inequality has a combinatorial interpretation: given a
set of $n$ elements, and an identification of $\lfloor \delta n\rfloor$ of those
elements as ``special'', we are counting the number of ways to choose
a subset of $r$ elements, at least one of which is special).
\end{proof}

Next, the following high-dimensional Littlewood--Offord theorem is due to Hal\'asz~\cite{Hal77} (it can also be deduced from \cref{theorem: optimal inverse theorem}).
\begin{theorem}\label{thm:Halasz}
    Let $n\ge d\ge 1$ and $\ell\ge 1$ and $\mb{F} \in \{\mb{R},\mb{C}\}$, and suppose $A \in \mb{F}^{d \times n}$ is a matrix containing at least $\ell$ disjoint  nonsingular $d\times d$ submatrices. 
    Let $\vec{\xi}=(\xi_1,\dots,\xi_n)\in \mb{F}^n$ be a vector of independent Rademacher (or shifted Rademacher, or lazy Rademacher\footnote{It is easy to deduce the lazy and shifted Rademacher cases from the Rademacher case. Indeed, recall that a lazy Rademacher random variable is the difference of two independent Rademacher random variables; after conditioning on one of these it is a shifted Rademacher random variable. By linearity, shifting the entries of $\vec \xi$ does not affect anticoncentration.}) random variables.
    Then,
    \[
        \sup_{\vec{z}\in \mb F^d} \Pr\left[A\vec{\xi} = \vec{z} \right] \le O_{d}(\ell^{-d/2}).
    \]
\end{theorem}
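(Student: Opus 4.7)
The plan is to deduce \cref{thm:Halasz} from the Nguyen--Vu optimal inverse theorem (\cref{theorem: optimal inverse theorem}) via a contradiction argument; the key combinatorial observation is that a symmetric GAP whose image contains the columns of a nonsingular $d \times d$ block must have rank at least $d$.

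First I would handle the shifted and lazy Rademacher cases, as suggested in the footnote: a shifted Rademacher variable merely translates $A\vec\xi$, preserving point probabilities, and a lazy Rademacher variable can be written as $(\epsilon_1-\epsilon_2)/2$ for two independent Rademachers, so conditioning on $\vec\epsilon_2$ reduces to the Rademacher setup (up to a deterministic rescaling and shift, which do not change the point-probability supremum). Next I would condition on the entries of $\vec\xi$ whose column indices lie outside all $\ell$ of the prescribed nonsingular blocks; this only increases the supremum, so I may assume $A = [B_1 \mid B_2 \mid \cdots \mid B_\ell]$ with each $B_k \in \mb F^{d \times d}$ nonsingular and $n = d\ell$. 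For $\ell \le 4d$ the bound $O_d(\ell^{-d/2})$ is trivial (the implicit constant can be chosen to exceed $1$), so I would assume $\ell \ge 4d$ from now on.

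Suppose for contradiction that $\rho := \sup_{\vec z}\Pr[A\vec\xi = \vec z] > C_d\,\ell^{-d/2}$ for a sufficiently large $C_d = C_d(d)$ to be chosen. Since $n = d\ell$, this gives $\rho > n^{-(d/2+1)}$ for $n$ large enough, so I would apply \cref{theorem: optimal inverse theorem} to the column vectors $\vec a_1, \dots, \vec a_n$ of $A$ in the torsion-free additive group $G = \mb F^d$, with $C = d/2 + 1$ and $n' = \lfloor \ell/2 \rfloor$ (which satisfies $\sqrt n \le n' \le n$ since $\ell \ge 4d$). This produces a symmetric GAP of rank $r = O_d(1)$ and volume $V = O_d\bigl(\rho^{-1}(n')^{-r/2}\bigr)$ whose image contains all but at most $\lfloor\ell/2\rfloor$ of the $\vec a_i$. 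Since each column lies in at most one of the $\ell$ blocks, at most $\lfloor\ell/2\rfloor$ blocks are ``damaged'' (have some column outside the image), so at least one block lies entirely inside the image. Because that block is nonsingular, its $d$ columns are $\mb F$-linearly independent, forcing $r \ge d$ (since the image of the GAP lies in the $\mb F$-span of its $r$ generators). Consequently $V \le O_d(\rho^{-1}\ell^{-d/2}) \le O_d(C_d^{-1})$, which is strictly less than $1$ once $C_d$ is chosen large enough; this contradicts the trivial bound $V = \prod_{j=1}^r (2N_j+1) \ge 1$ for any symmetric GAP.

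The main obstacle is really just identifying the rank lower bound $r \ge d$, which is what converts the nonsingular block structure into a direct contradiction with the GAP output of \cref{theorem: optimal inverse theorem}; after that the argument is mechanical bookkeeping. An alternative Fourier-analytic proof via Esseen's inequality applied to $|\phi_{A\vec\xi}(\vec t)| \le \prod_k \prod_j |\cos(\vec t \cdot (B_k)_{*,j})|$ would be considerably more delicate, largely because the discrete random vectors $B_k\vec\xi^{(k)}$ do not share a common underlying lattice in $\mb F^d$; the Nguyen--Vu route is cleaner and gives clean constant dependence only on $d$.
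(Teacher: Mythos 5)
The paper does not actually prove \cref{thm:Halasz} --- it cites Hal\'asz~\cite{Hal77} and merely remarks that the statement ``can also be deduced from \cref{theorem: optimal inverse theorem}.'' Your proposal carries out exactly that deduction, and it is correct. The conditioning step reducing to $n=d\ell$ is valid (conditioning can only increase the point-probability supremum and leaves a fresh image $A'\vec\xi'+c$ with the same supremum), the application of \cref{theorem: optimal inverse theorem} with $C=d/2+1$ and $n'=\lfloor\ell/2\rfloor$ is legitimate once $\ell$ is not too small, and the pigeonhole step guaranteeing an undamaged block is right. The decisive observation --- that a symmetric GAP whose image contains $d$ linearly independent vectors of $\mb F^d$ must have rank $r\ge d$, since the image lies in the $\mb F$-span of the $r$ generators --- is precisely what converts the nonsingular-block hypothesis into a forced lower bound on $r$, after which $V=O_d(\rho^{-1}\ell^{-d/2})$ falls below $1$ once $\rho>C_d\ell^{-d/2}$ for large $C_d$, contradicting $V\ge 1$. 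Two cosmetic points: since \cref{theorem: optimal inverse theorem} is implicitly for $n$ large in terms of $C$, the trivial-bound cutoff should be $\ell\le L(d)$ for a suitably large $L(d)$ rather than literally $4d$; and the reduction from lazy to shifted Rademacher is cleanest if, after conditioning on one of the two underlying Rademacher variables, you absorb the resulting scalar into $A$ (this preserves the disjoint nonsingular block structure).
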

Hal\'asz' theorem has the following consequence for the probability that a random variable falls in an affine-linear subspace (see for example \cite[Corollary~3.4]{KS}).
\begin{corollary}\label{cor:Halasz}
    Let $n\ge d\ge 1$ and $\ell\ge 1$ and $\mb{F} \in \{\mb{R},\mb{C}\}$, and suppose a matrix $A \in \mb{F}^{d \times n}$ contains at least $\ell$ disjoint  nonsingular $d\times d$ submatrices. 
    Let $\vec{\xi}=(\xi_1,\dots,\xi_n)\in\mb{F}^n$ be a vector of independent Rademacher (or shifted Rademacher, or lazy Rademacher) random variables.
    Then, for any $k$-dimensional affine-linear subspace $\mc W\subseteq \mb F^d$, we have
    \[
        \Pr\left[A\vec{\xi} \in \mc W \right] \le O_{d}(\ell^{-(d-k)/2}).
    \]
\end{corollary}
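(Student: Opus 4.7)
The plan is to reduce to the point-probability situation covered by \cref{thm:Halasz} by quotienting out the affine-linear subspace $\mc W$. Concretely, write $\mc W=\vec w_0+V$ where $V\subseteq \mb F^d$ is a $k$-dimensional linear subspace, and choose any linear projection $\pi:\mb F^d\to \mb F^{d-k}$ whose kernel is exactly $V$. Then the event $\{A\vec\xi\in \mc W\}$ is identical to the event $\{(\pi A)\vec\xi=\pi(\vec w_0)\}$, so
\[
\Pr[A\vec\xi\in \mc W]\;=\;\Pr[(\pi A)\vec\xi=\pi(\vec w_0)]\;\le\;\sup_{\vec z\in \mb F^{d-k}}\Pr[(\pi A)\vec\xi=\vec z].
\]

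Next I would argue that the $(d-k)\times n$ matrix $\pi A$ inherits enough disjoint nonsingular square submatrices to apply \cref{thm:Halasz} in dimension $d-k$. Suppose $A[\,\cdot\,,S]$ is one of our $\ell$ disjoint nonsingular $d\times d$ submatrices, indexed by $S\subseteq\{1,\dots,n\}$ with $|S|=d$. Since its columns span $\mb F^d$, their images under $\pi$ span $\mb F^{d-k}$, and so among the columns of $\pi A$ indexed by $S$ we can select $d-k$ columns forming a nonsingular $(d-k)\times(d-k)$ submatrix of $\pi A$. Carrying this out separately in each of the $\ell$ disjoint index sets produces $\ell$ disjoint nonsingular $(d-k)\times(d-k)$ submatrices of $\pi A$.

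Now \cref{thm:Halasz} applied to $\pi A$ yields
\[
\sup_{\vec z\in \mb F^{d-k}}\Pr[(\pi A)\vec\xi=\vec z]\;\le\; O_{d-k}(\ell^{-(d-k)/2})\;=\;O_d(\ell^{-(d-k)/2}),
\]
which is the desired bound. The argument is uniform across the three distributional cases (Rademacher, shifted Rademacher, lazy Rademacher), since \cref{thm:Halasz} already handles all three. There is essentially no obstacle: the only thing worth double-checking is that the submatrix-extraction step really does produce \emph{disjoint} submatrices of $\pi A$, which is automatic because the index sets $S$ from the original $\ell$ disjoint submatrices of $A$ are themselves disjoint, and we only ever pick column indices from inside a single such $S$ at a time.
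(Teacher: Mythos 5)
Your proof is correct, and since the paper itself does not supply a proof of \cref{cor:Halasz} (it simply points to \cite[Corollary~3.4]{KS}), there is nothing to compare against directly; the quotienting argument you give (project along a complement of the linear part of $\mc W$, observe that each nonsingular $d\times d$ block of $A$ yields a nonsingular $(d-k)\times(d-k)$ block of $\pi A$, then invoke \cref{thm:Halasz}) is the standard and presumably intended route. The only thing worth adding for completeness is the degenerate case $k=d$: there $\mc W=\mb F^d$, the probability is $1$, and the claimed bound $O_d(\ell^0)$ holds trivially; your projection argument does not literally apply since \cref{thm:Halasz} requires dimension at least $1$, so that case should be dispatched separately before choosing $\pi$.
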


Finally, for \cref{thm:rank,thm:stability} we will need some basic observations about matrices. The following fact allows us to translate between the property of having many nonsingular submatrices and the property of having many \emph{disjoint} nonsingular submatrices.
\begin{fact}\label{fact: disjoint and non-disjoint nonsingular submatrices}
    Let $n \ge d \ge 1$ and $m \ge 1$ and $\mb{F} \in \{\mb{R},\mb{C}\}$.
    If a matrix $A \in \mb{F}^{d \times n}$ does not contain more than $m$ disjoint nonsingular $d\times d$ submatrices, then at most an $(md^2/n)$-fraction of the $d\times d$ submatrices in $A$ are nonsingular.
\end{fact}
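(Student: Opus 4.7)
The plan is to use a simple maximality argument. I would start by taking a \emph{maximum} collection of pairwise disjoint nonsingular $d\times d$ submatrices of $A$; by hypothesis this collection has size $m' \le m$, and its submatrices together occupy a set $S$ of at most $m'd \le md$ columns of $A$.

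Next, the key observation is that every nonsingular $d\times d$ submatrix of $A$ must use at least one column from $S$. Indeed, if some nonsingular $d\times d$ submatrix used only columns outside $S$, we could adjoin it to our collection while preserving pairwise disjointness, contradicting maximality. Consequently, the number of nonsingular $d\times d$ submatrices is bounded above by the number of $d$-subsets of columns that meet $S$, which (by a union bound over the element of $S$ that is hit) is at most $|S|\binom{n-1}{d-1} \le md\binom{n-1}{d-1}$.

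Finally, dividing by the total number $\binom{n}{d}$ of $d\times d$ submatrices and using $\binom{n-1}{d-1}/\binom{n}{d} = d/n$ gives the desired bound of $md^2/n$ on the fraction of nonsingular $d\times d$ submatrices. There is no real obstacle here: the statement is an elementary double-counting/greedy argument, and the only thing to take care of is the standard binomial identity at the end.
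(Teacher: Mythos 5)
Your proof is correct and takes essentially the same route as the paper's: fix a maximal (or maximum) family of disjoint nonsingular $d\times d$ submatrices, observe that every nonsingular $d\times d$ submatrix must use one of the at most $md$ columns they occupy, and conclude with the identity $\binom{n-1}{d-1}/\binom{n}{d}=d/n$. The only cosmetic difference is that the paper works with a maximal family directly, whereas you explicitly track the size $m'\le m$; the argument is otherwise identical.
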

\begin{proof}
    Fix a maximal collection of disjoint nonsingular $d\times d$ submatrices in $A$; these involve at most $md$ different columns.
    The maximality of the collection implies that any $d\times d$ submatrix that does not use any of these $md$ columns must be singular.
    Hence, the number of nonsingular $d\times d$ submatrices in $A$ is at most $md \binom{n-1}{d-1}=(md^2/n)\binom{n}{d}$.
\end{proof}
The following ``monotonicity'' fact says that if a matrix has many nonsingular $r\times r$ submatrices, then for any $k\leq r$, there are also many nonsingular $k\times k$ submatrices.
\begin{fact} \label{lem:rank-monotonicity}
    Let $\mb{F} \in \{\mb{R}, \mb{C}\}$. Let $A\in \mb F^{n\times n}$ be a matrix with the property that at least an $\varepsilon$-fraction of its $r\times r$ submatrices are nonsingular. Then, for any $k\le r$, at least an $(\varepsilon/\binom r k)$-fraction of the $k\times k$ submatrices of $A$ are nonsingular.
\end{fact}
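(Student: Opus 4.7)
The plan is to prove this by a double-counting argument on pairs $(M, M')$ where $M$ is a nonsingular $r\times r$ submatrix of $A$ and $M'$ is a nonsingular $k\times k$ submatrix of $A$ whose row and column index sets are contained in those of $M$. Let $N_r$ and $N_k$ be the numbers of nonsingular $r\times r$ and $k\times k$ submatrices of $A$ respectively; the hypothesis says $N_r \ge \varepsilon \binom{n}{r}^2$ and the goal is $N_k \ge (\varepsilon/\binom{r}{k}) \binom{n}{k}^2$.

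The key linear-algebra input is the claim that \emph{any} $r\times r$ matrix $M$ of rank $r$ contains at least $\binom{r}{k}$ nonsingular $k\times k$ submatrices. To see this, fix any $k$-subset $R$ of the $r$ rows of $M$. Since $M$ has full rank $r$, its rows are linearly independent, so the chosen $k\times r$ row-submatrix has row rank $k$, hence column rank $k$, and therefore admits at least one nonsingular $k\times k$ column-submatrix. Running over the $\binom{r}{k}$ choices of $R$ produces $\binom{r}{k}$ distinct nonsingular $k\times k$ submatrices of $M$ (they are distinct because their row-index sets already differ). It is important here that we extract $\binom{r}{k}$ such submatrices rather than just one; otherwise we would end up with a weaker bound of $\varepsilon/\binom{r}{k}^2$, missing the stated result by a factor of $\binom{r}{k}$.

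Combining the two sides of the count, the number of pairs is at least $N_r\binom{r}{k}$, while each nonsingular $k\times k$ submatrix of $A$ lies inside at most $\binom{n-k}{r-k}^2$ $r\times r$ submatrices, giving an upper bound of $N_k\binom{n-k}{r-k}^2$. Thus
\[
N_k \;\ge\; \frac{\binom{r}{k}}{\binom{n-k}{r-k}^2}\,N_r \;\ge\; \varepsilon\binom{r}{k}\,\frac{\binom{n}{r}^2}{\binom{n-k}{r-k}^2}.
\]
Using the elementary identity $\binom{n}{r}/\binom{n-k}{r-k}=\binom{n}{k}/\binom{r}{k}$, the right-hand side simplifies to $(\varepsilon/\binom{r}{k})\binom{n}{k}^2$, which is exactly the bound claimed. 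The only genuinely non-routine step is the linear-algebra claim above; everything else is bookkeeping with binomial coefficients.
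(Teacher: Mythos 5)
Your proof is correct and follows essentially the same double-counting argument as the paper's: each nonsingular $r\times r$ submatrix contains at least $\binom{r}{k}$ nonsingular $k\times k$ submatrices, and each $k\times k$ submatrix extends to $\binom{n-k}{r-k}^2$ different $r\times r$ submatrices. The only difference is that you spell out the linear-algebra justification for the $\binom{r}{k}$ count (which the paper simply asserts), and that justification is sound.
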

\begin{proof}
Suppose that $A$ has at least $\varepsilon\binom n r^2$ nonsingular $r\times r$ submatrices. For $k\le r$, each of these nonsingular $r\times r$ submatrices contains at least $\binom r k$ nonsingular $k\times k$ submatrices, and every $k\times k$ submatrix can be extended to exactly $\binom{n-k}{r-k}^2$ different $r\times r$ submatrices. So, there are at least $\varepsilon\binom n r^2\binom r k/\binom{n-k}{r-k}^2=\left(\varepsilon/\binom r k\right)\binom n k^2$ nonsingular $k\times k$ submatrices.
\end{proof}
The following fact says that if a matrix has many nonsingular $r\times r$ submatrices, then it cannot be close to a matrix of rank less than $r$. (It can be viewed as the easy ``converse direction'' of \cref{lemma: close to symmetric low rank}).
\begin{fact}\label{lem:close-easy}
Let $\mb{F} \in \{\mb{R}, \mb{C}\}$. Fix $\varepsilon>0$ and $r\ge 1$, and let $n$ be sufficiently large (in terms of $\varepsilon,r$). Let $A\in \mb F^{n\times n}$ be a matrix with the property that at least an $r^2\varepsilon$-fraction of its $r\times r$ submatrices are nonsingular. Then there is no matrix $B\in \mb F^{n\times n}$ with rank less than $r$ such that $\|A-B\|_0\le \varepsilon n^2$.
\end{fact}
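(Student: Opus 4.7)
The plan is a short double-counting argument by contradiction. Suppose toward a contradiction that there is a matrix $B \in \mb F^{n \times n}$ of rank less than $r$ with $\|A-B\|_0 \le \varepsilon n^2$. Since $B$ has rank less than $r$, every $r\times r$ submatrix of $B$ is singular. Consequently, if some $r \times r$ submatrix of $A$ is nonsingular, the corresponding submatrix of $B$ is singular, so at least one of those $r^2$ entry positions must be a position where $A$ and $B$ disagree. In other words, every nonsingular $r \times r$ submatrix of $A$ contains at least one coordinate from the set $S = \{(i,j) : A_{ij} \ne B_{ij}\}$, and $|S| = \|A-B\|_0 \le \varepsilon n^2$.

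Next I would count how many $r \times r$ submatrices can ``touch'' $S$. Each coordinate $(i,j) \in \{1,\dots,n\}^2$ lies in exactly $\binom{n-1}{r-1}^2$ of the $r \times r$ submatrices, so a union bound gives at most $|S| \cdot \binom{n-1}{r-1}^2 \le \varepsilon n^2 \binom{n-1}{r-1}^2$ submatrices containing at least one element of $S$. Using the identity $\binom{n-1}{r-1} = (r/n)\binom{n}{r}$, this upper bound simplifies to $r^2 \varepsilon \binom{n}{r}^2$. Since the total number of $r \times r$ submatrices is $\binom{n}{r}^2$, the fraction of nonsingular $r \times r$ submatrices of $A$ is at most $r^2 \varepsilon$, contradicting the hypothesis that at least an $r^2\varepsilon$-fraction are nonsingular.

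There is essentially no serious obstacle here; the one minor subtlety is to ensure the final contradiction is strict rather than weak. This is handled by the ``sufficiently large $n$'' hypothesis: once $n$ is large enough (in terms of $\varepsilon$ and $r$), the union bound used above genuinely overcounts, because one can find two distinct positions in $S$ lying in a common $r \times r$ submatrix, so the bound on the fraction of nonsingular submatrices is strictly less than $r^2\varepsilon$. The statement can be regarded as the easy converse of \cref{lemma: close to symmetric low rank}.
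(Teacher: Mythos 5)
Your double-counting argument is exactly the paper's: one counts the $r\times r$ submatrices of $A$ that differ from their counterpart in $B$ (at most $\varepsilon n^2\binom{n-1}{r-1}^2 = r^2\varepsilon\binom{n}{r}^2$ of them, by the union bound you describe) and observes that every nonsingular $r\times r$ submatrix of $A$ must be among them, since the corresponding submatrix of $B$ is singular. Your closing remark about strictness is the one place you diverge, and it is not actually correct --- for $r=1$ two distinct positions never share a common $1\times 1$ submatrix, and in general nothing prevents $|S|\le 1$ --- but the paper's own statement and one-line proof have precisely the same weak-inequality slack and make no attempt to tighten it (the fact is only ever invoked with a strict hypothesis, via $\varepsilon$-robustness), so this does not separate your argument from the paper's.
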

\begin{proof}
If $\|A-B\|_0\le \varepsilon n^2$ then there are at most $\varepsilon n^2\binom {n-1}{r-1}^2=r^2\varepsilon\binom{n}{r}^2$ different $r\times r$ submatrices of $A$ that are not identical to their counterpart in $B$. So, if $B$ has rank less than $r$, then at most an $r^2\varepsilon$-fraction of the $r\times r$ submatrices in $A$ are nonsingular.
\end{proof}

The following lemma says that if a matrix has many nonsingular $r\times r$ submatrices, then we can partition it into $q$ block matrices, each of which individually has many $r\times r$ submatrices.
\begin{lemma} \label{lemma: tri-partition}
    Let $\mb{F} \in \{\mb{R}, \mb{C}\}$. Fix $\varepsilon>0$, and $q,r\ge 1$. Let $n$ be sufficiently large (in terms of $\varepsilon,q,r$), and divisible by $q$.
    Let $A\in \mb F^{n\times n}$ be a matrix with the property that at least an $\varepsilon$-fraction of its $r\times r$ submatrices are nonsingular.
    
    Then, there is a partition of $\{1,\dots,n\}=I_1\cup \dots \cup I_q$ with $|I_1|=\dots=|I_q|=n/q$, such that for any $i,j\in \{1,\dots,q\}$, more than an $\varepsilon/2$-fraction of the $r\times r$ submatrices of the matrix $A[I_i,I_j]$ are nonsingular.
\end{lemma}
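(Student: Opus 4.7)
The plan is a standard probabilistic-method argument via a uniformly random equipartition. Let $\pi=(I_1,\dots,I_q)$ be a uniformly random equipartition of $\{1,\dots,n\}$ into $q$ blocks of size $n/q$, and for each ordered pair $(i,j)\in\{1,\dots,q\}^2$ let $X_{ij}$ denote the number of nonsingular $r\times r$ submatrices of $A[I_i,I_j]$. It suffices to show that with positive probability we simultaneously have $X_{ij}>(\varepsilon/2)\binom{n/q}{r}^2$ for all $(i,j)$, as then some deterministic equipartition witnessing this event will do. I would write
\[
X_{ij}=\sum_{|R|=|C|=r} Y_{RC}\cdot \one\sqb{R\subseteq I_i,\ C\subseteq I_j},
\]
where $Y_{RC}\in\{0,1\}$ is the deterministic indicator that $A[R,C]$ is nonsingular, then compute $\mathbb{E}[X_{ij}]$ and $\mathrm{Var}(X_{ij})$, and finish with Chebyshev's inequality plus a union bound.

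For the expectation: a direct counting computation shows that, for any disjoint pair $(R,C)$ with $|R|=|C|=r$, one has $\Pr[R\subseteq I_i,\ C\subseteq I_j]=(1+o(1))q^{-2r}$ (in both the $i=j$ and $i\neq j$ cases, treating $r,q$ as constants with $o(1)\to 0$ as $n\to\infty$). Only $O_r(n^{2r-1})$ of the $\binom{n}{r}^2$ pairs $(R,C)$ have $R\cap C\neq \emptyset$, so the hypothesis on $A$ yields
\[
\mathbb{E}[X_{ij}]\ge(1-o(1))q^{-2r}\bigl(\varepsilon\tbinom{n}{r}^2-O_r(n^{2r-1})\bigr)\ge(\varepsilon-o(1))\tbinom{n/q}{r}^2.
\]

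For the variance: I would decompose $\mathbb{E}[X_{ij}^2]$ according to the overlap pattern of the four index sets $(R,C,R',C')$. The ``generic'' tuples, those with $R\cap R'=\emptyset$ and $C\cap C'=\emptyset$, satisfy
\[
\Pr[R,R'\subseteq I_i,\ C,C'\subseteq I_j]=(1+O(n^{-1}))\,\Pr[R\subseteq I_i,\ C\subseteq I_j]\,\Pr[R'\subseteq I_i,\ C'\subseteq I_j],
\]
contributing $(1+O(n^{-1}))\mathbb{E}[X_{ij}]^2$ to $\mathbb{E}[X_{ij}^2]$. The remaining ``degenerate'' tuples number only $O_r(n^{4r-1})$, and each joint probability is at most $O_q(q^{-(4r-1)})$ (since any overlap reduces by at least one the count of distinct elements that must be routed to $I_i\cup I_j$), so their total contribution is $O_{r,q}((n/q)^{4r-1})=o_{r,q}(\binom{n/q}{r}^4)$. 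Combining these estimates gives $\mathrm{Var}(X_{ij})=o_{r,q,\varepsilon}(\mathbb{E}[X_{ij}]^2)$, and Chebyshev yields
\[
\Pr\sqb{X_{ij}\le(\varepsilon/2)\tbinom{n/q}{r}^2}\le \frac{\mathrm{Var}(X_{ij})}{((\varepsilon/4)\binom{n/q}{r}^2)^2}=o_{r,q,\varepsilon}(1),
\]
which is smaller than $q^{-2}$ for $n$ sufficiently large. A union bound over the $q^2$ pairs $(i,j)$ produces, with positive probability, an equipartition with $X_{ij}>(\varepsilon/2)\binom{n/q}{r}^2$ for all $(i,j)$ simultaneously.

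The only technical step is the variance estimate, which requires routine but careful bookkeeping of how the joint probability $\Pr[R,R'\subseteq I_i,\ C,C'\subseteq I_j]$ depends on the precise overlap pattern of $R,R',C,C'$; every other ingredient is immediate counting or a direct application of a textbook inequality, and I do not anticipate any deeper obstacle.
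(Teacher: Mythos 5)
Your overall strategy coincides with the paper's: a uniformly random equipartition, the identical expectation computation (including the separate handling of pairs with $R\cap C\neq\emptyset$), a concentration bound for each $X_{ij}$, and a union bound over the $q^2$ pairs. The difference is the concentration tool: you use Chebyshev via a second-moment computation, whereas the paper invokes the Azuma--Hoeffding inequality in the form adapted to random permutations (bounded differences of order $O(n^{2r-1})$ per swap), giving an exponentially small failure probability. Both yield a failure probability $< q^{-2}$ for large $n$, so the choice is a matter of taste; Azuma sidesteps the variance bookkeeping entirely, while your route is more elementary but requires the overlap analysis you sketch.

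Two points in that overlap analysis need tightening. First, ``generic'' should mean that $R,C,R',C'$ are pairwise disjoint (equivalently $|R\cup C\cup R'\cup C'|=4r$), not merely $R\cap R'=\emptyset$ and $C\cap C'=\emptyset$: when $i=j$ a tuple can have, say, $R\cap C'\neq\emptyset$ while satisfying your two conditions, and then $\Pr[R,R'\subseteq I_i,\ C,C'\subseteq I_i]\asymp q^{-|R\cup C\cup R'\cup C'|}\gg q^{-4r}$, so the $(1+O(1/n))$-factorisation fails for those tuples. (For $i\neq j$ any overlap between $R\cup R'$ and $C\cup C'$ forces the joint probability to be zero, so the issue is invisible there.) Second, the claim that every degenerate tuple has joint probability $O_q(q^{-(4r-1)})$ is false as stated: a tuple with $k:=|R\cup C\cup R'\cup C'|<4r-1$ has probability $\Theta_q(q^{-k})$, which is much larger. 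The correct bound is obtained by stratifying over $k$: there are $O_r(n^k)$ tuples with union of size $k$, each with joint probability $O_q(q^{-k})$, and summing over $k\le 4r-1$ gives the $O_{r,q}((n/q)^{4r-1})$ total you state. Your final answer is right, but it is a coincidence of the $k=4r-1$ layer dominating both the count and the product; the argument as written would not survive if, say, the per-tuple probability had been the binding constraint. With these two fixes the proof is complete.
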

\begin{proof}
    This is a routine application of the probabilistic method. 
    Consider a \emph{random} partition of $\{1,\dots,n\}$ into sets $I_1,\dots,I_q$ of size $n/q$. 
    Let $X_{i,j}$ be the number of nonsingular $r\times r$ submatrices of $A[I_i,I_j]$.
    
    There are at least $\varepsilon \binom n r^2-n\binom {n-1}{r-1}^2=(\varepsilon-o(1))\binom n r^2$ nonsingular $r\times r$ submatrices which do not involve a diagonal entry of $A$ (here and in the rest of this proof, the asymptotic notation is for fixed $\varepsilon,q,r$). 
    For any $i,j\in \{1,\dots,q\}$, each of these submatrices is contained in $A[I_i,I_j]$ with probability $(1+o(1))/q^{2r}$. 
    So, \[
        \mb E X_{i,j} \ge \frac{(\varepsilon-o(1))\binom n r^2}{q^{2r}}=(\varepsilon-o(1))\binom {n/q}r^2. 
    \]
    Now, moving an element in or out of $I_i$ or $I_j$ can only change $X_{i,j}$ by at most $O(n^{2r-1})$. 
    So, by the Azuma--Hoeffding inequality (see for example \cite[Lemma 11]{Frieze-Pittel} for a statement adapted to our purposes\footnote{This statement is for random permutations. But note that a random partition into $q$ parts can easily be defined in terms of a random permutation: simply take the first $n/q$ elements as the first part, the next $n/q$ elements as the second part, and so on.}), for each $i,j$ we have \[
        \Pr\left[ X_{i,j}\le (\varepsilon/2)\binom {n/q}r^2\right]
        \le \exp\left(-\Omega\left(\frac{\big(\Ex X_{i,j} \big)^2}{n\cdot O(n^{2r-1})^2}\right)\right)=\exp(-\Omega(n))<\frac1{q^2}.
    \]
    for sufficiently large $n$. 
    It follows that with positive probability we have $X_{i,j}> (\varepsilon/2)\binom {n/q}{r}^2$ for all $q^2$ choices of $i,j$, as desired.
\end{proof}

\section{A local-to-global lemma for reducibility of tensors}\label{sec:tensor-property-testing}
In this section, we prove \cref{thm:tensor-property-testing}: if most of the small subtensors of $T$ are reducible, then $T$ itself is close to being reducible. 
There are different types of reducibility with respect to particular partitions, as follows.
\begin{definition}\label{def:reducible-under}
For a tensor $T\in \mb F^{I_1}\otimes \dots\otimes \mb F^{I_d}$ and a non-trivial partition $\{1,\dots,d\}=J_1\cup J_2$, we say that $T$ is \emph{reducible with respect to $\{J_1,J_2\}$} if there are tensors $T_1\in \bigotimes_{j\in J_1} \mb F^{I_j}$ and $T_2\in \bigotimes_{j\in J_2}\mb F^{I_j}$, such that $T$ can be factored as a tensor product $T=T_1\otimes T_2$.
\end{definition}
We note that reducibility of a tensor $T$ with respect to $\{J_1,J_2\}$ can equivalently be interpreted as the property that a certain matrix associated with $T$ has rank at most $1$, as follows.
\begin{fact}\label{fact:tensor-to-matrix}
    Fix disjoint sets $I_1,\dots,I_d$, a field $\mb F\in \{\mb R,\mb C\}$, and a non-trivial partition $\{1,\dots,d\}=J_1\cup J_2$. For $s\in \{1,2\}$, let $r_s=\prod_{j\in J_s}|I_j|$, so we have a vector space isomorphism $\bigotimes_{j\in J_s}\mb F^{I_j}\cong \mb F^{r_s}$.
    Then, \[\mb F^{I_1}\otimes \dots\otimes \mb F^{I_d}=\Bigg(\bigotimes_{j\in J_1}\mb F^{I_j}\Bigg)\otimes \Bigg(\bigotimes_{j\in J_2}\mb F^{I_j}\Bigg)\cong \mb F^{r_1\times r_2};\]
    that is to say, $\{J_1,J_2\}$ gives rise to a correspondence between $I_1\times \dots\times I_d$ tensors and $r_1\times r_2$ matrices. 
    A tensor in $\mb F^{I_1}\otimes \dots\otimes \mb F^{I_d}$ is reducible with respect to $\{J_1,J_2\}$ if and only if the corresponding $r_1\times r_2$ matrix has rank at most 1.
\end{fact}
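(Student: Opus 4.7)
The plan is to verify the fact by carefully tracing through the standard tensor product isomorphisms. The first sentence (the chain of isomorphisms) is essentially associativity of the tensor product, combined with the identification $\mb F^{r_1} \otimes \mb F^{r_2} \cong \mb F^{r_1 \times r_2}$. More explicitly, choosing the standard basis $\{e_i : i \in I_j\}$ of $\mb F^{I_j}$ for each $j$ yields a basis $\{\otimes_{j \in J_s} e_{i_j} : (i_j)_{j\in J_s} \in \prod_{j\in J_s} I_j\}$ of $\bigotimes_{j \in J_s} \mb F^{I_j}$, of cardinality $r_s$, which gives $\bigotimes_{j \in J_s} \mb F^{I_j} \cong \mb F^{r_s}$ for each $s\in\{1,2\}$. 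Under the final isomorphism $\mb F^{r_1} \otimes \mb F^{r_2} \cong \mb F^{r_1 \times r_2}$, a simple tensor $\vec u \otimes \vec v$ is identified with the outer product matrix $\vec u \vec v^{\,T}$, and the identification extends linearly.

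For the rank-at-most-1 characterization, I would argue in both directions. If $T$ is reducible with respect to $\{J_1, J_2\}$, write $T = T_1 \otimes T_2$ with $T_s \in \bigotimes_{j \in J_s} \mb F^{I_j}$, and let $\vec v_s \in \mb F^{r_s}$ be the vector corresponding to $T_s$ under the chosen isomorphism. By the construction of the isomorphism $\mb F^{r_1} \otimes \mb F^{r_2} \cong \mb F^{r_1 \times r_2}$, the chain of identifications sends $T$ to $\vec v_1 \vec v_2^{\,T}$, which has rank at most $1$ (with rank $0$ in the degenerate case that one of $\vec v_1, \vec v_2$ is zero). Conversely, if $T$ corresponds to a matrix $M \in \mb F^{r_1 \times r_2}$ of rank at most $1$, then one can factor $M = \vec v_1 \vec v_2^{\,T}$ for some $\vec v_1 \in \mb F^{r_1}$ and $\vec v_2 \in \mb F^{r_2}$ (taking both vectors to be zero if $M = 0$). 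Letting $T_s \in \bigotimes_{j \in J_s} \mb F^{I_j}$ be the preimages of $\vec v_s$, the same chain of isomorphisms sends $T_1 \otimes T_2$ to $M$, so $T = T_1 \otimes T_2$ and $T$ is reducible with respect to $\{J_1, J_2\}$.

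The statement is essentially bookkeeping about standard multilinear-algebra isomorphisms, so there is no substantive obstacle. The only technical care required is to fix the isomorphism $\mb F^{r_1} \otimes \mb F^{r_2} \cong \mb F^{r_1 \times r_2}$ so that simple tensors correspond to outer products; once this is set up, the equivalence between reducibility of $T$ and rank-at-most-1 of the associated matrix reduces to the textbook characterization of rank-at-most-1 matrices as outer products of vectors.
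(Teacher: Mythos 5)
Your proof is correct, and it is the natural filling-in of the standard multilinear-algebra bookkeeping: the paper states this as a \emph{Fact} without giving a proof, treating the tensor-to-matrix correspondence and the equivalence between being a simple tensor and having rank at most one as background knowledge. Your argument (fixing bases so that simple tensors go to outer products, then invoking the characterization of rank-one matrices as outer products, in both directions) is precisely what the paper is implicitly relying on when it invokes this fact.
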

We also adopt the following notation for subtensors, similar to that of matrices.
\begin{definition}\label{def:subtensors}
    Consider a tensor $T:I_1\times\dots\times I_d\to \mb F$ (represented as a $d$-dimensional array). For subsets $S_1\subseteq I_1,\dots,S_d\subseteq I_d$, write $T[S_1,\dots,S_d]$ for the corresponding subtensor (formally, this is a restriction of the function $T$ to the set $S_1\times \dots\times S_d$).
\end{definition}

We break the proof of \cref{thm:tensor-property-testing} into three simpler lemmas. First, by a simple averaging argument, we can reduce our consideration to the subtensors containing a particular nonzero entry.

\newcommand{\xs}[1]{{i^\star_{#1}}}
\begin{lemma}\label{claim:tensor-property-testing-1}
    Fix $d,r>1$ and  $\delta>0$ and $c\in [0,1]$, and any field $\mb F$.
    Consider a $d$-dimensional tensor $T:I_1\times \dots\times I_d\to \mb F$ (represented as a $d$-dimensional array). 
    Suppose that at least a $\delta^{c}$-fraction of the entries of $T$ are nonzero and that all but at most a $\delta$-fraction of the $r\times \dots\times r$ subtensors are reducible.
    
    Then, there is a nonzero entry $T(\xs{1},\xs{2},\dots,\xs{d})$ such that all but at most a $\delta^{1-c}$-fraction of the $r\times \dots \times r$ subtensors which contain the entry $T(\xs{1},\xs{2},\dots,\xs{d})$ are reducible.
\end{lemma}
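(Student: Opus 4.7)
The plan is a one-shot double-counting / averaging argument. First I would set $N=\prod_{j=1}^{d}|I_{j}|$ for the total number of entries of $T$ and $M=\prod_{j=1}^{d}\binom{|I_{j}|}{r}$ for the total number of $r\times\dots\times r$ subtensors. Each such subtensor contains exactly $r^{d}$ entries, and each entry of $T$ lies in exactly $K:=\prod_{j=1}^{d}\binom{|I_{j}|-1}{r-1}$ subtensors. From the identity $\binom{n}{r}=(n/r)\binom{n-1}{r-1}$ one immediately gets the clean bookkeeping relation $K\cdot N=r^{d}\cdot M$, which is the only combinatorial identity needed.

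Next, for each entry $e\in I_{1}\times\dots\times I_{d}$, I would let $f(e)$ denote the fraction of the $K$ subtensors containing $e$ that are irreducible. Double-counting the pairs (irreducible subtensor, entry in it) gives
\[
K\sum_{e}f(e)=r^{d}\cdot\#\{\text{irreducible }r\times\dots\times r\text{ subtensors}\}\le r^{d}\cdot\delta M=K\cdot\delta N,
\]
hence $\sum_{e}f(e)\le\delta N$. A Markov-type argument then shows that the ``bad'' set $B:=\{e:f(e)>\delta^{1-c}\}$ satisfies $|B|\cdot\delta^{1-c}<\sum_{e\in B}f(e)\le\delta N$, so $|B|<\delta^{c}N$ (with strict inequality, provided $|B|>0$; trivially otherwise).

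Finally, by hypothesis $T$ has at least $\delta^{c}N$ nonzero entries, and since $|B|<\delta^{c}N$, at least one of those nonzero entries lies outside $B$. I take this entry to be $(\xs{1},\dots,\xs{d})$; by the definition of $B$, at most a $\delta^{1-c}$-fraction of the $r\times\dots\times r$ subtensors containing $T(\xs{1},\dots,\xs{d})$ are irreducible, which is the desired conclusion.

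There is essentially no obstacle here: the argument is pure pigeonhole and the only small point to watch is that the strict inequality $|B|<\delta^{c}N$ (as opposed to $\le$) is what allows us to use the hypothesis ``at least a $\delta^{c}$-fraction of entries are nonzero'' to guarantee that some nonzero entry avoids $B$. This strictness is built in automatically by defining $B$ with a strict inequality $f(e)>\delta^{1-c}$ and averaging against the bound $\sum_{e}f(e)\le\delta N$.
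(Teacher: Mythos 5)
Your proof is correct and is essentially the same argument as the paper's: both apply Markov's inequality (equivalently, averaging) to the quantity ``fraction of $r\times\dots\times r$ subtensors containing a given entry that are irreducible,'' and then use the hypothesis on the density of nonzero entries to find a nonzero entry where this quantity is small. The only difference is presentational: the paper phrases the averaging step probabilistically (sampling a uniformly random ordered subtensor with a distinguished entry, then conditioning on the distinguished entry), whereas you phrase it as a deterministic double-count of (irreducible subtensor, entry) pairs using the identity $KN=r^dM$; your careful remark about the strict inequality $|B|<\delta^c N$ is the same point the paper glosses with ``more than a $(1-\delta^c)$-fraction.''
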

\begin{proof}
    For each $j\in \{1,\dots,d\}$, let $i_j^\star,i_j^1,\dots,i_j^{r-1}\in I_j$ be a uniformly random sequence of $r$ distinct indices from $I_j$. 
    So, by assumption we have
    \begin{align*}
        \delta &\ge \Pr\Big[ T\big[\{i_1^\star,i_1^1,\dots,i_1^{r-1}\}, \dots, \{i_d^\star,i_d^1,\dots,i_d^{r-1}\}\big]\text{ is not reducible}\Big]\\
        &=\mb E\bigg[\Pr\Big[T\big[\{i_1^\star,i_1^1,\dots,i_1^{r-1}\}, \dots, \{i_d^\star,i_d^1,\dots,i_d^{r-1}\}\big]\text{ is not reducible}\;\Big|\; i_1^\star,\dots,i_d^\star\Big]\bigg].
    \end{align*}
    By Markov's inequality, we deduce that for more than a $(1-\delta^c)$-fraction of choices of $(i_1^\star,\dots,i_d^\star)$, we have
    \[
        \Pr\Big[T\big[\{i_1^\star,i_1^1,\dots,i_1^{r-1}\}, \dots, \{i_d^\star,i_d^1,\dots,i_d^{r-1}\}\big]\text{ is not reducible}\;\Big|\; i_1^\star,\dots,i_d^\star\Big]\le \delta^{1-c},
    \]
    meaning that all but at most a $\delta^{1-c}$-fraction of the $r\times \dots \times r$ subtensors which contain the entry $T(\xs{1},\dots,\xs{d})$ are reducible. 
    Since at least a $\delta^c$-fraction of the entries of $T$ are nonzero, for at least one of these choices of $(i_1^\star,\dots,i_d^\star)$, the entry $T(\xs{1},\dots,\xs{d})$ is nonzero.
\end{proof}

The second lemma (arguably the most important) allows us to relate reducibility in general to reducibility with respect to a particular partition.

\begin{lemma}\label{claim:tensor-property-testing-2}
    Fix $d>1$ and $\delta > 0$, and let $\ell=2^{d-1}-1$. Consider a tensor $T:I_1\times\dots\times I_d\to \mb F$ (represented as a $d$-dimensional array) and suppose $T(\xs{1},\dots,\xs{d})\ne 0$ for some indices $\xs{1}\in I_1,\dots,\xs{d}\in I_d$.
    Suppose that all but at most a $\delta$-fraction of the $2^{d-1}\times \dots \times 2^{d-1}$ subtensors of $T$ which contain the entry $T(\xs{1},\dots,\xs{d})$ are reducible.
    
    Then, there is a non-trivial partition $\{1,\dots,d\}=J_1\cup J_2$, such that all but at most a $\delta^{1/\ell}$-fraction of the $2\times \dots\times 2$ subtensors of $T$ which contain the entry $T(\xs{1},\dots,\xs{d})$ are reducible with respect to $\{J_1,J_2\}$.
\end{lemma}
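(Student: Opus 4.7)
The plan is to argue by contradiction. Enumerate the $\ell = 2^{d-1}-1$ non-trivial partitions of $\{1,\dots,d\}$ as $\pi_1,\dots,\pi_\ell$, and suppose that for every $k$, strictly more than a $\delta^{1/\ell}$-fraction of the $2\times\dots\times 2$ subtensors of $T$ containing the entry $T(\xs 1,\dots,\xs d)$ fail to be reducible with respect to $\pi_k$. Write $\mathcal B_k$ for the corresponding set of ``bad'' small subtensors, so $|\mathcal B_k| > \delta^{1/\ell} N$ where $N = \prod_{i=1}^d (|I_i|-1)$. The aim is to show that this forces strictly more than a $\delta$-fraction of the $2^{d-1}\times\dots\times 2^{d-1}$ subtensors containing the star entry to be non-reducible, contradicting the hypothesis.

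The central structural input is that, by \cref{def: reducible tensors} together with \cref{fact:tensor-to-matrix}, a tensor is reducible if and only if it is $\pi$-reducible for some non-trivial $\pi$, and $\pi$-reducibility corresponds to a rank-at-most-$1$ condition on the $\pi$-flattening matrix, so it is inherited by every subtensor. Consequently, any $2^{d-1}\times\dots\times 2^{d-1}$ subtensor $B$ which, for every $k$, contains some sub-subtensor lying in $\mathcal B_k$ cannot be $\pi_k$-reducible for any $k$, and is therefore not reducible at all. To produce many such $B$, I would parameterise each $t \in \mathcal B_k$ by its tuple of extras $(j_1^{(k)},\dots,j_d^{(k)}) \in (I_1\setminus\{\xs 1\})\times\dots\times (I_d\setminus\{\xs d\})$, and consider ordered tuples $(t_1,\dots,t_\ell) \in \mathcal B_1\times\dots\times \mathcal B_\ell$ satisfying the \emph{distinctness} condition that, in each dimension $i$, the values $j_i^{(1)},\dots,j_i^{(\ell)}$ are pairwise distinct. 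Any such tuple yields a non-reducible big subtensor with extras $S_i = \{j_i^{(k)} : 1\le k\le \ell\}$ in dimension $i$, via a map whose fibres have size at most $(\ell!)^d$ (corresponding to orderings of each $S_i$).

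Since the total number of big subtensors containing the star is $\prod_i \binom{|I_i|-1}{\ell}$, it therefore suffices to exhibit more than $\delta(\ell!)^d\prod_i\binom{|I_i|-1}{\ell}=\delta\prod_{i=1}^d(|I_i|-1)(|I_i|-2)\cdots(|I_i|-\ell)$ distinct-extras tuples. The naive unconstrained count $\prod_k|\mathcal B_k|>\delta\prod_i(|I_i|-1)^\ell$ is already at least as large as this target, since $(|I_i|-1)^\ell\ge (|I_i|-1)(|I_i|-2)\cdots(|I_i|-\ell)$. The main obstacle will be bounding the number of tuples that violate distinctness in some coordinate; I would address this via a Cauchy--Schwarz-type estimate controlling how many members of $\mathcal B_k$ can share a common $i$-th extra, combined with the lower bound $|\mathcal B_k|>\delta^{1/\ell} N$, to argue that the $i$-th coordinates of $\mathcal B_k$ cannot be too concentrated at any single value of $I_i$. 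The resulting contribution from non-distinct tuples should be negligible as long as each $|I_i|$ is sufficiently large, which is the regime of interest when this lemma is applied within the proof of \cref{thm:tensor-property-testing}.
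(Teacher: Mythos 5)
The overall strategy you have identified is the correct one and mirrors the paper's: exploit that $\pi$-reducibility passes to subtensors (your flattening argument is sound), and assemble a non-reducible $2^{d-1}\times\dots\times2^{d-1}$ subtensor by gluing together, for each $k$, one $2\times\dots\times2$ witness of failure of $\pi_k$-reducibility. However, the counting step you sketch has a genuine gap, and the Cauchy--Schwarz route you propose cannot close it.

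Write $m_i=|I_i|-1$ and $N=\prod_i m_i$. Consider the worst case in which all $\ell$ sets $\mathcal B_k$ coincide with the same set $\{(j_1,\dots,j_d):j_1\in A\}$ for some fixed $A\subseteq I_1\setminus\{\xs1\}$ with $|A|=a>\delta^{1/\ell}m_1$. Then the number of distinct-extras tuples is exactly $a(a-1)\dotsm(a-\ell+1)\prod_{i\ge2}m_i(m_i-1)\dotsm(m_i-\ell+1)$, while your target is $\delta\prod_i m_i(m_i-1)\dotsm(m_i-\ell+1)$. Since $\frac{a-j}{m_1-j}<\frac{a}{m_1}$ for $j\ge1$ (as $a<m_1$), the product $\prod_{j=0}^{\ell-1}\frac{a-j}{m_1-j}$ can lie strictly below $(a/m_1)^\ell$, and when $a$ is only marginally above $\delta^{1/\ell}m_1$ the ratio drops below $\delta$: for instance with $\ell=3$, $m_1=1000$, $\delta=10^{-6}$, $a=11$, one gets $\frac{11\cdot10\cdot9}{1000\cdot999\cdot998}\approx 9.93\times10^{-7}<10^{-6}$. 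The lemma is still true in this scenario --- every big subtensor with $S_1\cap A\ne\emptyset$ is non-reducible, not merely those with $S_1\subseteq A$ --- but your tuple count only sees the latter, so the contradiction never arrives. More generally, the fraction of tuples with a collision in coordinate $i$ can be as large as $\Theta\big(1/(\delta^{1/\ell}m_i)\big)$ even with $|\mathcal B_k|>\delta^{1/\ell}N$ (take $\mathcal B_k$ supported on a set $A$ of size just above $\delta^{1/\ell}m_i$ in coordinate $i$), while the slack between $\prod_k|\mathcal B_k|$ and the target is only $\Theta(1/m_i)$ relative to $\prod_k|\mathcal B_k|$. The excess factor $1/\delta^{1/\ell}$ does not go away as $|I_i|\to\infty$, so the estimate fails in general, not just for small $|I_i|$.

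The paper's proof avoids this by sampling the extras \emph{with replacement}: it picks $i_j^1,\dots,i_j^\ell\in I_j\setminus\{\xs j\}$ i.i.d.\ uniform, so the events $\{T[\{i_1^t,\xs1\},\dots,\{i_d^t,\xs d\}]\text{ is not reducible w.r.t.\ }\mathcal P_t\}$ are independent across $t$ and the probability of their intersection factorises. It then deduplicates and pads $\{i_j^1,\dots,i_j^\ell\}$ to a uniform $\ell$-subset $Q_j$; the distinctness issue vanishes because the argument only uses the containment $\{i_j^t,\xs j\}\subseteq Q_j\cup\{\xs j\}$. Your proposal amounts to a deterministic double count of this probabilistic argument, but the double count loses a crucial $1/\delta^{1/\ell}$ factor that sampling-with-replacement handles automatically. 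Recasting your argument probabilistically, sampling with replacement and padding, essentially recovers the paper's proof.
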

\begin{proof}
Note that $\ell=2^{d-1}-1$ is the number of partitions $\mc P=\{J_1,J_2\}$ of $\{1,\dots,d\}$, such that neither $J_1$ nor $J_2$ is empty. Let $\mathcal P_1,\dots,\mc P_\ell$ be an enumeration of all these partitions.

For each $j\in \{1,\dots,d\}$, let $i_j^1,\dots,i_j^\ell$ be uniformly random indices sampled independently from $I_j\setminus \{i_j^\star\}$. Note that there may be fewer than $\ell$ different indices among $i_j^1,\dots,i_j^\ell$, due to repetitions; let $Q_j$ be a random set of exactly $\ell$ indices in $I_j\setminus \{i_j^\star\}$, obtained by starting from $\{i_j^1,\dots,i_j^\ell\}$ and adding the appropriate number of additional random indices. Do this independently for each $j\in \{1,\dots,d\}$.

Then, we have
\begin{align*}\delta&\ge \Pr\Big[T\big[Q_1\cup \{\xs{1}\}, \dots, Q_d\cup \{\xs{d}\}\big]\text{ is not reducible}\Big]\\
&= \Pr\Bigg[\bigcap_{t=1}^\ell\Big\{T\big[Q_1\cup \{\xs{1}\}, \dots, Q_d\cup \{\xs{d}\}\big]\text{ is not reducible with respect to }\mc P_t\Big\}\Bigg]\\
&\ge \Pr\Bigg[\bigcap_{t=1}^\ell\Big\{T\big[\{i_1^t,\xs{1}\}, \dots, \{i_d^t,\xs{d}\}\big]\text{ is not reducible with respect to }\mc P_t\Big\}\Bigg]\\
&=\prod_{t=1}^\ell\Pr\Big[T\big[\{i_1^t,\xs{1}\}, \dots, \{i_d^t,\xs{d}\}\big]\text{ is not reducible with respect to }\mc P_t\Big].
\end{align*}
Indeed, the first line is by the assumption on $T$ and the fact that each $Q_j$ is an independent and uniformly random subset of $I_j\setminus \{i_j^\star\}$ of size $\ell$,
the second line is by the definition of reducibility, the third line is due to the fact that $Q_j\cup \{\xs{j}\}\supseteq \{i_j^t,\xs{j}\}$ for all $j\in \{1,\dots,d\}$, and the last line is by the independence of $(i_1^t,\dots,i_d^t)$ between different $t$.

We deduce that there is some $t \in \{1,\dots,\ell\}$ such that, with probability at most $\delta^{1/\ell}$, the random subtensor $T\big[\{i_1^t,\xs{1}\}, \dots, \{i_d^t,\xs{d}\}\big]$ is not reducible with respect to $\mathcal P_t$. 
That is to say, all but at most a $\delta^{1/\ell}$-fraction of $2\times \dots\times 2$ subtensors of $T$ containing the entry $T(\xs{1},\dots,\xs{d})$ are reducible with respect to $\mc P_t$, as desired.
\end{proof}

In our final lemma, we observe that if $T$ has many $2\times \dots\times 2$ subtensors involving a single nonzero entry, which are all reducible with respect to the same partition, then we can construct a reducible tensor that agrees with $T$ on many entries.

\begin{lemma}\label{claim:tensor-property-testing-3}
    Fix $d\ge 1$ and $\delta > 0$. Let $n_0$ be sufficiently large in terms of $d$ and $\delta$. 
    Consider a tensor $T:I_1\times\dots\times I_d\to \mb F$ (represented as a $d$-dimensional array) with $|I_1|,\dots,|I_d|\ge n_0$, and a non-trivial partition $\{1,\dots,d\}=J_1\cup J_2$, and suppose $T(\xs{1},\dots,\xs{d})\ne 0$ for some indices $\xs{1}\in I_1,\dots,\xs{d}\in I_d$.
    Suppose that all but at most a $\delta$-fraction of the $2\times \dots\times 2$ subtensors of $T$ which contain the entry $T(\xs{1},\dots,\xs{d})$ are reducible with respect to $\{J_1,J_2\}$.

    Then one can make $T$ reducible by changing up to a $2\delta$-fraction of its entries.
\end{lemma}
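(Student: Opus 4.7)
My plan is to construct an explicit reducible tensor $T'$ (in the sense of \cref{def:reducible-under}) using the nonzero entry $T(\xs{1},\dots,\xs{d})$ as a pivot. By \cref{fact:tensor-to-matrix}, reducibility of a $2\times\dots\times 2$ subtensor $T[\{\xs{1},a_1\},\dots,\{\xs{d},a_d\}]$ with respect to $\{J_1,J_2\}$ is equivalent to the associated $2^{|J_1|}\times 2^{|J_2|}$ matrix having rank at most $1$. Since the pivot $T(\xs{1},\dots,\xs{d})$ sits in the $(0,0)$-entry of this matrix and is nonzero, rank at most $1$ forces the entry at position $(\mathbf{1}_{J_1},\mathbf{1}_{J_2})$ --- which corresponds to $T(a_1,\dots,a_d)$ --- to satisfy the single $2\times 2$ minor identity
\[
T(a_1,\dots,a_d)\cdot T(\xs{1},\dots,\xs{d}) \;=\; T(\mathrm{row})\cdot T(\mathrm{col}),
\]
where $T(\mathrm{row})$ denotes $T$ evaluated at the coordinate tuple keeping $a_j$ for $j\in J_1$ but pinning $j\in J_2$ to $\xs{j}$, and $T(\mathrm{col})$ is defined symmetrically.

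This suggests the construction I will use. Define $T_1'\in \bigotimes_{j\in J_1}\mb F^{I_j}$ by $T_1'(a_j:j\in J_1):=T(\mathrm{row})$ and $T_2'\in \bigotimes_{j\in J_2}\mb F^{I_j}$ by $T_2'(a_j:j\in J_2):=T(\mathrm{col})/T(\xs{1},\dots,\xs{d})$, and set $T':=T_1'\otimes T_2'$. Then $T'$ is reducible with respect to $\{J_1,J_2\}$ by construction, and entrywise
\[
T'(a_1,\dots,a_d) \;=\; \frac{T(\mathrm{row})\cdot T(\mathrm{col})}{T(\xs{1},\dots,\xs{d})}.
\]

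It then remains to bound the number of disagreements between $T$ and $T'$, which I will split into two cases. For tuples $(a_1,\dots,a_d)$ with $a_j\ne \xs{j}$ for every $j$: if the subtensor $T[\{\xs{1},a_1\},\dots,\{\xs{d},a_d\}]$ is reducible with respect to $\{J_1,J_2\}$, then the minor identity above yields $T(a_1,\dots,a_d)=T'(a_1,\dots,a_d)$, so the hypothesis bounds the number of disagreements here by $\delta\prod_j(|I_j|-1)\le \delta\prod_j|I_j|$. For tuples with some $a_j=\xs{j}$: there are at most $\prod_j|I_j|-\prod_j(|I_j|-1)\le (d/n_0)\prod_j|I_j|$ such tuples by a straightforward inclusion--exclusion bound, which is at most $\delta\prod_j|I_j|$ provided $n_0\ge d/\delta$. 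Summing the two bounds gives at most $2\delta\prod_j|I_j|$ disagreements in total, a $2\delta$-fraction of all entries.

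The only real obstacle is conceptual rather than technical: noticing that reducibility with respect to $\{J_1,J_2\}$ of each small subtensor containing the pivot pins down the corresponding ``interior'' entry via a single $2\times 2$ minor identity tying that entry to the pivot and the two ``axis'' entries. Once this is in hand, the construction of $T_1'$ and $T_2'$ and both counting bounds are essentially routine.
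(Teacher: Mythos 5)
Your proof is correct and takes essentially the same approach as the paper: you define the same rank-one approximant $T'=T_1'\otimes T_2'$ built from the two ``axis'' slices through the nonzero pivot, and use the same $2\times2$ minor identity to show agreement on tuples with all coordinates off the pivot. The only cosmetic difference is in how you bound the boundary tuples (explicit inclusion--exclusion vs.\ the paper's direct estimate $(1-\delta)\prod_j(|I_j|-1)\ge(1-2\delta)\prod_j|I_j|$), but both yield the same $2\delta$ bound.
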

\begin{proof}
Suppose without loss of generality that $J_1=\{1,\dots,h\}$ and $J_2=\{h+1,\dots,d\}$.
Let $T':I_1\times \dots\times I_d\to \mb F$ be the tensor defined by
\[T'(i_1,\dots,i_d)=\frac{T(i_1,\dots,i_{h},\xs{h+1},\dots,\xs{d})\,T(\xs{1},\dots,\xs{h},i_{h+1},\dots,i_{d})}{T(\xs{1},\dots,\xs{d})}.
\]
By definition, $T'$ is reducible. Indeed, we have $T'=T_1\otimes  T_2$ for $T_1: I_1\times \dots\times I_h\to \mb F$ given by $T_1(i_1,\dots,i_h)=T(i_1,\dots,i_{h},\xs{h+1},\dots,\xs{d})$ and $T_2: I_{h+1}\times \dots\times I_d\to \mb F$ given by $T_1(i_{h+1},\dots,i_d)=T(\xs{1},\dots,\xs{h},i_{h+1},\dots,i_{d})/T(\xs{1},\dots,\xs{d})$. It remains to show that $T'$ differs from $T$ in at most a $2\delta$-fraction of its entries.

To see this, note that for all $i_1\in I_1\setminus \{\xs{1}\},\dots,i_d\in I_d\setminus \{\xs{d}\}$ such that $T\big[\{i_1,i_1^\star\},\dots,\{i_d,i_d^\star\}\big]$ is reducible with respect to $\{J_1,J_2\}$, we have
\[T(i_1,\dots,i_d)\,T(\xs{1},\dots,\xs{d})-T(i_1,\dots,i_{h},\xs{h+1},\dots,\xs{d})\,T(\xs{1},\dots,\xs{h},i_{h+1},\dots,i_{d})=0\]
(this can be interpreted as a determinant of a $2\times 2$ matrix of rank at most $1$, recalling \cref{fact:tensor-to-matrix}).
Rearranging, we obtain $T(i_1,\dots,i_d)=T'(i_1,\dots,i_d)$. But, by assumption, the number of such choices of $i_1,\dots,i_d$ is at least $(1-\delta)\prod_{i=1}^d (|I_i|-1)\ge (1-2\delta)\prod_{i=1}^d |I_i|$. (Here we are using that $I_1,\dots,I_d$ are sufficiently large with respect to $d$ and $\delta$).
\end{proof}

We now combine \cref{claim:tensor-property-testing-1,claim:tensor-property-testing-2,claim:tensor-property-testing-3} to prove \cref{thm:tensor-property-testing}.
\begin{proof}[Proof of \cref{thm:tensor-property-testing}]
    As in \cref{claim:tensor-property-testing-2}, let $\ell:=2^{d-1}-1$, and note that then $\delta=(\varepsilon/2)^{2^{d-1}}=(\varepsilon/2)^{\ell+1}$.
    If at most a $\delta^{1/(\ell+1)}$-fraction of the entries in $T$ are nonzero, then $T$ can be made zero by changing up to a $\varepsilon$-fraction of the entries; we are done in this case.
    So, assume that $T$ contains more than a $\delta^{1/(\ell+1)}$-fraction of nonzero entries. We apply \cref{claim:tensor-property-testing-1} with $r=2^{d-1}$ and $c=1/(\ell+1)$ to see that $T$ has a nonzero entry $T(\xs{1},\xs{2},\dots,\xs{d})$ such that all but at most a $\delta^{\ell/(\ell+1)}$-fraction of the $2^{d-1}\times \dots \times 2^{d-1}$ subtensors which contain this entry are reducible.
    By \cref{claim:tensor-property-testing-2},
    there is a non-trivial partition $\{1,\dots,d\}=J_1\cup J_2$ such that all but at most a $\delta^{1/(\ell+1)}$-fraction of the $2\times \dots\times 2$ subtensors of $T$ containing the entry $T(\xs{1},\xs{2},\dots,\xs{d})$, are reducible with respect to $\{J_1,J_2\}$. Then, \cref{claim:tensor-property-testing-3}
    guarantees that $T$ can be made reducible by changing a fraction of up to $2\delta^{1/(\ell+1)}=\varepsilon$ of its entries, as desired.
\end{proof}

\section{Linear subspaces of the variety of reducible tensors}\label{sec:reducible-variety}
In this section, we prove \cref{thm:irreducible-variety}, characterising the linear subspaces of the variety of reducible tensors.

First, note that we can break up the set of reducible tensors according to the partition with respect to which they are reducible.
\begin{fact}\label{fact:break-up-by-partitions}
    Fix disjoint sets $I_1,\dots,I_d$, fix $\mb F\in \{\mb R,\mb C\}$ and let $r=\prod_{j=1}^d|I_j|$. Let $\mc Z\subseteq \mb F^{I_1}\otimes\dots\otimes \mb F^{I_d}$ be the set of reducible $I_1\times \dots\times I_d$  tensors, and for a non-trivial partition $\{1,\dots,d\}=J_1\cup J_2$, let $\mc Z_{J_1,J_2}$ be the set of $I_1\times\dots\times I_d$ tensors that are reducible with respect to $\{J_1,J_2\}$. Then
    \[\mc Z=\bigcup_{\{J_1,J_2\}} \mc Z_{J_1,J_2},\]
    where the union is over all $2^{d-1}-1$ unordered non-trivial partitions $\{1,\dots,d\}=J_1\cup J_2$.
\end{fact}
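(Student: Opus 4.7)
The plan is to prove this essentially by unwinding the definitions, verifying both set inclusions, and then accounting for the number of partitions. I do not expect any genuine obstacle: this fact is really a bookkeeping statement relating \cref{def: reducible tensors} (which uses an existentially quantified partition) to \cref{def:reducible-under} (which fixes the partition).

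First, I would verify the inclusion $\bigcup_{\{J_1,J_2\}} \mc Z_{J_1,J_2} \subseteq \mc Z$. If $T \in \mc Z_{J_1,J_2}$ for some non-trivial partition $\{J_1,J_2\}$, then by \cref{def:reducible-under} there exist tensors $T_1 \in \bigotimes_{j \in J_1} \mb F^{I_j}$ and $T_2 \in \bigotimes_{j \in J_2} \mb F^{I_j}$ with $T = T_1 \otimes T_2$. This is exactly the condition for $T$ to be reducible in \cref{def: reducible tensors}, so $T \in \mc Z$.

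Next, I would verify the reverse inclusion $\mc Z \subseteq \bigcup_{\{J_1,J_2\}} \mc Z_{J_1,J_2}$. If $T \in \mc Z$, then by \cref{def: reducible tensors} there exists a non-trivial partition $\{1,\dots,d\} = J_1 \cup J_2$ and tensors $T_1, T_2$ with $T = T_1 \otimes T_2$. This is precisely the statement that $T \in \mc Z_{J_1,J_2}$ for this particular partition, so $T$ lies in the union.

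Finally, I would justify the count of $2^{d-1} - 1$ unordered non-trivial partitions: there are $2^d$ subsets of $\{1,\dots,d\}$; excluding the empty set and the full set leaves $2^d - 2$ ordered choices of $J_1$ (with $J_2$ determined as the complement), and dividing by $2$ to pass to unordered partitions yields $2^{d-1} - 1$.
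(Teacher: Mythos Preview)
Your proof is correct and is exactly the intended reading: the paper states this as a Fact without proof, treating it as immediate from \cref{def: reducible tensors} and \cref{def:reducible-under}, which is precisely the unwinding you carry out.
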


The above fact, together with \cref{fact:tensor-to-matrix}, makes \cref{thm:irreducible-variety}(1) nearly immediate (i.e., the observation that the reducible tensors form a variety).
\begin{proof}[Proof of \cref{thm:irreducible-variety}(1)]
By \cref{fact:break-up-by-partitions}, it suffices to prove that each $\mc Z_{J_1,J_2}$ is an affine variety, and by \cref{fact:tensor-to-matrix}, it suffices to show that the set of $r_1\times r_2$ matrices with rank at most 1 can be interpreted as an affine variety in $\mb F^{r_1r_2}$ (i.e., it is the zero locus of a system of polynomial equations). This is well-known to be true: a matrix has rank at most 1 if and only if every $2\times 2$ submatrix has zero determinant.
\end{proof}

Next, we prove \cref{thm:irreducible-variety}(2), classifying the maximal linear subspaces of the variety of reducible tensors. In this proof, we need the simple algebraic geometry fact that linear subspaces are irreducible as affine varieties. This fact also holds in the case $\mb F=\mb R$, even though the underlying field is not algebraically closed (it holds over any infinite field).

\begin{proof}[Proof of \cref{thm:irreducible-variety}(2)]
    Recall that all linear subspaces of $\mb F^r$ are irreducible as affine varieties. 
    So, recalling \cref{fact:break-up-by-partitions}, any linear subspace of $\mc Z=\bigcup_{\{J_1,J_2\}} \mc Z_{J_1,J_2}$ must in fact be a linear subspace of some particular $\mc Z_{J_1,J_2}$.
    That is to say, it suffices to classify the maximal linear subspaces of $\mc Z_{J_1,J_2}$. 
    By \cref{fact:tensor-to-matrix}, this is really the same as classifying the maximal linear subspaces of the variety of $r_1\times r_2$ matrices with rank at most 1. 
    That is to say, it suffices to prove the lemma in the case $d=2$ (recalling that matrices are 2-dimensional tensors).
    
    So, fix disjoint sets $I_1,I_2$, and let $\mc V$ be a linear subspace of the variety $\mc Z$ of $I_1\times I_2$ tensors (matrices) which have rank at most 1. 
    Our objective is to prove that there is a ``column vector'' $\vec u^{\hspace{0.05em}\star}\in \mb F^{I_1}$ such that
    \[\mc V\subseteq \{\vec u^{\hspace{0.05em}\star}\otimes \vec x:\vec x\in \mb F^{I_2}\}\]
    or there is a ``row vector'' $\vec v^{\hspace{0.05em}\star}\in \mb F^{I_2}$ such that
    \[\mc V\subseteq \{\vec y\otimes \vec v^{\hspace{0.05em}\star}:\vec y\in \mb F^{I_1}\}.\]
    Indeed, this will show that we have $\mc V=\{\vec u^{\hspace{0.05em}\star}\otimes \vec x:\vec x\in \mb F^{I_2}\}$ or $\mc V= \{\vec y\otimes \vec v^{\hspace{0.05em}\star}:\vec y\in \mb F^{I_1}\}$, whenever $\mc V$ is a maximal linear subspace of $\mc Z$.
    
    If $\mc V$ contains only the zero matrix, the statement trivially holds. 
    So, we can assume that $\mc V$ contains a nonzero matrix, i.e., a matrix of rank exactly $1$. This means that there are nonzero vectors $\vec u^{\hspace{0.05em}\star}\in \mb F^{I_1}$ and $\vec v^{\hspace{0.05em}\star}\in \mb F^{I_2}$ such that $\vec u^{\hspace{0.05em}\star} \otimes \vec v^{\hspace{0.05em}\star}\in \mc V$.
    \begin{claim}
        Let $T \in \mc V$. Then, $T$ is of the form $\vec u^{\hspace{0.05em}\star}\otimes \vec x$ for some $\vec x\in \mb F^{I_2}$, or $\vec y\otimes \vec v^{\hspace{0.05em}\star}$ for some $\vec y\in \mb F^{I_1}$.
    \end{claim}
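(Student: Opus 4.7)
The plan is to exploit the hypothesis that $\mc V$ is a \emph{linear} subspace containing both $T$ and $\vec u^{\hspace{0.05em}\star}\otimes \vec v^{\hspace{0.05em}\star}$, so that for every $t\in \mb F$ the matrix $T+t\cdot \vec u^{\hspace{0.05em}\star}\otimes \vec v^{\hspace{0.05em}\star}$ lies in $\mc V\subseteq \mc Z$ and therefore has rank at most $1$. If $T=0$ we can write $T=\vec u^{\hspace{0.05em}\star}\otimes \vec 0$ and are done, so we may write $T=\vec y\otimes \vec x$ for nonzero $\vec y\in \mb F^{I_1}$ and $\vec x\in \mb F^{I_2}$. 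The goal becomes to show that $\vec y$ is a scalar multiple of $\vec u^{\hspace{0.05em}\star}$ (in which case $T=\vec u^{\hspace{0.05em}\star}\otimes (\lambda\vec x)$ for appropriate $\lambda$) or $\vec x$ is a scalar multiple of $\vec v^{\hspace{0.05em}\star}$ (in which case $T=(\mu\vec y)\otimes \vec v^{\hspace{0.05em}\star}$).

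I would argue by contradiction: suppose $\vec y,\vec u^{\hspace{0.05em}\star}$ are linearly independent in $\mb F^{I_1}$ and also $\vec x,\vec v^{\hspace{0.05em}\star}$ are linearly independent in $\mb F^{I_2}$. By linear independence I can pick indices $i_1,i_2\in I_1$ so that $\vec y[i_1]\vec u^{\hspace{0.05em}\star}[i_2]-\vec y[i_2]\vec u^{\hspace{0.05em}\star}[i_1]\ne 0$, and indices $j_1,j_2\in I_2$ so that $\vec x[j_1]\vec v^{\hspace{0.05em}\star}[j_2]-\vec x[j_2]\vec v^{\hspace{0.05em}\star}[j_1]\ne 0$.

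The key computation is to expand the $2\times 2$ submatrix determinant
\[
\det\!\Bigl(\bigl(\vec y\otimes \vec x+t\,\vec u^{\hspace{0.05em}\star}\otimes\vec v^{\hspace{0.05em}\star}\bigr)[\{i_1,i_2\},\{j_1,j_2\}]\Bigr)
\]
by multilinearity of the determinant in its rows. The $\vec y\otimes \vec x$--$\vec y\otimes \vec x$ and $\vec u^{\hspace{0.05em}\star}\otimes \vec v^{\hspace{0.05em}\star}$--$\vec u^{\hspace{0.05em}\star}\otimes \vec v^{\hspace{0.05em}\star}$ terms vanish (two rows being proportional), and the two cross terms combine to give
\[
t\cdot\bigl(\vec y[i_1]\vec u^{\hspace{0.05em}\star}[i_2]-\vec y[i_2]\vec u^{\hspace{0.05em}\star}[i_1]\bigr)\cdot\bigl(\vec x[j_1]\vec v^{\hspace{0.05em}\star}[j_2]-\vec x[j_2]\vec v^{\hspace{0.05em}\star}[j_1]\bigr),
\]
which is nonzero for any $t\ne 0$. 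Hence $T+t\,\vec u^{\hspace{0.05em}\star}\otimes \vec v^{\hspace{0.05em}\star}$ has rank at least $2$, contradicting membership in $\mc Z$. The only mild subtlety is the multilinearity expansion, which is entirely mechanical; no real obstacle is expected.
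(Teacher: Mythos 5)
Your proof is correct and rests on exactly the same key observation as the paper's: since $\mc V$ is linear and contains both $T$ and $\vec u^{\hspace{0.05em}\star}\otimes\vec v^{\hspace{0.05em}\star}$, the sum $T+\vec u^{\hspace{0.05em}\star}\otimes\vec v^{\hspace{0.05em}\star}$ lies in $\mc Z$ and hence has rank at most $1$. The only difference is in how you extract the contradiction: the paper writes $\vec u\otimes\vec v+\vec u^{\hspace{0.05em}\star}\otimes\vec v^{\hspace{0.05em}\star}$ as a matrix product $PQ$ with $P\in\mb F^{I_1\times 2}$, $Q\in\mb F^{2\times I_2}$ and observes that if both $P$ and $Q$ have rank $2$ then so does $PQ$; you instead pin down an explicit $2\times 2$ minor that is nonzero via multilinearity of the determinant. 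Both routes are elementary, equally short, and prove the same statement; the extra parameter $t$ in your argument is harmless but unnecessary ($t=1$ suffices).
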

    \begin{claimproof}
        $T \in \mc{V} \subseteq \mc{Z}$ has rank at most 1, so we can write $T=\vec u\otimes \vec v$ for some $\vec u\in \mb F^{I_1}$ and $\vec v\in \mb F^{I_2}$.
        We need to prove that $\vec u$ is a multiple of $\vec u^{\hspace{0.05em}\star}$ or $\vec v$ is a multiple of $\vec v^{\hspace{0.05em}\star}$.
    
        We can interpret $\vec u\otimes \vec v+u^{\hspace{0.05em}\star} \otimes \vec v^{\hspace{0.05em}\star}$ as a matrix product $PQ$, where $P$ is the $|I_1|\times 2$ matrix with columns $\vec u,\vec u^{\hspace{0.05em}\star}$ and $Q$ is the $2\times |I_2|$ matrix with rows $\vec v,\vec v^{\hspace{0.05em}\star}$. Since $\mc V$ is a linear space, we know that $\vec u\otimes \vec v+u^{\hspace{0.05em}\star} \otimes \vec v^{\hspace{0.05em}\star}=T+u^{\hspace{0.05em}\star} \otimes \vec v^{\hspace{0.05em}\star}\in \mc V \subseteq \mc{Z}$ has rank at most 1, so at least one of $P$ or $Q$ has rank at most 1 (since the product of a $|I_1|\times 2$ matrix of rank $2$ with a $2\times |I_2|$ matrix or rank 2 always has rank 2 as well). That is to say, $\vec u$ is a multiple of $\vec u^{\hspace{0.05em}\star}$, or $\vec v$ is a multiple of $\vec v^{\hspace{0.05em}\star}$.
    \end{claimproof}
    Now, let $\mc{V}_1:=\{\vec u^{\hspace{0.05em}\star}\otimes \vec x:\vec x\in \mb F^{I_2}\}$ and $\mc{V}_2:=\{\vec y\otimes \vec v^{\hspace{0.05em}\star}:\vec y\in \mb F^{I_1}\}$; they are both affine varieties (in fact, they are both linear subspaces). 
    The above claim shows that $\mc V \subseteq \mc{V}_1 \cup \mc{V}_2$.
    Since $\mc V$ is a linear space, it is irreducible, so $\mc V \subseteq \mc{V}_1$ or $\mc V\subseteq \mc{V}_2$, as desired.
\end{proof}

\section{$k$-multilinear forms}\label{sec:multilinear}
In this section we prove \cref{lem:collapse} and use it to prove \cref{thm:multilinear}.

\begin{proof}[Proof of \cref{lem:collapse}]
    For each $i\in I_d=\{1,\dots,n\}$, let $T_i:I_1\times I_2\times \dots\times I_{d-1}\to \mb F$ be the tensor defined by $T_i(i_1,\dots,i_{d-1})=T(i_1,\dots,i_{d-1},i)$, so we can write $T\vec \xi=\xi_1T_1+\dots+\xi_nT_n$. 
    Recall that we can interpret an $I_1\times I_2\times \dots\times I_{d-1}$ tensor as a vector in $\mb F^{|I_1|\dotsm |I_{d-1}|}$, and let $\mc Z\subseteq \mb F^{|I_1|\dotsm |I_{d-1}|}$ be the variety of reducible $I_1\times I_2\times \dots\times I_{d-1}$ tensors. 
    Then, the event that $T\vec \xi$ is reducible is precisely the event that $\xi_1T_1+\dots+\xi_nT_n\in \mc Z$. 
    
    Also, note that every maximal affine-linear subspace of $\mc Z$ is in fact a linear subspace. To see this, note that multiplying by a nonzero scalar does not affect reducibility, and therefore we have $\lambda\vec v\in \mc Z$ for all $\vec v\in \mc Z$ and $\lambda\in \mb F$. So, for any affine-linear subspace $\mc W\subseteq \mc Z$, the linear span $\{\lambda \vec w:\vec w\in \mc W,\lambda\in \mb F\}$ of the vectors in $\mc W$ is a \emph{linear} subspace in $\mc Z$ that contains $\mc W$.
    
    So, by \cref{thm:FKS}, $\Pr[T\vec \xi\text{ is reducible}]\le n^{-1/2+\varepsilon/r}\le n^{-1/2+\varepsilon}$ (i.e., \cref{D2} holds), or there is a maximal linear subspace $\mc W\subseteq \mc Z$ such that all but $(\varepsilon/r) n$ of the $T_i$ lie in $\mc W$. 
    We may assume the latter property holds.
    
    By \cref{thm:irreducible-variety}, there is a subset $\emptyset\subsetneq J\subsetneq \{1,2,\dots,d-1\}$ and a tensor $T^\star \in \bigotimes_{j\in J} \mb{F}^{I_j}$ such that 
    \[\mc W=\Bigg\{T^\star\otimes T':\; T' \in \bigotimes_{j\in \{1,\dots,d-1\}\setminus J}\mb{F}^{I_j}\Bigg\}.\]
    Suppose without loss of generality that $J=\{1,\dots,j\}$ for some $j \in \{1,\dots,d-2\}$.
    Now, we know that at least a $(1-\varepsilon/r)$-fraction of the $T_i$ lie in $\mc W$, so for at least a $(1-\varepsilon)$-fraction of the $r$-element subsets $I_d'\subseteq I_d$, we have $T_i\in \mc W$ for each $i\in I_d'$. 
    To prove \cref{D1}, it suffices to prove that for all such subsets $I_d'$, the corresponding $r\times \dots \times r$ subtensor $T[I_1,\dots,I_{d-1},I_d']$ is reducible. 
    In fact, this is nearly immediate: for each $i\in I_d'$ we can write $T_i=T^\star\otimes T'_i$, so defining $T':I_{j+1}\times \dots\times I_d\to \mb F$ by $T'(i_{j+1},\dots,i_d)=T_{i_d}(i_{j+1},\dots,i_{d-1})$, we have $T[I_1,\dots,I_{d-1},I_d']=T^\star\otimes T'$.
\end{proof}

Now, we turn to the proof of \cref{thm:multilinear}. It is a direct consequence of the following slightly more general result.
\begin{theorem}\label{theorem: multilinear probability and tensor reducibility}
    Fix $d\ge 2$ and $\varepsilon>0$, and let $\mathbb{F}\in\{\mathbb{R},\mathbb{C}\}$. 
    Let $n$ be sufficiently large (in terms of $\varepsilon,d$), and consider a tensor $T:I_1\times\dots\times I_d \to \mb{F}$ for some partition $\{1,\dots,n\}=I_1\cup \dots\cup I_d$, and the associated $d$-multilinear form  $f \in \mb{F}[x_1,\dots,x_n]$.
    Then, at least one of the following holds:
    \begin{enumerate}[{\bfseries{F\arabic{enumi}}}]
        \item \label{F1} $T$ can be made reducible by changing up to $\varepsilon n^{d}$ entries, or
        \item \label{F2} letting $\vec{\xi}=(\xi_{1},\dots,\xi_{n}) \in \{-1,1\}^n$ be a vector of i.i.d.\ Rademacher random variables, we have
        \[
            \sup_{z\in\mathbb{F}}
            \Pr[f(\xi_{1},\dots,\xi_{n})=z]\le n^{-1+\varepsilon}.
        \]
    \end{enumerate}
\end{theorem}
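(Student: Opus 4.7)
We plan to prove \cref{theorem: multilinear probability and tensor reducibility} by induction on $d$, with the bilinear case $d=2$ (essentially Costello's theorem) as the base case. For the inductive step ($d\ge 3$), given a tensor $T:I_1\times\cdots\times I_d\to\mb F$ representing $f$, we may assume each $|I_j|=\Theta(n)$ (otherwise $T$ has so few entries that \cref{F1} is trivial), and argue by contrapositive: assuming $T$ cannot be made reducible by changing at most $\varepsilon_0 n^d$ entries (for $\varepsilon_0$ small enough in terms of $\varepsilon$), we deduce $\sup_z\Pr[f(\vec\xi)=z]\le n^{-1+\varepsilon}$.

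The strategy is to reveal $\vec\xi[I_d]$ first, reinterpreting $f$ as the random $(d-1)$-multilinear form $f_{\vec\xi[I_d]}$ with coefficient tensor $T\vec\xi[I_d]$. To apply the inductive hypothesis, we first show that with high probability over $\vec\xi[I_d]$ the collapsed tensor $T\vec\xi[I_d]$ is also not close to reducible. By the contrapositive of \cref{thm:tensor-property-testing}, at least a constant fraction of the $2^{d-1}\times\cdots\times 2^{d-1}$ subtensors of $T$ are irreducible. For each tuple $(S_1,\dots,S_{d-1})$ with $|S_j|=2^{d-1}$, we apply \cref{lem:collapse} (with $r=2^{d-1}$) to the slab $T[S_1,\dots,S_{d-1},I_d]$; this yields either (D1) most size-$2^{d-1}$ subsets $S_d\subseteq I_d$ give a reducible $T[S_1,\dots,S_d]$, or (D2) the collapsed slab $(T\vec\xi[I_d])[S_1,\dots,S_{d-1}]$ is reducible with probability at most $n^{-1/2+\varepsilon_1}$. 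A counting argument forces a constant fraction of $(S_1,\dots,S_{d-1})$ into case (D2), and Markov's inequality then shows that with probability at least $1-O(n^{-1/2+\varepsilon_1})$ over $\vec\xi[I_d]$ many $2^{d-1}\times\cdots\times 2^{d-1}$ subtensors of $T\vec\xi[I_d]$ are irreducible. The trivial direction of \cref{thm:tensor-property-testing} (any subtensor not involving a modified entry is reducible when the whole tensor is changed to a reducible one) then implies $T\vec\xi[I_d]$ is not close to reducible, and the inductive hypothesis gives conditional anticoncentration at most $n^{-1+\varepsilon_2}$.

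In the exceptional event that $T\vec\xi[I_d]$ is close to reducible (of probability at most $O(n^{-1/2+\varepsilon_1})$), we obtain a secondary conditional anticoncentration bound by applying the Meka--Nguyen--Vu theorem (\cref{thm:polynomial-LO}) directly to $f_{\vec\xi[I_d]}$, yielding $\sup_z\Pr[f(\vec\xi)=z\mid\vec\xi[I_d]]\le n^{-1/2+\varepsilon_3}$ provided $f_{\vec\xi[I_d]}$ has $\Omega(n^{d-1})$ nonzero coefficients. Since $T$ is far from the zero tensor (which is reducible), it has $\Omega(n^d)$ nonzero entries, so most ``rows'' of $T$ in the $d$-th direction are dense; the linear Erd\H os--Littlewood--Offord bound then makes each coefficient of $T\vec\xi[I_d]$ (a Rademacher sum over such a row) nonzero with probability $1-O(n^{-1/2})$, and a Markov-type argument rules out the ``close to zero'' subcase at a rate strictly faster than $n^{-1/2+\varepsilon_1}$. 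Multiplying the probability of the exceptional event by the secondary conditional bound yields $O(n^{-1+\varepsilon_1+\varepsilon_3})$, which combined with the good-event contribution $n^{-1+\varepsilon_2}$ delivers $\sup_z\Pr[f(\vec\xi)=z]\le n^{-1+\varepsilon}$ after a suitable choice of the $\varepsilon_i$.

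The main obstacle is that a naive single-step application of \cref{lem:collapse} alone yields only an $n^{-1/2+\varepsilon}$-type probability bound for the collapsed tensor being (close to) reducible, a full power of $n^{1/2}$ weaker than the target anticoncentration $n^{-1+\varepsilon}$. Closing this gap requires the two-pronged analysis above, in which the exceptional event is simultaneously improbable and carries its own $n^{-1/2+\varepsilon}$-type conditional anticoncentration bound, so that their product attains $n^{-1+\varepsilon}$. The delicate technical point is verifying that the secondary bound genuinely applies in the exceptional event, which requires carefully controlling the ``close to zero'' subcase where $T\vec\xi[I_d]$ has too few nonzero coefficients for \cref{thm:polynomial-LO} to yield the full $n^{-1/2+\varepsilon}$ bound.
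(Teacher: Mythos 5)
Your overall induction and decomposition of the anticoncentration probability into a ``good event'' ($T\vec\xi[I_d]$ far from reducible, handled by the inductive hypothesis) and an ``exceptional event'' ($T\vec\xi[I_d]$ close to reducible, handled by \cref{lem:collapse} and \cref{thm:tensor-property-testing} together with an $n^{-1/2+o(1)}$ secondary bound) is essentially the same strategy as the paper's, and the derivation that the exceptional event has probability $O(n^{-1/2+\varepsilon_1})$ is correct. The problem is the ``close to zero'' subcase, which you treat too casually.

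You argue that since $T$ has $\Omega(n^d)$ nonzero entries, most fibres in the $d$-th direction are dense, so each coefficient of $T\vec\xi[I_d]$ coming from a dense fibre is nonzero with probability $1-O(n^{-1/2})$ by the linear Erd\H os--Littlewood--Offord bound, and ``a Markov-type argument'' rules out the close-to-zero event ``at a rate strictly faster than $n^{-1/2+\varepsilon_1}$''. Two things go wrong here. First, a first-moment Markov bound based on $\mb E[\#\{\text{zero coefficients}\}]=O(n^{d-1}\cdot n^{-1/2})$ gives exactly $\Pr[\mc B]=O(n^{-1/2})$, not anything faster --- and it is genuinely tight in this generality (consider coefficients that are heavily correlated across different fibres). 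Second, and more importantly, $O(n^{-1/2})$ is nowhere near good enough: inside the event $\mc B$ that $T\vec\xi[I_d]$ is close to zero, you have no useful conditional anticoncentration bound at all (a close-to-zero multilinear form can equal any fixed value with probability close to $1$), so you must simply pay $\Pr[\mc B]$ in the total, which needs to be $O(n^{-1+\varepsilon})$. A linear-ELO-plus-Markov estimate cannot deliver this; you would need a genuine pairwise estimate, i.e.\ Hal\'asz' inequality \cref{thm:Halasz} applied to $2\times n$ slices of the matricization of $T$ to show most pairs of coefficients vanish simultaneously only with probability $O(n^{-1})$ (plus a property-testing step to handle the exceptional pairs). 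The paper sidesteps the need to upper-bound $\Pr[\mc B]$ at all: in its Case 1, the argument runs in the other direction, showing that if $\Pr[\mc E_1]>n^{-1+\varepsilon}/3$ then \cref{lemma: tuple-counting}, Hal\'asz and the $d=2$ case of \cref{thm:tensor-property-testing} together imply $T$ is close to reducible, which is \cref{F1}. You should either adopt that structural route, or replace your first-moment estimate with a Hal\'asz-based pair argument of the same flavour; as written, the proposal is incomplete.
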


Our proof of \cref{theorem: multilinear probability and tensor reducibility} is by induction on $d$.
The base case $d=2$ was essentially proved by Costello~\cite{Cos13} (in fact, even with a stronger\footnote{We remark that Costello's notion of ``close to reducible'' (which we do not state here) seems to be genuinely stronger than \cref{F1}, in the sense that we do not know how to directly deduce \cite[Theorem~5]{Cos13} from the $d=2$ case of \cref{theorem: multilinear probability and tensor reducibility}. As discussed in \cref{item:smallness} in \cref{subsec:further-directions}, it may be interesting to investigate further the various different notions of ``smallness'' or ``closeness'', and the implications between them.} notion of ``close to reducible'' than our property \cref{F1}). Specifically, the following theorem is a direct consequence of \cite[Theorem~5]{Cos13}.
\begin{theorem}
\label{thm:costello}
    Fix $\gamma>0$ and $\mb{F} \in \{\mb{R},\mb{C}\}$, and let $r_0$ be sufficiently large in terms of $\gamma$.
    For any $m,m'\in \mb N$, consider a matrix $A \in \mb{F}^{m \times m'}$ such that every row has at least $r\ge r_0$ nonzero entries.
    Let $\vec{\xi}=(\xi_1,\dots,\xi_m)\in \{-1,1\}^m$ and $\pvec \xi=(\xi_1',\dots,\xi_{m'}')\in \{-1,1\}^{m'}$ be independent (column) vectors with i.i.d.\ Rademacher entries, and suppose that there is a function $\varphi:\mb F^{m'}\to \mb F$
    such that
    \[
    \Pr\big[\vec{\xi}^{\,\,T} A \pvec{\xi}=\varphi(\pvec{\xi})\big] \ge r^{-1+\gamma}.\]
    Then, $A$ can be turned into a matrix of rank at most 1 by changing up to $\gamma r(m+m')$ entries.
\end{theorem}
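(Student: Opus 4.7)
I would follow the standard two-step scheme underlying Costello's original argument: first decouple the bilinear form to produce a ``doubled'' anticoncentration statement, then apply a linear inverse theorem and pigeonholing to extract algebraic structure on $A$. Write $p=r^{-1+\gamma}$.

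Two rounds of the Cauchy--Schwarz-based decoupling trick (the same idea underlying \cref{lem:multiple-decoupling-LO}), applied once to $\vec\xi$ and once to $\pvec\xi$, upgrade the hypothesis to
\[
\Pr\big[\vec w^{\,T}A\pvec w=0\big]\ge p^{4}=r^{-4+4\gamma},
\]
where $\vec w\in\{-1,0,1\}^{m}$ and $\pvec w\in\{-1,0,1\}^{m'}$ are independent lazy Rademacher vectors (\cref{def:lazy-rademacher}). A Markov-type averaging then produces a set $\mathcal W$ of lazy Rademacher vectors with $\Pr[\vec w\in\mathcal W]\gtrsim r^{-4+4\gamma}$ on which $\Pr_{\pvec w}[(\vec w^{\,T}A)\pvec w=0]\ge r^{-4+4\gamma}/2$. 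For each $\vec w\in\mathcal W$, applying the Nguyen--Vu optimal inverse theorem (\cref{theorem: optimal inverse theorem}) to the $m'$ entries of $\vec w^{\,T}A$ forces all but a small exceptional fraction of them to lie in a symmetric GAP $P_{\vec w}$ of rank $O_\gamma(1)$ and volume $r^{O_\gamma(1)}$. The row hypothesis $\snorm{a_i}_0\ge r$ plays a dual role: via linear Erd\H os--Littlewood--Offord it gives $\Pr_{\pvec w}[a_i\cdot\pvec w=0]=O(1/\sqrt r)$ for each row, ruling out the trivial case in which $\vec w^{\,T}A$ is generically zero, and thereby guaranteeing that the GAPs $P_{\vec w}$ encode nontrivial information.

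The main obstacle, and the technical heart of the argument, is converting this family of pointwise GAP-containment statements --- one $P_{\vec w}$ per $\vec w\in\mathcal W$ --- into a single global algebraic statement about $A$. My plan is to pigeonhole over the $r^{O_\gamma(1)}$ combinatorial ``types'' of GAPs of rank $O_\gamma(1)$ and volume $r^{O_\gamma(1)}$, extracting a still-dense subfamily $\mathcal W^{\star}\subseteq\mathcal W$ on which all the $P_{\vec w}$ coincide with a single fixed GAP $P$; and then to run a Freiman/Pl\"unnecke-style additive argument --- in the spirit of Costello's original proof --- showing that the density of integer row-combinations of $A$ lying in the fixed $P$ forces the rows of $A$ themselves, outside an exceptional set of $O(\gamma r)$ rows and $O(\gamma r)$ columns, to be mutually proportional. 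Adjusting the at most $O(\gamma r(m+m'))$ entries inside those exceptional rows and columns then makes $A$ genuinely of rank at most $1$ and fits exactly inside the change budget in the conclusion. The delicate point is constant-tracking: each of the decoupling, inverse-theorem, pigeonhole, and Freiman steps must lose only a polynomial-in-$r$ factor, so that the cumulative error still fits into the linear-in-$r$ cleaning budget that gives the precise $\gamma r(m+m')$ bound.
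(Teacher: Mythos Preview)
The paper does not actually prove this theorem: it is stated as ``a direct consequence of \cite[Theorem~5]{Cos13}'' and used as a black box (the base case $d=2$ of the induction in \cref{theorem: multilinear probability and tensor reducibility}). So there is no proof in the paper to compare your proposal against; what you have sketched is, in effect, an attempt to reconstruct Costello's original argument from~\cite{Cos13}.

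That said, your sketch is in the right spirit but the final step is where the real difficulty lies, and your plan there is too vague to be convincing. Pigeonholing over ``combinatorial types'' of GAPs does not obviously work: a symmetric GAP of rank $O_\gamma(1)$ and volume $r^{O_\gamma(1)}$ in $\mb F$ is specified by its generators in $\mb F$, not just by discrete parameters $(N_1,\dots,N_r)$, so there are infinitely many such GAPs and you cannot simply pigeonhole to a single fixed $P$. Costello's actual argument does not proceed by reducing to a single GAP; rather, it works directly with the pairwise ratio structure forced by rank-$1$ closeness, together with careful number-theoretic estimates (some of which resurface later in this paper as \cref{lemma: pair volume by Costello0}). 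If you want to reproduce the result, you should consult \cite{Cos13} directly rather than improvise the endgame.
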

Compared to \cref{theorem: multilinear probability and tensor reducibility}, note that \cref{thm:costello} has an additional assumption requiring many nonzero entries in every row of the relevant matrix. We take a moment to state a slight strengthening of the $d=2$ case of \cref{theorem: multilinear probability and tensor reducibility} (this strengthening will be useful later in the paper) and explain how to deduce this statement from Costello's work (using an argument as in \cite[Remark~1]{Cos13}).
\begin{theorem}
\label{thm:costello-mod}
    Fix $\eps>0$ and $\mb{F} \in \{\mb{R},\mb{C}\}$. Consider $m,m'\in \mb N$ such that $m+m'$ is sufficiently large in terms of $\eps$, and consider a matrix $A \in \mb{F}^{m \times m'}$. Then, at least one of the following holds:
     \begin{enumerate}[{\bfseries{F\arabic{enumi}}'}]
        \item \label{F1'} $A$ can be made into a rank-1 matrix by changing up to $\varepsilon (m+m')^{2}$ entries, or
        \item \label{F2'} letting $\vec{\xi}=(\xi_1,\dots,\xi_m)\in \{-1,1\}^m$ be a vector of i.i.d.\ Rademacher random variables, and independently letting $\pvec \xi=(\xi_1',\dots,\xi_{m'}')\in \{-1,1\}^{m'}$ be a vector of i.i.d.\ Rademacher or lazy Rademacher random variables,  we have
        \[
            \Pr[\vec{\xi}^{\,\,T} A\pvec{\xi}=\psi(\pvec \xi)]\le (m+m')^{-1+\varepsilon}.
        \]
        for any function $\psi:\mb F^{m'}\to \mb F$.
    \end{enumerate}
\end{theorem}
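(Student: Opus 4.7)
The plan is to deduce \cref{thm:costello-mod} from Costello's \cref{thm:costello} via two reductions: (a) removing the hypothesis that every row of $A$ has at least $r$ nonzero entries, and (b) reducing the lazy Rademacher case for $\pvec{\xi}$ to the Rademacher case. Reduction (a) proceeds by zeroing out the ``sparse'' rows (those with few nonzero entries) and applying Costello's theorem to the submatrix of ``dense'' rows, balancing the total entry-change budget between these two steps. Reduction (b) is the more delicate one, and is where I expect the main technical difficulty: conditioning to reduce from lazy to shifted Rademacher introduces an unwanted $\vec\xi$-dependent term that breaks the ``one-sided'' form required by Costello's theorem (where $\varphi$ depends only on $\pvec{\xi}$), and I plan to resolve this via an augmentation trick that enlarges $A$ by one column.

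For reduction (a), assume the Rademacher setting and suppose \cref{F2'} fails, so the anticoncentration probability exceeds $(m+m')^{-1+\varepsilon}$. I would set $r := \lceil c(m+m')\rceil$ with constants $c = \gamma = \varepsilon/4$, and let $J\subseteq [m]$ be the set of rows of $A$ with fewer than $r$ nonzero entries. Zeroing out all rows in $J$ costs at most $|J|\cdot r \le c(m+m')^2$ entry changes. On the probability side, I would condition on $(\xi_i)_{i\in J}$; the contribution $\sum_{i\in J}\xi_i(A[i,:]\pvec{\xi})$ then becomes a fixed function of $\pvec{\xi}$ alone, which can be absorbed into $\psi$. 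By averaging, some fixed conditioning retains the anticoncentration probability, now for a Costello-type event on the submatrix $A[J^c,:]$ (whose rows each have at least $r$ nonzero entries). For $m+m'$ sufficiently large, both $r \ge r_0(\gamma)$ and $r^{-1+\gamma} < (m+m')^{-1+\varepsilon}$ hold, so \cref{thm:costello} gives that $A[J^c,:]$ can be made rank $\le 1$ by changing at most $\gamma r(m+m') = \gamma c(m+m')^2$ entries. Combined with the zeroing of the rows in $J$, the total modification of $A$ uses at most $c(1+\gamma)(m+m')^2 \le \varepsilon(m+m')^2$ entry changes, establishing \cref{F1'}.

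For reduction (b), if $\pvec{\xi}$ is lazy Rademacher, conditioning on an appropriate underlying Rademacher reduces $\pvec{\xi}$ to a shifted Rademacher vector $\pvec{\eta} + \vec w$ (with $\pvec{\eta}$ a Rademacher vector and $\vec w$ a fixed shift, chosen by averaging to preserve the probability). The event $\vec\xi^T A \pvec{\xi} = \psi(\pvec{\xi})$ then becomes $\vec\xi^T A \pvec{\eta} + \vec\xi^T(A\vec w) = \tilde\psi(\pvec{\eta})$, where the shift term $\vec\xi^T(A\vec w)$ depends on $\vec\xi$ and so cannot be absorbed into $\tilde\psi$. To re-encode this as a bona fide bilinear form, I would append $A\vec w$ as an extra column of $A$ to form $A^\star \in \mb{F}^{m\times(m'+1)}$, and introduce an independent auxiliary Rademacher $\eta_0$; conditioning on $\eta_0 = 1$ (at a multiplicative cost of $2$) rewrites the event as $\vec\xi^T A^\star \hat\eta = \hat\psi(\hat\eta)$ with $\hat\eta = (\pvec{\eta}, \eta_0) \in \{-1,1\}^{m'+1}$, which is exactly the one-sided form required. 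Applying the Rademacher case of \cref{thm:costello-mod} to $A^\star$ (with $\varepsilon/2$ in place of $\varepsilon$) yields the desired dichotomy: since $A$ is a submatrix of $A^\star$, a rank-$\le 1$ approximation of $A^\star$ restricts to one of $A$ within the same entry-change budget, and the probability bound $(m+m'+1)^{-1+\varepsilon/2}$ easily absorbs the factor of $2$ into the $n^{\varepsilon}$ slack for $m+m'$ sufficiently large. The remaining work is routine parameter balancing.
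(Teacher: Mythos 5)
Your reduction (a) is essentially the paper's own argument: let $J$ be the set of sparse rows, condition on $\vec\xi[J]$ (absorbing $\vec\xi[J]^T A[J,\cdot]\,\pvec\xi$ into the function $\psi$), apply \cref{thm:costello} to the dense submatrix $A[J^c,\cdot]$, and note that zeroing the sparse rows and replacing the dense block by its rank-$\le1$ approximation yields a globally rank-$\le1$ matrix (the zero rows cost nothing in rank). The paper takes $r=\varepsilon(m+m')$ and $\gamma=\varepsilon/2$ so that the two costs sum to exactly $\varepsilon(m+m')^2$; your choice $c=\gamma=\varepsilon/4$ balances the budget as well.

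Reduction (b), by contrast, is a step that the paper's written proof does not spell out: the paper simply invokes \cref{thm:costello}, which is stated only for Rademacher $\pvec\xi$, and leaves the lazy case implicit. You correctly identify the obstruction — conditioning a lazy $\pvec\xi$ down to a shifted Rademacher $\pvec\eta+\vec w$ produces the extra term $\vec\xi^T(A\vec w)$, which depends on $\vec\xi$ and therefore cannot be absorbed into the function that Costello's theorem allows (it may depend only on the second variable). Your augmentation — append $A\vec w$ as a new column to form $A^\star$, introduce an auxiliary Rademacher $\eta_0$, lower-bound $\Pr[\vec\xi^TA^\star\hat\eta=\hat\psi(\hat\eta)]$ by conditioning on $\eta_0=1$, and then observe that a rank-$\le1$ approximation of $A^\star$ restricts to one of $A$ with the same number of changed entries — is a clean and correct fix, and the factor of $2$ and the extra column are easily swallowed by $(m+m')^{\varepsilon}$. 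An alternative route closer to the footnoted treatment of Costello's Lemma~8 is to condition on the support of $\pvec\xi$ and apply the Rademacher case to the resulting column submatrix; that also works, but needs a side-argument for the regime in which $m'$ is small compared to $m$ (where the support may be tiny with non-negligible probability), which your augmentation avoids. In short, your proof is correct, follows the paper's line on the Rademacher case, and is more explicit than the paper on the lazy case.
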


\begin{proof}[Proof]
    Let $n=m+m'$ and assume that $f$ does not satisfy \cref{F2'}; that is to say, 
    \[
        \Pr\left[\vec{\xi}^{\,\,T}A\,\pvec\xi = \psi(\pvec \xi) \right]> n^{-1+\varepsilon}.
    \]
    for some function $\psi:\mb F^{m'}\to \mb F$. 
    We will show that \cref{F1'} holds.
    In other words, we will show that we can change at most $\varepsilon n^2$ entries of $A$ to obtain a matrix with rank at most 1.
    
    Let $I$ be the set of $i \in \{1,\dots,m\}$ such that row $i$ of $A$ has at least $\varepsilon n$ nonzero entries, and let $I^*=\{1,\dots,m\}\setminus I$. 
    Let $A'\in \mb F^{|I|\times m'}$ be the submatrix of $A$ containing the rows indexed by $I$, and let $A^*\in \mb F^{|I^*|\times m'}$ be the submatrix of $A$ containing the rows indexed by $I^*$. 
    Define the random function $\varphi:\{-1,1\}^{m'}\to \mb{F}$ by $\varphi(\vec x)=\psi(\vec x)- \vec{\xi}[I^*]^TA^*\vec{x}$ (this function depends on the random vector $\vec \xi[I^*]$).
    So, we have \[
         n^{-1+\varepsilon} 
         < \Pr\Big[\vec{\xi}^{T}\! A\, \pvec{\xi}=\psi(\pvec\xi) \Big]
         = \Pr\Big[\vec{\xi}[I]^{T}\! A'\, \pvec{\xi}=\varphi(\pvec{\xi})\Big].
    \]
    Consequently, there exists an outcome of $\vec{\xi}[I^*]$ such that \[
        \Pr\Big[ \vec{\xi}[I]^{T}\! A'\, \pvec{\xi}=\varphi(\pvec{\xi})\;\Big|\;\vec{\xi}[I^*] \Big]
        >n^{-1+\varepsilon} >(\varepsilon n)^{-1+\varepsilon/2}.
    \]
    Applying \cref{thm:costello} (with $r=\varepsilon n$ and $\gamma=\varepsilon/2$) in the conditional probability space where we condition on this particular outcome of $\vec{\xi}[I^*]$, we see that we can change at most $(\varepsilon/2)(\varepsilon n)(|I|+m')\le (\varepsilon n)(|I|+m')$ entries of $A'$ to obtain a matrix of rank at most 1. 
    Also, $A^*$ has at most $(\varepsilon n)|I^*|$ nonzero entries by definition, so we can make $A^*$ into the all-zero matrix by changing at most $(\varepsilon n)|I^*|=(\varepsilon n)(m-|I|)$ entries. 
    Recalling the definitions of $A'$ and $A^*$, we see that we can change at most $(\varepsilon n)(|I|+m')+(\varepsilon n)(m-|I|) =(\varepsilon n)(m+m')=\varepsilon n^2$ entries of $A$ to obtain a matrix with rank at most 1, as desired.    
\end{proof}

We now complete the proof of \cref{theorem: multilinear probability and tensor reducibility} by presenting the inductive step. Briefly speaking, we first expose $\vec{\xi}[I_d]$, and obtain a $(d-1)$-multilinear form in the variables $x_i$ for $i\in I_1\cup\dots\cup I_{d-1}$.
Equivalently, we ``collapse'' $T$, based on $\vec{\xi}[I_d]$, to a $(d-1)$-dimensional tensor $T'$.
Then, we will apply \cref{lem:collapse} if $T'$ tends to be close to reducible and apply the inductive hypothesis to $T'$ otherwise. 
\begin{proof}[Proof of \cref{theorem: multilinear probability and tensor reducibility}]
    We proceed by induction on $d$; the base case $d=2$ is handled by \cref{thm:costello-mod}.
    Fix $d \ge 3$ and suppose the statement holds for all $(d-1)$-multilinear forms. 
    Recall that $f(x_1,\dots,x_n)=\sum_{i_1\in I_1}\dots\sum_{i_d\in I_d} T(i_1,\dots,i_d) \,x_{i_1}\dots x_{i_d}$ for all $\vec{x} \in \mb{F}^n$.
    Our goal is to prove that \cref{F1} or \cref{F2} holds.
    
    First, recall that we are assuming that $n$ is large with respect to $d,\varepsilon$.
    We may also assume that
    \begin{equation}\label{eq:I'-big}
        |I_1|\cdots|I_d|\ge \varepsilon n^d.
    \end{equation}
    Indeed, otherwise, $T$ has at most $\varepsilon n^d$ entries, and \cref{F1} trivially holds (as we can change all the entries of $T$ to zero to make $T$ reducible).
    
    Now, let $I'=I_1\cup \dots\cup I_{d-1}$, and note that for any outcome of $\vec \xi[I_d]$, we can view $f$ as a $(d-1)$-multilinear form $f'\in \mb F[(x_i)_{i\in I'}]$, in the variables $x_i$ for $i\in I'$. Note that the coefficient tensor of $f'$ is precisely $T\vec \xi[I_d]$, in the notation of \cref{lem:collapse}. For convenience we write $T'=T\vec \xi[I_d]$. The high-level idea of the proof is to fix an outcome of $\vec \xi[I_d]$ (thereby fixing $f'$), and to apply the induction hypothesis to $f'$. When $f'$ is far from being reducible, this gives us the desired probability bound, so the main challenge is to carefully study the event that $f'$ is close to reducible. Actually, we need to break down this event further, depending on whether $f'$ is close to the zero polynomial or close to some other (far-from-zero) reducible polynomial. Specifically, let $\delta=(\varepsilon/2)^{2^{d-1}}$ and consider the following two events.
    \begin{itemize}
        \item Let $\varepsilon_1=\varepsilon^3/32$ and let $\mc E_1$ be the event that $T'$ has at most $\varepsilon_1 |I'|^{d-1}$ nonzero entries.
        \item Let $\varepsilon_2=2^{-(d-1)^2}\delta\varepsilon/4$, and let $\mc E_2$ be the event that $T'$ can be made reducible by changing at most $\varepsilon_2 |I'|^{d-1}$ entries.
    \end{itemize}

    \medskip\noindent\textbf{Decomposing the anticoncentration probability.}
    Let $\mc E_1^{\mr c},\mc E_2^{\mr c}$ be the complements of $\mc E_1$ and $\mc E_2$. Recall that our goal is to prove that \cref{F1} or \cref{F2} holds. 
    Assume that \cref{F2} fails, that is to say, for some $z\in \mb F$,
    \[\Pr\big[f(\vec{\xi})=z \big]>n^{-1+\varepsilon}.\]
    The probability $\Pr[f(\vec \xi)=z]$ can be decomposed as
    \[\Pr\big[f(\vec \xi)=z\,\big|\,\mc E_1\big]\cdot \Pr\big[\mc E_1\big]\;+\;\Pr\big[f(\vec \xi)=z\,\big|\,\mc E_1^{\mr c}\cap \mc E_2\big]\cdot \Pr\big[\mc E_1^{\mr c}\cap \mc E_2\big]\;+\;\Pr\big[f(\vec \xi)=z\,\big|\,\mc E_1^{\mr c}\cap \mc E_2^{\mr c}\big]\cdot \Pr\big[\mc E_1^{\mr c}\cap \mc E_2^{\mr c}\big].\]
    and we deduce
    \begin{equation}\label{eq:3-terms}
        \Pr\big[f(\vec \xi)=z\big]\;\le\;\Pr\big[\mc E_1\big]\;+\;
        \Pr\big[f(\vec \xi)=z\,\big|\,\mc E_1^{\mr c}\cap \mc E_2\big]\cdot\Pr\big[\mc E_2\big]\;+\;
        \Pr\big[f(\vec \xi)=z\,\big|\,\mc E_1^{\mr c}\cap \mc E_2^{\mr c}\big].
    \end{equation}
    Now, if $\vec \xi[I_d]$ satisfies $\mc E_1^{\mr c}\cap \mc E_2^{\mr c}\subseteq \mc E_2^{\mr c}$, then $T'$ cannot be made reducible by changing at most $\varepsilon_2 |I'|^{d-1}$ coefficients. So, if we fix an outcome of $\vec \xi[I_d]$ satisfying $\mc E_2^{\mr c}$, then we can condition on this outcome of $\vec \xi[I_d]$ and apply the induction hypothesis to $f'$ and $T'$ in the resulting conditional probability space, to obtain
    \[
         \Pr\big[f(\vec \xi)=z\,\big|\,\vec \xi[I_d]\big]
        =\Pr\big[f'(\vec \xi[I'])=z\,\big|\,\vec \xi[I_d]\big]
        \le |I'|^{-1+\varepsilon_2}
        \le n^{-1+\varepsilon}/3.
    \]
    (Here we used that $|I'|\ge \varepsilon n$, which follows from \cref{eq:I'-big}).
    Averaging over $\vec \xi[I_d]$ satisfying $\mc E_2^{\mr c}$, we deduce that
    \[
        \Pr\big[f(\vec \xi)=z\,\big|\,\mc E_2^{\mr c}\big]\le n^{-1+\varepsilon}/3.
    \]
    Similarly, if we fix an outcome of $\vec \xi[I_d]$ satisfying $\mc E_1^{\mr c}\cap \mc E_2\subseteq \mc E_1^{\mr c}$, then $f'$ has at least $\varepsilon_1 |I'|^{d-1}$ nonzero coefficients, so \cref{thm:polynomial-LO} yields
    \[
        \Pr\big[f(\vec \xi)=z\,\big|\, \vec \xi[I_d]\big] = \Pr\big[f(\vec \xi)=z\,\big|\, \vec \xi[I_d]\big]
        \le |I'|^{-1/2+\varepsilon_1}\le n^{-1/2+\varepsilon/2}.
    \]
    Averaging over all $\vec \xi[I_d]$ satisfying $\mc E_1^{\mr c}\cap \mc E_2$, we obtain
    \[
        \Pr\big[f(\vec \xi)=z\,\big|\,\mc E_1^{\mr c}\cap \mc E_2\big]
        \le |I'|^{-1/2+\varepsilon_1}\le n^{-1/2+\varepsilon/2}.
    \]
    Now, from \cref{eq:3-terms}, it follows that
    \[
        n^{-1+\varepsilon}<\Pr[f(\vec \xi)=z]
        \le\Pr[\mc E_1]+n^{-1/2+\varepsilon/2}\cdot \Pr[\mc E_2]+n^{-1+\varepsilon}/3.
    \]
    Thus, we must have $\Pr[\mc E_1]>n^{-1+\varepsilon}/3$ or $\Pr[\mc E_2]>n^{-1/2+\varepsilon/2}/3$. 
    In both cases, we will prove that $T$ can be made reducible by changing $\varepsilon|I_1|\cdots|I_d|\le \varepsilon n^d$ of its entries; that is, \cref{F1} holds. 
    In the first case we will use \cref{thm:Halasz} (i.e., Hal\'asz' inequality), and in the second case we will use \cref{lem:collapse}.
    
    \medskip\noindent\textbf{Case 1: $f'$ is likely to be close to the zero polynomial.}
    In this case, we suppose that $\Pr[\mc E_1]>n^{-1+\varepsilon}/3$. 
    Let $Q=I_1\times I_2\times\dots\times I_{d-1}$; it follows from \cref{eq:I'-big} that $|Q| > \varepsilon n^{d-1}$. 
    Recalling \cref{fact:tensor-to-matrix}, let us view $T$ as a $Q \times I_d$ matrix, which we refer to as $M$.
    
    Translating from tensor to matrix language, in this case we are assuming that with probability exceeding $n^{-1+\varepsilon}/3$, the number of nonzero entries in the matrix-vector product $M\vec \xi[I_d]$ is at most $\varepsilon_1 |I'|^{d-1}\le (\varepsilon^2/32)|Q|$.
    By \cref{lemma: tuple-counting} (with $r=2$), applied to the set of zero entries of  $M\vec \xi[I_d]$, all but at most a $\varepsilon^2/8$ fraction of the pairs of distinct $\vec i,\pvec i\in Q$ satisfy 
    \[
        \Pr\big[(M\vec{\xi}[I_d])_{\vec i} 
        = (M\vec{\xi}[I_d])_{\pvec i} = 0 \big]
        =\Pr\big[M[\{\vec{i},\pvec{i}\},I_d]\,\vec{\xi}[I_d] = \vec 0 \,\big]
        > (n^{-1+\varepsilon}/3)/2 > |I_d|^{-1+\varepsilon/2}.
    \]
    (Here we write $M[\{\vec{i},\pvec{i}\},I_d]$ for the $2\times |I_d|$ matrix containing just the rows of $M$ indexed by $\vec i,\pvec i$ and we use that $|I_d|>\varepsilon n$, which follows from \cref{eq:I'-big}.)
    For each such $\vec i,\pvec i$, by \cref{thm:Halasz}, $M[\{\vec{i},\pvec{i}\},I_d]$ contains at most $O(|I_d|^{1-\varepsilon/2})$ disjoint nonsingular $2\times 2$ submatrices, so by \cref{fact: disjoint and non-disjoint nonsingular submatrices} it has at most $O(|I_d|^{1-\varepsilon/2})|I_d|<(\varepsilon^2/8)\binom{|I_d|}{2}$ nonsingular $2\times 2$ submatrices in total.
    Summing over all these $\vec i,\pvec i$, the fraction of $2\times 2$ submatrices of $M$ which are nonsingular is less than $\varepsilon^2/8+\varepsilon^2/8\le \varepsilon^2/4$. 
    So, by the $d=2$ case of \cref{thm:tensor-property-testing}, we can change at most an $\varepsilon$-fraction of the entries of $M$ to obtain a matrix with rank at most 1. 
    Translating back into tensor language, $T$ can be made reducible by changing at most an $\varepsilon$-fraction of its entries. This proves \cref{F1}, as desired.
    
    \medskip\noindent\textbf{Case 2: $f'$ is likely to be close to reducible.}
    In this case, we suppose that $\Pr[\mc E_2]>n^{-1/2+\varepsilon/2}/3$.
    This means, with probability exceeding $n^{-1/2+\varepsilon/2}/3$, there is a reducible tensor $\tilde T:I_1\times \cdots\times I_{d-1}\to \mb F$ such that $T'$ differs from $\tilde T$ in at most $\varepsilon_2 |I'|^{d-1}\le \varepsilon_2 n^{d-1}\le 2^{-(d-1)^2}(\delta/4)|I_1|\cdots|I_{d-1}|$ entries (here, we used that $|I_1|\cdots|I_{d-1}| \ge \varepsilon n^{d-1}$, which follows from \cref{eq:I'-big}).
    When this occurs, at least a $(1-\delta/4)$ fraction of the $2^{d-1}\times \cdots \times 2^{d-1}$ subtensors of $T'$ are the same as the corresponding $2^{d-1}\times \dots \times 2^{d-1}$ subtensor of $\tilde T$, so are reducible. (Here we used that a $2^{d-1}\times \dots \times 2^{d-1}$ subtensor has $2^{(d-1)^2}$ entries). 
    By \cref{lemma: tuple-counting} (with $r=1$) applied to the set of reducible $2^{d-1}\times\dots\times 2^{d-1}$ subtensors of $T'$, we see that for at least a $(1-\delta/2)$-fraction of choices of $2^{d-1}$-element subsets $Q_1\subseteq I_1,\dots,Q_{d-1}\subseteq I_{d-1}$, we have
    \[\Pr\big[T'[Q_1,\dots,Q_{d-1}]\text{ is reducible}\big]>(n^{-1/2+\varepsilon}/3)/2\ge|I_d|^{-1/2+\delta/2}.\]
    (here we are using that $|I_d|\ge \varepsilon n$, which follows from \cref{eq:I'-big}).
    For each such $Q_1,\dots,Q_{d-1}$, we apply \cref{lem:collapse}: noting that $T'[Q_1,\dots,Q_{d-1}]=T[Q_1,\dots,Q_{d-1},I_d]\,\vec \xi[I_d]$ in the notation of \cref{lem:collapse}, we see that at least a $(1-\delta/2)$-fraction of the $2^{d-1}\times \dots\times 2^{d-1}$ subtensors of $T[Q_1,\dots,Q_{d-1},I_d]$ are reducible.
    Summing over all these $Q_1,\dots,Q_{d-1}$, we see that the fraction of $2^{d-1}\times\dots\times2^{d-1}$ subtensors of $T$ that are reducible is at least $(1-\delta/2)(1-\delta/2)>1-\delta$.
    In the end, \cref{thm:tensor-property-testing} implies that $T$ can be made reducible by changing at most $\varepsilon|I_1|\cdots|I_d|\le \varepsilon n^d$ of its entries. This proves \cref{F1}, as desired.
\end{proof}

\section{A local-to-global lemma for low-rank symmetric matrices}\label{sec:matrix-property-testing}
In this section we prove \cref{lemma: close to symmetric low rank}:
if almost all $r\times r$ submatrices of a symmetric matrix $A$ are singular, then $A$ can be made to have rank less than $r$ by changing only a few entries, \emph{while maintaining symmetry}. Recall that $\|A\|_0$ denotes the number of nonzero entries in a matrix $A$.

We split the proof of \Cref{lemma: close to symmetric low rank} into the following two lemmas.
Roughly speaking, we first approximate $A$ by a matrix of rank less than $r$, which may not be symmetric.
However, this matrix is necessarily \emph{almost} symmetric. In the second step we adjust it to be symmetric while maintaining the rank. Most of the difficulty lies in the second step.

\begin{lemma}\label{lem:close-non-symmetric}
    Consider integers $n, m \ge r \ge 1$, let $\alpha \in [0,1]$ and let $\mb{F}$ be a field. Let $A\in \mb{F}^{n \times m}$ be a matrix such that all but at most an $\alpha$-fraction of its $r \times r$ submatrices are singular.
    Then, there exists a matrix $B \in \mb{F}^{n \times m}$ of rank less than $r$ such that $\|A-B\|_0 \le \alpha^{1/r} nm$.
\end{lemma}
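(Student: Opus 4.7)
The plan is to proceed by induction on $r$. The base case $r=1$ is immediate: the hypothesis says at most $\alpha nm$ entries are nonzero, so $B=0$ has rank zero and satisfies $\|A-B\|_0 \le \alpha nm$.

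For the inductive step I would set the threshold $\beta := \alpha^{(r-1)/r}$ and split into two cases depending on how many $(r-1)\times(r-1)$ submatrices of $A$ are nonsingular. If at most a $\beta$-fraction of them are nonsingular, the induction hypothesis applied with $r-1$ and $\beta$ produces $B$ of rank less than $r-1<r$ with $\|A-B\|_0 \le \beta^{1/(r-1)} nm = \alpha^{1/r} nm$, and we are done.

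In the opposite case, where more than a $\beta$-fraction of the $(r-1)\times(r-1)$ submatrices are nonsingular, the plan is to choose a single nonsingular $(r-1)\times(r-1)$ block $A[R,C]$ and use it to construct a low-rank approximation. To pick a good block, I would double-count pairs $\bigl((R,C),(i,j)\bigr)$ with $|R|=|C|=r-1$, $A[R,C]$ nonsingular, and $A[R\cup\{i\},C\cup\{j\}]$ nonsingular: each nonsingular $r\times r$ submatrix of $A$ produces at most $r^2$ such pairs (one for each choice of the extra row and column inside the $r\times r$ block), so the total count is at most $r^2\alpha\binom{n}{r}\binom{m}{r}$. Averaging over the at least $\beta\binom{n}{r-1}\binom{m}{r-1}$ nonsingular choices of $(R,C)$ and simplifying with $\beta=\alpha^{(r-1)/r}$ yields some nonsingular $(R,C)$ for which the number of ``bad'' extensions $(i,j)$ is at most $\alpha^{1/r} nm$.

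Given such $(R,C)$, I would define
\[
B[i,j] := A[i,C]\, A[R,C]^{-1}\, A[R,j] \qquad \text{for all }(i,j)\in[n]\times[m].
\]
This factors through an $(r-1)$-dimensional space, so $\rank B \le r-1 < r$. The Schur complement identity
\[
\det\begin{pmatrix} A[R,C] & A[R,j] \\ A[i,C] & A[i,j]\end{pmatrix} = \det(A[R,C])\cdot\bigl(A[i,j]-B[i,j]\bigr)
\]
shows that for $i\notin R$, $j\notin C$ one has $B[i,j]=A[i,j]$ precisely when $A[R\cup\{i\},C\cup\{j\}]$ is singular, and a direct check gives $B=A$ on the rows indexed by $R$ and the columns indexed by $C$. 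Hence $\|A-B\|_0$ is at most the number of bad extensions, which is $\le \alpha^{1/r} nm$ by the previous step. The only subtle point is the choice of $\beta$: it is forced by requiring $\beta^{1/(r-1)} = \alpha/\beta = \alpha^{1/r}$, so that both the inductive branch and the averaging branch land on the same bound. Everything else is the Schur-complement identity plus bookkeeping, and I do not foresee a genuine obstacle.
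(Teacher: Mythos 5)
Your proof is correct and follows essentially the same approach as the paper's: the paper directly selects the minimal $k\le r$ for which at most an $\alpha^{k/r}$-fraction of $k\times k$ submatrices are nonsingular (rather than reaching it by unrolling an induction with thresholds $\alpha^{(r-1)/r},\alpha^{(r-2)/r},\dots$), then performs the same double-count over nonsingular $(k-1)\times(k-1)$ blocks and constructs the same rank-$(k-1)$ interpolant $B$. The only cosmetic differences are that the paper phrases the double-counting as a contradiction instead of an averaging argument, and uses the cofactor identity $\det(A[I\cup\{i\},J\cup\{j\}])-\det(B[I\cup\{i\},J\cup\{j\}])=\pm\det(A[I,J])\,(A[i,j]-B[i,j])$ in place of your Schur-complement formula.
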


\begin{lemma}\label{lem:fix-symmetry}
    Consider integers $n \ge q \ge 1$, let $\rho \in [0,1]$ and let $\mb{F}$ be a field.
    Let $A \in \mb{F}^{n \times n}$ be a matrix of rank at most $q$ such that $\|A-A^T\|_0 \le \rho n^2$.
    Then, there exists a symmetric matrix $B\in \mb{F}^{n \times n}$ of rank at most $q$ such that $\|A-B\|_0 \le  O(q^4\rho n^2)$.
\end{lemma}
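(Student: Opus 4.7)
The plan is to construct $B$ from a carefully chosen principal submatrix of $A$, which will serve as a ``guide'' for the approximation. Concretely, letting $r := \rank(A) \le q$, I aim to find a set $S \subseteq \{1, \dots, n\}$ with $|S| = r$ such that \emph{(a)} $A[S, S]$ is nonsingular, \emph{(b)} $A[S, S]$ is symmetric (meaning $A[s, s'] = A[s', s]$ for all $s, s' \in S$), and \emph{(c)} $\sum_{s \in S}\bigl|\{j : A[s, j] \ne A[j, s]\}\bigr| = O(q^3 \rho n)$, i.e., the rows (equivalently columns) of $A$ indexed by $S$ contain only few asymmetric entries in total. Writing $U \in \mb{F}^{n \times r}$ for the submatrix of $A$ formed by the columns indexed by $S$, I will then define
\[
    B := U \cdot A[S, S]^{-1} \cdot U^T.
\]
Because $A[S, S]$ is symmetric and nonsingular, so is $A[S, S]^{-1}$, which makes $B$ symmetric; and clearly $\rank(B) \le |S| = r \le q$, as required.

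Given such an $S$, the bound $\|A - B\|_0 \le O(q^4 \rho n^2)$ will follow by a short computation. Since $|S| = r = \rank(A)$ and $A[S, S]$ is nonsingular, the rank-$r$ decomposition identity gives $A[i, j] = A[i, S] \cdot A[S, S]^{-1} \cdot A[S, j]$ for all $i, j$, where $A[i, S]$ and $A[S, j]$ denote the usual row and column vectors restricted to $S$. Subtracting this from $B[i, j] = A[i, S] \cdot A[S, S]^{-1} \cdot A[j, S]^T$ yields
\[
    (B - A)[i, j] = A[i, S] \cdot A[S, S]^{-1} \cdot \bigl(A[j, S]^T - A[S, j]\bigr),
\]
which vanishes whenever $j$ is ``$S$-symmetric'' in the sense that $A[s, j] = A[j, s]$ for every $s \in S$. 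By property \emph{(c)}, at most $O(q^3 \rho n)$ values of $j$ fail to be $S$-symmetric, so at most $n \cdot O(q^3 \rho n) = O(q^4 \rho n^2)$ entries of $A - B$ are nonzero, giving the required bound.

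The main obstacle, on which essentially all the real work falls, is producing a set $S$ satisfying \emph{(a)}--\emph{(c)}. This step is genuinely delicate because a general rank-$r$ matrix need not possess \emph{any} nonsingular principal submatrix of order $r$ (for instance, the elementary matrix $E_{12}$ has rank $1$ but zero diagonal), so the near-symmetry of $A$ must be exploited crucially. The plan is to first restrict attention to the ``good'' index set $G := \{i : |\{j : A[i, j] \ne A[j, i]\}| \le C \rho n\}$ for a suitably large constant $C = C(q)$; by averaging, $|G^c| \le 2n/C$ is small. Within $G$, one should still be able to locate a nonsingular $r \times r$ submatrix $A[I, J]$ with $I, J \subseteq G$, and the heart of the argument will be to merge $I$ and $J$ into a common set $S \subseteq G$ of size $r$ for which $A[S, S]$ is still nonsingular. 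Writing $A = V W^T$ for full-column-rank factors $V, W \in \mb{F}^{n \times r}$, this is essentially a matroid common-basis problem for $V$ and $W$, and the near-symmetry of $A$ should resolve it via an iterative exchange argument exploiting that most $(i, j)$ satisfy $A[i, j] = A[j, i]$. Finally, among the $r$-subsets $S \subseteq G$ satisfying \emph{(a)}, the ``bad-pair graph'' on $G$ has maximum degree at most $C \rho n$, so a union-bound/avoiding argument will produce one additionally satisfying \emph{(b)}, and \emph{(c)} follows automatically from $S \subseteq G$. The combinatorial slack absorbed in these steps---in particular the interplay between $|G^c|$, the exchange procedure for nonsingularity, and the union bound for symmetry---is what gives rise to the $q^4$ factor in the final bound.
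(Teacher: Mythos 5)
Your construction $B := U \cdot A[S,S]^{-1} \cdot U^T$ is sound \emph{given} a set $S$ with properties (a)--(c), and you correctly identify finding such an $S$ as the crux. But the gap is that no such $S$ need exist, and your proposed repair does not close it. The fundamental problem is that your scheme forces $|S| = \rank(A)$ (so that the rank-factorization identity $A[i,j] = A[i,S] A[S,S]^{-1} A[S,j]$ holds universally), but a nonsingular principal $\rank(A) \times \rank(A)$ submatrix---let alone a symmetric one---can be entirely absent even when $A$ is almost perfectly symmetric. Your own example $A = E_{12}$ (extended to size $n \times n$) is decisive: here $\rank(A) = 1$, $\|A - A^T\|_0 = 2$, every diagonal entry of $A$ is zero, and so there is no nonsingular $1 \times 1$ principal submatrix anywhere. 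Writing $A = VW^T$ with $V = e_1$, $W = e_2$, the row-matroid of $V$ has unique basis $\{1\}$ and that of $W$ has unique basis $\{2\}$, so the ``matroid common-basis problem'' you invoke has \emph{no} solution. Moreover, restricting to the good set $G$ does not help: when $\rho$ is small ($\rho n = o(1)$, say), the indices $1$ and $2$ are themselves excluded from $G$ because they each carry one asymmetric entry, and then $A[G,G]$ is the zero matrix, so there is no nonsingular $r \times r$ submatrix $A[I,J]$ with $I,J \subseteq G$ at all. Near-symmetry of $A$ does not imply the near-equality of the two row matroids of a factorization $A = VW^T$; your ``iterative exchange argument exploiting near-symmetry'' has no foothold.

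What the paper does to escape this trap is genuinely different in structure. It does \emph{not} insist on a nonsingular principal submatrix of size $\rank(A)$. Instead it first passes (via \cref{lemma: robust rank in n by n matrices}) to a large index set $I$ on which the rank of $A[I,I]$ is \emph{robust} under deleting a few more rows and columns; this robust rank $k = \rank(A[I,I])$ may be strictly smaller than $\rank(A)$. Inside $A[I,I]$, the robustness \emph{does} guarantee a symmetric nonsingular $k\times k$ principal submatrix (that is \cref{lemma: symmetry for robust matrices}, which plays the role of your step but only after the robustness has been secured), and this produces a symmetric rank-$k$ approximation $C$ of $A[I,I]$. The remaining rank of $A$ ``leaks into the border'' $A[I^{\mr c},I]$ and $A[I,I^{\mr c}]$; the quantities $\Delta_1 = \rank(A[[n],I]) - k$ and $\Delta_2 = \rank(A[I,[n]]) - k$ measure this, and \cref{lemma: rank inequality of matrix blocks} gives $k + \Delta_1 + \Delta_2 \le q$. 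The final symmetric $B$ is then built from $C$ plus a rank-$\Delta_1$ correction $Q$ via the block formula $B[I^{\mr c},I^{\mr c}] = PCP^T + PQ^T + QP^T$, which a short row/column-operation argument shows has rank $\le k + 2\Delta_1 \le q$. Your proposal collapses this two-stage structure into a single ``find one good $S$'' step, and that step is where the argument breaks.

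Two smaller remarks: your final bookkeeping states $n \cdot O(q^3 \rho n) = O(q^4 \rho n^2)$, which should read $O(q^3 \rho n^2)$ (harmless, but worth fixing); and the bound in property (c) would need to be justified in terms of the size of $G^{\mr c}$ and the number of $r$-subsets avoided, which you only sketch.
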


\Cref{lemma: close to symmetric low rank} follows immediately from \Cref{lem:close-non-symmetric} and \Cref{lem:fix-symmetry}, as follows.
\begin{proof}[Proof of \Cref{lemma: close to symmetric low rank}] 
    By \cref{lem:close-non-symmetric}, we first obtain a matrix $B\in \mb{F}^{n\times n}$ with rank less than $r$ and $\|A-B\|_0 \le \alpha^{1/r}n^2$. 
    Since $A$ is symmetric, it holds that 
    $$
        \|B-B^T\|_0
        \le \|B-A^T\|_0+\|A^T-B^T\|_0
        =\|B-A\|_0+\|A-B\|_0 \le 2\alpha^{1/r}n^2.
    $$
    Then, \cref{lem:fix-symmetry} (with $q=r-1$) guarantees a symmetric matrix $A' \in \mb{F}^{n\times n}$ of rank less than $r$ such that $\|B-A'\|_0\le O(r^4\alpha^{1/r}n^2)$.
    Also, $\|A-A'\|_0\le \|A-B\|_0+\|B-A'\|_0\le O(r^4\alpha^{1/r}n^2)$, as desired.
\end{proof}

The rest of this section is devoted to proving \cref{lem:close-non-symmetric} and \cref{lem:fix-symmetry}.
We first give a short proof of \cref{lem:close-non-symmetric}. For the rest of this section, we use the notation $[n]=\{1,\dots,n\}$.
\begin{proof}[Proof of \cref{lem:close-non-symmetric}]
    First, find the minimum $k \ge 1$ such that all but at most an $\alpha^{k/r}$-fraction of the $k\times k$ submatrices of the matrix $A$ are singular.
    Clearly, $k \le r$ by assumption.
    If $k=1$, then $\|A\|_0\le \alpha^{1/r} nm$, and we are done (by taking $B=0$).
    From now on, we assume that $2\le k\le r$.
    
    Second, we claim that there exist sets $I\subseteq [n], J\subseteq [m]$, both of size $k-1$, such that $A[I,J]$ is nonsingular and $A[I\cup\{i\},J\cup \{j\}]$ is nonsingular for at most $\alpha^{1/r}(n-k+1)(m-k+1)$ choices of $(i,j) \in ([n]\setminus I)\times ([n]\setminus J)$.
    Indeed, suppose not. Then, whenever $A[I,J]$ is nonsingular, more than $\alpha^{1/r}(n-k+1)(m-k+1)$ choices of $(i,j) \in ([n]\setminus I)\times ([n]\setminus J)$ are such that $A[I\cup\{i\},J\cup \{j\}]$ is nonsingular.
    In addition, the minimality of $k$ implies there are more than $\alpha^{(k-1)/r}\binom{n}{k-1}\binom{m}{k-1}$ choices of $I,J$ such that $A[I,J]$ is nonsingular.
    In total, we acquire more than \[
        \alpha^{(k-1)/r}\binom{n}{k-1}\binom{m}{k-1}\cdot \alpha^{1/r}(n-k+1)(m-k+1)
        =k^2\alpha^{k/r}\binom{n}{k}\binom{m}{k}
    \] choices of $I,J,i,j$ such that $A[I\cup\{i\},J\cup \{j\}]$ is a nonsingular $k\times k$ submatrix of $A$. Since each submatrix is counted at most $k^2$ times, the number of nonsingular $k\times k$ submatrices of $A$ is larger than $\alpha^{k/r}\binom{n}{k}\binom{m}{k}$, contradicting our definition of $k$.

    We have established that there is some choice of $I$ and $J$ such that $A[I,J]$ is nonsingular, and $A[I\cup\{i\},J\cup \{j\}]$ is nonsingular for at most $\alpha^{1/r}(n-k+1)(m-k+1)$ choices of $(i,j)$. 
    Fix such $I,J$, and let $B\in  \mb{F}^{n \times m}$ be the unique matrix with the same row space as $A[I,[m]]$, such that $B[[n],J]=A[[n],J]$. 
    (In a bit more detail: since $A[I,J]$ is invertible, for each row $A[i,[m]]$ of $A$ there is a unique vector in the row space of $A[I,[m]]$ which agrees with $A[i,[m]]$ in the coordinates indexed by $J$; we take that to be the corresponding row of $B$.)
    Note that $B[I,[m]]=A[I,[m]]$ and $B[[n],J]=A[[n],J]$, and furthermore 
    $\rank B= \rank (A[I,[m]])= |I|=k-1<r$.
    
    Now, take any $(i,j) \in ([n]\setminus I)\times ([n]\setminus J)$ such that $A[I\cup\{i\},J\cup \{j\}]$ is singular; there are at least $(1-\alpha^{1/r})(n-k+1)(m-k+1)$ such $(i,j)$, thanks to the choice of $(I,J)$.
    Consider the two matrices $A[I\cup\{i\},J\cup \{j\}]$ and $B[I\cup\{i\},J\cup \{j\}]$.
    Their ranks are both less than $k$, i.e.\ $\det(A[I\cup\{i\},J\cup \{j\}])=\det(B[I\cup\{i\},J\cup \{j\}])=0$.
    Hence, $\det(A[I\cup\{i\},J\cup \{j\}])-\det(B[I\cup\{i\},J\cup \{j\}])=0$.
    As discussed above, these two matrices are identical except that, possibly, $A[i,j]\neq B[i,j]$.
    This means that $$0=\big|\det(A[I\cup\{i\},J\cup \{j\}])-\det(B[I\cup\{i\},J\cup \{j\}])\big|=\big|\det(A[I,J])(A[i,j]-B[i,j])\big|.$$
    But $\det(A[I,J])\neq 0$ because $A[I,J]$ is nonsingular.
    So, $A[i,j]=B[i,j]$.
    Recall also that $A[i,j]=B[i,j]$ whenever $i \in I$ or $j \in J$.
    Therefore, $\|A-B\|_0 \le \alpha^{1/r}(n-k+1)(m-k+1)<\alpha^{1/r} nm$.
\end{proof}

\cref{lem:fix-symmetry} is more complicated.
Our strategy is as follows. First, we pass from $A$ to a large submatrix $A[I,I]$, whose rank is ``robust'' (i.e., the rank is maintained whenever we delete a few rows and columns). Then, we show how to use this robustness to approximate $A[I,I]$ with a symmetric matrix with the same row space. Finally,  we show how this approximation can be extended to all of $A$.

The first of these steps (passing to a large submatrix whose rank is robust) is rather simple, as follows.
\begin{lemma}\label{lemma: robust rank in n by n matrices}
    Consider integers $n \ge q \ge 1$, let $\gamma > 0$, and let $\mb{F}$ be a field.
    Let $A \in \mb{F}^{n\times n}$ be a matrix with rank at most $q$.
    Then, there exists $k\in\{0,\dots,q\}$ and $I \subseteq [n]$ of size $|I| \ge (1-q\gamma )n$ such that $\rank(A[I',I'])=\rank(A[I,I])=k$ for all $I' \subseteq I$ of size $|I'| \ge |I| - \gamma n$.
\end{lemma}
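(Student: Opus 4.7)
The plan is to construct the desired subset $I$ by a simple iterative shrinking procedure, using the fact that the rank can only drop finitely many times.

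Specifically, I would start with $I_0 := [n]$ and set $k_0 := \rank(A[I_0,I_0]) \le q$. At step $i \ge 0$, I ask whether the pair $(I_i, k_i)$ already has the desired property, namely whether $\rank(A[I',I']) = k_i$ for every $I' \subseteq I_i$ with $|I'| \ge |I_i| - \gamma n$. Note that since $I' \subseteq I_i$, the rank $\rank(A[I',I'])$ is automatically at most $k_i = \rank(A[I_i, I_i])$, so the property really only asks that the rank does not strictly drop. If the property holds, I stop and output $I := I_i$ and $k := k_i$. Otherwise, there is a witness $I_{i+1} \subseteq I_i$ with $|I_{i+1}| \ge |I_i| - \gamma n$ and $\rank(A[I_{i+1}, I_{i+1}]) < k_i$; I set $k_{i+1} := \rank(A[I_{i+1}, I_{i+1}])$ and continue.

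The key observation is that the sequence $k_0 > k_1 > k_2 > \dots$ is a strictly decreasing sequence of non-negative integers bounded by $q$, so this process terminates after at most $q$ steps, giving some $i^\star \le q$ for which the desired property holds. At each step $|I_{i+1}| \ge |I_i| - \gamma n$, so after telescoping,
\[
|I| = |I_{i^\star}| \ge n - i^\star \gamma n \ge (1-q\gamma)n,
\]
and $k = k_{i^\star} \in \{0,1,\dots,q\}$ since $\rank(A[I,I]) \le \rank(A) \le q$.

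There is no real obstacle here; the only subtlety is that the shrinking threshold in the statement is $\gamma n$ (an absolute quantity) rather than $\gamma |I_i|$ (relative to the current subset), which is exactly what makes the telescoping sum straightforward. This is also why we can afford only $q$ iterations while maintaining the bound $|I| \ge (1-q\gamma)n$: if the rank dropped more than $q$ times it would go below zero, which is impossible.
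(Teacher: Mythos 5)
Your proof is correct and takes essentially the same approach as the paper: both exploit the fact that the rank of a nested principal submatrix can strictly drop at most $q$ times, with each drop costing at most $\gamma n$ in size, for a total loss of $q\gamma n$. The paper packages this as a one-shot extremal choice (``take the minimum $k$ such that $\rank(A[I,I])\le k$ holds for some $I$ of size at least $(1-(q-k)\gamma)n$''), whereas you unfold it as an explicit iterative shrinking procedure; these are two phrasings of the same well-ordering argument.
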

\begin{proof}
    Take the minimum $k\in\{0,\dots,q\}$ such that $\rank(A[I,I]) \le k$ holds for some $I \subseteq [n]$ of size $|I| \ge (1-(q-k)\gamma) n$; to see that such $k$ exists, consider $k=q$.
    We claim this $k$ and $I$ satisfy the conclusion of the lemma.
    Clearly, $|I| \ge (1-q\gamma)n$.
    Fix an arbitrary $I^\prime\subseteq I$ with $|I^\prime| \ge |I|-\gamma n$. 
    Then, since $|I'| \ge (1-(q-k+1)\gamma)n$, the minimality of $k$ implies that $\rank(A[I',I']) \ge k$. That is to say, $k \ge \rank(A[I,I]) \ge \rank(A[I',I'])\ge k$, so $\rank(A[I',I'])=\rank(A[I,I])=k$.
\end{proof}

The next lemma shows that when a matrix ``has its rank robustly'', we can approximate it by a symmetric matrix with the same row space. The proof approach is similar to the approach of the second and third authors in \cite[Section 5.7]{KS20}.

\begin{lemma}  \label{lemma: symmetry for robust matrices}
    Consider integers $n \ge 1$ and $r \ge 0$, let $\rho \in [0,1]$ and let $\mb{F}$ be a field. 
    Let $A \in \mb{F}^{n\times n}$ be a matrix with $\|A-A^T\|_0 \le \rho n^2$, such that $\rank(A[I,I])=r$ for all $I\subseteq [n]$ of size $|I| \ge (1-r\sqrt{\rho})n$.
    Then, there exists a symmetric matrix $B \in \mb{F}^{n \times n}$ such that $\|A-B\|_0 \le (r^2+1)\rho n^2$, and such that the row spaces of $A$ and $B$ are the same.
\end{lemma}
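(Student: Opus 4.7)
The strategy is to identify an $r \times r$ principal submatrix $A[S, S]$ of $A$ that is simultaneously nonsingular and symmetric, and then define $B$ via the congruence formula
\[
B := A[S, [n]]^T \, A[S, S]^{-1} \, A[S, [n]].
\]

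I would begin by finding $S \subseteq [n]$ of size $r$ satisfying (a) $A[S, S]$ is nonsingular, (b) $A[S, S]$ is symmetric, and (c) the set $\mc B_S := \{i \in [n] : A[i, s] \ne A[s, i] \text{ for some } s \in S\}$ has $|\mc B_S| \le 2r \rho n$. Conditions (b) and (c) are handled by a probabilistic argument on a uniformly random $r$-subset $S$: (b) holds with probability at least $1 - r^2 \rho$ (the number of asymmetric pairs $\{i,j\}$ is at most $\rho n^2/2$, and the expected number captured in $S$ is at most $r^2 \rho/2$), while (c) holds with probability at least $1/2$ by Markov's inequality (since $\Ex |\mc B_S| \le r \|A - A^T\|_0 / n \le r \rho n$). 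Condition (a) is the most delicate and essentially uses the robustness hypothesis: letting $k^\star$ denote the maximum size of a nonsingular principal submatrix $A[S^\star, S^\star]$, suppose for contradiction $k^\star < r$. Then the Schur complement $M := A[T, T] - A[T, S^\star] \, A[S^\star, S^\star]^{-1} \, A[S^\star, T]$ (where $T := [n] \setminus S^\star$) has zero diagonal (by maximality of $S^\star$) and rank $r - k^\star \ge 1$ (by the robustness hypothesis applied to $I = [n]$). The near-symmetry of $A$ propagates to an approximate symmetry of $M$, and since a symmetric matrix of rank $r'$ always contains a nonsingular $r' \times r'$ principal submatrix, this allows one to enlarge $S^\star$, contradicting the maximality of $k^\star$.

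Given such $S$, let $N := A[S, S]$ and $Q := A[S, [n]]$, and define $B := Q^T N^{-1} Q$. Then $B^T = Q^T N^{-T} Q = B$ (using that $N^{-1}$ is symmetric), so $B$ is symmetric; $\rank B = r$ since the middle factor is invertible and $Q$ has rank $r$; and the row span of $B$ equals the row span of $Q$, which is the row space of $A$. To bound $\|A - B\|_0$, observe that $A = P N^{-1} Q$ with $P := A[[n], S]$ (since $N$ is nonsingular), so $A - B = (P - Q^T) \, N^{-1} Q$. The $(i, s)$-entry of $P - Q^T$ equals $A[i, s] - A[s, i]$, so its nonzero rows are precisely those indexed by $\mc B_S$, of which there are at most $2r \rho n$. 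Since each row of $A - B$ lies in the row span of $Q$ and therefore has at most $n$ nonzero entries, we conclude
\[
\|A - B\|_0 \le 2r \rho n \cdot n = 2r \rho n^2 \le (r^2 + 1) \rho n^2,
\]
using $(r - 1)^2 \ge 0$ for the last inequality.

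The principal obstacle is verifying condition (a) — choosing $S$ (compatible with (b) and (c)) such that $A[S, S]$ is actually nonsingular. The probabilistic argument handles (b) and (c) cleanly, but nonsingularity requires the robustness hypothesis in an essential way. Making the Schur-complement contradiction rigorous, and in particular carefully propagating the near-symmetry of $A$ to the Schur complement $M$ so that $M$ can be treated as ``approximately symmetric'' and thereby guaranteed to contain a nonsingular principal submatrix of size $r - k^\star$, is the technical crux of the proof.
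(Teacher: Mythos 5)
Your congruence construction $B = A[S,[n]]^T A[S,S]^{-1} A[S,[n]]$ is exactly the one the paper uses, and once a suitable $S$ exists your bookkeeping of $\|A-B\|_0$ via the identity $A-B=(P-Q^T)N^{-1}Q$ is valid (and a bit tidier than the paper's). The gap is in producing $S$. You run two disconnected arguments: a probabilistic one for (b), (c), and a maximality/Schur-complement argument for (a). A maximal nonsingular principal submatrix has no reason to be symmetric or to have a small bad set $\mathcal{B}_S$, and a random $S$ satisfying (b), (c) has no reason to have $A[S,S]$ nonsingular, so nothing in the proposal ties the three conditions together.

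Worse, the Schur-complement argument for (a) appears unsound rather than merely unfinished. It invokes the robustness hypothesis only for $I=[n]$, i.e.\ only $\rank(A)=r$; but $\rank(A)=r$ together with approximate symmetry does not by itself give any nonsingular $r\times r$ principal submatrix (a nilpotent Jordan block has rank $n-1$, is entrywise close to symmetric, and has no nonsingular principal submatrix of any positive size). The full hypothesis for $I$ of size $\geq(1-r\sqrt\rho)n$ must enter somewhere, and the sketch does not use it. Also, the ``propagation of near-symmetry'' step is not merely a matter of care: a single asymmetric entry in $A[S^\star,S^\star]$ makes $N^{-1}$ differ from $N^{-T}$ globally, so $M$ need not be entrywise close to $M^T$ at all; and even if $M$ were approximately symmetric, extracting a nonsingular principal submatrix from it is precisely the problem you started with, so the argument is circular. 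The paper sidesteps all of this by building $V=\{v_1,\dots,v_r\}$ greedily, at each step choosing $v_{\ell+1}$ outside the high-asymmetry set $I_{\text{bad}}$ and outside $N(v_1)\cup\dots\cup N(v_\ell)$ (the robustness hypothesis applied to the large complement guarantees the greedy step never stalls), and then proving $A[V,V]$ nonsingular directly via the chain $\rank(A[V,V])\ge\rank(A[V,I])=\rank(A[I,V])\ge\rank(A[I,I])=r$ with $I=[n]\setminus\bigcup_{v\in V}N(v)$, where the middle equality uses $A[V,I]=A[I,V]^T$.
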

\begin{proof}
\newcommand{\Ib}{{I_{\text{bad}}}}
    Note that in the case $r=0$ we must have $A = 0$, and so we can take $B = 0$. So let us from now on assume $r\ge 1$.
    
    For $i,j\in [n]$, say that the pair $(i,j)$ is \emph{bad} if $a_{ij}\neq a_{ji}$.
    For each $1 \le i \le n$, write $N(i):=\{1\le j \le n: a_{ij}\neq a_{ji}\}$.
    Also, define $\Ib$ to be the set of indices $i$ such that $|N(i)|\ge \sqrt{\rho}n$; hence, \[|\Ib| \le \|A-A^T\|_0/\sqrt{\rho} n\le \sqrt{\rho} n.\]
    First, we claim that there exists $V \subseteq [n]\setminus \Ib$ of size $r$ such that $A[V,V]$ is symmetric and $\rank(A[[n],V])=r$.
    To this end, we iteratively find, for each $\ell\in[r]$, a sequence $v_1,\dots,v_\ell\in [n] \setminus \Ib$ such that $\rank(A[[n],\{v_1,\dots,v_\ell\}])=\ell$ and such that no $(v_i,v_j)$ is a bad pair.
    Suppose $\ell\in\{0,1,\dots,r-1\}$ and we have found $v_1,\dots,v_\ell$ with these properties.
    For all $i\in\{1,\dots,\ell\}$, we have $|N(v_i)| \le \sqrt{\rho}n$ because $v_i \notin \Ib$.
    Thus, $\tilde{I}:= [n] \setminus (\Ib\cup \bigcup_{i=1}^\ell N(v_i))$ has size $|\tilde{I}| \ge n-\sqrt{\rho}n-\ell\sqrt{\rho}n \ge (1-r\sqrt{\rho})n$.
    By our assumption on $A$, $\rank(A[[n],\tilde{I}])\ge \rank(A[\tilde{I},\tilde{I}])=r>\ell$, so there exists $v_{\ell+1} \in \tilde{I}$ such that $\rank(A[[n],\{v_1,v_2,\dots,v_{\ell+1}\}])=\ell+1$.
    In addition, for each $i\in [\ell]$, the pair $(v_i,v_{\ell+1})$ is not bad because $v_{\ell+1} \in \tilde{I} \subseteq [n]\setminus N(v_i)$.
    This completes the iteration step. Having found $v_1,\dots,v_r$ at the end of the iterative procedure, we set $V=\{v_1,\dots,v_r\}$, and note that $A[V,V]$ is symmetric since none of the pairs $(v_i,v_j)$ is bad.

    Fix $V$ as above. We next claim that $\rank(A[V,V])=r$, i.e., $A[V,V]$ is invertible.
    To see this, first note that since $\rank(A[[n],V])=r=\rank(A)$, the column vectors of $A[[n],V]$ span those of $A=A[[n],[n]]$.
    Let $I := [n]\setminus \bigcup_{v \in V} N(v)$; it holds that
    \begin{equation} \label{eq:I-size}
        |I|\ge n- \sum_{i=1}^r |N(v_i)|\ge (1-r\sqrt \rho) n.
    \end{equation}
    The above discussion indicates that the column vectors of $A[I,V]$ span those of $A[I,I]$, so $\rank(A[I,V]) \ge \rank(A[I,I])$.
    Similarly, we have $\rank(A[V,V]) \ge \rank(A[V,I])$.
    In addition, the definition of $I$ guarantees that $a_{vi}=a_{iv}$ for all $v \in V$ and all $i \in I$. In short, $A[V,I]=A[I,V]^T$.
    So, $$\rank(A[V,V]) \ge \rank(A[V,I])=\rank(A[I,V])\ge \rank(A[I,I])=r,$$ where the last inequality follows as $|I| \ge (1-r\sqrt{\rho})n$. That is to say, $\rank(A[V,V])=r$, i.e., $A[V,V]$ is invertible.
    
    Now, take $B:=A[V,[n]]^T A[V,V]^{-1} A[V,[n]]$. Note that $B$ is symmetric, and $\rank(B) \le |V|=r$.
    By the preceding discussion, \[B[V,V]=A[V,V]^T A[V,V]^{-1} A[V,V]=A[V,V]\] has rank $r$, so $\rank(B)=r$.
    Also, the row vectors of $B$ are spanned by those of $A$.
    This, plus the fact that $\rank(B)=r=\rank(A)$, demonstrates that the row space of $B$ is the same as that of $A$.
    
    To complete the proof, it remains to show that $\|A-B\|_0 \le (r^2+1)\rho n^2$.
    Since $r = \rank(A)\ge \rank(A[V,[n]])\ge \rank(A[V,V])=r$, we see that $A[V,[n]]$ has the same row space as $A$, 
    i.e., for each $i\in [n]$, we have $A[i,[n]]=\vec{x}_i A[V,[n]]$ for some  $\vec{x}_i \in \mb{F}^{V}$.
    In particular, $A[i,V]=\vec{x}_i A[V,V]$, i.e. $\vec{x}_i=A[i,V]A[V,V]^{-1}$.
    Hence, $A[i,[n]]=A[i,V]A[V,V]^{-1}A[V,[n]]$.
    By considering all $i \in I$, we deduce \begin{equation}
        A[I,[n]]=A[I,V]A[V,V]^{-1}A[V,[n]]=A[V,I]^T A[V,V]^{-1}A[V,[n]]= B[I,[n]].\label{eq:I-n-same}
    \end{equation}
    Writing $I^{\mr c}=[n]\setminus I$, note that for any $i \in I^{\mr c}$ and $j \in I$, the above equality implies that $b_{ij}=b_{ji}=a_{ji}$. So, we have $b_{ij}\neq a_{ij}$ if and only if $a_{ij}\neq a_{ji}$. 
    We deduce that
    \begin{equation}\big\|A[I^{\mr c},I]-B[I^{\mr c},I]\|_0 \le \|A-A^T\|_0\le \rho n^2.\label{eq:Ic-I-close}\end{equation}
    Considering separately the subsets of entries indexed by $I\times [n]$, $I^{\mr c}\times I$ and $I^{\mr c}\times I^{\mr c}$, using \cref{eq:I-size,eq:I-n-same,eq:Ic-I-close}, we conclude
    \[
        \|A-B\|_0
        \le \big\|A[I,[n]]-B[I,[n]]\big\|_0+\big\|A[I^{\mr c},I]-B[I^{\mr c},I]\big\|_0+|I^{\mr c}|^2
        \le (r^2+1)\rho n^2.\qedhere
    \]
\end{proof}
Before completing the proof of \cref{lem:fix-symmetry}, we record a necessary inequality on matrix ranks.
\begin{lemma}\label{lemma: rank inequality of matrix blocks}
    Consider a matrix $A \in \mb{F}^{n \times n}$ over any field $\mb F$ (for any integer $n\ge 1$). Then, for any $I,J\subseteq [n]$, we have $$\rank(A)\ge \rank(A[I,[n]])+\rank(A[[n],J])-\rank(A[I,J]).$$
\end{lemma}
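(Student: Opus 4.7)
The plan is to prove this via a coordinate-projection argument on column spaces. Let $U \subseteq \mb{F}^n$ denote the column space of $A$, and let $V \subseteq \mb{F}^n$ denote the column space of $A[[n], J]$; since the columns of $A[[n], J]$ form a subset of the columns of $A$, we have $V \subseteq U$, with $\dim U = \rank(A)$ and $\dim V = \rank(A[[n], J])$.

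Now consider the coordinate projection $\pi: \mb{F}^n \to \mb{F}^I$ that keeps only the coordinates indexed by $I$. A column of $A$ restricted to the rows in $I$ is exactly the corresponding column of $A[I, [n]]$, so $\pi(U)$ is the column space of $A[I, [n]]$, giving $\dim \pi(U) = \rank(A[I, [n]])$. By the same reasoning, $\pi(V)$ is the column space of $A[I, J]$, giving $\dim \pi(V) = \rank(A[I, J])$.

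The key step is to compare the kernels of $\pi|_U$ and $\pi|_V$. Since $V \subseteq U$, we automatically have $\ker(\pi|_V) \subseteq \ker(\pi|_U)$. Applying rank-nullity to each restriction yields
\[
    \rank(A[[n], J]) - \rank(A[I, J]) = \dim\ker(\pi|_V) \le \dim\ker(\pi|_U) = \rank(A) - \rank(A[I, [n]]),
\]
and rearranging gives the desired inequality. There is no real obstacle in this plan; the only thing to verify carefully is the bookkeeping identification that the coordinate projection of the column space of $A$ (respectively of $A[[n],J]$) is precisely the column space of the corresponding submatrix, which follows directly from the definitions.
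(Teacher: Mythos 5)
Your proof is correct, and it takes a genuinely different (and arguably cleaner) route than the paper. The paper proceeds by a greedy row-extension argument: it sets $\Delta=\rank(A[[n],J])-\rank(A[I,J])$, finds $\Delta$ extra row indices $i_1,\dots,i_\Delta\in[n]\setminus I$ whose rows $A[i_k,J]$ successively extend the row space of $A[I,J]$, observes that the same rows of the full matrix then successively extend the row space of $A[I,[n]]$, and concludes $\rank(A)\ge \rank(A[I,[n]])+\Delta$. Your argument instead packages the whole statement as a single dimension count: with $U,V$ the column spaces of $A$ and $A[[n],J]$ and $\pi$ the coordinate projection onto $\mb F^I$, the containment $V\subseteq U$ forces $\ker(\pi|_V)\subseteq\ker(\pi|_U)$, and rank--nullity applied to both restrictions, together with the identifications $\pi(U)=\mathrm{colspace}(A[I,[n]])$ and $\pi(V)=\mathrm{colspace}(A[I,J])$, gives the inequality immediately. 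Both proofs are elementary and self-contained; the paper's version is more constructive (it exhibits the extending rows explicitly), while yours trades that for a shorter, more conceptual ``projection of nested subspaces'' viewpoint that avoids any index bookkeeping. Either is perfectly suitable here.
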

\begin{proof}
    Write $\Delta:=\rank(A[[n],J])-\rank(A[I,J]) \ge 0$.
    There exist distinct $i_1,i_2,\dots,i_\Delta \in [n]\setminus I$ such that for each $k \in \{1,\dots,\Delta\}$, the row vector $A[i_k,J]$ is not spanned by the row vectors of $A[I,J]$ along with $A[i_1,J],\dots,A[i_{k-1},J]$.
    Hence, for each $k\in\{1,\dots,\Delta\}$, the row vector $A[i_k,[n]]$ is not spanned by the row vectors of $A[I,[n]]$ along with $A[i_1,[n]],\dots,A[i_{k-1},[n]]$.
    In other words, $\rank(A[I\cup\{i_1,\dots,i_\Delta\},[n]])\ge \rank(A[I,[n]])+\Delta$.
    This shows $\rank(A) \ge \rank(A[I,[n]])+\Delta$, as desired.
\end{proof}

We now complete the proof of \cref{lem:fix-symmetry}.
\begin{proof}[Proof of \cref{lem:fix-symmetry}]
    We first apply \cref{lemma: robust rank in n by n matrices} with $\gamma=q\sqrt{\rho}$ to obtain  $k\in\{0,\dots,q\}$ and $I \subseteq [n]$ of size $|I| \ge (1-q^2\sqrt{\rho} )n$ such that $\rank(A[I',I'])=\rank(A[I,I])=k$ for all $I' \subseteq I$ of size $|I'| \ge |I| - q\sqrt{\rho} n$.
    Write $\rank(A[[n],I])=k+\Delta_1$ and $\rank(A[I,[n]])=k+\Delta_2$; clearly, $\Delta_1,\Delta_2 \ge 0$.
    According to \cref{lemma: rank inequality of matrix blocks}, $q\ge \rank(A) \ge k+\Delta_1+k+\Delta_2-k=k+\Delta_1+\Delta_2$.
    Without loss of generality, we assume $\Delta_1 \le \Delta_2$ ({otherwise replace $A$ with $A^T$}).
    Hence, $k+2\Delta_1 \le k+\Delta_1+\Delta_2 \le q$.
    
    Now, \cref{lemma: symmetry for robust matrices} applied to $A[I,I]$ (with $r=k$) yields a symmetric matrix $C \in \mb{F}^{I \times I}$ such that 
    \begin{equation}\label{eq:I-I}
        \|A[I,I]-C\|_0 \le (q^2+1)\rho n^2
    \end{equation} and such that the row space of $C$ is the same as that of $A[I,I]$.
    The latter property implies that $A[I,I]=P_1 C$ for some $P_1 \in \mb{F}^{I\times I}$.
    
    We have shown how to approximate $A[I,I]$ with the symmetric matrix $C$; we now need to deal with the other parts of the matrix $A$.
    Recall that $\rank(A[[n],I])=\rank(A[I,I])+\Delta_1$.
    Letting $I^{\mr c}=[n]\setminus I$, this means we can write $A[I^{\mr c},I]=P_2 A[I,I]+Q$, for some $P_2,Q \in \mb{F}^{I^{\mr c} \times I}$ and $\rank(Q)=\Delta_1$.
    Let $P:=P_2P_1 \in \mb{F}^{I^{\mr c} \times I}$, so $A[I^{\mr c},I]=PC+Q$.
   
    Now, let $B\in \mb F^{n\times n}$ be the symmetric matrix defined by \[B[I,I]=C, \quad B[I^{\mr c},I]=B[I,I^{\mr c}]^T=PC+Q,\quad B[I^{\mr c},I^{\mr c}]=PCP^T+PQ^T+QP^T.\]
    By construction, and our assumption on $A$, we have
    \begin{align}
    \|B[I^{\mr c},I]-A[I^{\mr c},I]\|_0&=0,\label{eq:Ic-I}
    \\
    \|B[I,I^{\mr c}]-A[I,I^{\mr c}]\|_0&=\|B[I^{\mr c},I]^T-A[I,I^{\mr c}]\|_0=\|A[I^{\mr c},I]^T-A[I,I^{\mr c}]\|_0\le \rho n^2.\label{eq:I-Ic}
    \end{align}
    Combining \cref{eq:I-Ic,eq:Ic-I} with \cref{eq:I-I} yields
    \begin{align*}\|A-B\|_0&\le \|C-A[I,I]\|_0\;+\;\|B[I^{\mr c},I]-A[I^{\mr c},I]\|_0\;+\;\|B[I,I^{\mr c}]-A[I,I^{\mr c}]\|_0\;+\;|I^{\mr c}|^2\\&\le (q^2+1)\rho n^2 + \rho n^2 + q^4\rho n^2= O(q^4 \rho n^2).\end{align*}
    It remains to show that $\rank(B) \le q$. This follows from a short sequence of elementary row and column operations:
    \begin{align*}
            \rank(B)&= \renewcommand\arraystretch{1.2}
            \rank\left[
                \begin{array}{ c | c }
                    C & CP^T+Q^T \\
                    \hline
                    PC+Q & PCP^T+PQ^T+QP^T
                \end{array}
            \right]
            = \renewcommand\arraystretch{1.2}
            \rank\left[
                \begin{array}{ c | c }
                    C & Q^T \\
                    \hline
                    PC+Q & PQ^T
                \end{array}
            \right] 
            = \renewcommand\arraystretch{1.2}
            \rank\left[
                \begin{array}{ c | c }
                    C & Q^T \\
                    \hline
                    Q & 0
                \end{array}
            \right]\\
            &\le \rank(C) + 2\rank(Q)
            = k+2\Delta_1 \le q\,. \qedhere
            \end{align*}
\end{proof}

\section{Decoupling inequalities}\label{sec:decoupling}
In this section we prove \cref{lem:multiple-decoupling-LO,lem:custom-decoupling-LO}, which are the decoupling inequalities that will be used in the proofs of \cref{thm:rank,thm:stability} respectively.
Both lemmas use the following fact, which is an easy application of Jensen's inequality.
\begin{lemma}[Decoupling with multiple copies]\label{cla:decouple}
    Let $\mathcal{E}(E,F)$ be an event depending on independent random objects  $E,F$. 
    Then, for any $k\in \mathbb{N}$, we have
    \[\Pr[\mathcal{E}(E,F)] \leq \Pr\big[\mathcal{E}(E_0, F)\cap \dots\cap \mathcal{E}(E_k, F)\big]^{1/(k+1)},\]
    where $E_0,\dots, E_k$ are independent copies of $E$.
\end{lemma}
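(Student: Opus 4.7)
The plan is to condition on $F$ and then apply Jensen's inequality to the convex function $x \mapsto x^{k+1}$. First I would write
\[
\Pr[\mathcal{E}(E,F)] = \mathbb{E}_F\bigl[\Pr[\mathcal{E}(E,F) \mid F]\bigr],
\]
using the independence of $E$ and $F$ to make sense of the inner conditional probability as a function of $F$ alone.

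Next, by Jensen's inequality applied to the convex function $t \mapsto t^{k+1}$ on $[0,1]$, we have
\[
\mathbb{E}_F\bigl[\Pr[\mathcal{E}(E,F) \mid F]\bigr] \leq \mathbb{E}_F\bigl[\Pr[\mathcal{E}(E,F) \mid F]^{k+1}\bigr]^{1/(k+1)}.
\]
The key point is then to recognise the integrand on the right-hand side as a joint probability. Since $E_0,\dots,E_k$ are independent copies of $E$ and are (jointly with $E$) independent of $F$, after conditioning on $F$ they are still i.i.d., so
\[
\Pr[\mathcal{E}(E,F) \mid F]^{k+1} = \prod_{i=0}^{k} \Pr[\mathcal{E}(E_i,F) \mid F] = \Pr\bigl[\mathcal{E}(E_0,F)\cap \dots \cap \mathcal{E}(E_k,F) \mid F\bigr].
\]
Taking expectation over $F$ gives the unconditional probability $\Pr[\mathcal{E}(E_0,F)\cap \dots \cap \mathcal{E}(E_k,F)]$, and combining with the Jensen bound yields the claimed inequality.

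There is no serious obstacle here; the only points that require a moment of care are (i) invoking independence correctly so that $\Pr[\mathcal{E}(E,F)\mid F]$ is genuinely a function of $F$, and (ii) checking that independence of $(E_0,\dots,E_k)$ from $F$ allows the conditional probability of the intersection to factor as a product. Both are immediate from the hypothesis that $E,F$ are independent and that $E_0,\dots,E_k$ are independent copies of $E$.
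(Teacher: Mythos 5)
Your proof is correct and is essentially the same as the paper's: both condition on $F$, use the independence of the copies $E_0,\dots,E_k$ to factor the conditional probability as a power, and apply Jensen's inequality for $t\mapsto t^{k+1}$. The only cosmetic difference is that the paper starts from the right-hand side and applies Jensen as a lower bound, while you start from the left-hand side and apply it as an upper bound.
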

\begin{proof}
    By the law of total probability and the independence of $E_0,\dots,E_k$, we have
    \[
    \Pr\big[\mathcal{E}(E_0, F)\cap \dots\cap \mathcal{E}(E_k, F)\big]=\mb E\Big[\Pr\big[\mathcal{E}(E_0, F)\cap \dots\cap \mathcal{E}(E_k, F)\,\big|F\,\big]\Big]=\mb E\Big[\Pr\big[\mathcal{E}(E, F)\,\big|F\,\big]^{k+1}\Big].
    \]
    By Jensen's inequality, this is at least
    \begin{align*}
    \mb E\Big[\Pr\big[\mathcal{E}(E, F)\,\big|F\,\big]\Big]^{k+1}=\Pr\big[\mathcal{E}(E, F)\big]^{k+1}.
    \end{align*}
    Rearranging yields the desired result.
\end{proof}

We now prove \cref{lem:multiple-decoupling-LO} and \cref{lem:custom-decoupling-LO}. The proof approach is the same for both of these statements, but \cref{lem:custom-decoupling-LO} involves somewhat more complicated manipulations of equations.
\begin{proof}[Proof of \Cref{lem:multiple-decoupling-LO}]
    Fix any $z \in \mb{F}$, and let $\vec \xi^{\,(0)},\dots,\vec \xi^{\,(k)}$ be independent copies of $\vec \xi$.
    Applying \Cref{cla:decouple} with $E = \vec{\xi}[X]$ and $F = \vec{\xi}[Y]$, and with $\mc{E}(E,F)$ being the event that $f(\vec{\xi}[X],\vec{\xi}[Y]) = z$, we see
    \[
        \Pr[f(\vec{\xi}) = z] \leq \Pr[f(\vec{\xi}^{\,(i)}[X], \vec{\xi}[Y]) = z \text{ for all } i \in \{0,\dots, k\}]^{1/(k+1)}\,.
    \]
    Subtracting the equation for $i=0$ from each of the other $k$ equations, and using that $f(\vec{x}) = \vec{x}^T A \vec{x}+\vec b^T \vec x+c$ (for some $\vec b\in \mb F^n$ and $c\in \mb F$), we deduce
    \begin{align*}
        &\sup_{z \in \mb{F}} \Pr[f(\vec{\xi}) = z] \\
        &\qquad\leq \Pr\Bigl[f(\vec{\xi}^{\,(i)}[X],\vec{\xi}[Y])-f(\vec{\xi}^{\,(0)}[X],\vec{\xi}[Y])=0\text{ for all }i\in\{1,\dots,k\}\Bigr]^{1/(k+1)} \\
         &\qquad= \Pr\Bigl[(\vec{\xi}^{\,(i)}[X]-\vec{\xi}^{\,(0)}[X])^T A[X,Y]\,\vec{\xi}[Y]+\psi_0(\vec{\xi}^{\,(0)}[X],\vec{\xi}^{\,(i)}[X])=0\text{ for all }i\in\{1,\dots,k\}\Bigr]^{1/(k+1)},
    \end{align*}
    where
    \[\psi_0(\vec{\xi}^{\,(0)}[X],\vec{\xi}^{\,(i)}[X]) = \frac{1}{2} \Bigl(\vec{\xi}^{\,(i)}[X]^T A[X,X]\vec{\xi}^{\,(i)}[X] -  \vec{\xi}^{\,(0)}[X]^T A[X,X]\vec{\xi}^{\,(0)}[X] + \vec{b}[X]^T(\vec{\xi}^{\,(i)}[X] -\vec{\xi}^{\,(0)}[X])\Bigr).\]
    Let $\vec{x} \in \{-1,1\}^X$ maximise the expression
    \[
        \Pr\Bigl[(\vec{\xi}^{\,(i)}[X]-\vec{x})^T A[X,Y]\,\vec{\xi}[Y]+\psi_0(\vec{x},\vec{\xi}^{\,(i)}[X])=0\text{ for all }i\in\{1,\dots,k\}\Bigr]. 
    \]
    Then, we conclude 
    \[
        \sup_{z \in \mb{F}} \Pr[f(\vec{\xi}) = z] 
        \leq \Pr\Bigl[(\vec{\alpha}^{\,(i)})^T A[X,Y]\,\vec{\xi}[Y]=\psi(\vec{\alpha}^{\,(i)}) \text{ for all }i\in\{1,\dots,k\} \Bigr]^{1/(k+1)},
    \] where $\vec{\alpha}^{\,(i)} = \vec{\xi}^{\,(i)}[X] - \vec{x}$ is a shifted Rademacher random variable and  $\psi(\vec{\alpha}^{\,(i)}) = -\psi_0(\vec{x},\vec{\alpha}^{\,(i)} + \vec{x})$.
\end{proof}

\begin{proof}[Proof of \Cref{lem:custom-decoupling-LO}]
    Fix any $z \in \mb{F}$ and let $\pvec \xi$ be an independent copy of $\vec{\xi}$.
    We apply the $k=1$ case of \Cref{cla:decouple} twice, first with
    \[E = \vec{\xi}[X],\quad F = \vec{\xi}[Y\cup Z],\quad \mc{E}(E,F) = \{f(\vec{\xi}[X],\vec{\xi}[Y], \vec{\xi}[Z]) = z\},\]
    and then with 
    \[E = \vec{\xi}[Y],\quad F = (\vec{\xi}[X],\pvec{\xi}[X],\vec{\xi}[Z]),\quad \mc{E}(E,F) = \left\{\begin{minipage}{100pt}\setlength\abovedisplayskip{0pt}\begin{align*}f(\vec{\xi}[X], \vec{\xi}[Y],\vec{\xi}[Z]) &=z,\\
f(\pvec{\xi}[X], \vec{\xi}[Y], \vec{\xi}[Z]) &= z\end{align*}\vspace{-8pt}\end{minipage}
\right\}.\]
This yields
    \[
        \Pr[f(\vec \xi)=z]\le 
        \Pr\left[\begin{minipage}{140pt}\setlength\abovedisplayskip{0pt}\begin{align}
            f(\vec{\xi}[X],\vec{\xi}[Y],\vec{\xi}[Z])&=z,\label{eq:1}\\
            f(\pvec{\xi}[X],\vec{\xi}[Y],\vec{\xi}[Z])&=z,\label{eq:2}\\
            f(\vec{\xi}[X],\pvec{\xi}[Y],\vec{\xi}[Z])&=z,\label{eq:3}\\
            f(\pvec{\xi}[X],\pvec{\xi}[Y],\vec{\xi}[Z])&=z\label{eq:4}
        \end{align}\vspace{-8pt}\end{minipage}\right]^{1/4}.\notag
    \]
    Now, we write  $f(\vec{x}) = \vec{x}^T A \vec{x}+\vec b^T \vec x+c$ for some $\vec b\in \mb F^n$ and $c\in \mb F$. 
    If we subtract \eqref{eq:2} from \eqref{eq:1}, and \eqref{eq:4} from \eqref{eq:3}, and \eqref{eq:3} from \eqref{eq:1}, we obtain 
    \begin{align}
        &2(\vec{\xi}[X] - \pvec{\xi}[X])^T A[X, Y\cup Z] \vec{\xi}[Y\cup Z]+\varphi_0(\vec\xi[X],\pvec{\xi}[X])=0\label{eq:5}\\
        &2(\vec{\xi}[X] - \pvec{\xi}[X])^T A[X, Y\cup Z] (\pvec{\xi}[Y],\vec{\xi}[Z])+\varphi_0(\vec\xi[X],\pvec{\xi}[X])=0,\label{eq:6}\\
        &2(\vec{\xi}[Y] - \pvec{\xi}[Y])^T A[Y, X\cup Z] \vec{\xi}[X\cup Z]+\psi_0(\vec\xi[Y],\pvec{\xi}[Y]) = 0.\label{eq:7} 
    \end{align}
    where
    \begin{align*}
        \varphi_0(\vec\xi[X],\pvec{\xi}[X])&=\vec{\xi}[X]^T A[X,X]\vec{\xi}[X] - \pvec{\xi}[X] A[X,X] \pvec{\xi}[X] + \vec{b}[X]^T(\vec{\xi}[X] - \pvec{\xi}[X]),\\
        \psi_0(\vec\xi[Y],\pvec{\xi}[Y])&=\vec{\xi}[Y]^T A[Y,Y]\vec{\xi}[Y] - \pvec{\xi}[Y] A[Y,Y] \pvec{\xi}[Y] + \vec{b}[Y]^T(\vec{\xi}[Y] - \pvec{\xi}[Y]).
    \end{align*}
    Now, let
    \[\vec\alpha=\vec \xi[X]-\pvec \xi[X],\qquad \vec\beta=\vec \xi[Y]-\pvec \xi[Y],\qquad \vec \gamma=\vec \xi[Z].\]
    Subtracting \eqref{eq:6} from \eqref{eq:5} yields
    \[\vec{\alpha}^{\,T} A[X,Y] \vec{\beta} = 0.\]
    \eqref{eq:6} and \eqref{eq:7} can be written as 
    \[
        \vec{\alpha}^{\,T} A[X,Z] \vec{\gamma} = \varphi_1(\vec{\xi}[X],\pvec{\xi}[X], \vec{\xi}[Y],\pvec{\xi}[Y])\quad \text{and}\quad \vec{\beta}^{\,T} A[Y,Z] \vec{\gamma} = \psi_1(\vec{\xi}[X], \pvec{\xi}[X], \vec{\xi}[Y],\pvec{\xi}[Y])
    \] respectively, where
    \begin{align*}
        \varphi_1(\vec{\xi}[X],\pvec{\xi}[X], \vec{\xi}[Y],\pvec{\xi}[Y])
            &=-(\vec{\xi}[X] - \pvec{\xi}[X])^T A[X, Y] \pvec{\xi}[Y]-\varphi_0(\vec\xi[X],\pvec{\xi}[X])/2 \\
        \psi_1(\vec{\xi}[X], \pvec{\xi}[X], \vec{\xi}[Y],\pvec{\xi}[Y])
            &=-(\vec{\xi}[Y] - \pvec{\xi}[Y])^T A[Y, X] \vec{\xi}[X]-\psi_0(\vec\xi[Y],\pvec{\xi}[Y])/2.
    \end{align*}
    Hence,
    \[
        \Pr[f(\vec \xi)=z]\le 
        \Pr\left[\begin{minipage}{230pt}\setlength\abovedisplayskip{0pt}\begin{align}
        &\vec{\alpha}^{\,T} A[X,Y] \vec{\beta} = 0\notag ,\\
        &\vec{\alpha}^{\,T} A[X,Z] \vec{\gamma} = \varphi_1(\vec{\xi}[X],\pvec{\xi}[X], \vec{\xi}[Y],\pvec{\xi}[Y]),\label{eq:9}\\
        &\vec{\beta}^{\,T} A[Y,Z] \vec{\gamma} = \psi_1(\vec{\xi}[X], \pvec{\xi}[X], \vec{\xi}[Y],\pvec{\xi}[Y])\label{eq:10}
    \end{align}\vspace{-8pt}\end{minipage}\right]^{1/4}.\notag
    \]
    This is almost the desired conclusion, except that $\varphi_1$ and $\psi_1$ do not only depend on $\vec{\alpha},\vec{\beta}$. To address this issue, for all possible outcomes of $\vec \alpha\in \{-1,0,1\}^X$ and $\vec \beta\in \{-1,0,1\}^Y$, choose $\vec x_{\vec \alpha,\vec \beta}\in \{-1,1\}^X$ and $\vec y_{\vec \alpha,\vec \beta}\in \{-1,1\}^Y$ to maximise the conditional probability 
     \[
        \Pr\Big[\eqref{eq:9}\text{ and }\eqref{eq:10}\text{ hold}\,\Big|\,\pvec{\xi}[X]=\vec x_{\vec \alpha,\vec \beta},\;\pvec{\xi}[Y]=\vec y_{\vec \alpha,\vec \beta},\;\vec \alpha,\vec \beta\Big]
    \]
    (among all choices of $\vec x_{\vec \alpha,\vec \beta}$ and $\vec y_{\vec \alpha,\vec \beta}$ such that this conditional probability is well-defined).
    In addition, let $\varphi(\vec\alpha,\vec \beta)=\varphi_1(\vec x_{\vec \alpha,\vec \beta}+\vec{\alpha}, \vec x_{\vec \alpha,\vec \beta}, \vec y_{\vec \alpha,\vec \beta}+\vec{\beta}, \vec y_{\vec \alpha,\vec \beta})$ 
    and $\psi(\vec\alpha,\vec \beta)=\psi_1(\vec x_{\vec \alpha,\vec \beta}+\vec{\alpha}, \vec x_{\vec \alpha,\vec \beta}, \vec y_{\vec \alpha,\vec \beta}+\vec{\beta}, \vec y_{\vec \alpha,\vec \beta})$. 
    Then we have \[
        \sup_{z \in \mb{F}} \Pr[f(\vec \xi)=z] 
        \le \Pr\Bigl[\vx^{\,T} A[X,Y] \vy=0, \quad\vx^{\,T} A[X,Z] \vz=\varphi(\vec{\alpha}, \vec{\beta}),\quad\vy^{\,T} A[Y,Z] \vz=\psi(\vec{\alpha},\vec{\beta})\Bigr]^{1/4},
    \]
    as desired.
\end{proof}

\section{Bounds in terms of rank, for quadratic polynomials}\label{sec:rank}
In this section we prove \cref{thm: multi-x-single-y costello}, and use it (together with \cref{lem:multiple-decoupling-LO,lemma: close to symmetric low rank}) to deduce \cref{thm:rank}. 
First, we show how to deduce \cref{thm:rank} from \cref{thm: multi-x-single-y costello}.
\begin{proof}[Proof of \cref{thm:rank} assuming \Cref{thm: multi-x-single-y costello}]
    In what follows, we assume $n$ is even\footnote{For the odd-$n$ case, we can add a single dummy variable to reduce to the even-$n$ case.} and sufficiently large. Write the quadratic part of $f(\vec x)$ as $\vec{x}^T A \vec{x}$.

    Suppose \cref{C1} does not hold, i.e., for any symmetric matrix $B \in \mb{F}^{n\times n}$ with rank less than $2k^2$, $f(\vec{x})-\vec{x}^T B\vec{x}$ must have more than $\varepsilon n^2$ nonzero coefficients.
    Our goal is to show that \cref{C2} holds, i.e., $\sup_{z\in \mb F}\Pr[f(\xi_1,\dots,\xi_n)=z] \le n^{-1+2/k}$.

    Since $f$ has at most $n+1$ non-quadratic terms, our assumption that \cref{C1} does not hold implies that for any symmetric matrix $B\in \mb F^{n\times n}$ with rank less than $2k^2$, it must be that $\|A-B\|_0 >  \varepsilon n^2-n-1$. 
    By \cref{lemma: close to symmetric low rank}, this means that, for some $\delta\in (0,1)$ depending on $\varepsilon$ and $k$, at least a $\delta$-fraction of $2k^2\times 2k^2$ submatrices of $A$ are nonsingular.

    By \cref{lemma: tri-partition}, there is a partition of $\{1,\dots,n\}=X\cup Y$ into parts of size $|X|=|Y|=n/2$, such that more than a $(\delta/2)$-fraction of  $2k^2\times 2k^2$ submatrices of $A[X,Y]$ are nonsingular. By \cref{lem:multiple-decoupling-LO} we then have
    \[
        \sup_{z\in \mb F}\Pr[f(\vec \xi)=z] 
        \leq\Pr\Bigl[(\vec{\alpha}^{\,(i)})^T A[X,Y]\,\vec{\xi}[Y]=\psi(\vec{\alpha}^{\,(i)})\text{ for all }i\in\{1,\dots,k\}\Bigr]^{1/(k+1)},
    \]
    for some function $\psi:\mb F^X\to \mb F$, and some i.i.d.\ random vectors $\vec{\alpha}^{(1)},\dots,\vec{\alpha}^{(k)}\in \mb F^X$ with independent shifted Rademacher entries (all independent of $\vec \xi[Y]$). By \cref{thm: multi-x-single-y costello} (with $\Xi$ as the matrix with rows $\vec \alpha^{(1)},\dots,\vec \alpha^{(k)}$, and $\vec \eta=\vec \xi[Y]$, and $\vec\varphi(\Xi)=(\psi(\vec{\alpha}^{\,(i)}))_{i=1}^k$, and $\varepsilon=1$), we have
    \[
        \Pr\Bigl[(\vec{\alpha}^{\,(i)})^T A[X,Y]\,\vec{\xi}[Y]=\psi(\vec{\alpha}^{\,(i)})\text{ for all }i\in\{1,\dots,k\}\Bigr]
        \le (n/2)^{-k+1}\le n^{-(k+1)(1-2/k)}.
    \]
    The desired bound follows.
\end{proof}

In the rest of this section, we prove \cref{thm: multi-x-single-y costello}. Recalling the strategy outlined in \cref{subsubsec:rank}, via the Nguyen--Vu inverse theorem (\cref{theorem: optimal inverse theorem}), the proof boils down to bounding the probability that a given tuple of columns of $\Xi A$ lies in a GAP of ``small'' rank and of ``small'' volume (recall \cref{def:GAP}).
To be precise, we need the following two lemmas.

\begin{lemma}\label{lem:count-1}
    Fix $k \le m$ and $\mb{F} \in \{\mb{R},\mb{C}\}$. For some $\ell\le n$, consider a matrix $A \in \mb{F}^{n \times m}$ which has at least $\ell$ disjoint nonsingular $m\times m$ submatrices, and let $\Xi\in \mb{F}^{k \times n}$ be a matrix with independent shifted Rademacher entries.
    Then, $$\Pr[\rank(\Xi A) < k] = O_{m}(\ell^{-(m-k+1)/2}).$$  
\end{lemma}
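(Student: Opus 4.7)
The plan is to reduce $\rank(\Xi A)<k$ to a linear-dependence event among the rows of $\Xi A$ and then apply \cref{cor:Halasz} (the subspace version of Hal\'asz' inequality). First, I would note that the $i$-th row of $\Xi A$ can be written as $\vec r_i := A^T \vec\eta_i \in \mb F^m$, where $\vec\eta_i := \Xi[i,:]^T \in \mb F^n$ is a vector of independent shifted Rademacher entries, and the vectors $\vec\eta_1,\dots,\vec\eta_k$ are mutually independent (since $\Xi$ has i.i.d.\ entries). Consequently $\vec r_1,\dots,\vec r_k$ are i.i.d.\ copies of the distribution of $A^T\vec\eta$.

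Next, $\rank(\Xi A)<k$ is equivalent to the rows $\vec r_1,\dots,\vec r_k$ being linearly dependent in $\mb F^m$. By elementary linear algebra, this forces at least one $\vec r_i$ to lie in the span of the other $k-1$ rows, so by the union bound and the exchangeability of the $\vec r_i$,
\[
\Pr[\rank(\Xi A)<k] \;\le\; k\cdot \Pr\bigl[\vec r_k \in \operatorname{span}(\vec r_1,\dots,\vec r_{k-1})\bigr].
\]

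For the remaining probability, I would condition on $\vec r_1,\dots,\vec r_{k-1}$: their span is a linear subspace of $\mb F^m$ of dimension at most $k-1$, and hence is contained in some (non-random, conditionally determined) affine-linear subspace $\wt{\mc W}\subseteq \mb F^m$ of dimension exactly $k-1$. Since $A$ contains $\ell$ disjoint nonsingular $m\times m$ submatrices, the transpose $A^T\in \mb F^{m\times n}$ does too (transposition preserves nonsingularity of square submatrices). Thus \cref{cor:Halasz}, applied with ambient dimension $d=m$ and subspace dimension $k-1$, yields
\[
\Pr\bigl[\vec r_k \in \wt{\mc W}\,\big|\,\vec r_1,\dots,\vec r_{k-1}\bigr] \;=\; \Pr\bigl[A^T\vec\eta_k \in \wt{\mc W}\,\big|\,\vec r_1,\dots,\vec r_{k-1}\bigr]\;\le\; O_m\!\bigl(\ell^{-(m-k+1)/2}\bigr).
\]
Taking expectation over $\vec r_1,\dots,\vec r_{k-1}$, combining with the union bound, and absorbing the factor $k\le m$ into the $O_m(\,\cdot\,)$ notation gives the desired estimate $\Pr[\rank(\Xi A)<k]=O_m(\ell^{-(m-k+1)/2})$.

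There is no real obstacle here: the whole argument is just a careful setup to apply \cref{cor:Halasz}. The only things to verify are that (i) the independence of the rows of $\Xi$ makes the $\vec r_i$ i.i.d., (ii) any subspace of dimension less than $k-1$ can be enlarged to one of dimension $k-1$ without decreasing the containment probability, and (iii) \cref{cor:Halasz} applies to shifted Rademacher vectors, which is precisely noted in the statement of \cref{thm:Halasz} that underlies \cref{cor:Halasz}.
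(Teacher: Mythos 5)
Your proof is correct and follows essentially the same path as the paper's: reduce $\rank(\Xi A)<k$ to a linear-dependence event among the rows, apply a union bound, and then bound each term by \cref{cor:Halasz} applied to $A^T$. The only cosmetic difference is that the paper unions over the \emph{first} row landing in the span of its predecessors (giving $\sum_{i}\Pr[\vec r_i\in\operatorname{span}(\vec r_j:j<i)]$), whereas you union over \emph{some} row landing in the span of the other $k-1$ and invoke exchangeability; both yield the same $k\cdot O_m(\ell^{-(m-k+1)/2})$ bound.
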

\begin{proof}
    This follows from \Cref{cor:Halasz}. Indeed, write $\vec\xi^{\,(i)}\in \mb{F}^{n}$ for the $i$-th row of $\Xi$. 
    If $\rank(\Xi A) < k$, then there is an index $i\in \{1,\dots,k\}$ such that $\vec \xi^{\,(i)} A$ lies in $\on{span}(\vec \xi^{\,(j)} A: j < i)$ (which is a linear subspace with dimension at most $k-1$). Thus, applying \Cref{cor:Halasz} (after transposing, with the matrix $A^T$), we conclude
    \[
        \Pr[\rank(\Xi A) < k] 
        \leq \sum_{i=1}^k \Pr\big[\vec \xi^{\,(i)} A \in \on{span}(\vec \xi^{\,(j)} A: j < i)\big] 
        = k\cdot O_m(\ell^{-(m-k+1)/2})
        = O_m(\ell^{-(m-k+1)/2}).\qedhere
    \]
\end{proof}

\begin{lemma}\label{lem:count-2}
    Fix $k \le r\le m$ and $\mb{F} \in \{\mb{R},\mb{C}\}$. For some $\ell\le n$, consider a matrix $A \in \mb{F}^{n \times m}$ which has at least $\ell$ disjoint nonsingular $m\times m$  submatrices, and let $\Xi\in \mb{F}^{k \times n}$ be a matrix with independent shifted Rademacher (or lazy Rademacher) entries. Then for any $V\ge 1$,
    \begin{align*}
        &\Pr[\text{the columns of }\Xi A\text{ lie in a symmetric GAP of rank at most }r\text{ and volume at most }V]\\
        &\qquad\qquad \le O_{m} ( V^m(1+\log  V)^{r-1} \cdot \ell^{-k(m-r)/2}).
    \end{align*}
\end{lemma}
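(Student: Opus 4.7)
The plan is to perform a union bound over integer ``coefficient matrices'' $M$, using independence of the rows of $\Xi A$ together with \cref{cor:Halasz}.

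\textbf{Reformulation and per-$M$ probability.}
By padding with trivial directions (setting some $N_i=0$), we may assume the GAP has rank exactly $r$. Then the columns of $\Xi A$ lying in such a GAP means there exist $v_1,\dots,v_r\in\mb{F}^k$ and $N_1,\dots,N_r\ge 0$ with $\prod_i(2N_i+1)\le V$, together with an integer matrix $M\in\mb{Z}^{r\times m}$ satisfying $|M_{ij}|\le N_i$, such that $\Xi A=[v_1\mid\cdots\mid v_r]\,M$. In particular, every row of $\Xi A$ lies in the row span of $M$, which is a subspace of $\mb{F}^m$ of dimension $r'=\rank(M)\le r$. Since $\Xi$ has independent entries, its rows are independent, and thus so are the rows of $\Xi A$; each row has the form $\vec{\xi}^T A$ for $\vec{\xi}\in\mb{F}^n$ with i.i.d.\ shifted (or lazy) Rademacher entries, or equivalently is the transpose of $A^T\vec{\xi}\in\mb{F}^m$. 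Since $A^T\in \mb F^{m\times n}$ inherits $\ell$ disjoint nonsingular $m\times m$ submatrices from $A$, \cref{cor:Halasz} gives that each row of $\Xi A$ lies in the (at most) $r'$-dimensional row span of $M$ with probability at most $O_m(\ell^{-(m-r')/2})\le O_m(\ell^{-(m-r)/2})$. Taking a $k$-th power by independence (and using $k\le m$), the probability that every row of $\Xi A$ lies in the row span of $M$ is at most $O_m(\ell^{-k(m-r)/2})$, independently of the choice of $M$.

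\textbf{Counting valid $M$.}
Each valid $M$ is uniquely determined by the tuple $(N_i^*:=\max_j|M_{ij}|)_i$, which must satisfy $\prod_i(2N_i^*+1)\le V$. The number of length-$m$ integer vectors with max absolute value equal to $N_i^*$ is at most $O_m((N_i^*+1)^{m-1})$, so setting $u_i=2N_i^*+1\ge 1$, the number of valid $M$ is at most
\[
\sum_{\substack{u_1,\dots,u_r\ge 1\\ \prod u_i\le V}}\prod_i O_m(u_i^{m-1})\;\le\;O_m(V^{m-1})\cdot\Big|\big\{(u_1,\dots,u_r)\in\mb{Z}_{\ge 1}^r:\prod u_i\le V\big\}\Big|,
\]
where we used $\prod u_i^{m-1}=(\prod u_i)^{m-1}\le V^{m-1}$. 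The standard divisor-sum estimate $\sum_{N\le V}d_r(N)=O_r(V(\log V)^{r-1})$ bounds the cardinality on the right by $O_r(V(1+\log V)^{r-1})$, giving a total of $O_m(V^m(1+\log V)^{r-1})$ valid matrices $M$. Multiplying by the per-$M$ probability bound then yields the stated estimate.

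The main obstacle is arranging the counting so that the constraint $\prod u_i\le V$ yields a clean $V^m$ factor rather than $V^{m+1}$: the trick is to extract $(\prod u_i)^{m-1}\le V^{m-1}$ pointwise inside the sum, so that only a single additional factor of $V$ (times a polylogarithmic term from the divisor sum) is picked up in the end.
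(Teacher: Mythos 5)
Your proof is correct, and it follows the paper's union-bound-over-integer-matrices structure, but diverges in two places. For the per-matrix probability you apply \cref{cor:Halasz} row by row (each row $\vec\xi^{\,T}A$ of $\Xi A$ is independent and must land in the row space of $M$, an at-most-$r$-dimensional subspace of $\mb F^m$), then take a $k$-th power by independence; the paper instead concatenates the rows into a single vector $\vec\xi^{\,*}A^*\in\mb F^{km}$ with a block-diagonal matrix $A^*$ and a $kr$-dimensional target subspace, applying \cref{cor:Halasz} once. These are essentially the same calculation, though your row-by-row version avoids constructing the auxiliary block matrix and is somewhat more transparent. For the count of valid integer matrices, your argument is genuinely different: you extract the pointwise bound $\prod_i u_i^{m-1}=(\prod_i u_i)^{m-1}\le V^{m-1}$ from the sum and invoke the classical divisor-sum estimate $\sum_{N\le V}d_r(N)=O_r(V(1+\log V)^{r-1})$, whereas the paper instead performs a dyadic decomposition of the rows' amplitudes ($N_i\in(2^{d_i-1},2^{d_i}]$) and sums a geometric series over $d_1+\dots+d_r\le d$. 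Both give the same $O_m(V^m(1+\log V)^{r-1})$ count; the paper's approach is self-contained and slightly more hands-on, while yours is a little quicker if one is willing to cite the divisor-sum estimate. One minor point of exposition: the phrase ``each valid $M$ is uniquely determined by the tuple $(N_i^*)$'' is not what you mean — you mean that each $M$ determines such a tuple, and you then count the $M$'s mapping to each tuple — but the subsequent computation does exactly the right thing, so this is only a wording issue.
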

\begin{proof}
Let $\mathcal Z(N_1,\dots,N_r)$ be the set of integer matrices $Z\in \mb Z^{r\times m}$ such that, for each $i\in \{1,\dots,r\}$, the absolute value of the entries in the $i$th row is at most $N_i$. Then, let $\mc Z(V)$ be the union of all the sets $\mathcal Z(N_1,\dots,N_r)$, among $N_1,\dots,N_r$ satisfying $(2N_i+1)(2N_2+1)\dots(2N_r+1) \le V$.

By definition, if the column vectors of $\Xi A$ lie in a single 
symmetric GAP of rank at most $r$ and volume at most $V$, then $\Xi A = U Z$ for some $U \in \mb{F}^{k \times r}$ and $Z \in \mc{Z}(V)$. We will proceed by studying the probability that $\Xi A = U Z$ for some particular $Z$, and summing over all $Z\in \mc Z(V)$.
    \begin{claim}
        For every fixed $Z\in \mc Z(V)$, we have $\Pr[\Xi A=UZ\text{ for some }U\in \mb F^{k\times r}]\le \ell^{-k(m-r)/2}$.
    \end{claim}
    \begin{claimproof}
        This is a consequence of \cref{cor:Halasz}, noting that we can interpret $\Xi A$ as a (matrix-valued) linear function of the $kn$ entries of $\Xi$, and we can interpret $\{UZ:U\in \mb F^{k\times r}\}$ as a $kr$-dimensional linear subspace of the vector space of $k\times m$ matrices.
        
        In more detail: let $\vec \xi^{\,*}\in \mb F^{kn}$ be the vector obtained by concatenating the $k$ rows of $\Xi$ (so, this is a row vector of $kn$ independent shifted Rademacher random variables or $kn$ independent lazy Rademacher random variables), and let $A^*\in \mb F^{nk\times mk}$ be the block-diagonal matrix obtained by concatenating $k$ copies of $A$ along the diagonal. 
        Then, $\vec \xi^{\,*} A^*$ is the concatenation of the $k$ rows of $\Xi A$. 
        If we let $\mc W\in \mb F^{km}$ be the set of all vectors obtained by concatenating the $k$ rows of a matrix $UZ$, for some $U\in \mb F^{k\times r}$, then the event that $\Xi A=UZ$ for some $U\in \mb F^{k\times r}$ precisely corresponds to the event that $\vec \xi^{\,*} A^*\in \mc W$. 
        Note that $A^*$ has at least $\ell$ disjoint nonsingular $km\times km$ submatrices (each obtained by concatenating the $k$ copies of one of the $\ell$ disjoint nonsingular $m\times m$ matrices in $A$),
        and $\mc W$ is a $kr$-dimensional linear subspace of $\mb F^{km}$, so the desired bound follows from \cref{cor:Halasz} (after transposing, with the matrix $(A^*)^T$).
    \end{claimproof}
    \begin{claim}
        $|\mc{Z}(V)| = O_{m} (V^m (1+\log V))^{r-1})$.
    \end{claim}
    \begin{claimproof}
        Let $d$ be the minimal positive integer such that $V \le 2^d$, so $d=\log_2 V+O(1)$.
        
        Consider $N_1,\dots,N_r\in \mb N$ such that $(2N_i+1)(2N_2+1)\dots(2N_r+1) \le V$. For each $i\in \{1,\dots,r\}$, let $d_i\in \mb N$ be the minimal integer such that $N_i\le 2^{d_i}$, so $2N_i+1\ge 2^{d_i}$. 
        We have $2^{d_1+\dots+d_r} \le V \le 2^d$, i.e., $d_1+\dots+d_r \le d$. 
        It follows that
        \[
            \mc Z(V) \subseteq \bigcup_{0\le d_1+\dots+d_r \le d} \mc Z(2^{d_1},\dots,2^{d_r}),
        \] and hence 
        \[
            |\mc Z(V)|\le \sum_{0\le d_1+\dots+d_r \le d} \!\!\!\! |\mc Z(2^{d_1},\dots,2^{d_r})|.
        \]
        Then, note that 
        $|\mc Z(2^{d_1},\dots,2^{d_r})|=\prod_{i=1}^r(2^{d_i+1}+1)^m=O_m(2^{m(d_1+\dots+d_r)})$.
        We deduce that $|\mc Z(V)|$ is at most (recall $r\le m$)
        \[
            \sum_{\substack{d_1,\dots,d_r\in \mb N\\ d_1+\dots+d_r \le d}} \!\!\!\!\!\! O_m(2^{m(d_1+\dots+d_r)})
            \le \sum_{D=0}^{d} \binom{D+r-1}{r-1}O_m(2^{mD})
            = O_m(d^{r-1} 2^{md})
            = O_{m} (V^m (1+\log V)^{r-1}). \qedhere
        \]
    \end{claimproof}
    The desired bound immediately follows from the above two claims.
\end{proof}

\begin{remark}
    Both \cref{lem:count-1} and \cref{lem:count-2} seem far from the truth. In the setting of \cref{lem:count-1} we anticipate a bound of $\ell^{-m/2+o(1)}$ (see \cite{Kat93,Kat94} for related work in the analytic number theory literature, which can be viewed as special cases of this anticipated bound). If one could prove this, as well as an improved bound of the form $V^{m-r+o(1)}\cdot \ell^{-k(m-r)/2+o(1)}$ in \cref{lem:count-2}, then it would be possible to deduce an optimal version of \cref{thm: multi-x-single-y costello} (see \cref{rem:optimal?}), using some of the ideas we will see shortly in the proof of \cref{thm:stability} (namely, consideration of ``expected inverse volume'' quantities as in \cref{lem:technical-estimates}).
\end{remark}

Equipped with \cref{lem:count-1,lem:count-2}, we now present the proof of \cref{thm: multi-x-single-y costello}.
\begin{proof}[Proof of \cref{thm: multi-x-single-y costello}]
    Suppose \cref{G2} does not hold, i.e. $\Pr\big[ \Xi A \vec{\eta}=\vec \varphi(\Xi) \big] > n^{-k+\varepsilon}$ for some function $\varphi: \mb{F}^{k\times n}\to \mb{F}^k$ and some $\varepsilon \in (0,1]$.
    Taking $m:= \lceil2k^2/\varepsilon \rceil$, the goal is to prove that \cref{G1} holds, namely that at most a $\delta$-fraction of the $m\times m$ submatrices of $A$ are nonsingular (for any $\delta>0$, assuming that $n$ is sufficiently large in terms of $m,\delta$).
       
    Let $\mc{E}$ be the event that $\Xi$ satisfies $\Pr\big[\Xi A \vec{\eta}=\varphi(\Xi)\,\big|\,\Xi\big]>n^{-k+\varepsilon}/2$. 
    We have \[
        n^{-k+\varepsilon}
        <\Pr\big[\Xi A \vec{\eta}=\varphi(\Xi)\big] 
        \le \Pr[\mc{E}] + \Pr\big[ \Xi A \vec{\eta}=\varphi(\Xi)\big | \mc{E}^{\mr c}] 
        \le \Pr[\mc{E}] + n^{-k+\varepsilon}/2,
    \] so we must have $\Pr[\mc{E}] > n^{-k+\varepsilon}/2$ (and so in particular $\Pr[\mc{E}]> n^{-k}$).

    Now, note that $\Xi A\vec \eta$ can be viewed as a linear function of $\vec \eta$ (taking values in $\mb F^k$), where the coefficients of this linear function are the columns of $\Xi A$. So, if $\Xi$ satisfies $\mc{E}$, then \cref{theorem: optimal inverse theorem} tells us that most of the columns of $\Xi A$ are contained in a symmetric GAP of small rank and volume. 
    More precisely, for $\Xi$ satisfying $\mc{E}$, applying \Cref{theorem: optimal inverse theorem} with $n' = (\delta/(4m))n$, we conclude there exists a symmetric GAP $\varphi$ in $\mb F^k$ with rank $r = O_k(1)$ and volume $V= O_{k,\delta,\varepsilon}(n^{-r/2 + k-\varepsilon}) \leq n^{-r/2 + k-7\varepsilon/8}$ such that all but at most $n'$ of the columns of $\Xi A$ lie in $\varphi$. Observe that $r< 2k$, since otherwise $V = o(1)$, which is impossible according to the definition of $V$. 
    
    We define a \emph{suitable GAP} to be a symmetric GAP in $\mathbb F^k$ of rank $r< 2k$ and volume $V\leq n^{-r/2 + k-7\varepsilon/8}$.
    For an arbitrary outcome of $\Xi$, define $I = I(\Xi)\subseteq \{1,\dots, n\}$ to be a maximum-size set of (indices of) columns of $\Xi A$ that are contained in a suitable GAP. 
    Whenever $\Xi$ satisfies $\mc{E}$, we have $|I|\geq n - n' \geq (1-\delta/(4m))n$.
    By \cref{lemma: tuple-counting} (with $m$ as the ``$r$'' in \cref{lemma: tuple-counting}), it follows that for all but at most a $\delta/2$-fraction of $m$-sets $J\subseteq \{1,\dots,n\}$, we have $\Pr[J\subseteq I]\ge \Pr[\mc{E}]/2\ge n^{-k+\varepsilon}/4$. Say such subsets $J$ are \emph{good}.

    If $J$ is good, then the corresponding columns of $\Xi A$ are quite likely to lie in a suitable GAP. We will use \cref{lem:count-1,lem:count-2} to show that this implies that $A$ has many disjoint nonsingular $m\times m$ submatrices in the columns indexed by $J$; summing over good $J$ will then yield the desired result.

    \begin{claim}
        If $J$ is good, then at most a $\delta/2$-fraction of the $m\times m$ submatrices of $A$ in the columns indexed by $J$ are nonsingular.
    \end{claim}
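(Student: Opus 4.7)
My plan is to argue by contradiction: I will assume that strictly more than a $\delta/2$-fraction of the $m\times m$ submatrices of the $n\times m$ matrix $A[[n],J]$ (the submatrix consisting of the columns of $A$ indexed by $J$) are nonsingular, and deduce that $\Pr[J\subseteq I]<n^{-k+\varepsilon}/4$, contradicting the goodness of $J$. As a first step, applying \cref{fact: disjoint and non-disjoint nonsingular submatrices} to the transpose $A[[n],J]^T$ upgrades this abundance of nonsingular submatrices into $\ell=\Omega_{m,\delta}(n)$ disjoint nonsingular $m\times m$ submatrices inside $A[[n],J]$; this is the quantitative input that drives \cref{lem:count-1,lem:count-2}.

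Since $J\subseteq I$ forces the $m$ columns of $\Xi\cdot A[[n],J]$ to lie in a single suitable GAP, I will union-bound over the $O_k(1)$ allowed ranks $r\in\{0,\dots,2k-1\}$, splitting into two regimes. When $r<k$, any rank-$r$ GAP in $\mb F^k$ is contained in an at-most-$(k-1)$-dimensional linear subspace, so $\rank(\Xi\cdot A[[n],J])<k$; \cref{lem:count-1} then gives an upper bound $O(\ell^{-(m-k+1)/2})$, which is comfortably $o(n^{-k+\varepsilon})$ once $m\ge 2k^2/\varepsilon$. When $k\le r\le 2k-1$ (noting that $r\le m$ is automatic since $m\ge 2k\ge r+1$), I apply \cref{lem:count-2} with $V=V_{\max}(r)=O_{k,\delta,\varepsilon}(n^{k-\varepsilon-r/2})$; after a short calculation the exponent of $n$ in the resulting bound simplifies to
\[
m(k-\varepsilon-r/2)-k(m-r)/2 \;=\; \tfrac{mk}{2}-m\varepsilon-\tfrac{r(m-k)}{2} \;\le\; \tfrac{k^2}{2}-m\varepsilon \;\le\; -\tfrac{3k^2}{2},
\]
using first $r\ge k$ (so that $r(m-k)/2\ge k(m-k)/2$) and then $m\varepsilon\ge 2k^2$, and this easily absorbs the $(\log n)^{r-1}$ factor from \cref{lem:count-2}.

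Summing these $O_k(1)$ contributions, I obtain $\Pr[J\subseteq I]=O(n^{-c})$ for some constant $c>k$ depending only on $k$ and $\varepsilon$, and for $n$ sufficiently large this is strictly smaller than $n^{-k+\varepsilon}/4$, yielding the desired contradiction. I expect the main subtlety to be keeping the case split clean rather than any deep difficulty: \cref{lem:count-1} and \cref{lem:count-2} are genuinely complementary here — the former handles the low-rank regime $r<k$ in which \cref{lem:count-2}'s hypothesis $k\le r$ fails, while the latter covers $r\ge k$ — and one must thread the ambient volume cap $V_{\max}(r)$ supplied by \cref{theorem: optimal inverse theorem} together with the gain $\ell=\Omega(n)$ from the first step through the two estimates consistently, while also confirming that the choice $m=\lceil 2k^2/\varepsilon\rceil$ is large enough to make every resulting exponent strictly smaller than $-k+\varepsilon$.
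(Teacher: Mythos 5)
Your proof is correct and follows essentially the same route as the paper: you argue by contradiction, use \cref{fact: disjoint and non-disjoint nonsingular submatrices} to upgrade the abundance of nonsingular $m\times m$ submatrices to $\ell=\Omega_{m,\delta}(n)$ disjoint ones, split on the GAP rank into $r<k$ (handled by \cref{lem:count-1}) versus $k\le r<2k$ (handled by \cref{lem:count-2}), and carry out the same exponent bookkeeping with $m=\lceil 2k^2/\varepsilon\rceil$. The only cosmetic difference is that you keep the volume cap as $O_{k,\delta,\varepsilon}(n^{k-\varepsilon-r/2})$ rather than absorbing the implied constant into a weaker power $n^{k-7\varepsilon/8-r/2}$ as the paper does when defining ``suitable GAP''; this changes nothing in the calculation.
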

    \begin{claimproof}
        Let $\mc F_r$ be the event that the columns of $\Xi A$ indexed by $J$ lie in a suitable GAP with rank $r$, so the assumption that $J$ is good implies that $\Pr[\mc F_0\cup \dots\cup \mc F_{2k-1}]\ge n^{-k+\varepsilon}/4$.
        
        Suppose for the purpose of contradiction that the columns indexed by $J$ contain at least $(\delta/2)\binom n m$ nonsingular $m\times m$ submatrices. By \cref{fact: disjoint and non-disjoint nonsingular submatrices}, then these columns in fact contain $\ell=\Omega_{m,\delta}(n)$ \emph{disjoint} $m\times m$ submatrices.
        
        Recall that $m=\lceil 2k^2/\varepsilon\rceil$. First, \cref{lem:count-1} implies that
        \[\Pr[\mc F_0\cup \dots\cup \mc F_{k-1}]\le O_m(\ell^{-(m-k+1)/2})=o_{m,\delta}(n^{-k+\varepsilon}),\]
        with plenty of room to spare. Second, if $r\ge k$, \cref{lem:count-2} implies that, for $V= n^{k-7\varepsilon/8 - r/2}$,
        \begin{align*}
            \Pr[\mc F_r]\le O_{m}(V^m(\log  V)^{r-1} \cdot \ell^{-k(m-r)/2})
            &=O_{m,\delta}(n^{m(k-7\varepsilon/8-r/2)+o(1)}\cdot n^{-k(m-r)/2})\\
            &=O_{m,\delta}(n^{km/2-7\varepsilon m/8-r(m/2-k/2)+o(1)})\\
            &\le O_{m,\delta}(n^{km/2-7\varepsilon m/8-k(m/2-k/2)+o(1)})\\
            &\le O_{m,\delta}(n^{-7k^2/4+k^2/2+o(1)}) 
            =o_{m,\delta}(n^{-k+\varepsilon}).
        \end{align*}
        Summing the above bounds yields $\Pr[\mc F_0\cup \dots\cup \mc F_{{2k-1}}]= o_{m,\delta}(n^{-k+\varepsilon})$, which is the desired contradiction.
    \end{claimproof}
Now, using the above claim, the number of nonsingular $m\times m$ submatrices in good sets of columns is at most $(\delta/2)\binom n m^2$. Recalling that all but a $(\delta/2)$-fraction of $m$-sets of columns are good, the number of nonsingular $m\times m$ submatrices which do \emph{not} lie in a good set of columns is also at most $(\delta/2)\binom n m^2$. That is to say, at most a $\delta$-fraction of $m\times m$ submatrices of $A$ are nonsingular, which proves \cref{G1}.
\end{proof}

\section{A power-saving improvement for robustly irreducible quadratic polynomials}\label{sec:power-saving}

\renewcommand{\vx}{\vec{\alpha}}
\renewcommand{\vy}{\vec{\beta}}
\renewcommand{\vz}{\vec{\gamma}}
\newcommand{\vxi}{\vec{\xi}}

In this section we prove \cref{theorem: three bilinears}, and use it (together with \cref{lem:custom-decoupling-LO,lemma: close to symmetric low rank}) to deduce \Cref{thm:stability}. First, we show how to deduce \cref{thm:stability} from \cref{theorem: three bilinears}.

\begin{proof}[Proof of \Cref{thm:stability} assuming \Cref{lem:custom-decoupling-LO} and \Cref{theorem: three bilinears}]
    In what follows, we assume $n$ is divisible\footnote{in general, we can add one or two dummy variables to reduce to the case where $n$ is divisible by 3.} by 3, and sufficiently large. Write the quadratic part of $f(\vec x)$ as $\vec x^T A\vec x$ for some symmetric matrix $A \in \mb{C}^{n\times n}$.

    Suppose \cref{B1} does not hold, i.e., for every reducible polynomial $g \in \mb{C}[x_1,\dots,x_n]$ with degree at most 2, the polynomial $f-g$ must have more than $\varepsilon n^2$ nonzero coefficients.
    Our objective is to prove that \cref{B2'} holds, i.e., $\sup_{z\in \mb C}\Pr[f(\vec \xi)=z] \le n^{-1/2-1/24+\varepsilon}$.

    Note that a quadratic form is reducible over $\mb C$ if and only if its rank is less than 3. Thus, for every symmetric matrix $B \in \mb{C}^{n\times n}$ of rank less than 3, the polynomial $g(\vec x)=\vec x^T B\vec x$ is reducible, and so $f-g$ must have more than $\varepsilon n^2$ nonzero coefficients. Since $f-g$ has at most $n+1$ non-quadratic terms, this implies that we must have $\|A-B\|_0 > \varepsilon n^2-n-1$ for every symmetric matrix $B \in \mb{C}^{n\times n}$ with rank less than 3.
    By \cref{lemma: close to symmetric low rank}, this means that, for some $\delta>0$ depending on $\varepsilon$, at least a $\delta$-fraction of the $3\times 3$ submatrices of $A$ are nonsingular.

    By \cref{lemma: tri-partition}, there is a partition of $\{1,\dots,n\}=X\cup Y\cup Z$ with $|X|=|Y|=|Z|=n/3$ such that for each of the matrices $A[X,Y],A[X,Z],A[Y,Z]$, more than a $(\delta/2)$-fraction of their $3\times 3$ submatrices are nonsingular. By \cref{lem:custom-decoupling-LO} we then have
    \[\sup_{z\in \mb C}\Pr[f(\vec \xi)=z] \le \Pr\Bigl[\vx^{\,T} A[X,Y] \vy=0, \quad\vx^{\,T} A[X,Z] \vz=\varphi(\vec{\alpha}, \vec{\beta}),\quad\vy^{\,T} A[Y,Z] \vz=\psi(\vec{\alpha},\vec{\beta})\Bigr]^{1/4},\]
    for some functions $\varphi,\psi:\mb C^X\times \mb C^Y \to \mb C$, and independent random vectors $\vx,\vy,\vz$, where the entries of $\vx,\vy$ are i.i.d.\ lazy Rademacher, and the entries of $\vz$ are i.i.d.\ Rademacher. By \cref{theorem: three bilinears} (taking the ``$\varepsilon$'' in \cref{theorem: three bilinears} to be $\min(\delta/2,\varepsilon)$, we have
    \begin{align*}
        \Pr\Bigl[\vx^{\,T} A[X,Y] \vy=0, \quad\vx^{\,T} A[X,Z] \vz=\varphi(\vec{\alpha}, \vec{\beta}),\quad\vy^{\,T} A[Y,Z] \vz=\psi(\vec{\alpha},\vec{\beta})\Bigr]
        &\le (n/3)^{-2-1/6+\varepsilon}\\
        &\le n^{-4(1/2+1/24-\varepsilon)},
    \end{align*}
    and the desired bound follows.
\end{proof}

In the rest of this section, we prove \Cref{theorem: three bilinears}. As in the proof of \cref{thm:rank}, we will use the Nguyen--Vu inverse theorem (\cref{theorem: optimal inverse theorem}), but the estimates are much more delicate. It will be convenient to use the following reformulation of \cref{theorem: optimal inverse theorem} (which we will apply with $q=3$).

\begin{definition}\label{def:volume-GAP}
    Let $G$ be a torsion-free additive group. 
    For $r\in \mb N$ and $\delta>0$, and a sequence $\vec v=(v_1,\dots,v_n)\in G^n$ of elements of $G$, let $h_r^\delta(\vec{v})$ be the minimum volume of a symmetric GAP of rank at most $r$ that contains all but at most $n^{1-\delta}$ entries of $\vec{v}$ (let $h_r^\delta(\vec v)=\infty$ if no such GAP exists).
\end{definition}
We remark that the only possible rank-0 symmetric GAPs is the singleton $\{0\}$. So, $h_0^\delta(\vec{v})=1$ if at most $n^{1-\delta}$ elements of $\vec{v}$ are nonzero; otherwise $h_0^\delta(\vec{v})=\infty$. 
\begin{theorem}\label{lemma: prob via volume}
    Fix $q\in \mb N$ and $0<\delta<1/2$ , and let $G$ be a torsion-free additive group. Let $n$ be sufficiently large (in terms of $\delta,q$) and consider some $\vec v=(v_1,\dots,v_n) \in G^n$.
    Let $\xi_1,\dots,\xi_n$ be i.i.d.\ Rademacher (or lazy Rademacher) random variables. We have
    \[
        \sup_{z \in G} \Pr\left[\xi_1v_1+\dots+\xi_nv_n = z \right] \le n^{-q/2+q\delta}+\sum_{r=0}^{q-1}n^{-r/2+q\delta}\cdot \frac1{h_r^\delta(\vec{v})}.
    \]
\end{theorem}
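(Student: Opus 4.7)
The plan is to apply \cref{theorem: optimal inverse theorem} (the Nguyen--Vu optimal inverse theorem) directly. Write $\rho := \sup_{z \in G} \Pr[\xi_1 v_1 + \cdots + \xi_n v_n = z]$. The first term $n^{-q/2 + q\delta}$ on the right-hand side serves as a ``trivial case'' buffer: if $\rho \le n^{-q/2 + q\delta}$ there is nothing to prove. Otherwise $\rho > n^{-q/2 + q\delta} > n^{-q/2}$, so we may apply \cref{theorem: optimal inverse theorem} with $C = q/2$ and $n' := \lceil n^{1-\delta} \rceil$ (noting $n' \ge \sqrt{n}$ since $\delta \le 1/2$). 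This yields a symmetric GAP of some rank $r' = O_q(1)$ and volume $V \le C_q \cdot \rho^{-1}(n')^{-r'/2}$ covering all but at most $n^{1-\delta}$ of the $v_i$. In particular this GAP is a witness for $h_{r'}^{\delta}(\vec v)$, so $h_{r'}^{\delta}(\vec v) \le V$.

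Next I would upgrade the qualitative bound $r' = O_q(1)$ to the precise bound $r' \le q-1$, which is what makes the sum on the right-hand side range only up to $q-1$. The key observation is that any symmetric GAP has volume at least $1$; combining $V \ge 1$ with the Nguyen--Vu volume bound gives $\rho \le C_q (n^{1-\delta})^{-r'/2}$, and if $r' \ge q$ this would contradict $\rho > n^{-q/2 + q\delta}$ once $n$ is large enough (in terms of $q,\delta$). Hence $r' \in \{0,1,\dots,q-1\}$.

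Finally, rearranging $h_{r'}^{\delta}(\vec v) \le V \le C_q \rho^{-1}(n^{1-\delta})^{-r'/2}$ gives
\[
\rho \;\le\; \frac{C_q \cdot n^{-r'/2 + r'\delta/2}}{h_{r'}^{\delta}(\vec v)}.
\]
Since $r' \le q-1 < q$, the exponent $r'\delta/2$ is strictly less than $q\delta$, so the slack factor $n^{q\delta - r'\delta/2}$ absorbs the constant $C_q$ once $n$ is sufficiently large (in terms of $q,\delta$). This yields $\rho \le n^{-r'/2 + q\delta}/h_{r'}^{\delta}(\vec v)$, which is one of the summands on the right-hand side, completing the Rademacher case.

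The lazy Rademacher case is immediate: writing each $\xi_i = \xi_i^{(1)} - \xi_i^{(2)}$ as the difference of two independent Rademachers and conditioning on the vector $(\xi_i^{(2)})_i$, the point probability of $\sum_i \xi_i v_i$ is bounded by the sup of point probabilities of the shifted Rademacher sum $\sum_i \xi_i^{(1)} v_i$, which by translation-invariance equals the Rademacher case. I do not foresee any substantive obstacle; the statement is essentially a bookkeeping reformulation of \cref{theorem: optimal inverse theorem} designed to make the rank-by-rank structure explicit, and the only mildly delicate point is the $V \ge 1$ trick used to cap the rank at $q-1$ rather than accepting the much weaker generic bound $O_q(1)$.
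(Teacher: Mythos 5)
Your proof is correct and follows essentially the same route as the paper: apply the Nguyen--Vu inverse theorem with $n'\approx n^{1-\delta}$, then use the observation that GAP volumes are at least $1$ to cap the rank strictly below $q$, and finally rearrange the volume bound into one of the summands. The only difference is that you argue directly (peeling off the trivial case $\rho\le n^{-q/2+q\delta}$) rather than by contradiction, which is a cosmetic reformulation; the minor bookkeeping around $n'=\lceil n^{1-\delta}\rceil$ versus $n^{1-\delta}$ in the definition of $h_r^\delta$ is harmless (take $\lfloor n^{1-\delta}\rfloor$ instead) and is glossed over in the paper as well.
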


\begin{proof}
    Let $\rho=\sup_{z \in G} \Pr[\xi_1v_1+\dots+\xi_nv_n = z]$ and suppose for the purpose of contradiction that the desired bound does not hold. Then, we have $\rho\ge n^{-q/2+q\delta}$ and by\footnote{ \Cref{theorem: optimal inverse theorem} is only stated for Rademacher random variables, but it is easy to deduce the same statement in the lazy Rademacher case. 
    Indeed, a sequence of independent lazy Rademacher random variables can be expressed as a sum $\vec\xi^{(1)}+\vec\xi^{(2)}$ of two independent sequences of independent Rademacher random variables. 
    We can simply apply \Cref{theorem: optimal inverse theorem} after conditioning on an outcome of $\vec\xi^{(1)}$.} \Cref{theorem: optimal inverse theorem} there is a symmetric GAP that contains all but $n^{1-\delta}$ of $v_1,\dots,v_n$, such that the rank $r$ and the volume $V$ of this GAP satisfy
    \begin{equation}
        V=O_q(\rho^{-1}n^{-(1-\delta)(r/2)}).\label{eq:V-inverse-application}
    \end{equation}
    We are assuming that $\rho> n^{-r/2+q\delta}/h_r^\delta(\vec{v})$ for all $r<q$, 
    so \cref{eq:V-inverse-application} implies that the volume of our GAP satisfies $V=O_q(n^{-q\delta+\delta r/2}h_r^\delta(\vec{v}))<h_r^\delta(\vec v)$ for $r<q$. 
    By the definition of $h_r^\delta(\vec v)$, it follows that we cannot have $r<q$. 
    But if $r\ge q$, then \cref{eq:V-inverse-application} also implies that the volume of our GAP is $V=O_q(n^{-q\delta+q/2-(1-\delta)r/2})=O_q(n^{-q\delta+q/2-(1-\delta)q/2})=O_q(n^{-q\delta/2})<1$ (recalling that $\rho\ge n^{-q/2+q\delta}$), which is impossible.
\end{proof}

\subsection{Reduction to technical estimates}

Recall that we are working towards a proof of \cref{theorem: three bilinears}; namely, we are attempting to upper bound the probability of the event that $\vx^{\,T} A_1\vy=0$, and $\vx^{\,T} A_2 \vz=\varphi(\vec{\alpha}, \vec{\beta})$, and $\vy^{\,T} A_3 \vz=\psi(\vec{\alpha},\vec{\beta})$. We will want to break this event into sub-events, depending on the structure of the vectors $\vx^{\,T} A_1$, $\vx^{\,T} A_2$ and $\vy^{\,T} A_3$; in each case we will reveal our random vectors in slightly different orders, and estimate probabilities in slightly different ways.

For example, it is quite unlikely that almost all the entries of $\vx^{\,T} A_1$ are zero. But if $\vx^{\,T} A_1$ has plenty of nonzero entries, then we can get quite a strong bound on the probability that $\vx^{\,T} A_1\vy=0$ using the randomness of $\vy$. Similarly, it is quite unlikely that  $\vx^{\,T} A_2$ and $\vy^{\,T} A_3$ are nearly collinear. If they are not nearly collinear, then we can get quite a strong bound on the joint probability that $\vx^{\,T} A_2 \vz=\varphi(\vec{\alpha}, \vec{\beta})$, and $\vy^{\,T} A_3 \vz=\psi(\vec{\alpha},\vec{\beta})$, using the randomness of $\vz$.

In this subsection, we collect various technical probabilistic estimates of this kind (some of whose proofs will be deferred until the next subsection), and show how to piece together these estimates to prove \cref{theorem: three bilinears}. To state all these estimates, it will also be convenient to introduce some notation for the property that a matrix robustly has rank at least 3.

\begin{definition}\label{definition: robust rank 3}
We say a matrix $A\in \mb{C}^{n\times n}$ is said to be {\em $\varepsilon$-robust} if more than an $\varepsilon$-fraction of
    its $3\times 3$ submatrices are nonsingular.
\end{definition}

First, the following lemma, which is a simple corollary of Hal\'asz' inequality (\cref{thm:Halasz}), will be used to bound the probability that almost all of the entries of $\vx^{\,T} A_1$ or $\vx^{\,T} A_2$ are zero.

\begin{lemma}\label{lemma: many nonzero entries}
    Fix $\delta>0$ and let $n\in \mb N$ be sufficiently large (in terms of $\delta$). Consider a $\delta$-robust matrix $A\in \mb{C}^{n \times n}$.
    Let $\vx \in \{-1,0,1\}^n$ be a vector of i.i.d.\ lazy Rademacher random variables.
    Then \[\Pr\big[\Vert \vx^{\,T} A\Vert_0 \le n^{1-\delta}\big] < n^{-3/2+\delta}.\]
\end{lemma}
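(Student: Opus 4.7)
The plan is to argue by contradiction, combining the tuple-counting \cref{lemma: tuple-counting} with Hal\'asz' inequality (\cref{thm:Halasz}) and \cref{fact: disjoint and non-disjoint nonsingular submatrices}. Suppose that $\Pr[\|\vec{\alpha}^{\,T} A\|_0 \le n^{1-\delta}] \ge p := n^{-3/2+\delta}$, and let $Z = Z(\vec{\alpha}) = \{j \in \{1,\dots,n\} : (\vec{\alpha}^{\,T} A)_j = 0\}$, so that $|Z| \ge (1-n^{-\delta})n$ with probability at least $p$. Applying \cref{lemma: tuple-counting} with $r=3$ and with $n^{-\delta}$ in place of ``$\delta$'', I deduce that for all but at most a $6 n^{-\delta}$-fraction of $3$-element subsets $C \subseteq \{1,\dots,n\}$, we have $\Pr[C \subseteq Z] \ge p/2$; call such $C$ \emph{good}.

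For each good $C$, the event $C\subseteq Z$ is precisely the event $A[\{1,\dots,n\},C]^T \vec{\alpha} = \vec 0 \in \mb{C}^3$. Applying \cref{thm:Halasz} to the $3\times n$ matrix $A[\{1,\dots,n\},C]^T$ (noting that the theorem applies to lazy Rademacher random variables), if this matrix contains $\ell_C$ disjoint nonsingular $3\times 3$ submatrices, then $p/2 \le O(\ell_C^{-3/2})$, which forces $\ell_C = O(p^{-2/3}) = O(n^{1-2\delta/3})$. By \cref{fact: disjoint and non-disjoint nonsingular submatrices}, then at most an $O(\ell_C/n) = O(n^{-2\delta/3})$-fraction of the $3\times 3$ submatrices of $A[\{1,\dots,n\}, C]$ are nonsingular.

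Finally, I will bound the total number of nonsingular $3\times 3$ submatrices of $A$ by grouping them by their set of column-indices $C$. The good $C$ (of which there are at most $\binom{n}{3}$) each contribute at most $O(n^{-2\delta/3}) \binom{n}{3}$ nonsingular submatrices, while the bad $C$ (at most $6n^{-\delta}\binom{n}{3}$ in number) contribute at most $\binom{n}{3}$ each. Summing, the total number of nonsingular $3\times 3$ submatrices of $A$ is at most $\bigl(O(n^{-2\delta/3}) + 6 n^{-\delta}\bigr)\binom{n}{3}^2 = o_\delta(1)\,\binom{n}{3}^2$, which is strictly less than $\delta \binom{n}{3}^2$ for $n$ sufficiently large in terms of $\delta$; this contradicts the $\delta$-robustness of $A$. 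The proof is quite direct, and there is no real obstacle; one just needs to verify that the two error exponents $n^{-\delta}$ (from tuple counting) and $n^{-2\delta/3}$ (from Hal\'asz) are both $o_\delta(1)$, which is immediate for large $n$.
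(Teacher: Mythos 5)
Your argument is correct and is essentially identical to the paper's: both proceed by contradiction, apply \cref{lemma: tuple-counting} with $r=3$ to locate a $(1-6n^{-\delta})$-fraction of ``good'' column triples $C$ on which $\Pr[\vec\alpha^T A[C]=\vec 0]$ is large, invoke Hal\'asz (\cref{thm:Halasz}) on the transposed $3\times n$ submatrix to bound the number of disjoint nonsingular $3\times3$ submatrices, convert to a fraction via \cref{fact: disjoint and non-disjoint nonsingular submatrices}, and sum over good and bad triples to contradict $\delta$-robustness. There is no meaningful difference in approach or in the quantitative bookkeeping.
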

In the proof of \cref{lemma: many nonzero entries}, and some other lemmas later in this section, it is convenient to interpret a $m\times n$ matrix $A\in \mb F^{m\times n}$ as a sequence of $n$ column vectors in $\mb F^m$, so in particular, for a subset $J\subseteq \{1,\dots,n\}$ we write $A[J]$ to denote the $m\times |J|$ submatrix of $A$ consisting of just the columns indexed by $J$.
\begin{proof}[Proof of \cref{lemma: many nonzero entries}]
    Suppose for the purpose of contradiction that $\Pr[\Vert \vx^{\,T} A\Vert_0 \le n^{1-\delta}] \ge n^{-3/2+\delta}$.
    By \cref{lemma: tuple-counting}, for all but a $6n^{-\delta}$-fraction of the 3-element subsets $J$ of $\{1,\dots,n\}$, we have $\Pr[\vx^{\,T} A[J] = \vec 0\,] \ge \frac{1}{2}n^{-3/2+\delta}$. Say such a $J$ is \emph{good}.
    
    By \cref{thm:Halasz} (after transposing), if $J$ is good, then $A[J]$ contains at most $O(n^{1-2\delta/3})$ disjoint nonsingular $3\times 3$ submatrices. By \cref{fact: disjoint and non-disjoint nonsingular submatrices}, it follows that at most a $O(n^{-2\delta/3})$-fraction of the $3\times 3$ submatrices of $A[J]$ are nonsingular.

    Summing over both good and non-good triples of columns, the total number of nonsingular $3\times 3$ submatrices in $A$ is at most $6n^{-\delta} \binom n3^2+O(n^{-2\delta/3})\cdot \binom n3^2<\delta\binom n 3^2$, contradicting $\delta$-robustness of $A$.
\end{proof}
Next, given an outcome of $\vx^{\,T} A_2$ with many nonzero entries, the following lemma will be used to bound the probability that $\vx^{\,T} A_2$ and $\vy^{\,T} A_3$ are nearly collinear, while $\vx^{\,T} A_1\vy=0$ (when applying the lemma, we will take $\vec{u}$ and $\vec{v}$ to be $\vx^{\,T} A_1$ and $\vx^{\,T} A_2$, respectively). 
It is proved in a similar way to \cref{lemma: many nonzero entries}, using \cref{lemma: prob via volume} instead of Hal\'asz' inequality. (Recall the notation $h_r^\delta$ from \cref{def:volume-GAP}).
\begin{lemma}\label{lemma: dependent-event}
    Fix $\varepsilon,\delta>0$ and let $n\in \mb N$ be sufficiently large (in terms of $\varepsilon,\delta$). 
    Consider (row) vectors $\vec u,\vec v$ with $\|\vec v\|_0> n^{1-\delta}$, and consider an $\varepsilon$-robust matrix $A\in \mb{C}^{n \times n}$.
    Let $\vy\in \{-1,0,1\}^n$ be a (column) vector of i.i.d.\ lazy Rademacher random variables.
    Then 
    \[\Pr\Bigl[\vec u\cdot \vy=0\text{, and }\|a\vec v+b\vy^{\,T} A\|_0\le n^{1-\delta}\text{ for some }a,b\in \mb C\text{, not both zero}\Bigr] \le 2n^{-1+3\delta}\cdot \frac{1}{h_2^\delta(\vec{u})}+2n^{-3/2+3\delta}.\]
\end{lemma}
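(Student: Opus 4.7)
Set $\mc E_1 = \{\vec u \cdot \vy = 0\}$ and $\mc E_2 = \{\|a\vec v + b\vy^{\,T}A\|_0 \le n^{1-\delta}\text{ for some }(a,b)\ne(0,0)\}$, so the goal is to bound $\Pr[\mc E_1\cap \mc E_2]$. Since $\|\vec v\|_0>n^{1-\delta}$, no witness can have $b=0$; rescaling, we may assume $b=1$, so $\mc E_2 = \{\exists a\in \mb C:\ \|a\vec v+\vy^{\,T}A\|_0\le n^{1-\delta}\}$. The plan is to bound $\Pr[\mc E_2]$ (and hence $\Pr[\mc E_1\cap \mc E_2]$) by $O(n^{-3/2+\delta})$, which for $n$ large is at most $2n^{-3/2+3\delta}$ and therefore no more than the stated bound (the $h_2^\delta(\vec u)$-term is nonnegative, so it can be dropped). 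We split into two cases depending on the size of $\mathrm{supp}(\vec v)$.

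Case A ($|\mathrm{supp}(\vec v)|\ge 2n^{1-\delta}$). Split $\mc E_2 = \mc E_2^0\cup\mc E_2^+$ according to whether $a=0$ or $a\ne 0$. \cref{lemma: many nonzero entries} applied directly to $A$ gives $\Pr[\mc E_2^0]=\Pr[\|\vy^{\,T}A\|_0\le n^{1-\delta}]<n^{-3/2+\delta}$. For $\mc E_2^+$, observe that given any witness with $a\ne 0$, if $i^*\in\mathrm{supp}(\vec v)$ lies in the zero set of $a\vec v+\vy^{\,T}A$, then $a$ is forced to equal $-(\vy^{\,T}A)_{i^*}/v_{i^*}$, and hence $\|a\vec v+\vy^{\,T}A\|_0=\|\vy^{\,T}A^{(i^*)}\|_0$, where $A^{(i^*)}_{k,j}:=A_{k,j}-(v_j/v_{i^*})A_{k,i^*}$ has zero $i^*$-th column. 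A column-operation computation yields $\rank A^{(i^*)}[I,J]=\rank A[I,J\cup\{i^*\}]$ whenever $i^*\notin J$; in particular, $A[I,J]$ nonsingular implies $A^{(i^*)}[I,J]$ nonsingular, so (up to a $3/n$-loss from $(I,J)$ pairs with $i^*\in J$) $A^{(i^*)}$ inherits $\Omega(\varepsilon)$-robustness from $A$. A second application of \cref{lemma: many nonzero entries} then gives $\Pr[\|\vy^{\,T}A^{(i^*)}\|_0\le n^{1-\delta}]<n^{-3/2+\delta}$ for each $i^*$. Since on $\mc E_2^+$ at least $|\mathrm{supp}(\vec v)|-n^{1-\delta}\ge |\mathrm{supp}(\vec v)|/2$ of the indices of $\mathrm{supp}(\vec v)$ lie in the zero set, averaging over a uniformly random $i^*\in \mathrm{supp}(\vec v)$ gives $\Pr[\mc E_2^+]\le 2n^{-3/2+\delta}$.

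Case B ($|\mathrm{supp}(\vec v)|<2n^{1-\delta}$). For $i\notin\mathrm{supp}(\vec v)$ the $i$-th entry of $a\vec v+\vy^{\,T}A$ equals $(\vy^{\,T}A)_i$, so $\mc E_2$ implies $\|\vy^{\,T}A[\cdot,\mathrm{supp}(\vec v)^{\mr c}]\|_0\le n^{1-\delta}$. The column submatrix $B:=A[\cdot,\mathrm{supp}(\vec v)^{\mr c}]$ has at least $n-2n^{1-\delta}$ columns, and removing only $O(n^{1-\delta})$ columns from $A$ affects only a vanishing fraction of $3\times 3$ submatrices, so $B$ inherits $\Omega(\varepsilon)$-robustness from $A$. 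The proof of \cref{lemma: many nonzero entries} adapts verbatim to the non-square matrix $B$ with threshold $n^{1-\delta}$ (applying \cref{lemma: tuple-counting} with parameter $O(n^{-\delta})$ on the $|\mathrm{supp}(\vec v)^{\mr c}|$-element column index set), giving $\Pr[\mc E_2]<n^{-3/2+\delta}$ here as well. Combining, $\Pr[\mc E_1\cap \mc E_2]\le \Pr[\mc E_2]\le 3n^{-3/2+\delta}\le 2n^{-3/2+3\delta}$ for $n$ sufficiently large, matching the stated bound. The main technical points are the column-operation rank identity for $A^{(i^*)}$ and the (straightforward) non-square adaptation of \cref{lemma: many nonzero entries}.
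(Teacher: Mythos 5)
The core issue is that dropping the $h_2^\delta(\vec u)$-term and trying to bound $\Pr[\mc E_2]$ alone cannot work: the bound $\Pr[\mc E_2]\le 2n^{-3/2+3\delta}$ is simply false. For a counterexample, take $\vec v=\mathbf{1}$ and let $A=\vec w\,\vec v^{\,T}+\mathbf{1}\,\vec c_1^{\,T}+(1,2,\dots,n)^T\vec c_2^{\,T}$ with $\vec w,\vec c_1,\vec c_2\in\mb{C}^n$ generic. Then $A=UV^T$ with $U=[\vec w,\mathbf{1},(1,\dots,n)^T]$ and $V=[\vec v,\vec c_1,\vec c_2]$, so each $3\times 3$ minor of $A$ equals $\det U[I,:]\cdot\det V[J,:]$, which is nonzero for every $I,J$; hence $A$ is $1$-robust. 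However, choosing $a=-\vy^{\,T}\vec w$ gives $a\vec v+\vy^{\,T}A=(\vy^{\,T}\mathbf{1})\,\vec c_1^{\,T}+(\vy^{\,T}(1,\dots,n)^T)\,\vec c_2^{\,T}$, which vanishes entirely whenever $\sum_i\beta_i=\sum_i i\beta_i=0$, an event of probability $\Theta(1/n)$. So $\Pr[\mc E_2]=\Omega(1/n)\gg n^{-3/2+3\delta}$, and the $h_2^\delta(\vec u)$-term in the statement really is needed.

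Concretely, the step in your Case~A that breaks is the claim that $A^{(i^*)}$ inherits $\Omega(\varepsilon)$-robustness. The ``column-operation rank identity'' $\rank A^{(i^*)}[I,J]=\rank A[I,J\cup\{i^*\}]$ is wrong: one only has $\le$, since the column operation turns the $3\times 4$ matrix $A[I,J\cup\{i^*\}]$ into one with columns $\{A^{(i^*)}[I,j]\}_{j\in J}\cup\{A[I,i^*]\}$, and deleting the last column may lose rank. Correspondingly, $A[I,J]$ nonsingular does \emph{not} imply $A^{(i^*)}[I,J]$ nonsingular. In the example above, since $\vec v=\mathbf{1}$ the passage $A\mapsto A^{(i^*)}$ subtracts column $i^*$ from every column, which kills the $\vec w\vec v^{\,T}$ summand and leaves a rank-$2$ matrix; so every $3\times 3$ submatrix of $A^{(i^*)}$ is singular even though $A$ is $1$-robust. (Your Case~B for small $\mathrm{supp}(\vec v)$ is fine in isolation, but it cannot rescue the lemma.) The paper's proof genuinely uses the event $\vec u\cdot\vy=0$: it appends the row $\vec u$ to a $2\times n$ matrix $(A[J]P_J)^T$ built from a column triple $J$ (with $P_J$ annihilating the $\vec v$-direction) and applies \cref{lemma: prob via volume} to the resulting $3\times n$ matrix; the quantity $h_2^\delta(\vec u)$ enters the bound precisely through this appended row.
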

\begin{proof}
    First note that if $b=0$ and $a\ne 0$ then $\|a\vec v+b\vy^{\,T} A\|_0=\|\vec v\|_0> n^{1-\delta}$, so it suffices to consider $b\ne 0$. 
    
    Suppose for the purpose of contradiction that the desired inequality does not hold. Then \cref{lemma: tuple-counting} tells us that for all but a $6n^{-\delta}$-fraction of the triples of columns $J$, we have
    \[\Pr\big[\vec u\cdot \vy=0\text{, and }a\vec v[J]+b\vy^{\,T} A[J] = \vec 0\text{ for some }a\in \mb C\text{ and }b\in \mb C\setminus\{0\}\big]>n^{-1+3\delta}\frac{1}{h_2^\delta(\vec {u})}+n^{-3/2+3\delta}.\] Say that such $J$ are \emph{good}.

    For each good triple of columns $J$, let $P_J\in \mb C^{3\times 2}$ be a rank-2 matrix whose columns are both in the orthogonal complement of $\on{span}(\vec{v}[J])$,
    and let $((A[J] P_J)^T;\vec u)$ be the $3\times n$ matrix obtained by attaching the row $\vec u$ to the $2\times n$ matrix $(A[J] P_J)^T$. 
    So, if $\vec u\cdot \vy=0$ and $a\vec v[J]+b\vy^{\,T} A[J] = \vec 0$ for some $a\in \mb C$ and $b\in \mb C\setminus\{0\}$, then we have $((A[J] P_J)^T;\vec u)\vy=\vec 0$.
    
    Associating $((A[J] P_J)^T;\vec u)$ with its sequence of $n$ column vectors,
    we have $h_2^\delta((A[J] P_J)^T;\vec u)\ge h_2^\delta(\vec u)$, so we deduce
    \[
        \Pr[((A[J] P_J)^T;\vec u)\vy=\vec 0]
        >n^{-1+3\delta}\frac{1}{h_2^\delta((A[J] P_J)^T;\vec u)}+n^{-3/2+3\delta}.
    \]
    By \cref{lemma: prob via volume}, it follows that $h_r^\delta((A[J] P_J)^T;\vec u)<\infty$ for some $r\le 1$. 
    This means that all but at most $n^{1-\delta}$ of the rows of the $n\times 2$ matrix $A[J] P_J$ lie in a single symmetric GAP of rank at most 1. 
    So, every $3\times 2$ submatrix of $A[J] P_J$ which has rank 2 must involve at least one of the rows outside this GAP, meaning that $A[J]$ has at most $n^{1-\delta}\cdot \binom n2$ nonsingular $3\times 3$ submatrices.
    
    Summing over both good and non-good triples of columns, the total number of nonsingular $3\times 3$ submatrices in $A$ is at most $6n^{-\delta}\binom n3^2+n^{1-\delta}\binom n2\cdot \binom n3<\varepsilon\binom n 3^2$, contradicting the $\varepsilon$-robustness of $A$.
\end{proof}

We also need some estimates on the quantities $h_r^\delta$ from \cref{def:volume-GAP}.
\newcommand{\Ezero}{\mc{E}_0}
\begin{lemma}\label{lem:technical-estimates}
    Fix any $\delta>0$, and let $n$ be sufficiently large (in terms of $\delta$). Let $\vx\in \mb{C}^n$ be a vector of independent lazy Rademacher random variables.
    \begin{enumerate}
        \item [(1)] For a $\delta$-robust matrix $A \in \mb{C}^{n \times n}$, we have
        \[\mb E\left[\frac{1}{h_2^\delta(\vx^{\,T} A)} \right]\le n^{-1/6+\delta}.\]
        \item[(2)] For $\delta$-robust matrices $A_1,A_2 \in \mb{C}^{n \times n}$, we have
        \[\mb E\left[\frac{1}{h_1^\delta(\vx^{\,T} A_1)h_2^\delta(\vx^{\,T} A_2)} \right]\le n^{-2/3+\delta}.\]
    \end{enumerate}
\end{lemma}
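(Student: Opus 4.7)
The plan is to derive both bounds from the identity $\mathbb{E}[1/X]=\int_{1}^{\infty}\Pr[X\le V]/V^{2}\,dV$ (valid for any $X\ge 1$), by bounding the tail probability $\Pr[h_{r}^{\delta}(\vec{\alpha}^{T}A)\le V]$ through a counting argument and then integrating against $V^{-2}\,dV$. The counting argument is a local-to-global one: the event $h_{r}^{\delta}\le V$ means most entries of $\vec{\alpha}^{T}A$ lie in a rank-$r$ GAP of volume $\le V$, and restricting to a small triple of coordinates $S$ turns this into the event that $(\vec{\alpha}^{T}A)[S]\in\mb{C}^{3}$ lies in a specific $r$-dimensional subspace---something controlled by \cref{cor:Halasz} together with the $\delta$-robustness of $A$.

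For part (1), my plan is to show $\Pr[h_{2}^{\delta}(\vec{\alpha}^{T}A)\le V]\le O_{\delta}(V^{3}(\log V)\cdot n^{-1/2+O(\delta)})$. Conditional on $h_{2}^{\delta}\le V$, the random ``good set'' $J\subseteq[n]$ of indices covered by a minimizing GAP satisfies $|J|\ge n-n^{1-\delta}$. By \cref{lemma: tuple-counting} (applied with $r=3$ and $\delta'=n^{-\delta}$), for all but an $O(n^{-\delta})$-fraction of triples $S\subseteq[n]$ we have $\Pr[S\subseteq J]\ge\Pr[h_{2}^{\delta}\le V]/2$, which forces $(\vec{\alpha}^{T}A)[S]$ to lie in the $2$-dimensional image of some rank-$2$ GAP of volume $\le V$ on those three coordinates. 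These images are parametrized by pairs of integer vectors with bounded entries ($O(V^{3}\log V)$ choices in total, as counted in the proof of \cref{lem:count-2}). Applying \cref{cor:Halasz} to $(A[[n],S])^{T}\in\mb{C}^{3\times n}$---which by $\delta$-robustness has $\Omega_{\delta}(n)$ disjoint nonsingular $3\times 3$ submatrices for most $S$---bounds the probability of $(\vec{\alpha}^{T}A)[S]$ lying in any fixed $2$-dimensional subspace of $\mb{C}^{3}$ by $O_{\delta}(n^{-1/2})$. Combining these gives the claimed tail bound; integrating against $V^{-2}\,dV$ and using the cutoff $V\sim n^{1/6}$ then produces $\mathbb{E}[1/h_{2}^{\delta}(\vec{\alpha}^{T}A)]\le n^{-1/6+O(\delta)}$.

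For part (2), my plan is to run a joint two-parameter version of the same argument. Given thresholds $V_{1},V_{2}$, I would bound the probability of $\{h_{1}^{\delta}(\vec{\alpha}^{T}A_{1})\le V_{1}\}\cap\{h_{2}^{\delta}(\vec{\alpha}^{T}A_{2})\le V_{2}\}$ by applying \cref{lemma: tuple-counting} to both good sets simultaneously: for most pairs of triples $(S_{1},S_{2})$, the concatenated vector $((\vec{\alpha}^{T}A_{1})[S_{1}],(\vec{\alpha}^{T}A_{2})[S_{2}])\in\mb{C}^{6}$ lies in a fixed $(1+2)=3$-dimensional subspace with comparable probability. Applying \cref{cor:Halasz} to the stacked $6\times n$ matrix whose rows are the columns of $A_{1}$ indexed by $S_{1}$ and of $A_{2}$ indexed by $S_{2}$ gives $O_{\delta}(n^{-3/2})$ per subspace, provided the stacked matrix robustly has rank $6$. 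Summing over the $O(V_{1}^{3}V_{2}^{3}\log V_{2})$ integer specifications and integrating the resulting bound jointly in $V_{1},V_{2}$ (against $V_{1}^{-2}V_{2}^{-2}\,dV_{1}\,dV_{2}$) should produce the target $n^{-2/3+O(\delta)}$.

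The main obstacle will be verifying the robust rank-$6$ structure of the stacked $6\times n$ matrix in part (2): individual $\delta$-robustness of $A_{1}$ and $A_{2}$ does not automatically ensure it, and in the degenerate case $A_{1}=A_{2}$ the stacked matrix has rank at most $3$, making \cref{cor:Halasz} trivial. I expect overcoming this to require either a careful averaging argument showing that for most choices of $(S_{1},S_{2})$ the stacked matrix does have $\Omega_{\delta}(n)$ disjoint nonsingular $6\times 6$ submatrices, or a case split that separates the ``truly joint'' configurations from near-degenerate ones and handles the latter via the part-(1) bound combined with a conditional anticoncentration estimate for the second matrix given the structure imposed by the first.
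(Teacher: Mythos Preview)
Your plan for Part (1) is essentially the paper's proof: the tail bound $\Pr[h_2^\delta(\vec\alpha^{\,T}A)<V]\le O_\delta(V^3(1+\log V)n^{-1/2})$ is obtained via \cref{lemma: tuple-counting} plus \cref{lem:count-2} (with $m=3$, $k=1$, $r=2$), then integrated with cutoff $V\sim n^{1/6}$.

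For Part (2), your approach diverges from the paper's and has a genuine gap. First, the paper does \emph{not} stack $A_1$ and $A_2$ into a $6\times n$ matrix. Instead it writes $\mb E[1/(h_1 h_2)]=\int_1^\infty t^{-2}\Pr[h_1 h_2<t]\,dt$ and uses the elementary inclusion $\{h_1 h_2<t\}\subseteq\{h_1<t^{3/4}\}\cup\{h_2<t^{1/4}\}$, bounding each tail probability \emph{separately}. This completely sidesteps your rank-$6$ obstacle: the worry that $A_1=A_2$ collapses the stacked matrix simply never arises, because $A_1$ and $A_2$ are never combined.

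Second---and this is the missing idea---the separate tail bound for $h_1^\delta$ is \emph{not} obtained from \cref{cor:Halasz} or \cref{lem:count-2}. Those tools give at best $\Pr[h_1^\delta<V]\le O(V^2 n^{-1/2})$ (via pairs of columns) or $O(V^3 n^{-1})$ (via triples). The paper instead invokes Costello's number-theoretic estimate, \cref{lemma: pair volume by Costello0} and its \cref{lemma: pair volume by Costello}, which yield the much sharper bound $\Pr[h_1^\delta<V]\le V\cdot n^{-1/2+o(1)}$, \emph{linear} in $V$. Your proposal never mentions this ingredient. Even granting rank-$6$ robustness for free, integrating your joint bound $V_1^3 V_2^3(\log V_2)\,n^{-3/2}$ against $V_1^{-2}V_2^{-2}\,dV_1\,dV_2$ only produces $n^{-1/2+o(1)}$, not the target $n^{-2/3}$; and your proposed case split for the degenerate situation (``handle the latter via the part-(1) bound'') does not obviously work either, since when $A_1=A_2$ one has $1/(h_1 h_2)\le 1/h_2^2$, and Part (1) controls only $\mb E[1/h_2]$, not $\mb E[1/h_2^2]$. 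Costello's lemma is the key input you are missing.
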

\begin{remark}
    The bounds in \cref{lem:technical-estimates} are likely far from being optimal; we made no attempt to optimise them.
\end{remark}

We defer the proof of \cref{lem:technical-estimates} to the next subsection. First, we show how to deduce \cref{theorem: three bilinears}.

\newcommand{\Edep}{\mc{E}_{1}}
\begin{proof}[Proof of \cref{theorem: three bilinears}]
    Let \[
        \mc{E}:= \bigl\{\vx^{\,T} A_1 \vy=0\bigr\}\cap \bigl\{\vx^{\,T} A_2 \vz=\varphi(\vx,\vy)\bigr\} \cap \bigl\{\vy^{\,T} A_3 \vz=\psi(\vx,\vy)\bigr\}.
    \]
    We will prove that $\Pr[\mc{E}] \le n^{-2-1/6+\varepsilon}$ (i.e., \cref{H2}), assuming that $A_1,A_2,A_3$ are $\varepsilon$-robust (i.e., assuming that \cref{H1} does not hold). Throughout this proof, we assume $n$ is sufficiently large whenever necessary.
    
    Let $\delta=\varepsilon/100$, and define two auxiliary events:
    \begin{itemize}
        \item Let $\Ezero$ be the event that $\|\vx^{\,T} A_1\|_0 \leq n^{1-\delta}$ or $\|\vx^{\,T} A_2\|_0 \leq n^{1-\delta}$. 
        Hence, $\Ezero$ fails to hold if and only if $h_0^\delta(\vec{\alpha}^{\, T} A_1)=h_0^\delta(\vec{\alpha}^{\,T}A_2)=\infty$. 
        \item Let $\Edep$ be the event that there are $a,b\in \mb C$, not both zero, such that $\|a\vx^{\,T} A_2+b\vy^{\,T} A_3\|_0\le n^{1-\delta}$.
    \end{itemize}
    Note that if the event $\Edep$ does not hold, then there  is no symmetric GAP of rank at most 1 containing all but $n^{1-\delta}$ of the columns in the $2\times n$ matrix whose rows are $\vx^{\,T} A_2$ and $\vy^{\,T} A_3$.
    
    Now, $\Pr[\mc{E}]$ is at most
    \begin{equation} \label{eq: dividing dependency}
    \Pr[\mc{E} \cap \Ezero] \;+\; \Pr[\mc{E} \cap \Ezero^{\mr c} \cap \Edep] \;+\; \Pr[\mc{E} \cap \Ezero^{\mr c} \cap \Edep^{\mr c}].
    \end{equation}
    In the rest of the proof, we will show that each of the three terms above is at most $n^{-2-1/6+\varepsilon}/3$. 
    Throughout the proof, we will put subscripts on probabilistic notation to remind the reader whether we are using the randomness of  $\vx$, $\vy$ or $\vz$. Also, we simply write $h_0,h_1,h_2$ instead of $h_0^\delta,h_1^\delta,h_2^\delta$.

    \medskip\noindent\textbf{The first term.}
    By \cref{lemma: many nonzero entries}, we have $\Pr_{\vx}[\Ezero] \le 2n^{-3/2+\delta}$. 
    By \cref{lem:close-easy}, $A_3$ differs in more than $(\varepsilon/9)n^2$ entries from every matrix of rank at most 2 (in particular, this holds for every matrix of rank at most 1). So by \cref{thm:costello-mod}, for any outcome of $\vx$, we have 
    \[
        \Pr_{\vy,\vz}[\mc E\,|\,\vx]
        \le \Pr_{\vy,\vz}\bigl[ \vy^{\,T} A_3 \vz=\psi(\vx,\vy) \,\big|\, \vx\bigr]\le n^{-1+\varepsilon/9}.
    \]
     We deduce \[
        \Pr[\mc{E} \cap \Ezero]
        \le 2n^{-3/2+\delta}\cdot n^{-1+\varepsilon/9}
        < n^{-2-1/6+\varepsilon}/3
    \]
    (with plenty of room to spare in the final inequality).

    \medskip\noindent\textbf{The second term.} For any outcome of $\vx$ for which $\Ezero$ does not hold (in particular, $\|\vx^{\,T} A_2\|_0> n^{1-\delta}$), \cref{lemma: dependent-event} tells us that
    \[
        \Pr_{\vy}[\vx^{\,T}A_1\vy=0\text{, and }\Edep\text{ occurs}\,|\, \vx\,] 
        \le 2 \cdot n^{-1+3\delta}\cdot \frac{1}{h_2(\vx^{\,T} \!A_1)}+ 2\cdot n^{-3/2+3\delta}
        \le  n^{-1+4\delta}\cdot \frac{1}{h_2(\vx^{\,T} \!A_1)}+  n^{-3/2+4\delta}.
    \]
    Then, for any outcomes of $\vx,\vy$ for which $\Ezero$ does not hold (in particular, $\|\vx^{\,T} A_2\|_0> n^{1-\delta}$), we can apply \cref{lemma: prob via volume} (with $q=2$, using that $h_0(\vx^{\,T}\!A_2)=\infty$) to see that
    \[
        \Pr_{\vz}[\vx^{\,T} A_2 \vz = \varphi(\vx,\vy)\, |\, \vx,\vy\,] 
        \le n^{-1/2+2\delta}\frac{1}{h_1(\vx^{\,T} \!A_2)}+n^{-1+2\delta}.
    \]
    So, we have
    \begin{align*}
        \Pr[\mc{E} \cap \Ezero^{\mr c} \cap \Edep] 
        &\le \mb{E}_{\vx}\left[ \left(n^{-1+4\delta}\cdot \frac{1}{h_2(\vx^{\,T} \!A_1)}+n^{-3/2+4\delta}\right)\cdot \left(n^{-1/2+2\delta}\frac{1}{h_1(\vx^{\,T} \!A_2)}+n^{-1+2\delta}\right)\right]\\
        &\le n^{-3/2+6\delta}\,\mb{E}_{\vx}\left[ \frac{1}{h_1(\vx^{\,T} \!A_2) h_2(\vx^{\,T} \!A_1)}\right]+ n^{-2+6\delta}\,\mb{E}_{\vx}\left[\frac{1}{h_2(\vx^{\,T} \!A_1)}\right]\\
        &\qquad\qquad+n^{-2+6\delta}\,\mb{E}_{\vx}\left[\frac{1}{h_2(\vx^{\,T} \!A_2)}\right]+n^{-5/2+6\delta}
    \end{align*}
    (Here we used that $h_1(\vx^TA_2)\ge h_2(\vx^{\,T} A_2)$). Using \cref{lem:technical-estimates}, we deduce
    \[
        \Pr[\mc{E} \cap \Ezero^{\mr c} \cap \Edep]\le n^{-2-1/6+7\delta}+2n^{-2-1/6+7\delta}+n^{-5/2+6\delta}\le n^{-2-{1/6}+\varepsilon}/3.
    \]

    \medskip\noindent\textbf{The third term.}
    For any outcome of $\vx$ for which $\Ezero$ does not hold (in particular, $\|\vx^{\,T} A_1\|_0> n^{1-\delta}$), \cref{lemma: prob via volume} (with $q=2$, using that $h_0(\vx^{\,T}\!A_1)=\infty$) tells us that
    \[\Pr_{\vy}[\vx^{\,T} A_1 \vy=0 \,|\, \vx\,] \le \frac{1}{h_1(\vx^{\,T} \!A_1)}n^{-1/2+2\delta}+ n^{-1+2\delta}.\]
    Then, write $M$ for the $2\times n$ matrix whose rows are $\vx^{\,T} A_2$ and $\vy^{\,T} A_3$ (which we may view as a sequence of $n$ vectors in $\mb F^2$). For any outcomes of $\vx,\vy$ for which $\Edep$ does not hold, another application of \cref{lemma: prob via volume} (with $q=3$, using that $h_0(M)=h_1(M)=\infty$, and $h_2(M)\ge h_2(\vx^{\,T} \!A_2)$) tells us that
    \[\Pr_{\vz}[\vx^{\,T} A_2 \vz = \varphi(\vx,\vy)\text{ and }\vy^{\,T} A_3 \vz = \psi(\vx,\vy)\, |\, \vx,\vy\,] 
                \le \frac{1}{h_2(\vx^{\,T} \!A_2)}n^{-1+3\delta}+n^{-3/2+3\delta}.\]
    So, we have
    \begin{align*}
        \Pr[\mc{E} \cap \Ezero^{\mr c} \cap \Edep^{\mr c}] 
        &\le \mb{E}_{\vx}\left[ \left( \frac{1}{h_1(\vx^{\,T} \!A_1)}n^{-1/2+2\delta} + n^{-1+2\delta} \right) \cdot \left(\frac{1}{h_2(\vx^{\,T} \!A_2)}n^{-1+3\delta}+n^{-3/2+3\delta}\right)\right]\\
        &\le n^{-3/2+5\delta}\,\mb{E}_{\vx}\left[ \frac{1}{h_1(\vx^{\,T} \!A_1) h_2(\vx^{\,T} \!A_2)}\right]+ n^{-2+5\delta}\,\mb{E}_{\vx}\left[\frac{1}{h_2(\vx^{\,T} \!A_1)}\right]\\
        &\qquad\qquad+n^{-2+5\delta}\,\mb{E}_{\vx}\left[ \frac{1}{h_2(\vx^{\,T} \!A_2)}\right]+n^{-5/2+5\delta}
    \end{align*}
    (Here we used that $h_1(\vx^TA_1)\ge h_2(\vx^{\,T} A_1)$). Proceeding in the same way as for the second term, it follows from \cref{lem:technical-estimates} that $\Pr[\mc{E} \cap \Ezero^{\mr c} \cap \Edep^{\mr c}]\le n^{-2-{1/6}+\varepsilon}/3$, as desired.
\end{proof}

\subsection{Proofs of technical estimates}
In this subsection we prove \cref{lem:technical-estimates}, which is the remaining ingredient for the proof of \cref{theorem: three bilinears}.
It will be convenient to introduce some additional notation.
\begin{definition}
For a matrix $A\in\mb C^{n\times n}$ and a subset $J\subseteq\{1,\dots,n\}$,
recall that $A[J]$ denotes the submatrix of $A$ containing just the columns indexed
by $J$. For $r\in \mb N$ and a vector $\vec{u}\in\mb C^{m}$, let
$h^*_{r}(\vec{u})$ be the minimum volume of a symmetric GAP of rank at most $r$
that contains \emph{all} the entries of $\vec{u}$.
\end{definition}

We start by proving \cref{lem:technical-estimates}(1), which is a fairly simple consequence of \cref{lem:count-2}.

\begin{proof}
[Proof of \cref{lem:technical-estimates}(1)]Since $1/h_{2}^{\delta}(\vx^{\,T}A)$ only takes values
in the range $[0,1]$, we have
\begin{equation}
\mb E\left[\frac{1}{h_{2}^{\delta}(\vx^{\,T}A)}\right]=\int_{0}^{1}\Pr\left[\frac1{h_{2}^{\delta}(\vx^{\,T}A)}>u\right]\,du=\int_{1}^{\infty}V^{-2}\,\Pr[h_{2}^{\delta}(\vx^{\,T}A)<V]\,dV.\label{eq:h1-identity}
\end{equation}

To apply this identity, we need to estimate probabilities of the form
$\Pr[h_{2}^{\delta}(\vx^{\,T}A)<V]$, as follows.
\begin{claim}
\label{claim:h2}For any $V\ge1$, we have $\Pr[h_{2}^{\delta}(\vx^{\,T}A)<V]\le O_{\delta}(V^{3}(1+\log V)n^{-1/2})$.
\end{claim}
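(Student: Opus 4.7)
The plan is to prove the bound by a contrapositive counting argument, following the template of \cref{lemma: many nonzero entries} and \cref{lemma: dependent-event}: convert a too-large probability into many triples of columns whose associated random sum has abnormal GAP-concentration, then use \cref{lem:count-2} plus \cref{fact: disjoint and non-disjoint nonsingular submatrices} to rule out abundance of nonsingular $3\times 3$ submatrices, contradicting $\delta$-robustness.

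In detail, suppose for contradiction that $p := \Pr[h_2^\delta(\vx^{\,T} A) < V]$ exceeds $C_\delta V^3(1+\log V) n^{-1/2}$ for a sufficiently large constant $C_\delta$ to be chosen. For each outcome of $\vx$ with $h_2^\delta(\vx^{\,T} A) < V$, fix a witnessing symmetric GAP $\mc G$ of rank at most $2$ and volume less than $V$, and let $I = I(\vx) \subseteq \{1,\dots,n\}$ be the set of indices $i$ such that the $i$-th entry of $\vx^{\,T} A$ lies in $\mc G$. By definition of $h_2^\delta$, we have $|I| \ge n - n^{1-\delta}$, so $\Pr[|I| \ge (1 - n^{-\delta}) n] \ge p$. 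Applying \cref{lemma: tuple-counting} with $r = 3$ and parameter $n^{-\delta}$ in place of $\delta$, we conclude that all but a $6 n^{-\delta}$-fraction of the $3$-element subsets $J \subseteq \{1,\dots,n\}$ are \emph{good}, meaning $\Pr[J \subseteq I] \ge p/2$; in particular, for each good $J$, with probability at least $p/2$ the three entries of $\vx^{\,T} A[J]$ all lie in a common symmetric GAP of rank at most $2$ and volume at most $V$.

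Now apply \cref{lem:count-2} with $k = 1$, $m = 3$, and $r = 2$ to the $n \times 3$ matrix $A[J]$: if $A[J]$ has at least $\ell$ disjoint nonsingular $3 \times 3$ submatrices, then the probability above is at most $O(V^3 (1 + \log V)\, \ell^{-1/2})$. Combining this with the lower bound $p/2$, every good $J$ satisfies $\ell = O(V^6 (1+\log V)^2 / p^2)$. By \cref{fact: disjoint and non-disjoint nonsingular submatrices} (applied to $A[J]^T \in \mb C^{3 \times n}$), it follows that at most an $O(V^6 (1+\log V)^2 / (p^2 n))$-fraction of the $3\times 3$ submatrices of $A[J]$ are nonsingular.

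Summing over all $\binom{n}{3}$ triples $J$, the good triples contribute at most $O(V^6 (1+\log V)^2 / (p^2 n))\binom{n}{3}^2$ nonsingular $3 \times 3$ submatrices in $A$, while the bad triples contribute at most $6 n^{-\delta}\binom{n}{3}^2$. Substituting the hypothesis $p > C_\delta V^3 (1 + \log V)n^{-1/2}$, the first contribution becomes at most $O(1/(C_\delta^2))\binom{n}{3}^2$, and the second is $o_\delta(\binom{n}{3}^2)$. Choosing $C_\delta$ large enough that the total is below $\delta \binom{n}{3}^2$ then contradicts the $\delta$-robustness of $A$. The argument is almost mechanical once the right application of \cref{lem:count-2} is identified; the only place requiring care is tracking the exponents to verify that the budget in \cref{lem:count-2} exactly matches the $n^{-1/2}$ target, which pins down why the bound takes the form $V^3(1 + \log V) n^{-1/2}$ and not something sharper or weaker.
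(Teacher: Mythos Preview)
Your proof is correct and follows essentially the same approach as the paper's: contradiction, \cref{lemma: tuple-counting} to pass to triples of columns, \cref{lem:count-2} with $(k,m,r)=(1,3,2)$ on each good triple, then \cref{fact: disjoint and non-disjoint nonsingular submatrices} and a sum over good and bad triples to contradict $\delta$-robustness. The only cosmetic difference is that the paper substitutes the contradiction hypothesis into the bound from \cref{lem:count-2} immediately (obtaining $\ell=O(n/C_\delta^2)$ directly), whereas you first solve for $\ell$ in terms of $V$ and $p$ and substitute at the end; the arithmetic is the same.
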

\begin{claimproof}
    We will apply \cref{lemma: tuple-counting} to reduce studying $h_2^\delta(\vec{\alpha}^{\,T}A)$ to studying $h_2^\delta(\vec{\alpha}^{\,T}A[J])$ for 3-element subsets $J\subseteq \{1,\dots,n\}$.
    Similar arguments appeared in the proof of \Cref{thm: multi-x-single-y costello}, so we will be quite brief with details.
    Suppose that $\Pr[h_{2}^{\delta}(\vx^{\,T}A)<V]>C_{\delta}V^{3}(1+\log V)n^{-1/2}$.
    We will show that this is a contradiction for large $C_{\delta}$.
    
    First, \cref{lemma: tuple-counting} tells us that for all but an $6n^{-\delta}$-fraction of
    3-element subsets $J\subseteq\{1,\dots,n\}$ we have $\Pr[h^*_{2}(\vx^{\,T}A[J])<V]>C_{\delta}V^{3}(1+\log V)n^{-1/2}/2$.
    Say that such $J$ are \emph{good}.

    For each good $J$, \cref{lem:count-2} (with $m=3$ and $k=1$ and $r=2$) tells us
    that $A[J]$ has at most $O(n/C_{\delta}^{2})$ disjoint nonsingular
    $3\times3$ submatrices, so by \cref{fact: disjoint and non-disjoint nonsingular submatrices} at most a $O(C_{\delta}^{-2})$-fraction
    of all the $3\times3$ submatrices of $A[J]$ are nonsingular.
    
    Recalling that all but a $6n^{-\delta}$-fraction of $J$ are good,
    we see that the total number of nonsingular $3\times3$ submatrices
    in $A$ is at most $6n^{-\delta}\binom{n}{3}^{2}+O(C_{\delta}^{-2})\binom{n}{3}^2$,
    which contradicts the $\delta$-robustness of $A$ for sufficiently
    large $C_{\delta}$.
\end{claimproof}
    Now, given \cref{eq:h1-identity} and \cref{claim:h2}, we compute 
    \[
        \mb E\left[\frac{1}{h_{2}^{\delta}(\vx^{\,T}A)}\right]\le\int_{1}^{n^{1/6}}V^{-2}\cdot O_{\delta}(V^{3}(1+\log V)n^{-1/2})\,dV+\int_{n^{1/6}}^{\infty}V^{-2}\,dV=n^{-1/6+o(1)},
    \]
    as desired.
\end{proof}
The proof of \cref{lem:technical-estimates}(2) is similar, but \cref{lem:count-2} is not quite strong enough to get a non-trivial bound. Instead, we need the following more refined estimate of Costello~\cite[Lemma 8]{Cos13}\footnote{
\cite[Lemma 8]{Cos13} is stated for Rademacher random variables, not lazy Rademacher random variables.
This does not have any impact on the proof (also, one can deduce the lazy Rademacher case from the Rademacher case by first revealing which of $\xi_1,\dots,\xi_n$ are zero; conditionally, the nonzero $\xi_i$ are Rademacher).}. He proved this via (elementary) number-theoretic considerations.
\begin{lemma}\label{lemma: pair volume by Costello0}
    Consider any vectors $\vec u,\vec v\in \mb C^m$, such that for each $i \in \{1,\dots,m\}$, at least one of $u_i$ and $v_i$ is nonzero.
    Let $\vec \alpha\in \mb C^{m}$ be a vector of i.i.d.\ lazy Rademacher random variables. For any real number $q\ge 1$, let $\mc E(q)$ be the event that there are integers $x,y\in \mb Z$ such that
    \begin{itemize}
        \item $|x|,|y|\le q$,
        \item $x\vec u-y\vec v$ has at least $m/10$ nonzero entries,
        \item $x(\vec u\cdot \vec \alpha)=y(\vec v\cdot \vec \alpha)$.
    \end{itemize}
    Then for any $1\le q\le \sqrt{m}$ we have
    \[\Pr[\mathcal E(q)]\le q m^{-1/2+o(1)}.\]
\end{lemma}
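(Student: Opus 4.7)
The plan is to view $\mc E(q)$ as a statement about the location of the random point $(R, S) := (\vec u \cdot \vec\alpha, \vec v \cdot \vec\alpha) \in \mb C^2$. Specifically, $\mc E(q)$ occurs iff $(R, S)$ lies on one of the lines $\mc L_\ell := \{(r, s) \in \mb C^2 : x_0 r = y_0 s\}$ through the origin, indexed by a ``valid primitive pair'' $\ell = (x_0, y_0)$ with $\gcd(x_0, y_0) = 1$, $|x_0|, |y_0| \le q$, and $\|x_0\vec u - y_0 \vec v\|_0 \ge m/10$. (Here I use that if $(x, y)$ works then so does its primitive reduction.) The key observation is that distinct primitive pairs yield distinct lines through the origin in $\mb C^2$, so the events $\{(R, S) \in \mc L_\ell\} \setminus \{(R, S) = (0, 0)\}$ are pairwise disjoint. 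Hence
\[
\Pr[\mc E(q)] \le \Pr[(R, S) = (0, 0)] + \sum_\ell \Pr[(R, S) \in \mc L_\ell \setminus \{(0,0)\}],
\]
and the first term is $O(m^{-1/2})$ by the $d = 1$ case of \cref{thm:Halasz}, applied to whichever of $\vec u, \vec v$ has at least $m/2$ nonzero entries (one of them does, since each pair $(u_i, v_i)$ is nonzero by hypothesis).

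For the main sum I would split into cases based on the structure of the $2 \times m$ matrix $\binom{\vec u}{\vec v}$: either (a) some direction $(a, b) \in \mb C^2$ is ``dominant'' in the sense that $\ge 9m/10$ columns of $\binom{\vec u}{\vec v}$ are proportional to $(a, b)$ (the concentrated case), or (b) no such direction exists, in which case a simple averaging/greedy argument yields $\Omega(m)$ disjoint nonsingular $2 \times 2$ submatrices in $\binom{\vec u}{\vec v}$ (the dispersed case).

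In the dispersed case, the $d = 2$ case of \cref{thm:Halasz} gives the pointwise bound $\Pr[(R, S) = (r, s)] \le O(m^{-1})$ uniformly in $(r, s) \in \mb C^2$. The disjoint-events sum then reduces to $O(m^{-1})$ times the number of achievable values of $(R, S) \ne (0, 0)$ that lie on some $\mc L_\ell$. I would count these via a lattice-point estimate: for a primitive pair with $M := \max(|x_0|, |y_0|)$, only $O(\sqrt{m}/M)$ lattice points on $\mc L_\ell$ lie within the natural $O(\sqrt m)$ typical range of $(R, S)$, and there are $O(M)$ primitive pairs of each magnitude $M$, so summing over $M \le q$ gives $O(q \sqrt{m} \cdot m^{o(1)})$ achievable points in total, yielding an $O(q m^{-1/2 + o(1)})$ bound. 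In the concentrated case, I would decompose $\vec u = a\vec c + \vec u'$ and $\vec v = b\vec c + \vec v'$, with $\vec c$ supported on the dominant indices and $\vec u', \vec v'$ on the remaining $\le m/10$ indices. The validity condition $\|x_0\vec u - y_0 \vec v\|_0 \ge m/10$ then forces $x_0 a \ne y_0 b$ for every relevant primitive pair, so conditional on the randomness outside the dominant block the event reduces to $T := \vec c \cdot \vec \alpha$ hitting a controlled set of target values (each attained with probability $O(m^{-1/2})$ by one-dimensional Erd\H{o}s--Littlewood--Offord on a sum of $\ge 9m/10$ lazy Rademachers), and summing gives the desired bound.

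The main obstacle will be the lattice-point counting in the dispersed case when $\vec u, \vec v$ are genuinely complex: the support of $(R, S)$ in $\mb C^2 \cong \mb R^4$ need not lie in a $2$-dimensional lattice, and the typical-range argument must be adapted. My preferred approach is to apply \cref{thm:Halasz} in the $\mb R^4$ representation $(\operatorname{Re} R, \operatorname{Im} R, \operatorname{Re} S, \operatorname{Im} S)$ whenever the real and imaginary parts of $\vec u, \vec v$ together span $\mb R^4$ with enough richness (in which case one gets an even stronger $O(m^{-2})$ pointwise bound, with plenty of slack), and otherwise to reduce by a suitable real-linear change of coordinates to a setting where $(R, S)$ is effectively real-valued and the direct lattice-point argument applies. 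This dimensional reduction should be what the ``elementary number-theoretic considerations'' in Costello's original argument amount to.
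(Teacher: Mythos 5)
The paper does not prove this lemma at all: it is quoted directly from Costello~\cite[Lemma~8]{Cos13}, with only a footnote explaining the Rademacher-to-lazy-Rademacher translation. Your self-contained attempt is therefore more ambitious than anything in the paper, but the central counting step has a genuine gap. The actual content of the lemma is to improve on the trivial union bound: for each of the $\Theta(q^2)$ primitive pairs $(x_0,y_0)$ one already gets $\Pr[(x_0\vec u-y_0\vec v)\cdot\vec\alpha=0]=O(m^{-1/2})$ from one-dimensional Erd\H os--Littlewood--Offord (using the $\ge m/10$ nonzero entries), so a union bound yields $O(q^2 m^{-1/2})$ --- off by exactly the factor of $q$ that the lemma claims to save, which matters since $q$ can be as large as $\sqrt m$.

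Your dispersed case tries to supply this factor via lattice-point counting: bound $\sum_\ell\Pr[(R,S)\in\mc L_\ell\setminus\{0\}]$ by $O(m^{-1})$ times the number of achievable $(r,s)\ne 0$ on some $\mc L_\ell$, and count ``$O(\sqrt m/M)$ lattice points per line in the typical $O(\sqrt m)$ range''. This works only when the entries $u_i,v_i$ are bounded integers, in which case $(R,S)\in\mb Z^2$ and concentrates in a box of side $O(\sqrt{m\log m})$; for general $\vec u,\vec v\in\mb C^m$ there is no lattice, no $O(\sqrt m)$ typical range, and the number of achievable values of $(R,S)$ on a single line $\mc L_\ell$ can be exponential in $m$, so the estimate ``pointwise probability $\times$ count'' is vacuous. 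You flag this issue, but the suggested repair (Hal\'asz in $\mb R^4$, or a real-linear change of coordinates) does not create the missing structure: any real-linear change of coordinates leaves $(R,S)$ a sum of $m$ arbitrary step vectors, not a bounded-step walk on a fixed lattice. The concentrated case has the parallel gap: conditioning on the coordinates outside the dominant block produces up to $\Theta(q^2)$ distinct targets for $\vec c\cdot\vec\alpha$, each hit with probability $O(m^{-1/2})$, again only $O(q^2 m^{-1/2})$. (The disjointness of the lines $\mc L_\ell$, while true, gives only $\sum_\ell\Pr[(R,S)\in\mc L_\ell\setminus\{0\}]\le 1$, which by itself does not deliver the $q$-versus-$q^2$ improvement.) Closing the gap requires an arithmetic input --- the ``elementary number-theoretic considerations'' the paper attributes to Costello --- controlling how the probabilities $\Pr[(x_0\vec u-y_0\vec v)\cdot\vec\alpha=0]$ decay as $\max(|x_0|,|y_0|)$ grows, and nothing in the sketch supplies this.
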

\cref{lemma: pair volume by Costello0} has the following corollary in terms of rank-1 GAPs.
\begin{corollary}\label{lemma: pair volume by Costello}
    Suppose $A\in \mb C^{n\times 2}$ contains at least $\ell$ disjoint nonsingular $2\times 2$ submatrices.
    Then, for every $1 \le q \le \sqrt{2\ell}$, 
    \[\Pr\big[h_1^*(\vx^{\,T} A)< q \big]\le q \cdot n^{-1/2+o(1)}.\]
\end{corollary}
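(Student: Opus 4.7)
The plan is to unfold the definition of the event $\{h_1^*(\vx^{\,T} A) < q\}$ into an algebraic condition on $\vec\alpha$ and reduce to the hypothesis of Costello's lemma (\cref{lemma: pair volume by Costello0}). Write $\vec a,\vec b\in \mb C^n$ for the two columns of $A$. By the definition of $h_1^*$ and of a symmetric rank-$1$ GAP in $\mb C$, the event holds if and only if there exist integers $x,y$ with $|x|,|y|\le (q-1)/2$ and some $w\in\mb C$ such that $\vec\alpha\cdot\vec a=xw$ and $\vec\alpha\cdot\vec b=yw$. Splitting on whether $w=0$ or not, the event is contained in the union
\[
\{A^{T}\vec\alpha=\vec 0\}\;\cup\;\bigl\{\exists\,(x,y)\in\mb Z^{2}\setminus\{(0,0)\},\; |x|,|y|\le q,\; y(\vec\alpha\cdot\vec a)=x(\vec\alpha\cdot\vec b)\bigr\}.
\]

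For the first event, I would apply \cref{thm:Halasz} with $d=2$ to the $2\times n$ matrix $A^{T}$, which inherits $\ell$ disjoint nonsingular $2\times 2$ submatrices from $A$; this gives $\Pr[A^{T}\vec\alpha=\vec 0]\le O(\ell^{-1})$.

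For the second event, after the relabelling $(x,y)\leftrightarrow(y,x)$, this is precisely the event $\mc E(q)$ of \cref{lemma: pair volume by Costello0} applied with $\vec u=\vec a$ and $\vec v=\vec b$, provided the hypotheses of that lemma are met. To arrange this, I would first restrict attention to the set $S=\{i:(a_i,b_i)\neq(0,0)\}$ of nonzero rows (which does not alter the event, since zero rows contribute nothing to either dot product), thereby ensuring $(u_i,v_i)\neq(0,0)$ throughout. The crucial structural input is the following observation: for any integers $(x,y)\neq(0,0)$, the vector $x\vec a-y\vec b$ has at least $\ell$ nonzero entries, because each of the $\ell$ disjoint nonsingular $2\times 2$ submatrices of $A$ consists of two linearly independent rows, at most one of which can lie in the one-dimensional kernel $\{(s,t)\in\mb C^2:xs=yt\}$. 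This lower bound of $\ell$ is what verifies the ``$m/10$ nonzero entries'' hypothesis of Costello's lemma, with $m$ the effective dimension (at least $2\ell$). Since $q\le\sqrt{2\ell}\le\sqrt m$, we may apply \cref{lemma: pair volume by Costello0} to bound the probability of the second event by $q\cdot n^{-1/2+o(1)}$.

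Combining the two bounds, and using $q\le\sqrt{2\ell}$ to ensure that the Hal\'asz contribution $O(\ell^{-1})=O(q^{-2})$ is dominated by $q\cdot n^{-1/2+o(1)}$ in the relevant range, yields the claimed inequality. The main technical obstacle is bridging the ``effective dimension'' $m$ naturally appearing in Costello's lemma (either $|S|$ or, after further restriction, $2\ell$) with the ambient parameter $n$ in the final bound, together with the bookkeeping needed to ensure the $m/10$ nonzero-entries hypothesis is met after any such restriction; both of these aspects are controlled by the structural fact that $\ell$ disjoint nonsingular $2\times 2$ submatrices force any nontrivial integer combination of the two columns to have many nonzero entries.
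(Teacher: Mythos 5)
Your approach matches the paper's: reduce to Costello's \cref{lemma: pair volume by Costello0} via the structural observation that any non-trivial integer combination $x\vec a - y\vec b$ of the columns has at least $\ell$ nonzero entries (because within each of the $\ell$ disjoint nonsingular pairs, at most one of the two linearly independent rows can lie in the kernel of $(s,t)\mapsto xs-yt$). This is precisely the key step in the paper's proof.

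There are, however, two places where your write-up deviates from a clean execution of this plan. First, the restriction must be committed to: the paper restricts to a set $I$ of exactly $2\ell$ rows (one pair of rows for each of the $\ell$ disjoint nonsingular $2\times2$ submatrices), so that $m=2\ell$ and the lower bound $\ell = m/2$ trivially verifies Costello's ``$m/10$ nonzero entries'' hypothesis. Your choice $S=\{i:(a_i,b_i)\neq(0,0)\}$ can have $|S|\gg 10\ell$, in which case $\ell$ nonzero entries do \emph{not} meet the $m/10$ threshold with $m=|S|$; you flag this (``either $|S|$ or, after further restriction, $2\ell$'') but do not commit to the fix, and the vague claim that the structural fact ``controls'' the bookkeeping is not enough — you need $m=2\ell$ specifically. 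Second, your Hal\'asz split is unnecessary: the paper handles the degenerate case $\vec\alpha\cdot\vec a=\vec\alpha\cdot\vec b=0$ by simply noting that one may then take $(x,y)=(1,1)$, which still satisfies $x(\vec u\cdot\vec\alpha)=y(\vec v\cdot\vec\alpha)$ and (with $m=2\ell$) has $x\vec u[I]-y\vec v[I]$ with $\ell\ge m/10$ nonzero entries. So the full event $\{h_1^*(\vec\alpha^{\,T}A)<q\}$ is already contained in Costello's event $\mathcal E(q)$, and a separate term is not needed. Neither issue is a fundamental misunderstanding — you identified the correct tool and the correct structural observation — but as written the proposal does not verify Costello's hypothesis.
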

\begin{proof}
    Write $\vec u$ and $\vec v$ for the two columns of $A$.
    Let $I \subseteq \{1,\dots,n\}$ be a set of $2\ell$ row indices corresponding to $\ell$ disjoint nonsingular $2\times 2$ submatrices, so for each $i \in I$ at least one of $u_i$ and $v_i$ is nonzero.
    Note that for any $x,y\in \mb C$ which are not both zero, the vector $x\vec u[I]-y\vec v[I]$ has at least $\ell$ nonzero entries.

    Now, if the two entries $\vec u\cdot \vec \alpha$ and $\vec v\cdot \vec \alpha$ of $\vx^{\,T} A$ lie in a symmetric rank-1 GAP with volume $2q+1$, then there are integers $x,y\in \mb Z$ with $|x|,|y|\le q$ and $x(\vec u\cdot \vec \alpha)=y(\vec v\cdot \vec \alpha)$, such that $x$ and $y$ are not both zero. We also know that $x(\vec u\cdot \vec \alpha)=x(\vec u[I]\cdot \vec \alpha[I])$ and $y(\vec v\cdot \vec \beta) = y(\vec v[I]\cdot \vec \beta[I])$. 
    We then apply \cref{lemma: pair volume by Costello0} (with $m=2\ell$) to $\vec{u}[I]$ and $\vec{v}[I]$.
\end{proof}
Now we prove \cref{lem:technical-estimates}(2).
\begin{proof}
[Proof of \cref{lem:technical-estimates}(2)]
Here, our starting point is the inequality
\begin{align}
\mb E\left[\frac{1}{h_{1}^{\delta}(\vx^{\,T}A_{1})h_{2}^{\delta}(\vx^{\,T}A_{2})}\right] &=\int_{1}^{\infty}t^{-2}\,\Pr[h_{1}^{\delta}(\vx^TA_1)h_{2}^{\delta}(\vx^{\,T}A_{2})<t]\,dt\nonumber\\
 &\le \int_{1}^{n^{2/3}}t^{-2}\,\Pr[h_{1}^{\delta}(\vx^TA_1)<t^{3/4}\text{ or }h_{2}^{\delta}(\vx^{\,T}A_{2})<t^{1/4}]\,dt+\int_{n^{2/3}}^\infty t^{-2}\,dt\nonumber\\
 & \le \int_{1}^{n^{2/3}}t^{-2}\left(\Pr[h_{1}^{\delta}(\vx^TA_1)<t^{3/4}]+\Pr[h_{2}^{\delta}(\vx^{\,T}A_{2})<t^{1/4}]\right)\,dt+n^{-2/3}.
 \label{eq:h1h2-identity}
\end{align}

To apply \cref{eq:h1h2-identity}, we need \cref{claim:h2}, together
with the following additional estimate.
\begin{claim}
\label{claim:h1h2}For any $1\le V\le\sqrt{2\delta n}$, we have $\Pr[h_{1}^{\delta}(\vx^{\,T}A_{1})<V]\le V\cdot n^{-1/2+o(1)}$.
\end{claim}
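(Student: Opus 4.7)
The plan is to adapt the proof of \cref{claim:h2}, replacing the triple-counting argument based on \cref{lem:count-2} with a pair-counting argument based on \cref{lemma: pair volume by Costello}. The sharper Costello estimate for rank-1 GAPs (linear in $q$ rather than polynomial) is precisely the savings that makes the bound $Vn^{-1/2+o(1)}$ plausible, in contrast to the $V^3n^{-1/2+o(1)}$-style bound obtained in \cref{claim:h2}.

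Suppose toward contradiction that $\Pr[h_1^\delta(\vx^{\,T}A_1)<V] > C_\delta\cdot V n^{-1/2}$ for a large constant $C_\delta$ (to be chosen depending on $\delta$). Applying \cref{lemma: tuple-counting} with $r=2$ to the random index set $I\subseteq\{1,\dots,n\}$ of columns at which $\vx^{\,T}A_1$ lies in its optimal covering rank-1 GAP of volume less than $V$, we deduce that for all but a $4n^{-\delta}$-fraction of pairs $J\subseteq\{1,\dots,n\}$,
\[\Pr\bigl[h_1^*(\vx^{\,T}A_1[J])<V\bigr]\;\ge\; C_\delta V n^{-1/2}/2\,.\]
Call such $J$ \emph{good}. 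For each good $J$, we argue by a second contradiction: if $A_1[J]\in\mb C^{n\times 2}$ contained at least $V^2/2$ disjoint nonsingular $2\times 2$ submatrices, then since $V\le\sqrt{2\delta n}\le\sqrt n$ the hypothesis $q\le\sqrt{2\ell}$ of \cref{lemma: pair volume by Costello} is satisfied with $q=V$, yielding the bound $V\cdot n^{-1/2+o(1)}$ on the same probability --- in conflict with goodness for large $C_\delta$. Consequently, every good pair $J$ has fewer than $V^2/2$ disjoint nonsingular $2\times 2$ submatrices.

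It remains to derive a contradiction with the $\delta$-robustness of $A_1$. By \cref{fact: disjoint and non-disjoint nonsingular submatrices} (with $d=2$), each good $J$ contributes at most $(2V^2/n)\binom{n}{2}$ nonsingular $2\times 2$ submatrices. Summing over all pairs (bounding bad pairs trivially by $\binom{n}{2}$) gives a total of at most $\bigl(2V^2/n+O(n^{-\delta})\bigr)\binom{n}{2}^2$ nonsingular $2\times 2$ submatrices in $A_1$. On the other hand, \cref{lem:rank-monotonicity} converts the $\delta$-robustness assumption into a lower bound of $(\delta/3)\binom{n}{2}^2$ nonsingular $2\times 2$ submatrices. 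Comparing these bounds produces the contradiction once $V$ is slightly below $\sqrt{\delta n}$ in size.

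The main obstacle is to push this comparison through to the full range $V\le\sqrt{2\delta n}$ claimed: the naive counting above only yields the contradiction in a regime of the form $V\le c\sqrt{\delta n}$ for some $c<\sqrt 2$, due to slack in both \cref{fact: disjoint and non-disjoint nonsingular submatrices} and the $1/\binom{3}{2}$ loss in \cref{lem:rank-monotonicity}. Closing this gap likely requires either a more careful double-count relating disjoint nonsingular $2\times 2$ submatrices in each $A_1[J]$ directly to nonsingular $3\times 3$ submatrices of $A_1$ (bypassing \cref{lem:rank-monotonicity}), or working at the level of triples of columns and exploiting that goodness of a triple forces goodness of all three of its column pairs simultaneously.
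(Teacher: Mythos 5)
Your proposal follows the paper's own hinted approach (mirror the proof of \cref{claim:h2}, but use \cref{lemma: pair volume by Costello} in place of \cref{lem:count-2}, look at $2$-element column sets, and invoke \cref{lem:rank-monotonicity} at the end), so the architecture is right. Two points need addressing, though.

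The first is a genuine error in the setup. You suppose $\Pr[h_1^\delta(\vx^{\,T}A_1)<V] > C_\delta V n^{-1/2}$ for a \emph{constant} $C_\delta$ and then try to contradict this with the bound $Vn^{-1/2+o(1)}$ from \cref{lemma: pair volume by Costello}. That does not yield a contradiction: the $n^{o(1)}$ factor there (inherited from Costello's number-theoretic \cref{lemma: pair volume by Costello0}) is in general unbounded as $n\to\infty$, so it eventually exceeds any fixed $C_\delta$. Note that the claim itself only asserts $\Pr\le Vn^{-1/2+o(1)}$, not $\Pr\le C_\delta Vn^{-1/2}$, and the latter is probably not even true. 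The right setup is to fix $\gamma>0$, suppose toward contradiction that $\Pr > Vn^{-1/2+\gamma}$ for arbitrarily large $n$, run the tuple-counting step to get $\Pr[h_1^*(\vx^{\,T}A_1[J])<V]>Vn^{-1/2+\gamma}/2$ for good $J$, and then observe that if $A_1[J]$ had at least $V^2/2$ disjoint nonsingular $2\times2$ submatrices, \cref{lemma: pair volume by Costello} with $q=V$ would force $\Pr[h_1^*(\vx^{\,T}A_1[J])<V]\le Vn^{-1/2+o(1)}<Vn^{-1/2+\gamma}/2$ for $n$ large, a genuine conflict.

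The second point is the ``obstacle'' you raise at the end. You are correct that the counting only produces a contradiction when roughly $2V^2/n<\delta/3$, i.e.\ when $V$ is at most a constant (strictly less than $\sqrt 2$) times $\sqrt{\delta n}$, which does not cover the whole range $V\le\sqrt{2\delta n}$. But this is not a gap requiring the repairs you suggest: in the complementary regime $V\ge c\sqrt{\delta n}$ one has $Vn^{-1/2+\gamma}\ge c\sqrt\delta\cdot n^\gamma>1$ once $n$ is large in terms of $\delta$ and $\gamma$, so the claimed bound holds vacuously because the right-hand side exceeds the maximum possible probability $1$. Splitting into these two cases closes the argument; no sharpened double-count and no passage to triples is needed.
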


\cref{claim:h1h2} can be proved using \cref{lemma: pair volume by Costello}, in a very similar way to the proof of \cref{claim:h2} using the $r=2$ case of \cref{lem:count-2}, but now studying $h_1^\delta(\vec{\alpha}^{\,T}A[J])$ for 2-element subsets $J\subseteq \{1,\dots,n\}$ and counting nonsingular $2\times 2$ submatrices (recalling \cref{lem:rank-monotonicity} to obtain the final contradiction).
Now, substituting \cref{claim:h2,claim:h1h2} into \cref{eq:h1h2-identity}, we obtain
\begin{align*}
    \mb E\left[\frac{1}{h_{1}^{\delta}(\vx^{\,T}A_{1})h_{2}^{\delta}(\vx^{\,T}A_{2})}\right] 
    &\le \int_{1}^{n^{2/3}}t^{-2}\left( t^{3/4}n^{-1/2+o(1)}+ t^{3/4}\cdot n^{-1/2+o(1)}\right)\,dt+n^{-2/3}\\
    &\le n^{-1/2+o(1)}\cdot \int_1^{n^{2/3}} t^{-5/4}\,dt+n^{-2/3}\\
    &= n^{-1/2+o(1)}\cdot (n^{2/3})^{-1/4}+n^{-2/3}=n^{-2/3+o(1)}. \qedhere
\end{align*}
\end{proof}

\bibliographystyle{amsplain_initials_nobysame_nomr}
\bibliography{main}

 \appendix
\section{A Littlewood--Offord theorem for varieties}\label{sec:FKS}
    In this appendix we prove \cref{thm:FKS}, giving a bound of the form $\Pr[\xi_{1}\vec{a}_{1}+\dots+\xi_{n}\vec a_{n}\in\mathcal{Z}]\le n^{-1/2+o(1)}$
    unless almost all the vectors $\vec{a}_{1},\dots,\vec{a}_{n}$ lie
    in a linear subspace $\mc W$ such that some translate $\vec{w}+\mc W$ of $\mc W$ is contained in $\mathcal{Z}$.

\begin{proof}[Proof of \cref{thm:FKS}]
    Assume \cref{E1} does not hold; we will prove the bound in \cref{E2}. We may also assume that $\mc Z\ne \emptyset$ (otherwise \cref{E2} holds trivially). Let $P_1,\dots,P_z$ be nonzero polynomials with $\mc Z=\{\vec y\in \mb F^d: P_1(\vec y)=\dots=P_z(\vec y)=0\}$, and let $q^*=\max(\deg P_1,\dots,\deg P_z)$.

    We first find a subspace $\mc W\subseteq \mb{F}^d$, for which one can find ``many'' disjoint basis among the vectors $\vec{a}_1,\dots,\vec{a}_n$. Writing $n_{b}=n-\varepsilon n(d-b)/d$, let $b\in\{0,\dots,d\}$ be the minimum integer such that at least $n_{b}$ of the vectors $\vec{a}_{i}$ lie in a common linear subspace $\mathcal{W}\subseteq\mb F^{d}$ of dimension $b$
    (certainly such a $b$ exists, considering $b=d$).
    We cannot have $b=0$, as otherwise \ref{E1} would hold with $\mc W = \{\vec{0}\}$ and with $\vec{w}$ as an arbitrary element of $\mc Z$.
    In addition, there is no proper subspace $\mathcal{W}'\subsetneq\mathcal{W}$ containing at least $n_{b-1}$ of the vectors $\vec{a}_{i}$, meaning that among the vectors $\vec{a}_{i}$ we can find at least $(n_{b}-n_{b-1})/b \ge \varepsilon n/d^{2}$ disjoint bases of $\mathcal{W}$.

    Upon relabelling the indices, without loss of generality we may assume that $\vec{a}_{jb+1},\vec{a}_{jb+2},\dots,\vec{a}_{(j+1)b}$ form a basis of $\mc{W}$ for all integers $0\le j< \lfloor \varepsilon n/d^{2}\rfloor$.
    Let $m= \lfloor\varepsilon n/d^{2}\rfloor\cdot b$, then we have $\vec{a}_{1},\dots,\vec{a}_{m}\in \mc W$.
    
    Now, condition on any outcome of $(\xi_{i})_{i>m}$; 
    we will prove the desired bound conditional on this outcome (all probabilistic
    notation for the rest of the proof is implicitly with respect to the
    corresponding conditional probability space). 

    Let $\vec{w}=\sum_{i>m}\xi_{i}\vec{a}_{i}$ (which we no longer view as a random variable). 
    Fix a linear isomorphism $\varphi:\mathcal{W}\to\mb F^{b}$ and let $\mathcal{Z}'=\varphi(\mathcal{W}\cap(\mathcal{Z}-\vec{w}))\subseteq \mb{F}^b$. Recall that at least $n_{b}\ge(1-\varepsilon)n$ of the vectors $\vec{a}_{i}$ lie in $\mathcal{W}$, and we are assuming \cref{E1} does not hold, so $\vec{w}+\mathcal{W}$ is not fully contained in $\mathcal{Z}$, meaning that $\mathcal{Z}'\subsetneq\mb F^{b}$.
    
    Let $P\in \mb F[x_1,\dots,x_b]$ be a nonzero polynomial of degree $q:=\deg P\le q^*$ vanishing on $\mathcal{Z}'$. 
    To see that such a polynomial exists, note that $\mathcal{W}\cap(\mathcal{Z}-\vec{w})=\{\vec y\in \mathcal{W}:P_1(\vec y+\vec w)=\dots=P_z(\vec y+\vec w)=0\}$, and hence $\mathcal{Z}'\subset \mb{F}^b$ can be described as the vanishing set of up to $z$ polynomials with degrees at most $\deg P_1,\dots,\deg P_z$, respectively (at least one of these polynomials is nonzero, since $\mathcal{Z}'\ne \mb{F}^b$).

    Now, for $i=1,\dots,m$, let $\pvec a_{i}=\varphi(\vec{a}_{i})$. 
    Then for each $j=0,\dots,\lfloor \varepsilon n/d^{2}\rfloor-1$, the vectors $\pvec{a}_{jb+1},\dots,\pvec{a}_{(j+1)b}$ form a basis of $\mb F^b$. 
    Furthermore, we have
    \begin{align*}
        \Pr[\xi_{1}\vec{a}_{1}+\dots+\xi_{n}\vec{a}_{n}\in\mathcal{Z}]
        &=\Pr[\xi_{1}\vec{a}_{1}+\dots+\xi_{m}\vec{a}_{m}\in\mathcal{Z}-\vec{w}]
        =\Pr[\xi_{1}\vec{a}_{1}+\dots+\xi_{m}\vec{a}_{m}\in\mathcal{W}\cap (\mathcal{Z}-\vec{w})]\\
        &=\Pr[\xi_{1}\pvec{a}_{1}+\dots+\xi_{m}\pvec{a}_{m}\in\mathcal{Z}']
        \le \Pr[P(\xi_{1}\pvec{a}_{1}+\dots+\xi_{m}\pvec{a}_{m})=0].
    \end{align*}
    Note that we can interpret $P(\xi_{1}\pvec{a}_{1}+\dots+\xi_{m}\pvec{a}_{m})$ as a polynomial of degree (at most) $q$ in the variables $\xi_{1},\dots,\xi_{m}$. 
    It suffices to show that this polynomial has at least $\varepsilon'm^{q}$ nonzero coefficients for $\varepsilon'=\min(\varepsilon/2, 1/(2d)^{q^*})$. Indeed, then the Meka--Nguyen--Vu bound for the polynomial Littlewood--Offord problem (see \cref{thm:polynomial-LO}) implies 
    \[\Pr[\xi_{1}\vec{a}_{1}+\dots+\xi_{n}\vec{a}_{n}\in\mathcal{Z}]\le \Pr[P(\xi_{1}\pvec{a}_{1}+\dots+\xi_{m}\pvec{a}_{m})=0]\le m^{-1/2+\varepsilon'}\le n^{-1/2+\eps},\]
    establishing \cref{E2}.

    For any distinct indices $i(1),\dots,i(q)\in \{1,\dots,m\}$, the coefficient of $\xi_{i(1)}\dotsm\xi_{i(q)}$ in the polynomial $P(\xi_{1}\pvec{a}_{1}+\dots+\xi_{m}\pvec{a}_{m})$ is the same as the coefficient of $\xi_{i(1)}\dotsm\xi_{i(q)}$ in $P(\xi_{i(1)}\pvec{a}_{i(1)}+\dots+\xi_{i(q)}\pvec{a}_{i(q)})$ and hence the same as the coefficient of $t_1\dotsm t_q$ in $P(t_1\pvec{a}_{i(1)}+\dots+t_q\pvec{a}_{i(q)})$. But note that for any vectors $\vec{v}_1,\dots,\vec{v}_q\in \mb F^b$ the coefficient of $t_1\dotsm t_q$ in $P(t_1 \vec{v}_1+\dots+t_q \vec{v}_q)$ is a multilinear function of $\vec{v}_1,\dots,\vec{v}_q$ (more formally, the function assigning a $q$-tuple $(\vec{v}_1,\dots,\vec{v}_q)$ the value of this coefficient is a multilinear function $\mb F^b\times \dots\times \mb F^b\to \mb F$). Since $P$ has degree $q$, this multilinear function is nonzero (indeed, the homogeneous degree-$q$ part $P_q$ of the polynomial $P$ is non-zero, so we can find a vector $\vec{w} \in \mb{F}^b$ with $P_q(\vec{w}) \neq 0$, and observe that the coefficient of $t_1\dotsm t_q$ in $P(t_1 \vec{w}+\dots+t_q \vec{w})$ is the same as in $P_q(t_1 \vec{w}+\dots+t_q \vec{w})=(t_1+\dots+t_q)^q P_q(\vec{w})$ and therefore equal to $q!P_q(\vec{w}) \neq 0$). 
    This means that, given any bases $B_1,\dots,B_q$ of $\mb F^b$, we can choose vectors $\vec{v}_1\in B_1,\dots,\vec{v}_q\in B_q$ such that the coefficient of $t_1\dotsm t_q$ in $P(t_1 \vec{v}_1+\dots+t_q \vec{v}_q)$ is nonzero. 
    In particular, for any distinct $j(1),\dots,j(q)\in \{0,\dots,\lfloor \varepsilon n/d^{2}\rfloor-1\}$, we can find $i(1)\in \{j(1)b+1,\dots,(j(1)+1)b\},\dots,i(q)\in \{j(q)b+1,\dots,(j(q)+1)b\}$ such that the coefficient of $t_1\dotsm t_q$ in $P(t_1 \pvec{a}_{i(1)}+\dots+t_q \pvec{a}_{i(q)})$ is nonzero and hence the coefficient of $\xi_{i(1)}\dotsm\xi_{i(q)}$ in $P(\xi_{i(1)}\pvec{a}_{i(1)}+\dots+\xi_{i(q)}\pvec{a}_{i(q)})$ is nonzero. 
    This means that we can find at least $\binom{\lfloor \varepsilon n/d^{2}\rfloor}{q}=\binom{m/b}{q}> (m/(2d))^q\ge m^q/(2d)^{q^*}\ge \eps' m^q$ distinct $q$-element sets $\{i(1),\dots, i(q)\}\subset \{1,\dots,m\}$, such that the coefficient of $\xi_{i(1)}\dotsm\xi_{i(q)}$ in the polynomial $P(\xi_{1}\pvec{a}_{1}+\dots+\xi_{m}\pvec{a}_{m})$ is nonzero. Thus, the polynomial $P(\xi_{1}\pvec{a}_{1}+\dots+\xi_{m}\pvec{a}_{m})$ has indeed at least $\varepsilon'm^{q}$ nonzero coefficients. 
    This completes the proof.
    \end{proof}

\section{A counterexample to some conjectures of Costello \\(By Matthew Kwan, Ashwin Sah and Mehtaab Sawhney)}\label{sec:counterexample}

Here we prove \cref{prop:costello-false}. First, the second bullet point is straightforward: note that
\[\Pr[f(\xi_{1},\dots,\xi_{n})=0]\ge\Pr\big[L_{1}(\xi_{1},\dots,\xi_{n})=0\text{ and }L_{2d}(\xi_{1},\dots,\xi_{n})=0\big]=\Omega((1/\sqrt{n})^{2})\ge \varepsilon/n,\]for sufficiently small $\varepsilon$.

In the rest of this appendix we prove that \cref{B1} does not hold for this polynomial $f$, showing that there are no non-constant polynomials $g_1,g_2$ for which $f-g_1g_2$ has fewer than $\varepsilon n^d$ nonzero terms. The idea is that if $f-g_1g_2$ had fewer than $\varepsilon n^d$ nonzero terms (i.e., if $f$ had an ``approximate factorisation''), this would lead to an \emph{exact} factorisation of the irreducible polynomial $y_1\cdots y_d+y_{d+1}\cdots y_{2d}$, which is impossible.

To execute this idea, we start with a random sampling argument to strengthen our ``approximate factorisation'', showing that the only obstruction comes from non-multilinear terms. We then use a Ramsey-theoretic argument (related to ideas of Alon and Beigel~\cite{AB01}) to ``clean up'' $g_1$ and $g_2$. We start with some preparations.

\begin{definition}
Fix a vector $\vec{t}=(t_{1},\dots,t_{r})\in\mb N^{r}$, with entries
summing to $k=\|\vec{t}\|_{1}$. Let $I_{1},\dots,I_{r}$ be disjoint
sets of size $m$, and let $K_{m}(\vec{t})$ be the $k$-uniform hypergraph
on the vertex set $I_{1}\cup \dots\cup I_{r}$, containing every possible
edge $e$ for which $|e_{1}\cap I_{1}|=t_{1},\dots,|e_{r}\cap I_{r}|=t_{r}$.
\end{definition}

For example, $K_{m}((k))$ is the complete $k$-uniform hypergraph
on $m$ vertices, and $K_{m}((1,1))$ is the complete bipartite graph
on $m+m$ vertices. We will need the following Ramsey-type theorem.
\begin{lemma} \label{lem:ramsey}
    For any $r,s,b\ge 1$ and $\vec t\in \mb N^r$, there is $M\in \mb N$ such that the following holds. 
    Let $m\ge M$ and consider any colouring of the hyperedges of $K_{m}(\vec{t})$ with $b$ different colours.
    Then there are subsets $I_{1}'\subseteq I_{1},\dots,I_{r}'\subseteq I_{r}'$ of size at least $s$ such that all the edges of $K_{m}(\vec{t})$ inside $I_{1}'\cup\dots\cup I_{r}'$ have the same colour.
\end{lemma}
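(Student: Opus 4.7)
My plan is to deduce this partite hypergraph Ramsey statement from the classical Ramsey theorem for $k$-uniform hypergraphs, by induction on the number $r$ of parts.

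The base case $r = 1$ is immediate: the assertion reduces to the classical Ramsey theorem applied to $I_1$, so one may take $M = R_{t_1}(s; b)$, where $R_k(s;b)$ denotes the standard $k$-uniform hypergraph Ramsey number.

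For the inductive step, assuming the result for $r - 1$ parts, I would prove it for $r$. One may assume $t_r \ge 1$ (otherwise $I_r$ does not appear in any edge, and we simply take $I_r' = I_r$ and apply induction to $I_1,\dots,I_{r-1}$). My strategy is an Erd\H os--Rado style sequential argument: fix an ordering $I_r = \{v_1,\dots,v_m\}$ and construct nested subsets $I_j = I_j^{(0)} \supseteq I_j^{(1)} \supseteq \cdots$ for each $j < r$, as follows. At step $i \to i+1$, every $t_r$-subset $T \subseteq \{v_1,\dots,v_{i+1}\}$ containing $v_{i+1}$ induces a colouring $\chi_T(e^*) := \chi(e^* \cup T)$ of the $(t_1,\dots,t_{r-1})$-type edges on $I_1^{(i)} \cup \dots \cup I_{r-1}^{(i)}$. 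There are $\binom{i}{t_r-1}$ such subsets $T$, and combining the corresponding $\chi_T$'s into a single ``product colouring'' with $b^{\binom{i}{t_r-1}}$ colours, one applies the inductive hypothesis to obtain $I_j^{(i+1)} \subseteq I_j^{(i)}$ (with $|I_j^{(i+1)}|$ still large) on which every new $\chi_T$ is individually monochromatic. The $\chi_T$ from earlier steps remain monochromatic on $I_j^{(i+1)} \subseteq I_j^{(i)}$, so at each step all colourings $\chi_T$ constructed so far are monochromatic.

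I would iterate until some step $N \geq R_{t_r}(s;b)$. At that point, for each $t_r$-subset $T \subseteq \{v_1,\dots,v_N\}$, all edges of the form $e^* \cup T$ with $e^* \subseteq I_1^{(N)} \cup \dots \cup I_{r-1}^{(N)}$ share a common colour $c(T) \in \{1,\dots,b\}$. A final application of classical Ramsey to the $b$-colouring $T \mapsto c(T)$ of $t_r$-subsets of $\{v_1,\dots,v_N\}$ produces a monochromatic $s$-subset $I_r' \subseteq \{v_1,\dots,v_N\}$; I would set $I_j' := I_j^{(N)}$ for $j < r$. The main obstacle here is purely quantitative bookkeeping: the number of auxiliary colours at step $i$ grows roughly like $b^{\binom{i}{t_r - 1}}$, so the required sizes $|I_j^{(i)}|$ must be chosen via a downward recursion producing tower-type bounds on $M$. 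Since the statement only asserts the existence of $M$, this causes no real difficulty, and no further ideas are needed.
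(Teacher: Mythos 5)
Your proof is correct, but it takes a genuinely different route from the paper's. The paper's argument is a one-shot reduction to classical hypergraph Ramsey via a positional encoding: identify each $I_j$ with $\{1,\dots,m\}$, set $k=\|\vec t\|_1$, and map each $k$-subset $\{q(1)<\dots<q(k)\}$ of $\{1,\dots,m\}$ to a type-$\vec t$ edge of $K_m(\vec t)$ by sending the $\ell$-th smallest element into the part $I_{j(\ell)}$, where the index-to-part assignment $\ell\mapsto j(\ell)$ is the fixed one dictated by $\vec t$. Pulling back the colouring of $K_m(\vec t)$ along this map gives a $b$-colouring of the complete $k$-uniform hypergraph on $\{1,\dots,m\}$; classical Ramsey yields a monochromatic vertex set $Q$ of size $sr$, and cutting $Q$ into $r$ consecutive blocks of size $s$ produces the $I_j'$. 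The point is that the block structure guarantees every type-$\vec t$ edge inside $I_1'\cup\dots\cup I_r'$ is the image of some $k$-subset of $Q$, so monochromaticity transfers. Your approach instead inducts on $r$ and runs an Erd\H os--Rado-style sweep over the last part $I_r$: at each step the link colourings indexed by $t_r$-subsets of the processed vertices are bundled into a product colouring, the inductive hypothesis is invoked (with an exponentially growing palette) to make them all simultaneously monochromatic on the remaining parts, and a final application of $t_r$-uniform Ramsey to $I_r$ finishes. Both arguments are sound. The paper's encoding avoids induction entirely and gives essentially the classical Ramsey bound with $s$ replaced by $sr$, whereas your nested recursion produces much worse (tower-on-tower) bounds for $M$—immaterial here since only existence is claimed, but worth being aware that the direct reduction is available and cleaner.
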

\begin{proof}
    Put $k:=\|\vec{t}\|_1$ and let $G$ be the complete $k$-uniform hypergraph on the vertex set $\{1,\dots,m\}$. 
    We will use the edge-colouring of $K_{m}(\vec{t})$ to define an edge-colouring of $G$, to which we will apply Ramsey's theorem.
    
    Let $j(0)=0$, and for $\ell\in\{1,\dots,k\}$, define $j(\ell)$ to satisfy
    $t_{1}+\dots+t_{j(\ell)-1}<\ell\le t_{1}+\dots+t_{j(\ell)}$. 
    Writing $I_{j}=\{i_{1}^{j},\dots,i_{m}^{j}\}$
    for each $j\in\{1,\dots,r\}$, we define a mapping $\varphi$ from edges
    of $G$ to edges of $K_{m}(\vec{t})$ as follows. For an edge $e=\{q(1),\dots,q(k)\}$
    of $G$, where $q(1)<q(2)<\dots<q(k)$, let $\varphi(e)=\left\{i_{q(1)}^{j(1)},\dots,i_{q(k)}^{j(k)}\right\}$.
    
    Via this mapping, our edge-colouring of $K_{m}(\vec{t})$ induces
    an edge-colouring of $G$. So, by Ramsey's theorem, assuming $m$
    is sufficiently large there is a subset $Q$ of $sr$ vertices of
    $G$ such that all edges of $G$ inside $Q$ have the same colour.
    Order the elements of $Q$ as
    \[
    q(1,1)<\dots<q(1,s)<\dots<q(r,1)<\dots<q(r,s),
    \]
    and for each $j\in\{1,\dots,r\}$ let $I_{j}'=\{i_{q(j,1)}^{j},\dots,i_{q(j,s)}^{j}\}.$
\end{proof}

We also need a simple combinatorial fact about of sums of vectors.

\begin{definition}
For a vector $\vec v\in \mb N^r$ let $\vec{v}_{\downarrow}$ be the \emph{decreasing rearrangement} of $\vec{v}$
(obtained by sorting the entries of $\vec{v}$ in decreasing order).
Recall that in the \emph{lexicographic order} $\preceq$
on $\mb N^r$, we write $\vec{p}\preceq\vec{q}$ if $\vec{p}=\vec{q}$
or if $p_{i}<q_{i}$ for the first $i$ where $p_{i}$ and $q_{i}$
differ.
\end{definition}

\begin{lemma} \label{lem:colexicographic}
    Let $\vec{s}_{1},\vec{s}_{2},\vec{t}_{1},\vec{t}_{2}\in\mb N^{r}$ be vectors with $\vec{s}_{1}+\vec{s}_{2}=\vec{t}_{1}+\vec{t}_{2}=\vec v$.
    If $(\vec{s}_{1},\vec s_2)\ne(\vec{t}_{1},\vec t_2)$ then 
    \[
        (\vec{s}_{1}+\vec{t}_{2})_{\downarrow}\succ\vec v_\downarrow\text{ or }(\vec{t}_{1}+\vec{s}_{2})_{\downarrow}\succ\vec v_\downarrow.
    \]
\end{lemma}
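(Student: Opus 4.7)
The plan is to rephrase the problem in terms of the sums $\vec a := \vec s_1 + \vec t_2$ and $\vec b := \vec t_1 + \vec s_2$, which satisfy $\vec a + \vec b = 2\vec v$. It then suffices to prove the following key claim: whenever $\vec a, \vec b \in \mb N^r$ satisfy $\vec a + \vec b = 2\vec v$ and \emph{both} $\vec a_\downarrow \preceq \vec v_\downarrow$ and $\vec b_\downarrow \preceq \vec v_\downarrow$ in the lex order, one has $\vec a = \vec v = \vec b$. Indeed, given the key claim, $\vec a = \vec v$ means $\vec s_1 + \vec t_2 = \vec s_1 + \vec s_2$, so $\vec t_2 = \vec s_2$ and hence $\vec t_1 = \vec v - \vec t_2 = \vec v - \vec s_2 = \vec s_1$, contradicting the hypothesis $(\vec s_1, \vec s_2) \neq (\vec t_1, \vec t_2)$.

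To prove the key claim I will peel off the maximum-valued coordinates of $\vec v$ and induct. Let $M$ be the largest entry of $\vec v$ and set $I := \{i : v_i = M\}$. The hypothesis $\vec a_\downarrow \preceq \vec v_\downarrow$ implies $\max_i a_i \le M$, and symmetrically for $\vec b$. For each $i \in I$, the equation $a_i + b_i = 2M$ combined with $a_i, b_i \le M$ forces $a_i = b_i = M$. Moreover, no index $i \notin I$ can satisfy $a_i = M$: otherwise $\vec a$ would have strictly more than $|I|$ entries equal to $M$, so $\vec a_\downarrow$ and $\vec v_\downarrow$ would agree on their first $|I|$ entries while at position $|I|+1$ the former would read $M$ and the latter something smaller, violating $\vec a_\downarrow \preceq \vec v_\downarrow$. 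The same holds for $\vec b$. Hence $\vec a$, $\vec b$, and $\vec v$ all take the value $M$ on exactly the coordinates indexed by $I$.

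Now restrict to the coordinates outside $I$. The identity $\vec a + \vec b = 2\vec v$ is preserved, and because the first $|I|$ entries of $\vec a_\downarrow$ and $\vec v_\downarrow$ (and similarly of $\vec b_\downarrow$ and $\vec v_\downarrow$) are all equal to $M$, the lex comparisons descend to the sorted restricted vectors. Inducting on the number of distinct values of $\vec v$ (the base case in which $\vec v$ is constant being handled directly by the preceding paragraph), the restricted $\vec a$ and $\vec b$ both equal the restricted $\vec v$, and together with the earlier $I$-analysis this gives $\vec a = \vec b = \vec v$, proving the key claim.

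The argument is elementary and I do not foresee a real obstacle; the one point requiring attention is the inheritance of the lex comparison under restriction, which is immediate from the observation that both sorted sequences begin with an identical block of $|I|$ copies of $M$, so the lex comparison is determined by their respective tails.
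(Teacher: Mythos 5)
Your proof is correct, but it takes a genuinely different route from the paper's. You rephrase everything in terms of $\vec a := \vec s_1 + \vec t_2$ and $\vec b := \vec t_1 + \vec s_2$ with $\vec a + \vec b = 2\vec v$, and prove the contrapositive in the strong ``equality case'' form: if neither $\vec a_\downarrow \succ \vec v_\downarrow$ nor $\vec b_\downarrow \succ \vec v_\downarrow$, then $\vec a = \vec b = \vec v$. You establish this by peeling off the maximum-valued coordinates of $\vec v$ and inducting on the number of distinct values, with the tight bound $a_i + b_i = 2M$, $a_i, b_i \le M$ forcing $a_i = b_i = M$ on the top block. The paper's proof is more direct: it first WLOG reduces to the case $\vec v = \vec v_\downarrow$, picks the first index $i$ where $s_{1,i} \ne t_{1,i}$, observes that the sum $\vec s_1 + \vec t_2$ (or $\vec t_1 + \vec s_2$, depending on the sign of the discrepancy) agrees with $\vec v$ before position $i$ and strictly exceeds it at position $i$, and then invokes the elementary fact that $\vec w_\downarrow \succeq \vec w$ for any $\vec w$. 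Your argument is somewhat longer, but it is self-contained (it does not rely on $\vec w_\downarrow \succeq \vec w$, which the paper uses implicitly) and delivers the slightly stronger conclusion that the two sums can avoid strictly dominating $\vec v_\downarrow$ only when $\vec a = \vec b = \vec v$; the paper's is shorter and more transparent once one grants that auxiliary fact.
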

\begin{proof}
    Without loss of generality, suppose that $\vec v_\downarrow=\vec v$.
    Note that the condition $(\vec{s}_{1},\vec s_2)\ne(\vec{t}_{1},\vec t_2)$ is equivalent to the condition $\vec s_1\ne \vec t_1$ (since $\vec s_1+\vec s_2=\vec t_1+\vec t_2$). 
    Let $i\in\{1,\dots,r\}$ be minimal such that $s_{1,i}\ne t_{1,i}$. If $s_{1,i}> t_{1,i}$ then $s_{2,i}< t_{2,i}$ and 
    $(\vec{s}_{1}+\vec{t}_{2})_\downarrow \succeq \vec{s}_1+\vec{t}_2 \succ \vec{v}$. 
    On the other hand, if $s_{1,i}<t_{1,i}$ then $s_{2,i}>t_{2,i}$ and $(\vec{t}_{1}+\vec{s}_{2})_\downarrow\succeq\vec{t}_{1}+\vec{s}_{2}\succ\vec v$.
\end{proof}

Now we prove \cref{prop:costello-false}.
\begin{proof}[Proof of \cref{prop:costello-false}]
Fix $d\ge 2$. As discussed at the start of this appendix, we need to show that for our particular polynomial $f$, \cref{B1} does not hold. Suppose for the purpose of contradiction that \begin{equation}f=g_{1}g_{2}+p,\label{eq:contradiction-factorisation}\end{equation} for
some polynomials $g_{1}g_{2},p\in\mathbb{C}[x_{1},\dots,x_{n}]$ such
that $\deg(g_{1}),\deg(g_{2})\ge1$ and $\deg(g_{1}g_{2}),\deg(p)\le d$,
and $p$ has at most $\varepsilon n^{d}$ nonzero coefficients. Let
$d_{1}=\deg(g_{1})$ and $d_{2}=d-d_1\ge \deg(g_{2})$.

Let $N$ be a large integer (we will need it to be sufficiently large in terms of $d$, at a later point in the proof).
\begin{claim}\label{claim:random-sampling}
    If $\varepsilon>0$
is sufficiently small (in terms of $N,d$), then there are subsets $I_{1}'\subseteq I_{1},\dots,I_{2d}'\subseteq I_{2d}$
of size $N$, such that for every set $\{i_{1},\dots,i_{d}\}\in I_{1}'\cup\dots\cup I_{2d}'$
of $d$ distinct indices, the coefficient of $x_{i_{1}}\dots x_{i_{d}}$
in $p$ is zero.
\end{claim}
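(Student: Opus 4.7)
The plan is a direct first-moment / random sampling argument, which will not use the factorisation structure $f=g_1g_2+p$ at all (that structure will be exploited later in the proof of \cref{prop:costello-false}, not in this claim). Concretely, I would sample $I_1'\subseteq I_1,\dots,I_{2d}'\subseteq I_{2d}$ uniformly and independently at random (each of size $N$), and upper-bound the expected number of ``bad'' multilinear monomials, meaning monomials $x_{i_1}\dotsm x_{i_d}$ with $\{i_1,\dots,i_d\}\subseteq I_1'\cup\dots\cup I_{2d}'$ whose coefficient in $p$ is nonzero.

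For a fixed multilinear monomial with index set $\{i_1,\dots,i_d\}$, letting $n_j$ be the number of these indices that lie in $I_j$ (so $\sum_j n_j=d$), the probability that all of them land in $I_1'\cup\dots\cup I_{2d}'$ is exactly
\[
\prod_{j=1}^{2d}\binom{|I_j|-n_j}{N-n_j}\Big/\binom{|I_j|}{N}.
\]
Since $|I_j|=\Theta_d(n)$ and $n_j\le d$, each factor is at most $O_{N,d}(n^{-n_j})$, so the whole product is at most $C_{N,d}\cdot n^{-d}$ for some constant $C_{N,d}$ depending only on $N$ and $d$. The polynomial $p$ has at most $\varepsilon n^d$ nonzero coefficients in total (in particular on multilinear monomials), so summing over all multilinear monomials gives an expected count of bad monomials at most $\varepsilon n^d\cdot C_{N,d}n^{-d}=C_{N,d}\varepsilon$. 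Choosing $\varepsilon<1/C_{N,d}$ forces this expectation to be strictly less than $1$, and hence some outcome of $I_1',\dots,I_{2d}'$ realises a count of zero, as required.

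I do not anticipate any real obstacle here: the argument is purely combinatorial, and the bookkeeping works out cleanly because the $n^d$ factor from the coefficient count exactly cancels the $n^{-d}$ shrinkage from random sampling, leaving a quantity that is linear in $\varepsilon$ with a constant depending on $N$ and $d$. The only point that needs a minor check is that $|I_j|=2\lfloor n/(4d)\rfloor\ge 2N$ for $n$ sufficiently large (so that all the binomial coefficients above make sense), and that the implicit constant in the ``$\Theta_d(n)$'' bound on $|I_j|$ is absorbed into $C_{N,d}$. This explains why the statement of the claim only requires $\varepsilon$ to be small \emph{in terms of $N$ and $d$}, which is unavoidable but also harmless, since later in the proof $N$ will be a fixed constant chosen to make the subsequent Ramsey-theoretic argument work.
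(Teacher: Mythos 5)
Your proposal is correct and follows essentially the same route as the paper: sample each $I_j'$ uniformly at random, bound the probability that a fixed multilinear monomial of $p$ has its index set contained in $I_1'\cup\dots\cup I_{2d}'$ by $O_{N,d}(n^{-d})$, and finish by a first-moment/union bound over the at most $\varepsilon n^d$ nonzero coefficients. The paper's write-up uses the slightly cleaner inequality $\Pr[Q\subseteq I_j']\le (N/|I_j|)^{|Q|}$ in place of your explicit binomial-ratio computation, but the two are the same estimate.
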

\begin{claimproof}
    Independently for each $j\in\{1,\dots,2d\}$, let $I_j'\subseteq I_j$ be a uniformly random subset of size $N$. 
    We will show that the sets $I_j'$ satisfy the desired property with positive probability.
    
    Indeed, note that for any particular set $Q\subseteq I_j$, we have $\Pr[Q\subseteq I_j']\le (N/|I_j|)^{|Q|}$. 
    Thus, for each monomial $x_{i_1}\dots x_{i_d}$ of $p$ with nonzero coefficient and distinct indices $i_{1},\dots,i_{d}$ (there are at most $\varepsilon n^d$ of them), the probability of having $\{i_1,\dots,i_d\}\subseteq I_1'\cup \dots\cup I_{2d}'$ is at most $(N/(2\lfloor n/(4d)\rfloor))^d<1/(\varepsilon n^d)$ for sufficiently small $\varepsilon$.
    The claim follows from a union bound over all such monomials.
\end{claimproof}

\cref{claim:random-sampling} tells us that if we restrict our polynomials $f,g_1,g_2,p$ to the variables in $I_{1}'\cup\dots\cup I_{2d}'$, then we have a slightly ``cleaner'' version of \cref{eq:contradiction-factorisation} in which we have eliminated all the degree-$d$ multilinear terms in $p$. Unfortunately, the non-multilinear degree-$d$ terms in $p$ can still cause problems, so we need to refine the situation further.

For $S\subset I_{1}'\cup\dots\cup I_{2d}'$, let $\on{type}(S)=(|S\cap I_{1}'|,\dots,|S\cap I_{2d}'|)$
be a vector encoding the number of elements of $S$ that lie in each $I_{j}'$.
Slightly abusively, we conflate the set $S \subset I_1'\cup\dots\cup I_{2d}'$ with the multilinear monomial $\prod_{i\in S}x_{i}$ 
(so we can also talk about the type of a multilinear monomial, or the type of a multilinear term of a polynomial). 
Say a\emph{ $k$-type} is a vector $\vec{t}\in\mb N^{2d}$ with entries
summing to $k$. 
So, the degree-$d_1$ multilinear terms of $g_{1}$ can be categorised by $d_{1}$-type, the degree-$d_2$ multilinear terms of $g_{2}$ can be categorised by $d_{2}$-type, and the terms of $f$ (all of which are degree-$d$, and multilinear) can be categorised by $d$-type.

For $z\in\mb C$, with argument $\arg(z)\in[0,2\pi)$, define the \emph{direction}
\[
\on{dir}(z)=\begin{cases}
* & \text{if }z=0\\
\pi/2 & \text{if }\arg(z)\in[\pi/4,3\pi/4)\\
\pi & \text{if }\arg(z)\in[3\pi/4,5\pi/4)\\
3\pi/2 & \text{if }\arg(z)\in[5\pi/4,7\pi/4)\\
0 & \text{otherwise}.
\end{cases}
\]
In other words, if $z\in\mb C$ is nonzero, then $\on{dir}(z)$ is chosen such that $\arg(z)$ differs from $\on{dir}(z)$ by an angle of at most $\pi/4$. 
Slightly abusively, we conflate each term of a polynomial with its coefficient (so we can talk about the directions of terms of $g_{1}$ and $g_{2}$). 
We will need the following (easy) fact about directions.
\begin{fact} \label{fact:no-cancellation-direction}
    For any $x_{1},\dots,x_{k},y_{1},\dots,y_{k}\in\mb C \setminus \{0\}$, if 
    \[
        \on{dir}(x_{1})=\dots=\on{dir}(x_{k}),\quad\on{dir}(y_{1})=\dots=\on{dir}(y_{k}),
    \]
    then $x_{1}y_{1}+\dots+x_{k}y_{k}\ne 0$.
\end{fact}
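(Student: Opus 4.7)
The plan is to reduce, via a rotation, to the situation where the common value $\on{dir}(x_i)+\on{dir}(y_i)$ is zero, and then verify that the resulting sum is nonzero by a short case analysis. First I would unpack the definition of $\on{dir}$: if $\on{dir}(z)=\theta$ then $\arg(z)-\theta$ lies in the half-open arc $[-\pi/4,\pi/4)$ (working modulo $2\pi$). Writing $\theta_x$ and $\theta_y$ for the common values of $\on{dir}(x_i)$ and $\on{dir}(y_i)$, this would yield $\arg(x_iy_i)-(\theta_x+\theta_y)\in[-\pi/2,\pi/2)$ for every $i$. Setting $\theta=\theta_x+\theta_y$ and $w_i=e^{-i\theta}x_iy_i$, each $w_i$ is then a nonzero complex number with $\arg(w_i)\in[-\pi/2,\pi/2)$, so $\on{Re}(w_i)\ge 0$, with equality forcing $\arg(w_i)=-\pi/2$ (the right endpoint $+\pi/2$ being excluded), in which case $w_i=-i|w_i|$.

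The proof then concludes with a two-way split. If some $w_i$ has strictly positive real part, then $\on{Re}(\sum_i w_i)>0$ (since all real parts are nonnegative), so $\sum_i w_i\ne 0$. Otherwise every $w_i$ equals $-i|x_iy_i|$, and $\sum_i w_i=-i\sum_i |x_iy_i|\ne 0$ since each $|x_iy_i|$ is strictly positive. In either case $\sum_i x_iy_i=e^{i\theta}\sum_i w_i\ne 0$, as required. There is no real obstacle here; the only subtlety is that the interval $[-\pi/4,\pi/4)$ defining $\on{dir}$ is half-open, and this asymmetry is exactly what lets the case split above go through (if it were instead the closed interval $[-\pi/4,\pi/4]$, the statement would fail, as illustrated by $x_1=1$, $y_1=e^{i\pi/2}$, $x_2=e^{-i\pi/2}$, $y_2=1$).
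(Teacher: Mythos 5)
The paper does not include a proof of this fact (it is stated as an ``easy'' observation and left to the reader), so there is no proof in the source to compare against; your argument is correct and is the natural one. Briefly re-checking: with $\theta=\theta_x+\theta_y$ and $w_i=e^{-i\theta}x_iy_i$, your lift shows $\arg(w_i)\in[-\pi/2,\pi/2)$, so $\operatorname{Re}(w_i)=|w_i|\cos(\arg(w_i))\ge 0$ with equality exactly when $\arg(w_i)=-\pi/2$; then the two-case split (some $w_i$ has positive real part, or all $w_i$ lie on the negative imaginary ray with strictly positive moduli) cleanly rules out cancellation. The ``$+\pi/2$ excluded'' observation is indeed the crux of why the argument, and the fact itself, go through.

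One small correction to the closing remark: the specific tuple you give, $x_1=1$, $y_1=e^{i\pi/2}$, $x_2=e^{-i\pi/2}$, $y_2=1$, is not a counterexample to the closed-interval variant, because $x_1$ and $x_2$ (and likewise $y_1$ and $y_2$) would still receive different directions under any reasonable closed-interval convention. Your larger point is right, but a witness that actually exploits the boundary of the interval is needed, for instance $x_1=y_1=e^{i\pi/4}$ and $x_2=y_2=e^{-i\pi/4}$: with a closed interval $[-\pi/4,\pi/4]$ one could declare all four to have direction $0$, yet $x_1y_1+x_2y_2=e^{i\pi/2}+e^{-i\pi/2}=0$.
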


Now, by iteratively applying \cref{lem:ramsey} (once for each $d_{1}$-type and once for each $d_{2}$-type), assuming
$N$ is sufficiently large (in terms of $d$), we can find $I_{1}''\subseteq I_{1}',\dots,I_{2d}''\subseteq I_{2d}'$,
each of size $d$, such that, among multilinear degree-$d_{1}$ terms
of $g_{1}$ (respectively, degree-$d_{2}$ terms of $g_{2}$) containing
variables indexed by $I_{1}''\cup\dots\cup I_{2d}''$, the direction
of a term depends only on its type.

Now, let $h_{1},h_{2},q,R_{1},\dots,R_{2d}\in\mb C[x_{i}:i\in I_{1}''\cup\dots\cup I_{2d}'']$
be the $2d^{2}$-variable polynomials obtained from $g_{1},g_{2},q,L_{1},\dots,L_{2d}$
by setting all variables not indexed by $I_{1}''\cup\dots\cup I_{2d}''$
to zero. (So, in particular, we have $R_j=\sum_{i\in I_j''} x_i$ for $j=1,\dots,2d$.)
Then, \cref{eq:contradiction-factorisation} gives rise to the ``cleaner'' identity
\begin{equation}
R_{1}\cdots R_{d}+R_{d+1}\cdots R_{2d}=h_{1}h_{2}+q,\label{eq:cleaner-identity}
\end{equation}
where $q$ has no multilinear degree-$d$ terms (by \cref{claim:random-sampling}), and in $h_{1}$ (respectively,
$h_{2}$), the direction of a degree-$d_{1}$ (respectively, degree
$d_{2}$) multilinear term only depends on its type. So, we can talk about the direction of a $d_{1}$-type in $h_{1}$ or the direction of a $d_{2}$-type in $h_{2}$.

Now, say that a type is \emph{simple} if all of its entries are at
most 1.
\begin{claim}
\label{claim:disjoint-terms}Let $\vec{t}_{1}$ be a $d_{1}$-type
and let $\vec{t}_{2}$ be a $d_{2}$-type, such that $\vec{t}_{1}+\vec{t}_{2}$
is not simple. Then $\vec{t}_{1}$ has direction $*$ in $h_{1}$
or $\vec{t}_{2}$ has direction $*$ in $h_{2}$.
\end{claim}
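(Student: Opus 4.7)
Comparing multilinear degree-$d$ coefficients on both sides of \cref{eq:cleaner-identity}: the left-hand side only produces multilinear monomials of the two simple types $(1,\dots,1,0,\dots,0)$ and $(0,\dots,0,1,\dots,1)$, while $q$ contains no multilinear degree-$d$ terms at all. Hence for every non-simple $d$-type $\vec v$, the multilinear degree-$d$ part of $h_1h_2$ of type $\vec v$ must vanish identically. I would prove the claim by induction on $\vec v_\downarrow$ in \emph{decreasing} lex order among non-simple $d$-types, applied to $\vec v=\vec t_1+\vec t_2$.

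For the inductive step, suppose towards a contradiction that $(\vec t_1,\vec t_2)$ sums to a non-simple $\vec v$, that $\vec t_1$ has direction $\ne *$ in $h_1$, and that $\vec t_2$ has direction $\ne *$ in $h_2$. Expand the multilinear type-$\vec v$ part of $h_1h_2$ as a sum of contributions indexed by pairs $(\vec s_1,\vec s_2)$ of a $d_1$- and a $d_2$-type with $\vec s_1+\vec s_2=\vec v$. For any such pair distinct from $(\vec t_1,\vec t_2)$, \cref{lem:colexicographic} provides a ``swap'' showing that one of $(\vec s_1+\vec t_2)_\downarrow$ or $(\vec t_1+\vec s_2)_\downarrow$ is strictly above $\vec v_\downarrow$; since every non-simple $d$-type has decreasing rearrangement $\succ (1,\dots,1,0,\dots,0)$, the swapped sum is itself non-simple. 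Applying the inductive hypothesis at the swapped pair, together with the directional assumption on $(\vec t_1,\vec t_2)$, forces $\vec s_1$ to have direction $*$ in $h_1$ or $\vec s_2$ to have direction $*$ in $h_2$; either way, the contribution of $(\vec s_1,\vec s_2)$ vanishes.

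Consequently, only the pair $(\vec t_1,\vec t_2)$ contributes. For any multilinear type-$\vec v$ monomial $M$ (such $M$ exist since $v_j\le d=|I_j''|$ for each $j$), the coefficient of $M$ in $h_1h_2$ equals the sum, over multilinear decompositions $M=M_1M_2$ with $M_i$ of type $\vec t_i$, of the corresponding products of coefficients of $h_1$ and $h_2$. By hypothesis all $h_1$-coefficients appearing in this sum share a common nonzero direction and all $h_2$-coefficients share a common nonzero direction, so \cref{fact:no-cancellation-direction} forces the coefficient of $M$ to be nonzero -- contradicting the vanishing of the multilinear type-$\vec v$ part of $h_1h_2$. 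The base case $\vec v_\downarrow=(d,0,\dots,0)$ needs no swap: then $(\vec t_1,\vec t_2)=(d_1 e_j,d_2 e_j)$ is the only pair summing to $\vec v$, and the same coefficient computation yields the contradiction directly.

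The main subtlety I anticipate is verifying that the swap machinery always lands inside the inductive regime. The delicate instance is when $d_1=d_2$ and one considers the ``flipped'' pair $(\vec s_1,\vec s_2)=(\vec t_2,\vec t_1)$ with $\vec t_1\ne\vec t_2$: here \cref{lem:colexicographic} forces $(2\vec t_1)_\downarrow$ or $(2\vec t_2)_\downarrow$ to lie strictly above $\vec v_\downarrow$, and both are non-simple because $d_1,d_2\ge 1$ guarantees each $\vec t_i$ has a nonzero entry. Once this corner case is settled, the induction proceeds uniformly.
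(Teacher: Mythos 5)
Your proof is correct and is essentially the paper's argument recast as an explicit induction on $\vec v_\downarrow$: the paper instead picks a counterexample pair $(\vec t_1,\vec t_2)$ with $(\vec t_1+\vec t_2)_\downarrow$ lexicographically maximal and derives a contradiction via the same mechanism (coefficient comparison in \cref{eq:cleaner-identity}, \cref{fact:no-cancellation-direction} to rule out cancellation within a type pair, and \cref{lem:colexicographic} to escape to a lex-larger non-simple type), so the two presentations are equivalent. The only stylistic difference is that you show every pair other than $(\vec t_1,\vec t_2)$ contributes zero and then conclude the type-$\vec v$ coefficient is nonzero, whereas the paper deduces the existence of a second ``fully nonzero'' type pair with the same sum and then contradicts maximality; the corner case you flag ($d_1=d_2$, flipped pair) is already handled uniformly by the general reasoning and needs no special treatment.
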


\begin{claimproof}
    Suppose for the purpose of contradiction that the statement of this claim is false, so there are $\vec{t}_{1},\vec{t}_{2}$ such that $\vec{t}_{1}+\vec{t}_{2}$
    is not simple but $\vec{t}_{1}$ does not have direction $*$ in $h_{1}$
    and $\vec{t}_{2}$ does not have direction $*$ in $h_{2}$. Choose
    such a pair $\vec{t}_{1},\vec{t}_{2}$ such that $(\vec{t}_{1}+\vec{t}_{2})_{\downarrow}$
    is lexicographically maximal.
    
    Now, let $\{i_{1},\dots,i_{d}\}\subseteq I_{1}''\cup\dots\cup I_{2d}''$
    be a set of distinct indices with type $\vec{t}_{1}+\vec{t}_{2}$
    (such a set exists, since each $I_{j}''$ has size $d$).
    Since $\vec{t}_{1}+\vec{t}_{2}$ is not simple, the coefficient of
    $x_{i_{1}}\cdots x_{i_{d}}$ in $R_{1}\cdots R_{d}+R_{d+1}\dots R_{2d}$
    is zero. By \cref{eq:cleaner-identity}, this coefficient can also
    be written as 
    \begin{equation}
    0=\sum_{A,B}\alpha_{A}\beta_{B},\label{eq:ST-sum}
    \end{equation}
    where we write $\alpha_{A}$ for the coefficient of $\prod_{i\in A}x_{i}$
    in $h_{1}$, and we write $\beta_{B}$ for the coefficient of $\prod_{i\in B}x_{i}$
    in $h_{2}$, and the sum is over all partitions of $\{i_{1},\dots,i_{d}\}$
    into a set of $A$ of size $d_{1}$ and a set $B$ of size $d_{2}$.
    Note that for all such $A,B$ we always have $\on{type}(A)+\on{type}(B)=\vec{t}_{1}+\vec{t}_{2}$.
    
    By \cref{fact:no-cancellation-direction}, and the choice of $\vec{t}_{1},\vec{t}_{2}$ (and the meaning of the direction $*$), there is a nonzero contribution to \cref{eq:ST-sum} from pairs $A,B$ with $\on{type}(A)=\vec{t}_{1}$ and $\on{type}(B)=\vec{t}_{2}$.
    So, there must be an additional $d_{1}$-type $\vec{s}_{1}$ and an additional $d_{2}$-type $\vec{s}_{2}$, such that $(\vec{s}_{1},\vec s_2)\ne(\vec{t}_{1},\vec t_2)$ and such that $\vec{s}_{1}+\vec{s}_{2}=\vec{t}_{1}+\vec{t}_{2}$,
    but $\vec{s}_{1}$ does not have direction $*$ in $h_{1}$ and $\vec{s}_{2}$ does not have direction $*$ in $h_{2}$. 
    But by \cref{lem:colexicographic}, at least one of $(\vec{t}_{1}+\vec{s}_{2})_\downarrow$ or
    $(\vec{s}_{1}+\vec{t}_{2})_\downarrow$ is lexicographically greater than $(\vec{t}_{1}+\vec{t}_{2})_\downarrow$ (and therefore not simple), which contradicts the choice of $\vec{t}_{1},\vec{t}_{2}$.
\end{claimproof}
Now, say a term of $h_{1}$ (respectively, of $h_{2}$) is \emph{good}
if it has degree $d_{1}$ (respectively, degree $d_{2}$) and is multilinear.
If two terms with types $\vec t_1,\vec t_2$ share a variable, then $\vec t_1+\vec t_2$ is not simple. So, \cref{claim:disjoint-terms} tells us that no variable appears in nonzero good terms of both $h_{1}$ and $h_{2}$.
Now, arbitrarily choose $i_{1}\in I_{1}'',\dots,i_{2d}\in I_{2d}''$.
Let $h_{1}^{*},h_{2}^{*}\in\mb C[x_{i_1},\dots,x_{i_{2d}}]$ be the $2d$-variable
polynomials obtained from $h_{1},h_{2}$ by setting all variables
other than $x_{i_1},\dots,x_{i_{2d}}$ to zero, and deleting all terms which
are not good. By the above discussion, $h_{1}^{*}$ and $h_{2}^{*}$ involve disjoint sets of variables (so each term of $h_{1}^{*}h_{2}^{*}$ is multilinear, so there can be no cancellation between $h_{1}^{*}h_{2}^{*}$ and $q$). Also, there can be no cancellation between $h_{1}^{*}h_{2}^{*}$ and terms arising from the non-good terms omitted in $h_{1}^{*}$ and $h_{2}^{*}$ (since the terms arising this way cannot be degree-$d$ multilinear). So, \cref{eq:cleaner-identity} gives rise to the identity
\[
x_{i_1}\cdots x_{i_d}+x_{i_{d+1}}\cdots x_{i_{2d}}=h_{1}^{*}h_{2}^{*}.
\]
(Specifically, on both sides of \cref{eq:cleaner-identity}, we have set all variables other than $x_{i_1},\dots,x_{i_{2d}}$ to zero, and restricted to degree-$d$ multilinear terms).
But it is easy to see that this factorisation is impossible.
\end{proof}

\end{document}